\definecolor{mygreen}{RGB}{0,108,0}
\definecolor{myblue}{RGB}{0,0,153}
\newcommand{\ud}{\,\mathrm{d}}
\newcommand{\rn}{\,\mathcal{R}}
\newcommand{\Z}{\mathbb{Z}}
\newcommand{\R}{\mathbb{R}}
\newtheorem{theorem}{Theorem}[section]
\newtheorem{corollary}[theorem]{Corollary}
\newtheorem{lemma}[theorem]{Lemma}
\newtheorem{proposition}[theorem]{Proposition}
\newtheorem{definition}[theorem]{Definition}
\newtheorem{example}{Example}
\title{Large number of endemic equilibria for disease transmission models in patchy environment}
\author[a]{Di\' ana H. Knipl \thanks{Corresponding author. Tel.: +36 62 34 3883}}
\author[b]{Gergely R\"{o}st}
\affil[a]{\small MTA--SZTE Analysis and Stochastics Research Group, University of Szeged, Aradi v\'{e}rtan\'{u}k tere 1, Szeged, Hungary, H-6720 \textit{E-mail: knipl@math.u-szeged.hu}
}
\affil[b]{Bolyai Institute, University of Szeged, Aradi v\'{e}rtan\'{u}k tere 1, Szeged, Hungary, H-6720  \textit{E-mail: rost@math.u-szeged.hu}}
\date{}
\begin{document}

\thispagestyle{empty}
\maketitle

\begin{abstract}
We show that disease transmission models in a spatially heterogeneous environment can have a large number of coexisting endemic equilibria. A general compartmental model is considered to describe the spread of an infectious disease in a population distributed over several patches. For disconnected regions, many boundary equilibria may exist with mixed disease free and endemic components, but these steady states usually disappear in the presence of spatial dispersal. However, if backward bifurcations can occur in the regions, some partially endemic equilibria of the disconnected system move into the interior of the nonnegative cone and persist with the introduction of mobility  between the patches. We provide a mathematical procedure that precisely describes in terms of the local reproduction numbers and the connectivity network of the patches, whether a steady state of the disconnected system is preserved or ceases to exist for low volumes of travel. Our results are illustrated on a patchy HIV transmission model with subthreshold  endemic equilibria and backward bifurcation. We demonstrate the rich dynamical behavior (i.e., creation and destruction of steady states) and the presence of multiple stable endemic equilibria for various connection networks.

\bigskip
{\bf Keywords:} differential equations, large number of steady states, compartmental patch model, epidemic spread.\\
{\bf AMS subject classification:} Primary 92D30; Secondary 58C15.
\end{abstract}

\section{Introduction}
Compartmental epidemic models have been considered widely in the mathematical literature since the pioneering works of Kermack, McKendrick and many others. Investigating fundamental properties of the models with analytical tools allows us to get insight into the spread and control of the disease by gaining information about the solutions of the corresponding system of differential equations. Determining steady states of the system and knowing their stability is of particular interest if one thinks of the long term behavior of the solution as final epidemic outcome. \\

In the great majority of the deterministic models for communicable diseases, two steady states exist: one disease free, meaning that the disease is not present in the population, and the other one is endemic, when the infection persists  with a positive state in some of the infected compartments. In such situation the basic reproduction number ($\rn_0$) usually works as a threshold for the stability of fixed points: typically the disease free equilibrium is locally asymptotically stable whenever this quantity, defined as the number of secondary cases generated by an index infected individual who was introduced into a completely susceptible population, is less than unity, and for values of $\rn_0$ greater than one, the endemic fixed point emerging at $\rn_0=1$ takes stability over by making the disease free state unstable. This phenomenon, known as forward bifurcation at $\rn_0=1$, is in contrary to some other cases when more than two equilibria coexist in certain parameter regions. Backward bifurcation presents such a scenario, when there is an interval for values of $\rn_0$ to the left of one where there is a stable and an unstable endemic fixed point besides the unique disease free equilibrium. Such dynamical structure of fixed points has been observed is several biological models considering multiple groups with asymmetry between groups and multiple interaction mechanisms (for an overview see, for instance, \cite{gumelbb} and the references therein). However, examples can also be found in the literature where the coexistence of multiple non-trivial steady states is not due to backward transcritical bifurcation of the disease free equilibrium; in the age-structured SIR model analyzed by Franceschetti {\it et al.} \cite{franceschetti} endemic equilibria arise through two saddle-node bifurcations of a positive fixed point, moreover Wang \cite{wang2} found backward bifurcation from an endemic equilibrium in a simple SIR model with treatment.  \\

In case of forward transcritical bifurcation, the classical disease control policy can be formulated: the stability of the endemic state typically accompanied with the persistence of the disease in the population as long as the reproduction number is larger than one, while controlling the epidemic in a way such that $\rn_0$ decreases below one successfully eliminates the infection, since every solution converges to the disease free equilibrium when $\rn_0$ is less than unity. On the other hand, the presence of backward bifurcation with a stable non-trivial fixed point for $\rn_0<1$ means that bringing the reproduction number below one is only necessary but not sufficient for disease eradication. Nevertheless, multiple endemic equilibria have further epidemiological implications, namely that stability and global behavior of the models that exhibit such structure are often not easy to analyze, henceforth little can be known about the final outcome of the epidemic.\\

\begin{figure}[t]
\centering
\subfigure[$e_1=2$, $e_2=2$, $e_3=1$, $\rn^{1}<1$, $\rn^{2}<1$, $\rn^{3}>1$.] { \includegraphics[width=4cm]{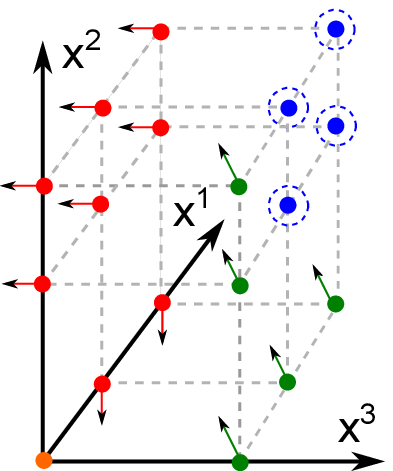}}\hspace{1cm}
\subfigure[$e_1=2$, $e_2=1$, $e_3=1$, $\rn^{1}<1$, $\rn^{2}>1$, $\rn^{3}>1$.]{\includegraphics[width=4cm]{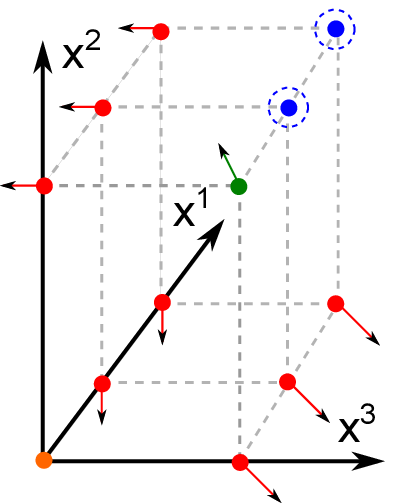}}\hspace{1cm}
\subfigure[$e_1=1$, $e_2=1$, $e_3=1$, $\rn^{1}>1$, $\rn^{2}>1$, $\rn^{3}>1$.]{\includegraphics[width=4cm]{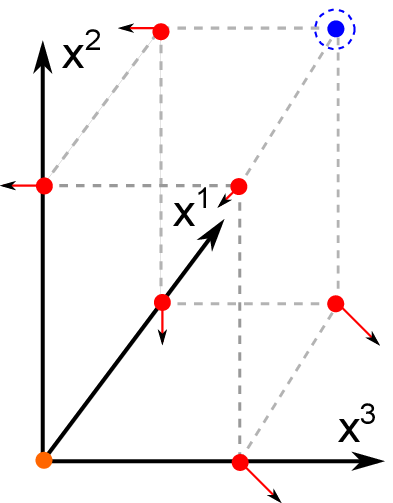}}
\caption{We illustrate the behavior of steady states in the system of three regions connected to each other by a complete mobility network. Dots on the schematic diagrams depict infected components of equilibria of the disconnected system for three different cases, $e_i$ denotes the number of positive fixed points in region $i$, $i=1,2,3$. Mobility has no impact on the disease free equilibrium (orange dot). Componentwise positive steady states (blue dots) are preserved  in the system with traveling as they continuously depend on the mobility parameter $\alpha$. A boundary endemic equilibrium moves out from the nonnegative octant with the introduction of traveling if the equilibrium has a component corresponding to a region, which is disease free in the absence of traveling and has local reproduction number ($\rn$) greater than one (red dot).
Other boundary steady states move into the interior of the nonnegative octant (green dots). \label{fig:schematic}}
\end{figure}

Multi-city epidemic models, where the population is distributed in space over several discrete geographical regions with the possibility of individuals' mobility between them, provide another example for rich dynamics. In the special case when the cities are disconnected the model possesses numerous steady states, the product of the numbers of equilibria in the one-patch models corresponding to each city. However, the introduction of traveling has a significant impact on steady states, as it often causes substantial technical difficulties in the fixed point analysis and, more importantly, makes certain equilibria disappear. Some works in the literature deal with models where the system with traveling exhibits only two steady states, one disease free with the infection not being present in any of the regions, and another one, which exists only for $\rn_0>1$, corresponding to the situation when the disease is endemic in each region (see, for instance, Arino \cite{arinometa}, Arino and van den Driessche \cite{arvdd}). Other studies which consider the spatial dispersal of infecteds between regions (Gao and Ruan \cite{gao}, Wang and Zhao \cite{wang} and the references therein) don't derive the exact number for the steady states but show the global stability of a single disease free fixed point for $\rn_0<1$ and claim the uniform persistence of the disease for $\rn_0>1$ with proving the existence of at least one (componentwise) positive equilibrium.\\  

The purpose of this study is to investigate the impact of individuals' mobility on the number of equilibria in multiregional epidemic models. A general deterministic model is formulated to describe the spread of infectious diseases with horizontal transmission. The framework enables us to consider models with multiple susceptible, infected and removed compartments, and more significantly, with several steady states. The model can be extended to an arbitrary number of regions connected by instantaneous travel, and we investigate how mobility creates or destroys equilibria in the system. First we determine the exact number of steady states for the model in disconnected regions, then give a precise condition in terms of the reproduction numbers of the regions and the connecting network for the persistence of equilibria in the system with traveling. The possibilities for a three patch scenario with backward bifurcations (i.e., when two endemic states are present for local reproduction numbers less than one) are sketched in Figure \ref{fig:schematic} (cf. Corollary \ref{cor:summaryee}). \\

The paper is organized as follows. A general class of compartmental epidemic models is presented in section \ref{sec:model}, including multigroup, multistrain and stage progression models. We consider $r$ regions which are connected by means of movement between the subpopulations and use our setting as a model building block in each region. Section \ref{sec:dfe} concerns with the unique disease free equilibrium of the multiregional system with small volumes of mobility, whilst in sections \ref{sec:ee}, \ref{sec:irred} and \ref{sec:nodirect} we consider the endemic steady states of the disconnected system and specify conditions on the connection network and the model equations for the persistence of fixed points in the system with traveling. We finish sections \ref{sec:ee}-\ref{sec:nodirect} with corollaries that summarize the achievements. The results are applied to a model for HIV transmission in three regions with various types of connecting networks in section \ref{sec:HIV}, then this model is used for the numerical simulations of section \ref{sec:richdyn} to give insight into the interesting dynamics with multiple stable endemic equilibria, caused by the possibility of traveling.  

\section{Model formulation}\label{sec:model}

We consider an arbitrary ($r$) number of regions, and use upper index to denote region $i$, $i \in \{1, \dots r\}$. Let $x^{i} \in \R^n$,  $y^{i} \in \R^m$ and  $z^{i} \in \R^k$ represent the set of infected, susceptible and removed (by means of immunity or recovery) compartments, respectively, for $n, m, k \in \Z^{+}$. The vectors $x^{i}$, $y^{i}$ and $z^{i}$ are functions of time $t$. We assume that all individuals are born susceptible, the continuous function $g^{i}(x^{i}, y^{i}, z^{i})$ models recruitment and also death of susceptible members. It is assumed that $g^{i}$ is $r-1$ times continuously differentiable. The $n \times n$ matrix $-V^{i}$ describes the transitions between infected classes as well as removals from infected states through death and recovery. It is reasonable to assume that all non-diagonal entries of $V^{i}$ are non-positive, that is, $V^{i}$ has the Z sign pattern \cite{vddwat}; moreover the sum of the components of $V^{i}u$ should also be nonnegative for any $u \geq 0$. It is shown in \cite{vddwat} that for such a matrix it holds that it is a non-singular M-matrix, moreover $(V^{i})^{-1} \geq 0$. Furthermore we let $D^{i}$ be a $k \times k$ diagonal matrix whose diagonal entries denote the removal rate in the corresponding removed class.\\
Disease transmission is described by the $m \times n$ matrix function $\mathcal{B}^{i}(x^{i}, y^{i}, z^{i})$, assumed $C^{r-1}$ on $\R^{n}_+ \times (\R^{m}_+ \setminus \{0\}) \times \R^{k}_+$, an element $\beta^{i}_{p,q} (x^{i}, y^{i}, z^{i})$ represents transmission between the $p$th susceptible class and the $q$th infected compartment. The term $(\text{diag} (y^{i}) \mathcal{B}^{i}(x^{i}, y^{i}, z^{i}) x^{i})_p$ thus has the form $(y^{i})_p \sum_{q=1}^{n} \beta^{i}_{p,q} (x^{i})_q$, $p \in \{1, \dots m\}$. For each pair $(p,q) \in \{1, \dots m\}\times \{1, \dots n\}$ we define a non-negative $n$-vector $\eta^{i}_{p,q}$ which distributes the term $(y^{i})_p \beta^{i}_{p,q} (x^{i})_q$ into the infected compartments; it necessarily holds that $\sum_{\substack{j=1} }^n (\eta^{i}_{p,q})_j=1$. Henceforth individuals who enter the $j$-th infected class when turning infected are represented by  $\sum_{p=1}^{m}\sum_{q=1}^{n} (\eta^{i}_{p,q})_{j} (y^{i})_p \beta^{i}_{p,q} (x^{i})_q$, which allows us to interpret the inflow of newly infected individuals into $x^{i}$ as $\mathcal{F}^{i}(x^{i}, y^{i}, z^{i}) x^{i}$ with $(\mathcal{F}^{i})_{j,q}=\sum_{p=1}^{m} (\eta^{i}_{p,q})_{j} (y^{i})_q \beta^{i}_{p,q}$, $j,q \in \{1, \dots n\}$.  Recovery of members of the $q$th disease compartment into the $p$th removed class is denoted by the $(p,q)$-th entry of the $k \times n$ nonnegative matrix $Z^{i}$.\\

In case of disconnected regions we can formulate the equations describing disease dynamics in region $i$, $i \in \{1, \dots r\}$, as
\begin{equation}\label{basismodel}\tag{$\textit{L}_i$}\aligned
\frac{\ud}{\ud t}x^{i}&=\mathcal{F}^{i}(x^{i}, y^{i}, z^{i}) x^{i}- V^{i} x^{i},\\
\frac{\ud}{\ud t}y^{i}&=g^{i}(x^{i}, y^{i}, z^{i})- \text{diag} (y^{i}) \mathcal{B}^{i}(x^{i}, y^{i}, z^{i}) x^{i},\\
\frac{\ud}{\ud t}z^{i}&=-D^{i} z^{i}+Z^{i} x^{i}.
\nonumber\endaligned\end{equation}
Due to its general formulation our system is applicable to describe a broad variety of epidemiological models in the literature. This is illustrated with some simple examples. 

\begin{example} Multigroup models \end{example}
Epidemiological models where, based on individual behavior, multiple homogeneous subpopulations (groups) are distinguished in the heterogeneous population are often called multigroup models. The different individual behavior is typically reflected in the incidence function as, for instance, by sexually transmitted diseases the probability of becoming infected depends on the number of contacts the individual makes, which is closely related to his / her sexual behavior. In terms of our system (\ref{basismodel}), such a model is realized if $n=m=k$ holds and the vector $\eta_{p,q}^{i}$ is defined as its $p$th component is one with all other elements zero, meaning that individuals who are in the $p$th susceptible group go into the $p$th infected class when contracting the disease. A simple SIR-type model with constant recruitment $\Lambda_j$ into the $j$th susceptible class, and $\mu_j$ and $\gamma_j$ as natural mortality rate of the $j$th subpopulation and recovery rate of individuals in $I_j$, $j\in \{1, \dots n\}$, becomes a multigroup model if its ODE system reads 
\begin{equation}\aligned
\frac{\ud}{\ud t}S_{j}(t)&=\Lambda_j-\sum_{q=1}^n \beta_{j,q} I_q(t) S_j(t)-\mu_j S_j(t),\\
\frac{\ud}{\ud t}I_{j}(t)&=\sum_{q=1}^n \beta_{j,q} I_q(t) S_j(t)-\gamma_j I_j(t)-\mu_j I_j(t),\\
\frac{\ud}{\ud t}R_{j}(t)&=\gamma_j I_j(t)-\mu_j R_j(t).
\nonumber\endaligned\end{equation}
See also the classical work of Hethcote and Ark \cite{hethcote} for epidemic spread in heterogeneous populations.   

\begin{example} Stage progression models \end{example}
These models are designed to describe the spread of infectious diseases where all newly infected individuals arrive to the same compartment and then progress through several infected stages until they recover or die. If we let $\eta_{p,q}^{i}=(1,0,\dots 0)$ for every $(p,q) \in \{1, \dots m\}\times \{1, \dots n\}$ then (\ref{basismodel}) becomes a stage progression model. The example 
\begin{equation}\aligned
\frac{\ud}{\ud t}S(t)&=\Lambda-\sum_{q=1}^n \beta_{q} I_q(t) S(t)-\mu_{_S} S(t),\\
\frac{\ud}{\ud t}I_{1}(t)&=\sum_{q=1}^n \beta_{q} I_q(t) S(t)-\gamma_1 I_1(t)-\mu_1 I_1(t),\\
\frac{\ud}{\ud t}I_{2}(t)&=\gamma_1 I_1(t)-\gamma_2 I_2(t)-\mu_2 I_2(t),\\
\vdots\\
\frac{\ud}{\ud t}I_{n}(t)&=\gamma_{n-1} I_{n-1}(t)-\gamma_n I_n(t)-\mu_n I_n(t),\\
\frac{\ud}{\ud t}R(t)&=\gamma_n I_n(t)-\mu_{_R} R(t)
\nonumber\endaligned\end{equation}
provides such a framework with one susceptible and one removed class. The more general model presented by Hyman {\it et al.} in \cite{hyman} considers different infected compartments to represent the phenomenon of changing transmission potential throughout the course of the infectious period.

\begin{example} Multistrain models \end{example}
Considering more than one infected class in an epidemic model might be necessary because of the coexistence of multiple disease strains. Individuals infected by different subtypes of pathogen belong to different disease compartments, and a new infection induced by a strain always arises in the corresponding infected class. Using the interpretation of $(\eta_{p,q})$ in (\ref{basismodel}) this can be modeled with the choice of $(\eta_{p,q}^{i})_q=1$, $p \in \{1, \dots m\}$, $q \in \{1, \dots n\}$, however it is not hard to see that the model described by the system \begin{equation}\aligned
\frac{\ud}{\ud t}S(t)&=\Lambda-\sum_{q=1}^n \beta_{q} I_q(t) S(t)-\mu_{_S} S(t),\\
\frac{\ud}{\ud t}I_{j}(t)&= \beta_{j} S(t)I_j(t) -\gamma_j I_j(t)-\mu_j I_j(t), \hspace{1cm} j=1, \dots n,\\
\frac{\ud}{\ud t}R(t)&= \sum_{q=1}^n \gamma_q I_q(t)-\mu_{_R} R(t)
\nonumber\endaligned\end{equation}
also exhibits such a structure. Van den Driessche and Wathmough refer to several works for multistrain models in section 4.4 in \cite{vddwat}, and they also provide a system with two strains and one susceptible class as an example; though we point out that their model incorporate the possibility of ``super-infection'' which is not considered in our framework. 

\bigskip
After describing our general disease transmission model in $r$ separated territories we connect the regions by means of traveling with the assumptions that travel occurs instantaneously.  We denote the matrices of movement rates from region $j$ to region $i$, $i,j \in \{1, \dots r\}$, $i \neq j$, of infected, susceptible and removed individuals by $\mathbb{A}_x^{ij}$, $\mathbb{A}_y^{ij}$ and $\mathbb{A}_z^{ij}$, respectively, which have the form $\mathbb{A}_x^{ij}=\text{diag}(\alpha_{x,1}^{ij}, \dots \alpha_{x,n}^{ij})$,  $\mathbb{A}_y^{ij}=\text{diag}(\alpha_{y,1}^{ij}, \dots \alpha_{y,m}^{ij})$ and $\mathbb{A}_z^{ij}=\text{diag}(\alpha_{z,1}^{ij}, \dots \alpha_{z,k}^{ij})$, where all entries are nonnegative. For connected regions, our model in region $i$ reads

\begin{equation}\label{genmodel}\tag{$\textit{T}_i$}\aligned
\frac{\ud}{\ud t}x^{i}&=\mathcal{F}^{i}(x^{i}, y^{i}, z^{i}) x^{i}- V^{i} x^{i}-\sum_{\substack{j=1 \\j \neq i}}^r \mathbb{A}_x^{ji} x^{i}+\sum_{\substack{j=1 \\j \neq i}}^r  \mathbb{A}_x^{ij} x^{j},\\
\frac{\ud}{\ud t}y^{i}&=g^{i}(x^{i}, y^{i}, z^{i})- \text{diag} (y^{i}) \mathcal{B}^{i}(x^{i}, y^{i}, z^{i}) x^{i}- \sum_{\substack{j=1 \\j \neq i}}^r \mathbb{A}_y^{ji} y^{i}+\sum_{\substack{j=1 \\j \neq i}}^r  \mathbb{A}_y^{ij} y^{j},\\
\frac{\ud}{\ud t}z^{i}&=-D^{i} z^{i}+Z^{i} x^{i}- \sum_{\substack{j=1 \\j \neq i}}^r \mathbb{A}_z^{ji} z^{i}+\sum_{\substack{j=1 \\j \neq i}}^r \mathbb{A}_z^{ij} z^{j}.
\nonumber\endaligned\end{equation}

\section{Disease free equilibrium and local reproduction numbers}\label{sec:dfe}

In the absence of traveling, i.e., when $\alpha_{x,\cdot}^{ij}$, $\alpha_{y,\cdot}^{ij}$, $\alpha_{z,\cdot}^{ij}=0$ for all $i, j \in \{1, \dots r\}$, the equations for a given region $i$ are independent of the equations of other regions. We assume that for each $i$ the equation $$g^{i}(0,y_0^i,0)=0$$ has a unique solution $y_0^i>0$; this yields that there exists a unique disease free equilibrium $(0, y_0^i, 0)$ in region $i$ since $x_0^{i}=0$ and the third equation of (\ref{basismodel}) implies $z_0^{i}=0$. We also suppose that all eigenvalues of the derivative $g_{y^i}^{i}(0, y_0^i,0)$ have negative real part, which establishes the local asymptotic stability of $(y_0^i,0)$ in the disease free system 
\begin{equation}\aligned
\frac{\ud}{\ud t}y^{i}&=g^{i}(0, y^{i}, z^{i}),\\
\frac{\ud}{\ud t}z^{i}&=-D^{i} z^{i}.
\nonumber\endaligned\end{equation}
When system (\ref{basismodel}) is close to the disease free equilibrium, the dynamics in the infected classes can be approximated by the linear equation 
$$\frac{\ud}{\ud t}x^{i}=(F^i - V^{i}) x^{i},$$ 
where we use the notation $F^i=\mathcal{F}^{i}(0, y_0^{i}, 0)$. The transmission matrix $F^i$ represents the production of new infections while $V^i$ describes transition between and out of the infected classes. Clearly  $F^i$ is nonnegative, which together with  $(V^{i})^{-1} \geq 0$ implies the non-negativity of  $F^i(V^i)^{-1}$. We recall that the spectral radius $\rho(A)$ of a matrix $A \geq 0$ is the largest real eigenvalue of $A$ (according to the Frobenius--Perron theorem such an eigenvalue always exists for non-negative matrices, and it dominates the modulus of all other eigenvalues).  We define the {\it local} reproduction number in region $i$ as
\begin{equation}\aligned
\rn^i&=\rho(F^i (V^i)^{-1}),
\nonumber\endaligned\end{equation}
and obtain the following result.
\begin{proposition}
The point $(0, y_0^i, 0)$ is locally asymptotically stable in (\ref{basismodel}) if $\rn^i<1$, and unstable if $\rn^i>1$. 
\end{proposition}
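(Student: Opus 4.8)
The plan is to linearize system (\ref{basismodel}) at the disease free equilibrium $(0, y_0^i, 0)$, exploit the block structure of the resulting Jacobian, and then convert the stability question for the infected block into the spectral condition on $F^i(V^i)^{-1}$ via the M-matrix theory already recalled above. This is the natural adaptation of the van den Driessche--Watmough argument \cite{vddwat} to the present general framework.

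First I would compute the Jacobian at $(0, y_0^i, 0)$. The crucial observation is that every nonlinear coupling in the $x^i$-equation carries a factor of $x^i$: the inflow of new infections is $\mathcal{F}^i(x^i,y^i,z^i)\,x^i$, so when differentiating at $x^i=0$ the product-rule contributions involving $\partial_{x^i}\mathcal{F}^i$, $\partial_{y^i}\mathcal{F}^i$ and $\partial_{z^i}\mathcal{F}^i$ all vanish, leaving only $\mathcal{F}^i(0,y_0^i,0)=F^i$. Hence the linearized $x^i$-equation is simply $\tfrac{\ud}{\ud t}\delta x^i=(F^i-V^i)\,\delta x^i$ and is decoupled from $\delta y^i$ and $\delta z^i$. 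The same factor $x^i$ in the transmission term $\text{diag}(y^i)\mathcal{B}^i x^i$ of the $y^i$-equation annihilates its derivatives with respect to $y^i$ and $z^i$ at the DFE, so the $\delta y^i$- and $\delta z^i$-rows reproduce exactly the coefficients $g^i_{y^i}(0,y_0^i,0)$, $g^i_{z^i}(0,y_0^i,0)$ and $-D^i$ of the disease free subsystem (plus terms in $\delta x^i$ only). Consequently the Jacobian is block lower triangular with respect to the splitting $\{x^i\}\cup\{y^i,z^i\}$, with diagonal blocks $F^i-V^i$ and the Jacobian $J_{DF}$ of the disease free subsystem at $(y_0^i,0)$.

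Since the spectrum of a block triangular matrix is the union of the spectra of its diagonal blocks, the eigenvalues of the full Jacobian split into those of $F^i-V^i$ and those of $J_{DF}$. By the standing assumption all eigenvalues of $g^i_{y^i}(0,y_0^i,0)$ have negative real part, and the triangular block $-D^i$ contributes only its negative diagonal entries; hence every eigenvalue of $J_{DF}$ lies in the open left half-plane and never obstructs stability. The stability of the DFE is therefore governed entirely by the sign of the spectral abscissa $s(F^i-V^i)$: the equilibrium is locally asymptotically stable when $s(F^i-V^i)<0$ and unstable when $s(F^i-V^i)>0$.

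Finally I would translate this into a statement about $\mathcal{R}^i$. Because $V^i$ is a non-singular M-matrix and $F^i\geq 0$, the matrix $V^i-F^i$ inherits the Z sign pattern, and the M-matrix characterization recalled from \cite{vddwat} yields the equivalences $s(F^i-V^i)<0 \Leftrightarrow \rho(F^i(V^i)^{-1})<1$ and $s(F^i-V^i)>0 \Leftrightarrow \rho(F^i(V^i)^{-1})>1$. Substituting $\mathcal{R}^i=\rho(F^i(V^i)^{-1})$ then gives the claim. The only genuinely nontrivial ingredient is this last spectral equivalence, which is exactly the content of the cited M-matrix lemma; the rest is the block-triangular linearization, whose single delicate point is verifying that the $x^i$-factor really does kill all off-diagonal couplings into the infected block, so that $F^i-V^i$ appears cleanly as an isolated diagonal block.
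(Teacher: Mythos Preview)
Your proposal is correct and follows essentially the same approach as the paper: both linearize at the DFE, observe that the Jacobian is block lower triangular with respect to the splitting $\{x^i\}\cup\{y^i,z^i\}$ (so that the diagonal blocks $F^i-V^i$, $g^i_{y^i}(0,y_0^i,0)$ and $-D^i$ determine the spectrum), and then invoke the equivalence from \cite{vddwat} between $s(F^i-V^i)<0$ and $\rho(F^i(V^i)^{-1})<1$. Your write-up is in fact slightly more careful than the paper's in justifying \emph{why} the $(x^i;y^i,z^i)$ off-diagonal block vanishes, via the explicit $x^i$-factor in $\mathcal{F}^i x^i$.
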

\begin{proof}
The stability of the disease free fixed point is determined by the eigenvalues of the Jacobian of (\ref{basismodel}) evaluated at the equilibrium. Linearizing the system at $(0, y_0^i,0)$ yields
\begin{equation}
J^i= 
\begin{pmatrix}
F^i-V^i & 0 & 0\\
g_{x^i}^{i}(0, y_0^i,0)-\text{diag} (y^{i}) \mathcal{B}^{i}(0, y_0^i,0) & g_{y^i}^{i}(0, y_0^i,0) & g_{z^i}^{i}(0, y_0^i,0)\\
Z^i & 0 & -D^i
\end{pmatrix},
\nonumber\end{equation}
where it holds that $-D^i$ has negative real eigenvalues, and by assumption the eigenvalues of $g_{y^i}^{i}(0, y_0^i,0)$ have negative real part.  The special structure of $J^i$ implies that $F^i-V^i$  determines the stability of the disease free equilibrium.\\ 
It is known \cite{vddwat} that all eigenvalues of the matrix $F^{i}-V^{i}$ have negative real part if and only if $\rho(F^i (V^i)^{-1})<1$, and there is an eigenvalue with positive real part if and only if $\rho(F^i (V^i)^{-1})>1$. Since $\rn^i$ was defined as the spectral radius of $F^i (V^i)^{-1}$, one obtains the statement of the proposition. 
\end{proof}
If the regions are disconnected, the basic (global) reproduction number arises as the maximum of the local reproduction numbers, hence we arrive to the following simple proposition.
\begin{proposition}\label{prop:stabalpha0}
The system $(L_1)$--$(L_r)$ has a unique disease free equilibrium $E_{df}^0=(0, y_0^1, 0,$ $\dots$ $0, y_0^r, 0)$, which is locally asymptotically stable if $\rn^B_0<1$ and is unstable if $\rn^B_0>1$, where we define
$$\rn^B_0=\max_{1 \leq i \leq r} \rn^i.$$
\end{proposition}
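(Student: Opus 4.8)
The plan is to exploit the fact that when all travel rates vanish, the system $(L_1)$--$(L_r)$ is completely decoupled: the right-hand side of $(L_i)$ depends only on the variables $(x^i, y^i, z^i)$ of region $i$. Consequently the full state space factors as a product, and a steady state of the whole system is precisely a tuple whose $i$-th block $(x^i, y^i, z^i)$ is a steady state of the individual system $(L_i)$. Both uniqueness and the stability threshold will then follow by reducing to the single-region result already established in the preceding proposition.

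First I would establish uniqueness of the disease free equilibrium. A disease free equilibrium of the combined system is one with $x^i = 0$ for every $i$. By the standing assumption that $g^i(0, y_0^i, 0) = 0$ has a unique positive solution $y_0^i$, together with the third equation of $(L_i)$ forcing $z_0^i = 0$, each region admits exactly one disease free steady state $(0, y_0^i, 0)$. Hence the only disease free equilibrium of $(L_1)$--$(L_r)$ is the concatenation $E_{df}^0 = (0, y_0^1, 0, \dots, 0, y_0^r, 0)$, which gives uniqueness.

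Next, for the stability claim, I would linearize the decoupled system at $E_{df}^0$. Since the regions do not interact when $\alpha_{x,\cdot}^{ij}, \alpha_{y,\cdot}^{ij}, \alpha_{z,\cdot}^{ij} = 0$, the Jacobian is block diagonal with $i$-th diagonal block equal to the matrix $J^i$ computed in the proof of the preceding proposition. The spectrum of a block diagonal matrix is the union of the spectra of its blocks, so all eigenvalues of the full Jacobian have negative real part if and only if this holds for every $J^i$ simultaneously, while there is an eigenvalue with positive real part precisely when some $J^i$ has one. Invoking the preceding proposition, $J^i$ has all eigenvalues with negative real part exactly when $\rn^i < 1$, and has an eigenvalue with positive real part exactly when $\rn^i > 1$.

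Combining these observations, $E_{df}^0$ is locally asymptotically stable when $\rn^i < 1$ for all $i$, that is, when $\max_{1 \le i \le r} \rn^i = \rn^B_0 < 1$, and it is unstable when $\rn^i > 1$ for some $i$, that is, when $\rn^B_0 > 1$. I do not expect a serious obstacle: the whole argument rests on the block-diagonal structure forced by the absence of travel, together with the single-region stability result. The only point requiring a moment's care is translating the componentwise conditions into the single threshold $\rn^B_0$, which is immediate from the definition of the maximum.
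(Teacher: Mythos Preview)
Your proof is correct and follows essentially the same approach as the paper, which treats this as an immediate consequence of the preceding single-region proposition via the block-diagonal structure of the decoupled system. You have simply spelled out in detail what the paper condenses into the one-line remark that ``the basic (global) reproduction number arises as the maximum of the local reproduction numbers.''
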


Let us suppose that all movement rates admit the form $\alpha_{x,\cdot}^{ij}=\alpha \cdot c_{x,\cdot}^{ij}$, $\alpha_{y,\cdot}^{ij}=\alpha \cdot c_{y,\cdot}^{ij}$, $\alpha_{z,\cdot}^{ij}=\alpha \cdot c_{z,\cdot}^{ij}$, where the non-negative constants $c_{x,\cdot}^{ij}$, $c_{y,\cdot}^{ij}$ and $c_{z,\cdot}^{ij}$ represent connectivity potential and we can think of $\alpha \geq 0$ as the general mobility parameter. Using the notation $C_w^{ij}=\text{diag}(c_{w,1}^{ij}, \dots c_{w,n}^{ij})$ makes $\mathbb{A}_w^{ij}=\alpha C_w^{ij}$, $w \in \{x,y,z\}$. With this formulation we can control all movement rates at once, through the parameter $\alpha$, moreover it allows us to rewrite systems $(T_1)$ -- $(T_r)$ in the compact form
\begin{equation}\aligned\label{compt}
\frac{\ud}{\ud t}\mathcal{X}&=\mathcal{T}(\alpha,\mathcal{X})
\endaligned\end{equation}
with $\mathcal{X}=(x^{1},y^{1},z^{1}, \dots x^{r},y^{r},z^{r})^T \in \R^{r(n+m+k)}$ and $\mathcal{T}=(\mathcal{T}^{1,x},\mathcal{T}^{1,y},\mathcal{T}^{1,z},$ $\dots$ $\mathcal{T}^{r,x},\mathcal{T}^{r,y},\mathcal{T}^{r,z})^T\colon \R \times \R^{r(n+m+k)} \shortrightarrow \R^{r(n+m+k)}$, where $\mathcal{T}^{i,x}$, $\mathcal{T}^{i,y}$ and $\mathcal{T}^{i,z}$ are defined as the right hand side of the first, second and third equation, respectively, of system (\ref{genmodel}), $i \in \{1, \dots r\}$. We note that $\mathcal{T}$ is an $r-1$ times continuously differentiable function on $\left(\R \times \R^{n}_+ \times (\R^{m}_+ \setminus \{0\}) \times \R^{k}_+ \times \dots \times \R^{n}_+ \times (\R^{m}_+ \setminus \{0\}) \times \R^{k}_+\right)$, and for $\alpha=0$ (\ref{compt}) gives system $(L_1)$--$(L_r)$.\\
As pointed out in Proposition \ref{prop:stabalpha0}, the point $E_{df}^0=(0, y_0^1, 0,$ $\dots$ $0, y_0^r, 0)$ is the unique disease free equilibrium of $(L_1)$--$(L_r)$. Since this system coincides with $(T_1)$ -- $(T_r)$ for $\alpha=0$, it holds that $\mathcal{T}(0,E_{df}^0)=0$, this is, $E_{df}^0$ is a disease free steady state of $(T_1)$ -- $(T_r)$ when $\alpha=0$, and it is unique. The following theorem establishes the existence of a unique disease free equilibrium of this system for small positive $\alpha$-s.

\begin{theorem}\label{th:iftdfe}
Assume that the matrix $\left(\frac{\partial \mathcal{T}}{\partial \mathcal{X}}\right) (0,E_{df}^0)$ is invertible. Then, by means of the implicit function theorem it holds that there exists an $\alpha_0>0$, an open set $U_0$ containing $E_{df}^0$, and a unique $r-1$ times continuously differentiable function $f_0=$ $(f_{x_0^1}, f_{y_0^1}, f_{z_0^1},$ $\dots f_{x_0^r}, f_{y_0^r}, f_{z_0^r})^T\colon$ $[0, \alpha_0) \shortrightarrow U_0$ such that $f_0(0)=E_{df}^0$ and $\mathcal{T}(\alpha,f_0(\alpha))=0$ for $\alpha \in [0, \alpha_0)$. Moreover, $\alpha_0$ can be defined such that $f_0$ is the unique disease free equilibrium of system $(T_1)$--$(T_r)$ on $[0, \alpha_0)$.
\end{theorem}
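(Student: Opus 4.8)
The plan is to prove the statement in two stages: first the existence and smoothness of the branch $f_0$ by a direct invocation of the implicit function theorem, and then the two ``moreover'' assertions, namely that $f_0$ is genuinely disease free and that it exhausts the disease free equilibria for small $\alpha$.

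For the first stage I would verify the three hypotheses of the implicit function theorem for the map $\mathcal{T}\colon \R \times \R^{r(n+m+k)} \to \R^{r(n+m+k)}$ at the point $(0, E_{df}^0)$. The required smoothness ($C^{r-1}$) was recorded when $\mathcal{T}$ was introduced; the equality $\mathcal{T}(0, E_{df}^0)=0$ was observed just before the statement; and invertibility of $(\partial \mathcal{T}/\partial \mathcal{X})(0, E_{df}^0)$ is precisely the standing hypothesis. The theorem then yields a two-sided neighborhood of $0$ on which a unique $C^{r-1}$ solution branch exists; restricting attention to nonnegative mobilities produces the $\alpha_0>0$, the open set $U_0 \ni E_{df}^0$, and the function $f_0\colon [0,\alpha_0)\to U_0$ with $f_0(0)=E_{df}^0$ and $\mathcal{T}(\alpha,f_0(\alpha))=0$.

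For the second stage I would first show that $f_0(\alpha)$ has vanishing infected components. The key structural observation is that each infected block $\mathcal{T}^{i,x}(\alpha,\mathcal{X})=(\mathcal{F}^i-V^i)x^i-\alpha\sum_{j\neq i}C_x^{ji}x^i+\alpha\sum_{j\neq i}C_x^{ij}x^j$ is linear and homogeneous in the collection $(x^1,\dots,x^r)$, hence vanishes identically on the subspace $x^1=\dots=x^r=0$. Restricting the equilibrium equations to this subspace leaves only the $(y,z)$-equations; since $Z^ix^i=0$ there, the $z$-part is a homogeneous linear system whose coefficient matrix equals $\text{diag}(-D^1,\dots,-D^r)$ at $\alpha=0$ and is therefore invertible for small $\alpha$, forcing $z=0$, while the remaining $y$-part $g^i(0,y^i,0)+\alpha(\cdots)=0$ has the nondegenerate solution $y_0=(y_0^1,\dots,y_0^r)$ at $\alpha=0$, its Jacobian $\text{diag}(g_{y^i}^i(0,y_0^i,0))$ being invertible by the negative-real-part assumption. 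A second application of the implicit function theorem to this reduced system produces a disease free branch $(0,y(\alpha),0)$; as it solves the full system and lies in $U_0$ for $\alpha$ small, local uniqueness from the first stage forces it to coincide with $f_0(\alpha)$.

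The remaining, and most delicate, point is the global-in-state uniqueness: that for each $\alpha\in[0,\alpha_0)$ the point $f_0(\alpha)$ is the \emph{only} disease free equilibrium, not merely the only one inside $U_0$. I would argue by contradiction and compactness. Suppose there were $\alpha_n\downarrow 0$ and disease free equilibria $\widehat{\mathcal{X}}_n\neq f_0(\alpha_n)$; after establishing an a priori bound that confines disease free equilibria to a fixed compact subset of the admissible region, I would pass to a convergent subsequence $\widehat{\mathcal{X}}_n\to\widehat{\mathcal{X}}^\ast$. Continuity makes $\widehat{\mathcal{X}}^\ast$ a disease free equilibrium at $\alpha=0$, which by the standing assumption that $g^i(0,y_0^i,0)=0$ has the unique positive root $y_0^i$ must equal $E_{df}^0$; then $\widehat{\mathcal{X}}_n\in U_0$ for large $n$, contradicting the local uniqueness already proved, and shrinking $\alpha_0$ if necessary gives the claim. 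I expect this a priori bound to be the genuine obstacle, since the hypotheses on $g^i$ are stated in considerable generality and boundedness of the disease free equilibria away from both infinity and the excluded face $\{y^i=0\}$ has to be extracted from the dissipative balance between recruitment and death encoded in $g^i$; everything else reduces to the linearity of the infected subsystem together with a double use of the implicit function theorem.
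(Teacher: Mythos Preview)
Your approach matches the paper's almost exactly for the existence and the disease-free nature of the branch: the paper also applies the implicit function theorem twice, first to the full map $\mathcal{T}$ to obtain $f_0$, and then to the restricted susceptible subsystem (your $y$-part with $x=z=0$) to obtain a branch $\tilde f_0^{\,y}(\alpha)$ near $(y_0^1,\dots,y_0^r)$, after which local uniqueness from the first application forces $f_0=(0,\tilde f_0^{\,y},0)$. Your treatment of the $z$-components is slightly more explicit than the paper's (the paper simply writes down the susceptible subsystem with $z=0$ already substituted), but the substance is identical.

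Where you diverge is in the global uniqueness claim. The paper does \emph{not} carry out the compactness argument you propose; it simply asserts, after the second application of the implicit function theorem, that ``the susceptible subsystem obtains a unique equilibrium for small values of $\alpha$'' and leaves it at that. Strictly speaking this yields only uniqueness in a neighborhood of $(y_0^1,\dots,y_0^r)$, and the paper does not extract any a priori bound to rule out a second disease free branch elsewhere. Your instinct that such a bound is the genuine obstacle is therefore correct, but it is not an obstacle the paper overcomes: the ``unique disease free equilibrium'' in the statement should be read as uniqueness within $U_0$, which is exactly what the double implicit-function-theorem argument delivers. Your proof is thus at least as complete as the paper's, and your reservation about the a priori bound is a fair observation about the limits of the stated hypotheses rather than a gap in your own argument.
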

\begin{proof}
The existence of $f_0$, the continuous function which satisfies the fixed point equations of (\ref{compt}) for small $\alpha$-s, is straightforward so it remains to show that it defines a disease free steady state when $\alpha$ is sufficiently close to zero.\\ 
We consider the following system for the susceptible classes of the model with traveling
\begin{equation}\aligned\label{eq:suscsub}
\frac{\ud}{\ud t}y^{1}&=g^{1}(0, y^{1}, 0)- \sum_{\substack{j=1 \\j \neq 1}}^r \alpha C_y^{j1} y^{1}+\sum_{\substack{j=1 \\j \neq 1}}^r  \alpha C_y^{1j} y^{j},\\
\vdots&\\
\frac{\ud}{\ud t}y^{r}&=g^{r}(0, y^{r}, 0)- \sum_{\substack{j=1 \\j \neq r}}^r \alpha C_y^{jr} y^{r}+\sum_{\substack{j=1 \\j \neq r}}^r  \alpha C_y^{rj} y^{j}.
\endaligned\end{equation}
\begin{sloppypar}
The Jacobian evaluated at the disease free equilibrium and $\alpha=0$ reads  $\text{diag}(g^{i}_{y^{i}}(0,y_0^i,0))$,  its non-singularity follows from the assumption made earlier in this section that all eigenvalues of $g^{i}_{y^{i}}(0,y_0^i,0)$, $i \in \{1, \dots r\}$, have negative real part. We again apply the implicit function theorem and get that in the absence of the disease the susceptible subsystem obtains a unique equilibrium for small values of $\alpha$. More precisely, there is an $r-1$ times continuously differentiable function $\tilde{f}_0^{y}(\alpha) \in \R^{r m}$, which satisfies the steady-state equations of (\ref{eq:suscsub}) whenever $\alpha$ is in $[0,\tilde{\alpha}_0)$ with $\tilde{\alpha}_0$ close to zero, and it also holds that $\tilde{f}_0^{y}(0)=(y_0^1, \dots y_0^r)^T$. On the other hand, we note that the point $(0, (\tilde{f}_0^{y})_1,0, \dots 0,(\tilde{f}_0^{y})_r,0)^T$ is an equilibrium solution of system $(T_1)$--$(T_r)$, and by uniqueness it follows that $f_0=(0, (\tilde{f}_0^{y})_1,0, \dots 0,(\tilde{f}_0^{y})_r,0)^T$, and necessarily $(f_{y_0^1}, \dots f_{y_0^r})^T=\tilde{f}_0^{y}$, for $\alpha <\min\{\alpha_0,\tilde{\alpha}_0\}$. By continuity it is clear from $f_{y_0^i}(0)=y_0^{i}>0$,  $i \in \{1, \dots r\}$, that $\alpha_0$ can be defined such that $f_0$ is nonnegative, and thus, it is a disease free fixed point of $(T_1)$--$(T_r)$ which is biologically meaningful.
\end{sloppypar}
\end{proof}

If $E_{df}^0$ is locally asymptotically stable in system $(L_1)$--$(L_r)$ then  $\left(\frac{\partial \mathcal{T}}{\partial \mathcal{X}}\right) (0,E_{df}^0)$ has only eigenvalues with negative real part, and therefore is invertible. By continuity of the eigenvalues with respect to parameters all eigenvalues of $\left(\frac{\partial \mathcal{T}}{\partial \mathcal{X}}\right) (\alpha,f_0(\alpha))$ have negative real part if $\alpha$ is sufficiently small. Similarly, if $E_{df}^0$ is unstable and $\left(\frac{\partial \mathcal{T}}{\partial \mathcal{X}}\right) (0,E_{df}^0)$ has no eigenvalues on the imaginary axis then, for $\alpha$-s close enough to zero, $f_0(\alpha)$ has an eigenvalue with positive real part and thus, is unstable. We have learned from Proposition \ref{prop:stabalpha0} that $\rn^B_0$ works as a threshold for the stability of the disease free steady state for $\alpha=0$, and now we obtain that this is not changed when traveling is introduced with small volumes into the system.
\begin{proposition}
There exists an $\alpha^*_0 >0$ such that $f_0(\alpha)$ is locally asymptotically stable on $[0, \alpha^*_0)$ if $\rn^B_0<1$, and in case $\rn^B_0>1$ and $\det \left(\frac{\partial \mathcal{T}}{\partial \mathcal{X}}\right) (0,E_{df}^0)  \neq 0$, $\alpha^*_0$ can be chosen such that it also holds that $f_0(\alpha)$ is unstable for $\alpha < \alpha^*_0$.
 \end{proposition}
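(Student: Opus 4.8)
The plan is to reduce the statement to a spectral-perturbation argument for the Jacobian $\frac{\partial \mathcal{T}}{\partial \mathcal{X}}(\alpha, f_0(\alpha))$ taken along the equilibrium branch supplied by Theorem \ref{th:iftdfe}, and then to invoke the principle of linearized stability. Since $\mathcal{T}$ is $C^{r-1}$ (hence at least $C^1$) and $f_0$ is $C^{r-1}$ by Theorem \ref{th:iftdfe}, the local asymptotic stability of $f_0(\alpha)$ is decided by where the spectrum of $\frac{\partial \mathcal{T}}{\partial \mathcal{X}}(\alpha, f_0(\alpha))$ sits relative to the imaginary axis: all eigenvalues in the open left half-plane give local asymptotic stability, while a single eigenvalue in the open right half-plane gives instability. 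The whole argument thus amounts to tracking the real parts of these eigenvalues as $\alpha$ leaves $0$.

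First I would identify the spectrum at $\alpha=0$. There the travel terms vanish and $\frac{\partial \mathcal{T}}{\partial \mathcal{X}}(0, E_{df}^0)$ is block-diagonal over the regions, its $i$-th block being exactly the matrix $J^i$ computed in the proof of the first Proposition. Each $J^i$ is block lower-triangular with diagonal blocks $F^i - V^i$, $g^i_{y^i}(0,y_0^i,0)$ and $-D^i$, so its spectrum is the union of the spectra of these three blocks; by the standing assumptions the latter two lie in the open left half-plane, while the van den Driessche--Watmough characterization cited earlier shows that $F^i - V^i$ has all eigenvalues with negative real part precisely when $\rn^i<1$ and has an eigenvalue with positive real part precisely when $\rn^i>1$. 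Taking the union over $i$ and recalling $\rn^B_0=\max_i \rn^i$, I would conclude that the spectral abscissa of $\frac{\partial \mathcal{T}}{\partial \mathcal{X}}(0, E_{df}^0)$ is negative when $\rn^B_0<1$ and strictly positive when $\rn^B_0>1$; this is Proposition \ref{prop:stabalpha0} read off the linearization.

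The next step is the perturbation in $\alpha$. The map $\alpha \mapsto \frac{\partial \mathcal{T}}{\partial \mathcal{X}}(\alpha, f_0(\alpha))$ is continuous, being the composition of the continuous Jacobian with the continuous branch $f_0$, and the roots of its characteristic polynomial depend continuously on $\alpha$. In the case $\rn^B_0<1$ the finitely many eigenvalues all sit at negative real part at $\alpha=0$, hence are uniformly bounded away from the imaginary axis; continuity then yields an $\alpha^*_0 \in (0,\alpha_0]$ such that every eigenvalue keeps negative real part on $[0,\alpha^*_0)$, and linearized stability gives the asserted local asymptotic stability. In the case $\rn^B_0>1$, the hypothesis $\det \frac{\partial \mathcal{T}}{\partial \mathcal{X}}(0,E_{df}^0)\neq 0$ is exactly what Theorem \ref{th:iftdfe} requires to produce the branch $f_0$ at all; granting it, at $\alpha=0$ there is an eigenvalue with strictly positive real part, and by the same continuity it persists in the open right half-plane for all sufficiently small $\alpha$, which forces $f_0(\alpha)$ to be unstable on some $[0,\alpha^*_0)$.

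The step I expect to demand the most care is the uniformity of the continuity argument. Individual eigenvalues need not be differentiable, since they may collide and split as $\alpha$ varies, so instead of following one eigenvalue I would phrase the stable case through upper semicontinuity of the spectral abscissa $\alpha \mapsto \max\{\operatorname{Re}\lambda : \lambda \in \sigma(\tfrac{\partial \mathcal{T}}{\partial \mathcal{X}}(\alpha,f_0(\alpha)))\}$: it is negative at $\alpha=0$, hence negative on a neighborhood. For the unstable case the analogous point is that the right-most eigenvalue cannot instantaneously cross the axis, which again follows from continuity of the spectrum as a set; here the role of $\det\neq 0$ is only to guarantee existence and uniqueness of $f_0$, not to drive the instability, since instability needs merely one eigenvalue off the axis to the right. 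Finally I would shrink $\alpha^*_0$ below the $\alpha_0$ of Theorem \ref{th:iftdfe} so that $f_0$ is defined throughout.
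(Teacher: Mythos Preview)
Your proposal is correct and follows essentially the same route as the paper: the paper's justification (given in the paragraph immediately preceding the proposition rather than in a formal proof environment) is precisely the spectral-perturbation argument you describe, invoking continuity of eigenvalues of $\frac{\partial \mathcal{T}}{\partial \mathcal{X}}(\alpha,f_0(\alpha))$ in $\alpha$ together with Proposition~\ref{prop:stabalpha0} to locate the spectrum at $\alpha=0$. Your treatment is more thorough---in particular your remarks on upper semicontinuity of the spectral abscissa and on the role of the determinant hypothesis (needed only to invoke Theorem~\ref{th:iftdfe} and produce the branch $f_0$, since instability itself requires only one eigenvalue in the open right half-plane)---but the underlying argument is the same.
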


\section{Endemic equilibria}\label{sec:ee}
Next we examine endemic equilibria $(\hat{x}^i, \hat{y}^i, \hat{z}^i)$, $\hat{x}^i\neq0$, of system (\ref{basismodel}). We assume that the functions and matrices defined for the model are such that either $\hat{w}^i=0$ or $\hat{w}^i>0$ holds for $w \in \{x, y, z\}$, that is, in region $i$ if any of the infected (susceptible) (removed) compartments are at positive steady state  then so are the other infected (susceptible) (removed) classes. Endemic fixed points thus admit $\hat{x}^i>0$, which implies $\hat{y}^i>0$ and $\hat{z}^i>0$. Indeed, the equilibrium condition for system (\ref{basismodel})
\begin{equation}
-D^{i} z^{i}+Z^{i} x^{i}=0
\nonumber\end{equation}
and $Z^i\geq 0$, $Z^i\neq 0$ gives $\hat{z}^i \neq 0$ if $\hat{x}^i>0$, so our assumption above implies that $z^{i}$ is at positive steady state in endemic equilibria. On the other hand, $\hat{y}^i=0$ would make $\mathcal{F}^i=0$, so using the non-singularity of $V^{i}$ and the first equation of (\ref{basismodel}), $V^{i} \hat{x}^i=0$ contradicts $\hat{x}^i>0$. Endemic equilibria of the regions can thus be referred to as positive fixed points. \\
Without connections between the regions, let region $i$ have $e_i \geq 1$ positive fixed points  $(\hat{x}^i, \hat{y}^i, \hat{z}^i)_{1}$, $\dots$ $(\hat{x}^i, \hat{y}^i, \hat{z}^i)_{e_i}$. Then the disconnected system $(L_1)$--$(L_r)$ admits $\left(\prod_{i=1}^{r} (e_i+1)\right) -1$ endemic equilibria of the form $EE^0=(EE_1, \dots EE_r)$, $EE_i \in \{(0, y_0^i, 0), (\hat{x}^i, \hat{y}^i, \hat{z}^i)_{1}, \dots (\hat{x}^i, \hat{y}^i, \hat{z}^i)_{e_i}\}$, and $EE^0 \neq (0, y_0^1, 0,$ $\dots$ $0, y_0^r, 0)$, the disease free steady state. In the sequel we will use the general notation $EE^0=(\hat{x}^1, \hat{y}^1, \hat{z}^1, \dots \hat{x}^r, \hat{y}^r, \hat{z}^r)$, where $\hat{x}^i=0$ for an $i$ means $(\hat{x}^i, \hat{y}^i, \hat{z}^i)=(0, y_0^i, 0)$. The upper index `$0$' in $EE^0$ stands for $\alpha=0$. We note that $\mathcal{T}(0,EE^0)=0$ holds with $\mathcal{T}$ defined for system (\ref{compt}).\\

The implicit function theorem is also applicable for any of the endemic equilibria under the assumption that the Jacobian of system (\ref{compt}) evaluated at the fixed point and $\alpha=0$ has nonzero determinant. We remark that whenever  $EE^0$ is asymptotically stable, that is, $EE_i$ is asymptotically stable in (\ref{basismodel}) for all $i \in \{1, \dots r \}$, then $\left(\frac{\partial \mathcal{T}}{\partial \mathcal{X}}\right) (0,EE^0)$ has no eigenvalues on the imaginary axis and thus, is nonsingular.
\begin{sloppypar}
\begin{theorem}\label{th:iftendemic}
Assume that the matrix $\left(\frac{\partial \mathcal{T}}{\partial \mathcal{X}}\right) (0,EE^0)$ is invertible. Then, by means of the implicit function theorem it holds that there exists an $\alpha_E$, an open set $U_E$ containing $EE^0$, and a unique $r-1$ times continuously differentiable function $f=$ $(f_{\hat{x}^1}, f_{\hat{y}^1}, f_{\hat{z}^1},$ $\dots f_{\hat{x}^r}, f_{\hat{y}^r}, f_{\hat{z}^r})^T\colon$ $[0, \alpha_E)\shortrightarrow U_E$ such that $f(0)=EE^0$ and $\mathcal{T}(\alpha,f(\alpha))=0$ for $\alpha \in [0, \alpha_E)$. By continuity of eigenvalues with respect to parameters $\det \left(\frac{\partial \mathcal{T}}{\partial \mathcal{X}}\right) (0,EE^0) \neq 0$ implies $\det \left(\frac{\partial \mathcal{T}}{\partial \mathcal{X}}\right) (\alpha,f(\alpha)) \neq 0$ for $\alpha$-s sufficiently small, thus on an interval $[0,\alpha^*_E)$ it holds that $f(\alpha)$ is a locally asymptotically stable (unstable) steady state of $(T_1)$--$(T_r)$ whenever $EE^0$ is locally asymptotically stable (unstable) in $(L_1)$--$(L_r)$.
\end{theorem}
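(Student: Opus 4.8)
The plan is to treat the statement as a direct application of the implicit function theorem (IFT) for the existence, uniqueness and regularity of $f$, followed by a continuity-of-eigenvalues argument for the persistence of the stability type. The whole scheme parallels the proof of Theorem \ref{th:iftdfe}, the only difference being that here no additional step is needed to verify that the branch stays in the biologically meaningful region.

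First I would record the three hypotheses that IFT demands at the base point $(0,EE^0)$. Smoothness: $\mathcal{T}$ is $C^{r-1}$ on $\R \times \R^{n}_+ \times (\R^{m}_+ \setminus \{0\}) \times \R^{k}_+ \times \cdots$, and the only genuine constraint imposed by this domain is strict positivity of the susceptible blocks, which rules out the singularity of $\mathcal{B}^i$ at $y=0$; this holds at $EE^0$, since each susceptible component is either $\hat{y}^i>0$ (endemic component) or $\hat{y}^i=y_0^i>0$ (disease-free component), so $\mathcal{T}$ is $C^{r-1}$ on an open neighborhood of $EE^0$, exactly as in the disease-free branch. The equation $\mathcal{T}(0,EE^0)=0$ is already noted in the text, and invertibility of $\left(\frac{\partial \mathcal{T}}{\partial \mathcal{X}}\right)(0,EE^0)$ is the standing assumption. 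Because IFT is stated for open sets while the conclusion concerns the half-open interval $[0,\alpha_E)$, I would apply it to $\mathcal{T}$ on a full open $\alpha$-neighborhood of $0$ (the map is defined for all $\alpha \in \R$), obtain a unique $C^{r-1}$ solution branch $\alpha \mapsto f(\alpha)$ with $f(0)=EE^0$, and then restrict to $[0,\alpha_E)$; the regularity class $C^{r-1}$ of $f$ is inherited directly from that of $\mathcal{T}$. This yields existence, uniqueness on $U_E$, and smoothness of $f$.

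For the stability claim I would exploit the block structure at $\alpha=0$. When $\alpha=0$ the regions decouple, so $\left(\frac{\partial \mathcal{T}}{\partial \mathcal{X}}\right)(0,EE^0)$ is block-diagonal, the $i$-th block being the Jacobian of $(L_i)$ evaluated at $EE_i$. Consequently its spectrum is the union of the regional spectra, and $EE^0$ is locally asymptotically stable in $(L_1)$--$(L_r)$ precisely when every block is Hurwitz, i.e. when all eigenvalues of $\left(\frac{\partial \mathcal{T}}{\partial \mathcal{X}}\right)(0,EE^0)$ have negative real part, while instability corresponds to at least one eigenvalue with positive real part. Since $f$ is continuous and $\mathcal{T}$ is $C^1$, the matrix-valued map $\alpha \mapsto \left(\frac{\partial \mathcal{T}}{\partial \mathcal{X}}\right)(\alpha,f(\alpha))$ is continuous, and eigenvalues depend continuously on the entries; hence on a possibly smaller interval $[0,\alpha^*_E)$ the sign of the real part of each relevant eigenvalue is preserved, and the principle of linearized stability transfers local asymptotic stability (respectively instability) of $EE^0$ to $f(\alpha)$.

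The whole argument is routine, and the single point that really needs care is hyperbolicity. Invertibility of the Jacobian at $(0,EE^0)$ only guarantees that $0$ is not an eigenvalue, whereas preservation of stability under a small change of $\alpha$ requires that no eigenvalue lie on the imaginary axis at $\alpha=0$. This is exactly where the remark preceding the theorem is used: local asymptotic stability of $EE^0$ forces $\left(\frac{\partial \mathcal{T}}{\partial \mathcal{X}}\right)(0,EE^0)$ to have no eigenvalues on the imaginary axis, so in the stable case all eigenvalues are bounded strictly into the open left half-plane and remain there for small $\alpha$; in the unstable case it suffices that a single eigenvalue with strictly positive real part persist, which it does by the same continuity. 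I expect this hyperbolicity bookkeeping, rather than the IFT step itself, to be the only place where one must be slightly careful.
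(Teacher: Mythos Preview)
Your proposal is correct and matches the paper's approach exactly: the paper gives no separate proof for this theorem, treating the statement itself as the argument (direct application of the implicit function theorem plus continuity of eigenvalues), and you have carefully filled in the routine verifications---notably that $\hat{y}^i>0$ in every component so $\mathcal{T}$ is $C^{r-1}$ near $EE^0$, and the hyperbolicity bookkeeping for the stability claim. Your observation that the unstable case only needs persistence of a single eigenvalue with strictly positive real part (rather than full hyperbolicity) is a useful sharpening of what the paper leaves implicit.
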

\end{sloppypar}
The last theorem means that, under certain assumptions on our system, it holds that for every equilibrium $EE^0$ of the disconnected system $(L_1)$--$(L_r)$ there is a fixed point $f(\alpha)$, $f(0)=EE^0$, of $(T_1)$--$(T_r)$ close to $EE^0$ when $\alpha$ is sufficiently small. If $EE^0$ has only positive components then so does $f(\alpha)$, so we arrive to the following result.
\begin{theorem}\label{th:posee}
If $EE^0$ is a positive equilibrium of $(L_1)$--$(L_r)$ then $\alpha_E$ in Theorem \ref{th:iftendemic} can be chosen such that $f(\alpha)>0$ holds for $ \alpha \in [0,\alpha_E)$. This means that the equilibrium $EE^0$ of the disconnected system is preserved for small volumes of movement by a unique function which depends continuously on $\alpha$.
\end{theorem}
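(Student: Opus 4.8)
The plan is to deduce the statement directly from the continuity of the branch $f$ supplied by Theorem~\ref{th:iftendemic}, exploiting that strict positivity of all components is an \emph{open} condition. First I would recall that, by hypothesis, $EE^0$ is a positive equilibrium, meaning every one of its finitely many coordinates $\hat{x}^i,\hat{y}^i,\hat{z}^i$, $i\in\{1,\dots,r\}$, is strictly positive; equivalently, $EE^0$ lies in the interior of the nonnegative cone $\R^{r(n+m+k)}_+$. The branch $f\colon[0,\alpha_E)\to U_E$ given by Theorem~\ref{th:iftendemic} is $r-1$ times continuously differentiable, hence in particular continuous, and satisfies $f(0)=EE^0$.

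Next I would invoke the fact that the open positive cone $\{\mathcal{X}\in\R^{r(n+m+k)} : \mathcal{X}>0\}$ is open, being a finite intersection of the open half-spaces $\{(\mathcal{X})_l>0\}$. Since $f$ is continuous and $f(0)=EE^0$ belongs to this open set, the preimage $f^{-1}(\{\mathcal{X}>0\})$ is a relatively open subset of $[0,\alpha_E)$ containing the point $0$, so it contains an interval $[0,\alpha')$ for some $0<\alpha'\le\alpha_E$. Concretely, writing $f=(f_1,\dots,f_{r(n+m+k)})$ coordinatewise, each $f_l$ is continuous with $f_l(0)>0$, so there is an $\alpha_l>0$ with $f_l(\alpha)>0$ on $[0,\alpha_l)$; as there are only finitely many coordinates, $\alpha':=\min_l\alpha_l>0$ and $f(\alpha)>0$ on $[0,\alpha')$.

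Finally I would replace the constant $\alpha_E$ of Theorem~\ref{th:iftendemic} by $\min\{\alpha_E,\alpha'\}$, which is still positive, and rename it $\alpha_E$. On the resulting interval, $f$ remains the unique $C^{r-1}$ solution branch emanating from $EE^0$ and in addition satisfies $f(\alpha)>0$; hence $EE^0$ persists as a componentwise positive equilibrium of $(T_1)$--$(T_r)$, depending continuously on $\alpha$, for all sufficiently small volumes of movement, which is exactly the claim.

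The main obstacle here is essentially notational rather than mathematical: the heavy lifting, namely the existence, uniqueness and smoothness of the branch, has already been carried out in Theorem~\ref{th:iftendemic}, so what remains is only the soft observation that strict positivity survives small continuous perturbations because the positive cone is open. The one point deserving explicit care is that the index set over coordinates is finite, so that the minimum of the individual positivity thresholds $\alpha_l$ is attained and stays strictly positive; no uniform lower bound on the $\alpha_l$ is required, and the argument genuinely uses the componentwise strict positivity of $EE^0$, which is why it fails for boundary equilibria having a disease-free block.
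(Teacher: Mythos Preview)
Your proof is correct and follows exactly the approach the paper takes: the paper simply remarks, immediately before stating the theorem, that ``If $EE^0$ has only positive components then so does $f(\alpha)$,'' leaving the continuity argument implicit. You have spelled out that argument carefully, and there is nothing more to add.
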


On the other hand, it is possible that the $EE^0=f(0)$ has some zero components when there is a region $i$, $i\in \{1, \dots r\}$, where $\hat{x}^i=0$ and $\hat{z}^i=0$ hold, that is, the fixed point is on the boundary of the nonnegative cone of $\R^{r(n+m+k)}$; nevertheless we recall that $EE^0$ is an endemic equilibrium so there exists a $j \in \{1, \dots r\}$, $j \neq i$, such that $\hat{x}^j > 0$. In the sequel such fixed points will be referred to as {\it boundary} endemic equilibria. The biological interpretation of such a situation is that, when the regions are disconnected, the disease is endemic in some regions but is not present in others. In this case $f(\alpha)$ may move out of the nonnegative cone of $\R^{r(n+m+k)}$ as $\alpha$ increases, which means that, though $f(\alpha)$ is a fixed point of system $(T_1)$--$(T_r)$, it is not biologically meaningful. Henceforth it is essential to describe under which conditions is $f(\alpha) \geq 0$ fulfilled. This will be done in the following two lemmas but before we proceed let us introduce a definition to facilitate notations and terminology.

\begin{definition}
Consider an endemic equilibrium $EE^0$ of system $(L_1)$--$(L_r)$. \\
If there is a region $i$ which is at a disease free steady state in $EE^0$ then we say that region $i$ is DFAT (disease free in the absence of traveling) in the endemic equilibrium $EE^0$, that is, $\hat{x}^i=0$.\\
If there is a region $j$ which is at an endemic (positive) steady state in $EE^0$ then we say that region $j$ is EAT (endemic in the absence of traveling) in the endemic equilibrium $EE^0$, that is, $\hat{x}^j>0$.
\end{definition}

\begin{lemma}\label{lem:suffforpos}
Consider a boundary endemic equilibrium $EE^0$ of system $(L_1)$--$(L_r)$. For the function $f(\alpha)$ defined in Theorem \ref{th:iftendemic} to be nonnegative for small $\alpha$-s it is necessary and sufficient to ensure that $f_{\hat{x}^i} (\alpha)\geq0 $ holds  for all $i$-s such that $\hat{x}^i=0$ in $EE^0$, that is, $i$ is DFAT. 
\end{lemma}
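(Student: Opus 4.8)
The plan is to split the blocks of $f(\alpha)$ into those that are automatically nonnegative for small $\alpha$ by a continuity argument and the single family of blocks that genuinely needs the hypothesis. \textbf{Necessity} is immediate: if $f(\alpha)\geq 0$, then in particular each infected block $f_{\hat{x}^i}(\alpha)$ is nonnegative, so the stated condition is forced. For \textbf{sufficiency}, I would assume $f_{\hat{x}^i}(\alpha)\geq 0$ for every DFAT region $i$ and $\alpha$ small, and examine the remaining blocks. In an EAT region $j$ all three blocks start strictly positive at $\alpha=0$ (since $\hat{x}^j,\hat{y}^j,\hat{z}^j>0$), so by continuity of $f$ they remain positive for $\alpha$ near $0$. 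In a DFAT region $i$ the susceptible block satisfies $f_{\hat{y}^i}(0)=y_0^i>0$, hence is again positive near $0$ by continuity. Thus the only blocks not handled by continuity are the removed blocks $f_{\hat{z}^i}(\alpha)$ of the DFAT regions, whose value at $\alpha=0$ is $0$.

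The key step is to show that these removed blocks are nonnegative as soon as all infected blocks are. At the fixed point $f(\alpha)$ the third equation of $(T_i)$ is linear in the removed variables: collecting $z(\alpha)=(f_{\hat{z}^1}(\alpha),\dots,f_{\hat{z}^r}(\alpha))$ and $x(\alpha)=(f_{\hat{x}^1}(\alpha),\dots,f_{\hat{x}^r}(\alpha))$, the removed steady-state equations read $M(\alpha)\,z(\alpha)=Z\,x(\alpha)$, where $Z=\text{diag}(Z^1,\dots,Z^r)\geq 0$ and $M(\alpha)$ has diagonal blocks $D^i+\alpha\sum_{j\neq i}C_z^{ji}$ and off-diagonal blocks $-\alpha C_z^{ij}$. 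Since every $C_z^{ij}$ and every $D^i$ is diagonal, $M(\alpha)$ decouples class by class into $r\times r$ matrices, each of which has nonpositive off-diagonal entries (the Z sign pattern) and positive diagonal. I would then read off the column sums of $M(\alpha)$: in each column the total outflow rate from the corresponding region enters the diagonal positively and is spread with the opposite sign across the off-diagonal entries of that same column (this is simply conservation of travelers), so these mobility contributions cancel and the column sum reduces to the positive removal rate $(D^j)_{\cdot\cdot}>0$ on the diagonal of $D$. Hence each block of $M(\alpha)$ is a strictly column-diagonally-dominant Z-matrix with positive diagonal, therefore a nonsingular M-matrix with $M(\alpha)^{-1}\geq 0$ (cf.\ \cite{vddwat}).

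With inverse-positivity in hand the conclusion is immediate: under the hypothesis $f_{\hat{x}^i}(\alpha)\geq 0$ on the DFAT regions, together with the continuity-positivity of the infected blocks on the EAT regions, one gets $x(\alpha)\geq 0$, whence $Z\,x(\alpha)\geq 0$ and $z(\alpha)=M(\alpha)^{-1}Z\,x(\alpha)\geq 0$. Combined with the continuity arguments for the susceptible blocks and for the EAT regions, every block of $f(\alpha)$ is then nonnegative for small $\alpha$, which proves sufficiency. I expect the only genuine obstacle to be the verification that $M(\alpha)^{-1}\geq 0$; everything else is bookkeeping and continuity. The cleanest route to that inverse-positivity is the column-sum cancellation described above, which makes $M(\alpha)$ a strictly diagonally dominant M-matrix uniformly in $\alpha\geq 0$, so in fact the removed blocks are nonnegative for \emph{all} admissible $\alpha$, not merely the small ones.
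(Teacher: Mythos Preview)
Your proof is correct and follows essentially the same approach as the paper. Both arguments handle the $y$-blocks and the EAT blocks by continuity, write the steady-state equations for the removed variables as $M_z\,z(\alpha)=\text{diag}(Z^i)\,x(\alpha)$, and verify that $M_z$ is a column-diagonally-dominant Z-matrix (the mobility terms in each column cancel, leaving the positive $D^j$), hence an M-matrix with nonnegative inverse; the paper invokes \cite{friedler} Theorem~5.1 for this last step rather than \cite{vddwat}, but the content is identical.
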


\begin{proof}
We recall that in an endemic equilibrium $\hat{y}^j>0$ holds by assumption for any $j\in \{1, \dots r\}$, thus for an $i$ with $\hat{x}^i=0$ the positivity of $f_{\hat{y}^i}(\alpha)$ for small $\alpha$-s follows from $f_{\hat{y}^i}(0)=y_0^i$ and the continuity of $f$. From (\ref{genmodel}) we derive the fixed point equation
\begin{equation}\label{eq:fxfz}
\begin{pmatrix}
Z^1 & 0 & \dots& 0\\
0 & Z^2 & \dots& 0\\
\vdots & \vdots & \ddots &\vdots\\
0 & 0 & \dots & Z^r 
\end{pmatrix} \begin{pmatrix}
f_{\hat{x}^1}(\alpha) \\
f_{\hat{x}^2}(\alpha)\\
\vdots\\
f_{\hat{x}^r}(\alpha)
\end{pmatrix}=M_z \begin{pmatrix}
f_{\hat{z}^1}(\alpha) \\
f_{\hat{z}^2}(\alpha)\\
\vdots\\
f_{\hat{z}^r}(\alpha)
\end{pmatrix},
\end{equation}
where $M_z$ is defined as
\begin{equation}\aligned
M_z&=\begin{pmatrix}
D_1+\sum_{\substack{j=1 \\j \neq 1}}^r \alpha C_z^{j1} & -\alpha C_z^{12} & \dots & -\alpha C_z^{1r}\\
-\alpha C_z^{21}& D_2+\sum_{\substack{j=1 \\j \neq 2}}^r \alpha C_z^{j2}& \dots & -\alpha C_z^{2r}\\
\vdots & \vdots & \ddots &\vdots\\
-\alpha C_z^{r1} & -\alpha C_z^{r2} &\dots  & D_r+\sum_{\substack{j=1 \\j \neq r}}^r \alpha C_z^{jr} 
\end{pmatrix}.
\nonumber\endaligned\end{equation} 
All non-diagonal elements of this $r k \times r k$ matrix are non-positive, thus it has the Z sing pattern \cite{vddwat}, moreover we also note that in each column the diagonal element dominates the absolute sum of all non-diagonal entries since $D_i >0$,  $i\in \{1, \dots r\}$. Then, we can apply Theorem 5.1 in \cite{friedler} where the equivalence of properties 3 and 11 claims that $M_z$ is invertible with the inverse nonnegative. Using the non-negativity of $Z_i$, $i \in \{1, \dots r\}$, and equation (\ref{eq:fxfz}) we get that  $f_{\hat{z}^i}(\alpha) \geq 0$ for all $i \in \{1, \dots r\}$ whenever the vector $(f_{\hat{x}^1}(\alpha), \dots f_{\hat{x}^r}(\alpha))$ is nonnegative. If $\hat{x}^j>0$ in a region $j$, meaning that the region is endemic in the absence of traveling, then for $\alpha$-s close to zero it holds that $f_{\hat{x}^j}(\alpha)>0$  since $f$ is continuous and $f_{\hat{x}^j}(0)=\hat{x}^j$. It is therefore enough (though, clearly, also necessary as well) to guarantee the nonnegativity of $f_{\hat{x}^i}(\alpha)$ for each region $i$ where $\hat{x}^i=0$, that is, the region is DFAT.
\end{proof}

\begin{lemma}\label{lem:firstder}
Consider a boundary endemic equilibrium $EE^0$ of system $(L_1)$--$(L_r)$. If $\frac{\ud f_{\hat{x}^i}}{\ud \alpha} (0) > 0$ is satisfied for the function $f$ defined in Theorem \ref{th:iftendemic} whenever region $i$ is DFAT in $EE^0$, then $f_{\hat{x}^i}(\alpha)$ is positive for $\alpha$-s sufficiently small. On the other hand if there is a region $i$, which is DFAT and for which $\frac{\ud f_{\hat{x}^i}}{\ud \alpha} (0)$ has a negative component then there is no interval for $\alpha$ to the right of zero such that $f(\alpha)$ is nonnegative. The derivative arises as the solution of the equation
\begin{equation}\aligned\label{eq:derx}
\left(V^{i}-F^i \right)\frac{\ud f_{\hat{x}^i}}{\ud \alpha} (0)&=\sum_{\substack{j=1 \\j \neq i}}^r  C_x^{ij} \hat{x}^j.
\endaligned\end{equation}
\end{lemma}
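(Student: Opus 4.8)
The plan is to combine a first-order Taylor expansion of $f_{\hat{x}^i}$ around $\alpha=0$ with an implicit differentiation of the fixed point equations of (\ref{genmodel}). Since $EE^0$ is a boundary endemic equilibrium, Lemma \ref{lem:suffforpos} reduces nonnegativity of $f(\alpha)$ to that of $f_{\hat{x}^i}(\alpha)$ for the DFAT regions $i$, and for each such region $f_{\hat{x}^i}(0)=\hat{x}^i=0$. Because $f$ is $r-1$ times continuously differentiable and a boundary endemic equilibrium forces $r\geq2$, the map $f_{\hat{x}^i}$ is at least $C^1$, so I would write $f_{\hat{x}^i}(\alpha)=\alpha\,\frac{\ud f_{\hat{x}^i}}{\ud \alpha}(0)+o(\alpha)$ componentwise. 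Hence for small $\alpha>0$ the sign of each component of $f_{\hat{x}^i}(\alpha)$ is governed by the corresponding component of the derivative: if $\frac{\ud f_{\hat{x}^i}}{\ud \alpha}(0)>0$ for every DFAT region then $f_{\hat{x}^i}(\alpha)>0$ for small $\alpha$, whereas a single negative component of $\frac{\ud f_{\hat{x}^i}}{\ud \alpha}(0)$ forces the corresponding entry of $f_{\hat{x}^i}(\alpha)$ to be negative, so no right-neighborhood of zero can keep $f(\alpha)$ in the nonnegative cone. This yields both assertions of the lemma once the derivative is identified.

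To obtain (\ref{eq:derx}) I would differentiate the equilibrium form of the first equation of (\ref{genmodel}) along $f$, namely
\begin{equation}
\mathcal{F}^i(f_{\hat{x}^i}, f_{\hat{y}^i}, f_{\hat{z}^i}) f_{\hat{x}^i} - V^i f_{\hat{x}^i} - \alpha \sum_{j \neq i} C_x^{ji} f_{\hat{x}^i} + \alpha \sum_{j \neq i} C_x^{ij} f_{\hat{x}^j} = 0,
\nonumber\end{equation}
with respect to $\alpha$, and then set $\alpha=0$. The term $-\alpha\sum_{j}C_x^{ji}f_{\hat{x}^i}$ vanishes at $\alpha=0$ since $f_{\hat{x}^i}(0)=0$, while $\alpha\sum_{j}C_x^{ij}f_{\hat{x}^j}$ contributes only through its undifferentiated factor, giving $\sum_{j\neq i}C_x^{ij}f_{\hat{x}^j}(0)=\sum_{j\neq i}C_x^{ij}\hat{x}^j$. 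In the transmission term I apply the product rule; because $f_{\hat{x}^i}(0)=0$, the summand in which $\mathcal{F}^i$ itself is differentiated drops out, and the remaining summand evaluates to $\mathcal{F}^i(0,y_0^i,0)\frac{\ud f_{\hat{x}^i}}{\ud \alpha}(0)=F^i\frac{\ud f_{\hat{x}^i}}{\ud \alpha}(0)$ by the definition $F^i=\mathcal{F}^i(0,y_0^i,0)$ together with $f_{\hat{y}^i}(0)=y_0^i$, $f_{\hat{z}^i}(0)=0$. Collecting terms gives $(F^i-V^i)\frac{\ud f_{\hat{x}^i}}{\ud \alpha}(0)+\sum_{j\neq i}C_x^{ij}\hat{x}^j=0$, which rearranges to (\ref{eq:derx}).

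The computation is routine, so the delicate points are bookkeeping rather than analysis. The one thing to get right is the vanishing of the summand in which $\mathcal{F}^i$ is differentiated: this relies crucially on $\hat{x}^i=0$ for DFAT regions, which is precisely what makes the potentially complicated derivative of the incidence $\mathcal{F}^i$ disappear and leaves the clean linear equation (\ref{eq:derx}). A second point worth flagging is that the lemma deliberately says nothing about the borderline case in which $\frac{\ud f_{\hat{x}^i}}{\ud \alpha}(0)$ is nonnegative but has a zero component; there the first-order term is inconclusive and a higher-order expansion (available since $f\in C^{r-1}$) would be required, but this situation lies outside the stated dichotomy. I would also remark that the right-hand side $\sum_{j\neq i}C_x^{ij}\hat{x}^j$ is nonnegative and represents the inflow of infecteds into the DFAT region $i$ from the EAT regions, so that (\ref{eq:derx}) reads as balancing this inflow against the local infection dynamics encoded in $V^i-F^i$.
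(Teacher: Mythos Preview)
Your proposal is correct and follows essentially the same approach as the paper: both differentiate the $x^i$-component of the fixed point equation $\mathcal{T}^{i,x}(\alpha,f(\alpha))=0$ with respect to $\alpha$, evaluate at $\alpha=0$, and use $f_{\hat{x}^i}(0)=0$ to kill the term involving the derivative of $\mathcal{F}^i$ and the outflow term, leaving the linear relation (\ref{eq:derx}). Your write-up is in fact slightly more explicit than the paper's, since you spell out the Taylor argument for the sign conclusions (which the paper leaves tacit) and you flag the borderline case of a nonnegative derivative with a zero component as lying outside the dichotomy.
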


\begin{proof}
We consider a region $i$ where $\hat{x}^i=0$, this is, $i$ is a DFAT region in $EE^0$. Using the equilibrium condition $\mathcal{T}^{i,x}(\alpha,f(\alpha))=0$  we obtain
\begin{equation}\aligned\label{eq:deralpha}
\frac{\ud}{\ud \alpha} \biggl( \mathcal{F}^{i}(f_{\hat{x}^i}(\alpha), f_{\hat{y}^i}(\alpha), f_{\hat{z}^i}(\alpha)) f_{\hat{x}^i}(\alpha)- V^{i} f_{\hat{x}^i}(\alpha)&\\
-\sum_{\substack{j=1 \\j \neq i}}^r \alpha C_x^{ji} f_{\hat{x}^i}(\alpha)+\sum_{\substack{j=1 \\j \neq i}}^r  \alpha C_x^{ij} f_{\hat{x}^j}(\alpha)\biggr)&=\\
\frac{\ud}{\ud \alpha} \biggl(\mathcal{F}^{i}(f_{\hat{x}^i}(\alpha), f_{\hat{y}^i}(\alpha), f_{\hat{z}^i}(\alpha))\biggr) f_{\hat{x}^i}(\alpha) +\mathcal{F}^{i}(f_{\hat{x}^i}(\alpha), f_{\hat{y}^i}(\alpha), f_{\hat{z}^i}(\alpha))\cdot &\\
\cdot \frac{\ud f_{\hat{x}^i}}{\ud \alpha} (\alpha)
- V^{i} \frac{\ud f_{\hat{x}^i}}{\ud \alpha}(\alpha)-\sum_{\substack{j=1 \\j \neq i}}^rC_x^{ji} f_{\hat{x}^i}(\alpha)&\\
-\sum_{\substack{j=1 \\j \neq i}}^r \alpha C_x^{ji}\frac{\ud f_{\hat{x}^i}}{\ud \alpha} (\alpha)+\sum_{\substack{j=1 \\j \neq i}}^r  C_x^{ij} f_{\hat{x}^j}(\alpha)+\sum_{\substack{j=1 \\j \neq i}}^r  \alpha C_x^{ij}\frac{\ud f_{\hat{x}^j}}{\ud \alpha} (\alpha)&=0,
\endaligned\end{equation}
where we remark that $\mathcal{T}^{i,x}$ is differentiable at fixed points since $f_{\hat{y}^i}(\alpha)>0$ and $\mathcal{T}^{i}\in C^{r-1}$ when $y^i \neq 0$. Evaluating (\ref{eq:deralpha}) at $\alpha=0$ gives
\begin{equation}\aligned
\left(\mathcal{F}^{i}(0, \hat{y}^i, \hat{z}^i)- V^{i} \right)\frac{\ud f_{\hat{x}^i}}{\ud \alpha} (0)&=-\sum_{\substack{j=1 \\j \neq i}}^r  C_x^{ij} \hat{x}^j,
\nonumber\endaligned\end{equation}
where we used that $f_{\hat{x}^j}(0)=\hat{x}^j$, $f_{\hat{y}^j}(0)=\hat{y}^j$ and $f_{\hat{z}^j}(0)=\hat{z}^j$ for  $j \in \{1, \dots r\}$ and $\hat{x}^i=0$. Note that $(0, \hat{y}^i,\hat{z}^i)$ is an equilibrium in (\ref{basismodel}) and, since its component for the infected classes is zero, it equals the unique disease free equilibrium $(0,y_0^i, 0)$. This makes $\mathcal{F}^{i}(0, \hat{y}^i,\hat{z}^i)=\mathcal{F}^{i}(0,y_0^i, 0)$, so applying the definition of $F^i$ in section \ref{sec:dfe} the above equations reformulate as
\begin{equation}\aligned
\left(V^{i}-F^i \right)\frac{\ud f_{\hat{x}^i}}{\ud \alpha} (0)&=\sum_{\substack{j=1 \\j \neq i}}^r  C_x^{ij} \hat{x}^j.
\nonumber\endaligned\end{equation}
\end{proof}

Before we investigate the solutions of equation (\ref{eq:derx}) let us point out a few things. When introducing traveling a fixed point of $(T_1)$--$(T_r)$ moves along the continuous function $f(\alpha)$. In the case when there are regions where the disease is not present  without traveling and the fixed point $f$ has zeros for $\alpha=0$, it is possible that $f(\alpha)$ is non-positive for small positive $\alpha$-s. The epidemiological implication of such a situation is that boundary equilibria of the disconnected system  might disappear when traveling is introduced. \\ 
Considering a boundary endemic equilibrium $EE^0$, Lemmas \ref{lem:suffforpos} and \ref{lem:firstder} describe when such a case is realized and give condition for the non-negativity of $f(\alpha)$, $f(0)=EE^0$, for small positive $\alpha$-s.  The equation (\ref{eq:derx}) is derived for an $i \in \{1, \dots r\}$ for which $f_{\hat{x}^i}(0)=\hat{x}^i=0$ holds; the right hand side of (\ref{eq:derx}) is a nonnegative $n$-vector with the $q$th component having the form  $\left(\sum_{\substack{j=1 \\j \neq i}}^r  C_x^{ij} \hat{x}^j\right)_q=\sum_{\substack{j=1 \\j \neq i}}^r  c_{x,q}^{i,j} (\hat{x}^j)_q$. It is clear that $\left(\sum_{\substack{j=1 \\j \neq i}}^r  C_x^{ij} \hat{x}^j\right)_q$ is positive if and only if there exists a $j_q \in \{1, \dots r\}, j_q \neq i$, such that $(\hat{x}^{j_q})_q>0$ and $c_{x,q}^{i,j_q}>0$, or with words, there is a region $j_q$ where the $q$th infected class is in a positive steady state in $EE^0$, and there is a connection from that class toward the $q$th infected class of region $i$ (we remark that $(\hat{x}^{j_q})_q>0$ implies $\hat{x}^{j_q}>0$, yielding that the region $j_q$ is EAT). We state two theorems.

\begin{theorem}\label{th:r0<1}
Assume that there is a region $i$, $i \in \{1, \dots r\}$, which is DFAT in the boundary endemic equilibrium $EE^0$ of system $(L_1)$--$(L_r)$. Then for the function $f_{\hat{x}^i}$ defined in Theorem  \ref{th:iftendemic} it is satisfied that $\frac{\ud f_{\hat{x}^i}}{\ud \alpha} (0) \geq 0$ if $\rn^i<1$. Furthermore, if we assume that $\sum_{\substack{j=1 \\j \neq i}}^r  C_x^{ij} \hat{x}^j>0$, then it follows that $\frac{\ud f_{\hat{x}^i}}{\ud \alpha} (0) > 0$.
\end{theorem}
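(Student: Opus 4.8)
The plan is to treat equation (\ref{eq:derx}) of Lemma \ref{lem:firstder} as a linear system for the unknown vector $\frac{\ud f_{\hat{x}^i}}{\ud \alpha}(0)$ and to read off its sign from the structure of the coefficient matrix $V^i - F^i$. Since region $i$ is DFAT, Lemma \ref{lem:firstder} gives
\begin{equation}
\left(V^i - F^i\right)\frac{\ud f_{\hat{x}^i}}{\ud \alpha}(0) = \sum_{\substack{j=1 \\ j \neq i}}^r C_x^{ij}\hat{x}^j =: b,
\nonumber
\end{equation}
where $b \geq 0$ because every $C_x^{ij}$ is a nonnegative diagonal matrix and every $\hat{x}^j \geq 0$. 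Everything then reduces to showing that $V^i - F^i$ is invertible with $(V^i - F^i)^{-1} \geq 0$: once this is established, $\frac{\ud f_{\hat{x}^i}}{\ud \alpha}(0) = (V^i - F^i)^{-1} b$ is a nonnegative matrix applied to a nonnegative vector, hence nonnegative.

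The key step, and the only real obstacle, is the nonnegativity of the inverse, and here I would exploit the hypothesis $\rn^i<1$. Because $AB$ and $BA$ share the same spectral radius, $\rho\left((V^i)^{-1} F^i\right) = \rho\left(F^i (V^i)^{-1}\right) = \rn^i < 1$. Factoring
\begin{equation}
V^i - F^i = V^i\left(I - (V^i)^{-1} F^i\right),
\nonumber
\end{equation}
the factor $I - (V^i)^{-1} F^i$ is invertible and its inverse is the Neumann series $\sum_{\ell \geq 0}\left((V^i)^{-1} F^i\right)^\ell$, which converges precisely because the spectral radius is below one. Each term of this series is a product of the nonnegative matrices $(V^i)^{-1} \geq 0$ and $F^i \geq 0$, so the series, and thus $\left(I - (V^i)^{-1} F^i\right)^{-1}$, is nonnegative. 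Multiplying by $(V^i)^{-1} \geq 0$ yields $(V^i - F^i)^{-1} = \left(I - (V^i)^{-1} F^i\right)^{-1}(V^i)^{-1} \geq 0$ as a product of nonnegative matrices. Equivalently, one may note that $V^i - F^i$ inherits the Z sign pattern from $V^i$ since $F^i \geq 0$, while $\rn^i < 1$ forces all its eigenvalues into the open right half-plane by the results of \cite{vddwat}; this makes $V^i - F^i$ a non-singular M-matrix, whose inverse is nonnegative by the characterization in \cite{friedler}. Either route establishes $\frac{\ud f_{\hat{x}^i}}{\ud \alpha}(0) \geq 0$.

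For the strict statement, I would add the assumption $b = \sum_{j \neq i} C_x^{ij}\hat{x}^j > 0$ componentwise. Since $(V^i - F^i)^{-1} \geq 0$ is invertible, none of its rows can vanish identically (a zero row would force a zero determinant), so each row contains at least one strictly positive entry. Pairing such an entry with the corresponding strictly positive component of $b$, every component of $(V^i - F^i)^{-1} b$ is a sum of nonnegative terms containing at least one strictly positive term, whence $\frac{\ud f_{\hat{x}^i}}{\ud \alpha}(0) = (V^i - F^i)^{-1} b > 0$. The only place where care is needed is the convergence and nonnegativity of the Neumann series, but this is immediate from $\rn^i < 1$ together with the standing nonnegativity of $(V^i)^{-1}$ and $F^i$.
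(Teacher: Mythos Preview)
Your proof is correct and arrives at the same conclusion by the same mechanism as the paper: both reduce the claim to showing $(V^i-F^i)^{-1}\geq 0$ under $\rn^i<1$, then multiply by the nonnegative right-hand side of (\ref{eq:derx}). The paper invokes the M-matrix characterization directly from \cite{friedler} (Z sign pattern plus all eigenvalues in the right half-plane via \cite{vddwat}), which is exactly the alternative route you sketch; your primary Neumann-series argument is a slightly more constructive justification of the same inverse-positivity fact, but it is not a genuinely different strategy. Your strict-positivity step (no zero row in a nonnegative invertible matrix) unpacks precisely what the paper means by ``$\det(V^i-F^i)^{-1}\neq 0$ yields $\frac{\ud f_{\hat{x}^i}}{\ud \alpha}(0)>0$''.
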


\begin{proof}
From the properties of $V^{i}$ described in section \ref{sec:model} and the non-negativity of $F^{i}$ we get that $(V^{i}-F^{i})_{p,q}\leq 0$  holds for $p \neq q$, hence $(V^{i}-F^{i})$ has the Z sign pattern. Theorem 5.1 in \cite{friedler} says that $V^{i}-F^{i}$ is invertible and $(V^{i}-F^{i})^{-1} \geq 0$ if and only if all eigenvalues of $V^{i}-F^{i}$ have positive real part (properties 11 and 18 are equivalent); or analogously, $F^{i}-V^{i}$ is invertible and $(V^{i}-F^{i})^{-1} \geq 0$ if and only if all eigenvalues of $F^{i}-V^{i}$ have negative real part. We follow \cite{arino} and \cite{vddwat} and claim that, for all eigenvalues of $F^{i}-V^{i}$ to have negative real part it is necessary and sufficient that the spectral radius of $F^{i}\cdot(V^{i})^{-1}$ --- which is the local reproduction number $\rn^i$ --- is less than unity.\\
We conclude that if $\rn^i<1$ holds then the equality
\begin{equation}\aligned
\frac{\ud f_{\hat{x}^i}}{\ud \alpha} (0)&=\left(V^{i}-F^i\right)^{-1} \left(\sum_{\substack{j=1 \\j \neq i}}^r C_x^{ij} \hat{x}^j\right)
\nonumber\endaligned\end{equation}
derived from (\ref{eq:derx}) shows that $\frac{\ud f_{\hat{x}^i}}{\ud \alpha} (0)$ is nonnegative. If the sum on the right hand side is strictly positive (which is possible since $EE^0$ is an endemic equilibrium hence there is a region $j \in \{1, \dots r\}$, $j \neq i$, where $\hat{x}^j > 0$; furthermore the  matrix $C_x^{ij}$ is also nonnegative), then $\det (V^{i}-F^{i})^{-1} \neq 0$ yields $\frac{\ud f_{\hat{x}^i}}{\ud \alpha} (0)>0$. The proof is complete.
\end{proof}

\begin{theorem}\label{th:r0>1}
Assume that there is a region $i$, $i \in \{1, \dots r\}$, which is DFAT in the endemic equilibrium $EE^0$ of system $(L_1)$--$(L_r)$. If $\rn^i>1$, then for the function $f_{\hat{x}^i}$ defined in Theorem  \ref{th:iftendemic} it is satisfied that $\frac{\ud f_{\hat{x}^i}}{\ud \alpha} (0)$ has a non-positive component.  Furthermore, if we assume that $\sum_{\substack{j=1 \\j \neq i}}^r  C_x^{ij} \hat{x}^j>0$, then it holds that $\frac{\ud f_{\hat{x}^i}}{\ud \alpha} (0)$ has a strictly negative component.
\end{theorem}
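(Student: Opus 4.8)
The plan is to exploit the Z sign pattern of $V^{i}-F^{i}$ through a Frobenius--Perron argument, mirroring the proof of Theorem \ref{th:r0<1} but now in the regime where $V^{i}-F^{i}$ fails to be an M-matrix. As recalled there, $(V^{i}-F^{i})_{p,q}\leq 0$ for $p\neq q$, so $V^{i}-F^{i}$ has the Z sign pattern, and I would first write $V^{i}-F^{i}=sI-B$ with $s\geq \max_{p}(V^{i}-F^{i})_{p,p}$. Then the nonpositive off-diagonal entries become nonnegative off-diagonal entries of $B$, the shifted diagonal $s-(V^{i}-F^{i})_{p,p}$ is nonnegative by the choice of $s$, and hence $B\geq 0$.

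Next I would extract the sign information from the spectrum. By the Frobenius--Perron theorem the spectral radius $\rho(B)$ is itself a real eigenvalue of $B$ and admits a nonnegative left eigenvector $v\geq 0$, $v\neq 0$ (for a possibly reducible $B$ this eigenvector is only guaranteed nonnegative, which is all the argument needs). The eigenvalues of $V^{i}-F^{i}$ are exactly $s-\mu$ as $\mu$ runs over the spectrum of $B$, and since $\rho(B)$ dominates the real parts of all eigenvalues of $B$, the least real part among eigenvalues of $V^{i}-F^{i}$ equals $s-\rho(B)$ and is attained by a real eigenvalue. Invoking the same characterization from \cite{vddwat} used earlier, $\rn^{i}>1$ means $F^{i}-V^{i}$ has an eigenvalue with positive real part, equivalently $V^{i}-F^{i}$ has an eigenvalue with negative real part, equivalently $s-\rho(B)<0$, that is $\rho(B)>s$.

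The crux is then to pair this left eigenvector with equation (\ref{eq:derx}). Writing $w=\frac{\ud f_{\hat{x}^{i}}}{\ud \alpha}(0)$ and $b=\sum_{\substack{j=1\\ j\neq i}}^{r}C_x^{ij}\hat{x}^j\geq 0$, I would left-multiply (\ref{eq:derx}) by $v^{T}$ and use $v^{T}(V^{i}-F^{i})=v^{T}(sI-B)=(s-\rho(B))v^{T}$ to obtain $(s-\rho(B))\,v^{T}w=v^{T}b$. Since $v\geq 0$ and $b\geq 0$ force $v^{T}b\geq 0$ while $s-\rho(B)<0$, this yields $v^{T}w\leq 0$. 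Because $v\geq 0$ is nonzero, $w$ cannot be strictly positive (otherwise $v^{T}w>0$), so it has a nonpositive component, which is the first claim.

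For the strict statement, under the extra hypothesis $\sum_{\substack{j=1\\ j\neq i}}^{r}C_x^{ij}\hat{x}^j>0$ the vector $b$ is strictly positive, hence $v^{T}b>0$ (as $v\geq 0$, $v\neq 0$), forcing $v^{T}w<0$ and therefore a strictly negative component of $w$. I expect the only delicate point to be the spectral bookkeeping of the second step: justifying that $s-\rho(B)$ is precisely the least real part in the spectrum and tying the sign $\rho(B)>s$ to $\rn^{i}>1$ through the van den Driessche--Watmough characterization. The remainder is the same nonnegativity manipulation as in Theorem \ref{th:r0<1}, only read in the opposite direction, now using the left Perron eigenvector as a positive functional that reverses the sign when $V^{i}-F^{i}$ is no longer an M-matrix.
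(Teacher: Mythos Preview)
Your proof is correct. Both arguments rest on the same Perron--Frobenius substrate, but the routes differ in packaging. The paper invokes ready-made M-matrix characterizations from \cite{friedler}: Theorems 5.3 and 5.11 there say that a Z-matrix $A$ admitting some $x>0$ with $Ax\geq 0$ must have all eigenvalues with nonnegative real part, so the contrapositive (triggered by $\rn^{i}>1$, which forces an eigenvalue of $V^{i}-F^{i}$ with negative real part) rules out a strictly positive $w$; the strict case is handled analogously via the equivalence of properties 1 and 18 in Theorem 5.1 of \cite{friedler}. You instead unpack the mechanism directly: the shift $V^{i}-F^{i}=sI-B$ with $B\geq 0$, the left Perron eigenvector $v\geq 0$ of $B$, and the scalar identity $(s-\rho(B))\,v^{T}w=v^{T}b$ do all the work without an external citation. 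Your approach is more self-contained and makes the sign reversal transparent (the left Perron vector acts as a nonnegative linear functional that flips the inequality once $s-\rho(B)<0$), at the cost of a few more lines; the paper's version is terser but leans on \cite{friedler} as a black box. The spectral bookkeeping you flagged as delicate is fine: for $B\geq 0$ the spectral radius is real, dominates all real parts, and the existence of a nonnegative left eigenvector holds even in the reducible case, which is exactly what you need.
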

\begin{proof}
Theorems 5.3 and 5.11 in \cite{friedler} state that if $A$ is a square matrix  which satisfies $(A)_{p,q} \leq 0$ for $p \neq q$ and if there exists a vector $x>0$ such that $Ax \geq 0$, then it holds that every eigenvalue of $A$ has nonnegative real part. It is known \cite{vddwat} that all eigenvalues of the matrix $F^{i}-V^{i}$ have negative real part if and only if $\rn^i< 1$, the maximum real part of the eigenvalues is zero if and only if $\rn^i= 1$, and there is an eigenvalue with strictly positive real part if and only if $\rn^i>1$. Hence, using the above result from \cite{friedler} with $A=V^{i}-F^{i}$ and the non-negativity of the right hand side of (\ref{eq:derx}) we get that if $\rn^i > 1$ then there exists no positive vector $x$ such that $(V^{i}-F^{i})x \geq 0$ since $V^{i}-F^{i}$ has an eigenvalue with negative real part. This implies the first statement of the theorem. \\
Theorem 5.1 in \cite{friedler} yields that there is no $x \geq 0$ such that $(V^{i}-F^{i})x >0$; it follows from the equivalence of properties 1 and 18 of Theorem 5.1 that for the existence of such $x$ all eigenvalues of $V^{i}-F^{i}$ should have positive real part. If we now suppose that the last assumption of our statement holds, which ensures the positivity of the right hand side of (\ref{eq:derx}), then we get that $\frac{\ud f_{\hat{x}^i}}{\ud \alpha} (0)$ should satisfy an inequality of the form $(V^{i}-F^{i}) x >0$, which in the light of the argument above is only possible if  $\frac{\ud f_{\hat{x}^i}}{\ud \alpha} (0)$ has a negative component.
\end{proof}

Theorems \ref{th:r0<1} and \ref{th:r0>1} together with Lemmas \ref{lem:suffforpos} and \ref{lem:firstder} give conditions for the persistence of endemic equilibria in system $(T_1)$ -- $(T_r)$ for small volumes of travel. If the fixed point $EE^0$ is a boundary endemic equilibrium of system $(L_1)$--$(L_r)$ with a DFAT region $i$ (that is, $\hat{x}^i=f_{\hat{x}^i}(0)=0$) but, once traveling is introduced, to every infected class in $i$ there is an inflow from another region which is EAT (i.e., if the right hand side of equation (\ref{eq:derx}) is positive), then $f(\alpha)$, $f(0)=EE^0$, leaves the nonnegative cone of  $\R^{r(n+m+k)}$ if $\rn^i>1$, since $\frac{\ud f_{\hat{x}^i}}{\ud \alpha} (0)$ has a negative component and hence, so does $f_{\hat{x}^i} (\alpha)$ for small $\alpha$-s. On the other hand, if for every DFAT region $i$, $i \in \{1, \dots r\}$, it holds that the local reproduction number is less than one, and to each infected class there is an inflow from an EAT region by means of individuals' movement, then $\frac{\ud f_{\hat{x}^i}}{\ud \alpha} (0)>0$ for each such $i$ implies that the endemic equilibrium is preserved in system $(T_1)$ -- $(T_r)$ when $\alpha$ is small.\\

We understand that there is a limitation in applying the results of the above stated theorems: to decide whether an endemic steady state of the disconnected system continues to exist in the system with traveling, we need to know the structure of the connecting network and require the pretty restrictive property that for each $i\in  \{1, \dots r\}$ with $\hat{x}^i=0$, for each $q \in \{1, \dots n\}$ there exists a $j_q \in \{1, \dots r\}, j_q\neq i$, such that $ (\hat{x}^{j_q})_q>0$ and $c_{x,q}^{i,j_q}>0$. In the next section we turn our attention to the case when this property doesn't hold, that is, there is a region $i$ which is DFAT and the right hand side of (\ref{eq:derx}) is not positive (nevertheless we emphasize that, considering the biological interpretation of the sum, it is always nonnegative). This section is closed with a corollary which summarizes our findings. The result covers the special case when the connecting network of all infected classes is a complete network.

\begin{corollary}\label{cor:summaryee}
Consider a boundary endemic equilibrium $EE^0$ of system $(L_1)$--$(L_r)$. Assume that $\sum_{\substack{j=1 \\j \neq i}}^r C_x^{ij} \hat{x}^j>0$ is satisfied whenever $i$, $i\in \{1, \dots r\}$, is a DFAT region in $EE^0$; we note that this condition always holds if the constant $c_{x,q}^{j,l}$ is positive for every $j,l \in \{1, \dots r\}$ and $q \in \{1, \dots n\}$, meaning that all possible connections are established between the infected compartments of the regions. Then, in case $\rn^i<1$ holds in all DFAT regions $i$  we get that $EE^0$ is preserved for small volumes of traveling by a unique function which depends continuously on $\alpha$. If there exists a region $i$ which is EAT and where $\rn^i>1$ then $EE^0$ moves out of the feasible phase space when traveling is introduced.
\end{corollary}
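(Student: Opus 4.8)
The plan is to obtain the corollary as a direct synthesis of the four results already proved in this section, with essentially no new analysis: Theorem~\ref{th:iftendemic} supplies the locally unique, continuous branch $f(\alpha)$ with $f(0)=EE^0$; Lemma~\ref{lem:suffforpos} reduces feasibility of $f(\alpha)$ to the sign of the infected blocks $f_{\hat{x}^i}(\alpha)$ over the DFAT regions alone; Lemma~\ref{lem:firstder} converts the sign of $f_{\hat{x}^i}(\alpha)$ for small $\alpha>0$ into the sign of $\frac{\ud f_{\hat{x}^i}}{\ud\alpha}(0)$; and Theorems~\ref{th:r0<1}--\ref{th:r0>1} read that derivative off from $\rn^i$, precisely under the standing hypothesis $\sum_{\substack{j=1\\j\neq i}}^r C_x^{ij}\hat{x}^j>0$. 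First I would record that for every EAT region $j$ one has $f_{\hat{x}^j}(0)=\hat{x}^j>0$, so by continuity $f_{\hat{x}^j}(\alpha)>0$ for small $\alpha$; thus only the DFAT blocks can threaten feasibility, which is exactly what Lemma~\ref{lem:suffforpos} isolates.

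For the preservation statement I would fix a DFAT region $i$: since $\rn^i<1$ and the hypothesis gives $\sum_{\substack{j=1\\j\neq i}}^r C_x^{ij}\hat{x}^j>0$, Theorem~\ref{th:r0<1} yields $\frac{\ud f_{\hat{x}^i}}{\ud\alpha}(0)>0$ componentwise, and Lemma~\ref{lem:firstder} then forces $f_{\hat{x}^i}(\alpha)>0$ for all sufficiently small $\alpha>0$. Running this over the finitely many DFAT regions and combining with the EAT positivity above, every infected block of $f(\alpha)$ is positive on a common interval $[0,\alpha')$, whereupon Lemma~\ref{lem:suffforpos} upgrades this to $f(\alpha)\geq 0$ there. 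Together with the uniqueness and continuity from Theorem~\ref{th:iftendemic}, this is exactly the assertion that $EE^0$ is preserved by a single continuously varying branch.

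For the destruction statement I would take a region $i$ that is disease free in the absence of traveling (DFAT) and carries $\rn^i>1$, which is the configuration governed by Theorem~\ref{th:r0>1}. Invoking the hypothesis $\sum_{\substack{j=1\\j\neq i}}^r C_x^{ij}\hat{x}^j>0$ once more, Theorem~\ref{th:r0>1} forces $\frac{\ud f_{\hat{x}^i}}{\ud\alpha}(0)$ to possess a strictly negative component, and the necessity half of Lemma~\ref{lem:firstder} then rules out any right neighborhood of $0$ on which $f(\alpha)\geq 0$; hence the branch leaves the feasible cone at once and $EE^0$ is destroyed. The ``complete network'' remark I would dispatch by noting that any EAT region $j$ has $\hat{x}^j>0$ in every coordinate, by the dichotomy $\hat{w}^i=0$ or $\hat{w}^i>0$ imposed at the start of Section~\ref{sec:ee}; so if $c_{x,q}^{i,j}>0$ for all indices, each coordinate $q$ of $\sum_{\substack{j=1\\j\neq i}}^r C_x^{ij}\hat{x}^j$ receives a strictly positive contribution and the standing hypothesis holds automatically.

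I do not expect a genuine obstacle here, since the substantive content---the Z-pattern/M-matrix characterizations underlying Theorems~\ref{th:r0<1}--\ref{th:r0>1} and the implicit-function construction of $f$---is already in place, leaving the corollary as bookkeeping. The only points requiring care will be to intersect the finitely many region-by-region $\alpha$-thresholds into one common interval (immediate, as there are finitely many regions) and, in the destruction case, to use the necessity direction of Lemma~\ref{lem:firstder}, where a strictly negative component in even a single DFAT block already defeats feasibility irrespective of the other blocks.
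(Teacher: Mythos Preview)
Your proposal is correct and follows exactly the approach the paper takes: the corollary is stated as a summary of the preceding discussion, and the paper's justification (the paragraph immediately before the corollary) is precisely the synthesis of Lemmas~\ref{lem:suffforpos} and~\ref{lem:firstder} with Theorems~\ref{th:r0<1} and~\ref{th:r0>1} that you spell out. Your bookkeeping additions---intersecting the finitely many $\alpha$-thresholds and justifying the complete-network remark via the dichotomy $\hat{x}^i=0$ or $\hat{x}^i>0$---make explicit what the paper leaves implicit. Note also that you correctly read the destruction clause as concerning a \emph{DFAT} region with $\rn^i>1$; the printed ``EAT'' in the corollary statement is evidently a typo, as confirmed by comparison with Corollary~\ref{cor:summaryirred} and the discussion preceding the corollary.
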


\section{The role of irreducibility of $V^{i}-F^{i}$}\label{sec:irred}
Knowing the steady states of the disconnected system $(L_1)$--$(L_r)$, we are interested in the effect of incorporating the possibility of individuals' movement on the equilibria. The differential system of connected regions $(T_1)$--$(T_r)$ reduces to $(L_1)$--$(L_r)$ when the general mobility parameter $\alpha$ equals zero, thus whenever the Jacobian of $(T_1)$--$(T_r)$ evaluated at an equilibrium of $(L_1)$--$(L_r)$ and $\alpha=0$, $\left(\frac{\partial \mathcal{T}}{\partial \mathcal{X}}\right) (0,EE^0)$, is nonsingular, the existence of a fixed point $f(\alpha)$, $f(0)=EE^0$, in $(T_1)$--$(T_r)$ is guaranteed for small $\alpha$-s by the implicit function theorem. Theorem \ref{th:posee} implies that if  $EE^0$ is a positive steady state of $(L_1)$--$(L_r)$ then so is $f(\alpha)$ in $(T_1)$--$(T_r)$. On the other hand in case $EE^0$ is a boundary endemic equilibrium and $\hat{x}^i=f_{\hat{x}^i}(0)=0$ holds for some $i \in \{1 \dots r\}$, meaning that region $i$ is at disease free state (DFAT) when the system is disconnected, the continuous dependence of $f$ on $\alpha$ allows that the fixed point might move out of the feasible phase space as $\alpha$ becomes positive. \\

In section \ref{sec:ee} we gave a full picture of the behavior of $f(\alpha)$ for small $\alpha$-s in the case when the condition $\sum_{\substack{j=1 \\j \neq i}}^r C_x^{ij} \hat{x}^j>0$ holds for each region $i$ which is DFAT (for a summary, see Corollary \ref{cor:summaryee}). If this condition is not satisfied, then Theorem \ref{th:r0<1} yields that the derivative $\frac{\ud f_{\hat{x}^i}}{\ud \alpha} (0)$ is nonnegative but may have some zero components if $\rn^i<1$, and though --- following Theorem \ref{th:r0>1} --- it cannot be positive if $\rn^i>1$, it might happen that it is still nonnegative. Following this argument it is clear that the problematic case is when $\frac{\ud f_{\hat{x}^i}}{\ud \alpha} (0) \geq 0$ and either the derivative is identically zero, or it has both positive and zero components. In both situations Lemmas \ref{lem:suffforpos} and \ref{lem:firstder} through equation (\ref{eq:derx}) don't provide enough information to decide whether the boundary endemic equilibrium will be preserved once traveling is incorporated. \\

In this section we investigate the question of under what conditions can the derivative be nonnegative but non-positive, and we recall that this can only happen if the right hand side of (\ref{eq:derx}) is not positive. It is convenient to work with the general equation $(V^{i}-F^{i}) v=u$ where $v, u \geq 0$, which gives (\ref{eq:derx}) for $u=\sum_{\substack{j=1 \\j \neq i}}^r  C_x^{ij} \hat{x}^j$ and $v=\frac{\ud f_{\hat{x}^i}}{\ud \alpha} (0)$. The statement of the next proposition immediately follows from the Z sign pattern property of $V^{i}-F^{i}$.
\begin{proposition}\label{prop:vu}
If $v$ is a nonnegative solution of $(V^{i}-F^{i}) v=u$ with $u \geq 0$, then $v_q=0$ implies $u_q=0$, $q \in \{1, \dots n\}$.
\end{proposition}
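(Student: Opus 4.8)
The plan is to argue componentwise, using only the sign information already at hand: the matrix $V^{i}-F^{i}$ has the Z sign pattern (all off-diagonal entries are non-positive), while by hypothesis $v \geq 0$ and $u \geq 0$. First I would fix an index $q$ for which $v_q = 0$ and write out the $q$th scalar equation of the vector identity $(V^{i}-F^{i}) v = u$, namely
$$u_q = (V^{i}-F^{i})_{q,q}\, v_q + \sum_{p \neq q} (V^{i}-F^{i})_{q,p}\, v_p.$$

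Since $v_q = 0$ by assumption, the diagonal contribution vanishes and $u_q$ collapses to the sum of the off-diagonal terms alone. The crucial observation is then that each surviving summand is a product of a non-positive factor $(V^{i}-F^{i})_{q,p} \leq 0$ (Z sign pattern, as $p \neq q$) with a nonnegative factor $v_p \geq 0$; hence every term is $\leq 0$, which gives $u_q \leq 0$.

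Finally I would invoke the standing hypothesis $u \geq 0$, which forces $u_q \geq 0$. Combining the two inequalities yields $u_q = 0$, as claimed. There is really no obstacle here — the statement is a one-line sign chase — and the only point meriting attention is that both ingredients must be used at once: the Z sign pattern of $V^{i}-F^{i}$ \emph{and} the nonnegativity of $v$, since neither alone suffices to pin down the sign of the off-diagonal sum.
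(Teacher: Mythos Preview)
Your proof is correct and is precisely the argument the paper has in mind: the paper states only that the proposition ``immediately follows from the Z sign pattern property of $V^{i}-F^{i}$,'' and your componentwise sign chase is exactly the one-line unpacking of that remark.
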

\begin{lemma}\label{lem:reduc}
If $v$ is a solution of $(V^{i}-F^{i}) v=u$ with $u \geq 0$ such that $v$ is nonnegative and has both zero and positive components, then the matrix $V^{i}-F^{i}$ is reducible.
\end{lemma}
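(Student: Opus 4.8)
The plan is to exhibit an explicit index partition that witnesses the reducibility of $A := V^i - F^i$. Set $Z = \{q : v_q = 0\}$ and $P = \{q : v_q > 0\}$. Since $v \geq 0$, every coordinate lies in exactly one of these sets, and the hypothesis that $v$ has both zero and positive components means $Z \neq \emptyset$ and $P \neq \emptyset$; thus $\{Z,P\}$ is a genuine partition into two nonempty proper subsets. I would show that $A_{qp} = 0$ for every $q \in Z$ and $p \in P$. Reordering the coordinates so that the indices of $Z$ precede those of $P$ then makes $A$ block triangular (the block with rows in $Z$ and columns in $P$ vanishes), which is precisely the statement that $V^i - F^i$ is reducible.

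First I would invoke Proposition \ref{prop:vu}: because $v \geq 0$ solves $Av = u$ with $u \geq 0$, each $q \in Z$ has $v_q = 0$ and hence $u_q = 0$. Writing out the $q$-th equilibrium equation for such a $q$ gives $\sum_{p} A_{qp} v_p = u_q = 0$. The summands with $p \in Z$ drop out since $v_p = 0$ there (the diagonal term $A_{qq}v_q$ included, as $q \in Z$), leaving $\sum_{p \in P} A_{qp} v_p = 0$.

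The decisive step is the sign argument. For $q \in Z$ and $p \in P$ the indices are distinct because the two sets are disjoint, so the Z sign pattern of $V^i - F^i$ gives $A_{qp} \leq 0$, while $v_p > 0$ by definition of $P$. Every summand $A_{qp} v_p$ is therefore non-positive, yet the sum is zero; a sum of non-positive reals that equals zero forces each term to vanish. Since $v_p > 0$, I conclude $A_{qp} = 0$ for all $q \in Z$ and $p \in P$, which is the block-zero condition sought above.

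I do not anticipate a serious obstacle: the argument amounts to the observation that a zero on the right-hand side of $Av = u$ at an index where $v$ vanishes, together with the non-positivity of the off-diagonal entries, annihilates an entire off-diagonal block. The only points demanding a little care are the cancellation step (ensuring the off-diagonal sign pattern is genuinely used to pass from ``sum is zero'' to ``each entry is zero'') and the routine translation of the block-zero condition $A_{qp}=0$ on $Z \times P$ into the formal definition of reducibility, which is immediate once the coordinates are reordered.
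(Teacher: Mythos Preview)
Your proposal is correct and follows essentially the same argument as the paper: partition the indices into those where $v$ vanishes and those where it is positive, use Proposition~\ref{prop:vu} to zero out the corresponding entries of $u$, and then exploit the Z sign pattern together with the positivity of the remaining $v$-components to kill the off-diagonal block. The only cosmetic difference is that the paper orders the positive indices before the zero ones and works in block-matrix notation, whereas you order them the other way and argue entrywise.
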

\begin{proof}
If $v$ consists of zero and positive components then, without loss of generality we can assume that there are $r,s>0$, $r+s=n$ such that $v$ can be represented as $v=(v_1, \dots v_{r},$ $v_{r +1}, \dots v_{r+s})^T$ with $v_1, \dots v_{r}>0$ and $v_{r+1}, \dots v_{r+s}=0$. We decompose $V^{i}-F^{i}$ as
\begin{equation}\aligned
V^{i}-F^{i}=\begin{pmatrix}
R_{r \times r} & S_{r \times s}\\
S_{s \times r} & R_{s \times s}
\end{pmatrix}
\nonumber\endaligned\end{equation}
with the $r \times r, r\times s, s \times r$ and $s \times s$ dimensional matrices $R_{r \times r}, S_{r \times s}, S_{s \times r}$ and $R_{s \times s}$, and derive the equation
\begin{equation}\aligned
S_{s \times r} (v_1, \dots v_{r})^T+R_{s \times s}(v_{r +1}, \dots v_{r+s})^T&=(u_{r +1}, \dots u_{r+s})^T
\nonumber\endaligned\end{equation}
from $(V^{i}-F^{i}) v=u$. According to Proposition \ref{prop:vu} from $v_{r+1}, \dots v_{r+s}=0$ it follows that $u_{r+1}, \dots u_{r+s}=0$, thus the last equation reduces to
\begin{equation}\aligned
S_{s \times r} (v_1, \dots v_{r})^T&=0,
\nonumber\endaligned\end{equation}
which, considering that $S_{s \times r} \leq 0$ and $(v_1, \dots v_{r})^T>0$, immediately implies $S_{s \times r} = 0$ and thus the reducibility of $V^{i}-F^{i}$.
\end{proof}
\subsection{The case when $V^{i}-F^{i}$ is irreducible}
The last lemma has an important implication on equation $(V^{i}-F^{i}) v=u$, as it excludes certain solutions. We will also see that it enables us to answer the question posed at the beginning of this section, namely that the derivative in (\ref{eq:derx}) cannot have both positive and zero but no negative components if $V^{i}-F^{i}$ is irreducible.

\begin{lemma}\label{lem:uveq}
Assume that $V^{i}-F^{i}$ is irreducible. If $u \geq 0$, $u \neq 0$ then $(V^{i}-F^{i}) v=u$ has a unique positive solution if $\rn^i<1$, and it holds that $v \ngeq 0$ if $\rn^i>1$. In the case when $u=0$, $v=0$ is the only solution if $\rn^i<1$, and for $\rn^i>1$ it holds that either $v=0$ or $v$ has a negative component.
\end{lemma}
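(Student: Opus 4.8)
The plan is to set $A=V^{i}-F^{i}$ and exploit that $A$ is an \emph{irreducible} matrix with the Z sign pattern, together with the sign characterization of its spectrum in terms of $\rn^i$ already recorded in the proofs of Theorems \ref{th:r0<1} and \ref{th:r0>1}. First I would write $A=sI-B$ with $s>0$ large enough that $B\geq0$; since $A$ is irreducible so is $B$, and the Perron--Frobenius theorem for irreducible nonnegative matrices furnishes a strictly positive right eigenvector $w>0$ and a strictly positive left eigenvector $\phi>0$ associated with the simple eigenvalue $\rho(B)$. Translating back, the eigenvalue of $A$ of smallest real part is the real number $\lambda_{\min}=s-\rho(B)$, carried by the same eigenvectors, and by the cited equivalences (\cite{vddwat}, \cite{friedler}) its sign is governed by $\rn^i$: one has $\lambda_{\min}>0 \iff \rn^i<1$ and $\lambda_{\min}<0 \iff \rn^i>1$.

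For $\rn^i<1$ the matrix $A$ is a nonsingular M-matrix, and irreducibility upgrades the inequality $A^{-1}\geq0$ from Theorem 5.1 in \cite{friedler} to the entrywise strict inequality $A^{-1}>0$ (again via Perron--Frobenius applied to $B$). Consequently, if $u\geq0$, $u\neq0$ the unique solution $v=A^{-1}u$ is strictly positive, while $u=0$ forces $v=0$; this settles both $\rn^i<1$ claims.

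For $\rn^i>1$ I would avoid inverting $A$, which need not be nonsingular, and argue directly with the left Perron eigenvector. For any solution $v$ of $(V^{i}-F^{i})v=u$, multiplying on the left by $\phi$ and using $\phi A=\lambda_{\min}\phi$ gives $\lambda_{\min}(\phi v)=\phi u$. When $u\geq0$, $u\neq0$ one has $\phi u>0$ (because $\phi>0$), and since $\lambda_{\min}<0$ this yields $\phi v<0$, which is incompatible with $v\geq0$; hence $v\ngeq0$. When $u=0$ the same identity gives $\phi v=0$, so a nonnegative $v$ must vanish, i.e. either $v=0$ or $v$ has a negative component. I would stress that this computation is valid for \emph{every} solution $v$, whether or not $A$ is invertible, which is precisely the generality the statement demands.

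The crux of the argument is the passage from irreducibility to \emph{strict} positivity: both the strict positivity of $A^{-1}$ when $\rn^i<1$ and the existence of a strictly positive left eigenvector $\phi$ when $\rn^i>1$ rest on the Perron--Frobenius theory for the irreducible nonnegative matrix $B$. I expect this to be the only delicate point; the sign of $\lambda_{\min}$ in terms of $\rn^i$ is already available from Section \ref{sec:dfe} and the proofs above, and the remaining manipulations with $\phi$ are a one-line linear-algebra computation.
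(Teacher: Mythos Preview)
Your proof is correct, and it takes a genuinely different route from the paper's. The paper proves this lemma by leaning on the auxiliary Lemma \ref{lem:reduc}: for $\rn^i<1$ it first gets $v\geq0$ from $(V^{i}-F^{i})^{-1}\geq0$ (as in Theorem \ref{th:r0<1}) and then invokes Lemma \ref{lem:reduc} to rule out the mixed zero/positive case; for $\rn^i>1$ it quotes the Fiedler results used in Theorem \ref{th:r0>1} to obtain a non-positive component of $v$ and again appeals to Lemma \ref{lem:reduc} to force the dichotomy $v=0$ or $v\ngeq0$. Your argument bypasses Lemma \ref{lem:reduc} entirely: for $\rn^i<1$ you upgrade $A^{-1}\geq0$ to $A^{-1}>0$ directly via Perron--Frobenius on the irreducible $B$, and for $\rn^i>1$ the one-line computation with the strictly positive left Perron eigenvector $\phi$ gives the sign contradiction without any M-matrix machinery. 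Your approach is arguably cleaner and more self-contained --- in particular it makes transparent why the $\rn^i>1$ conclusion holds for \emph{every} solution $v$ regardless of whether $A$ is invertible --- while the paper's route has the virtue of reusing Lemma \ref{lem:reduc}, which was introduced precisely to isolate the structural role of irreducibility.
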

\begin{proof}
In the proof of Theorem \ref{th:r0<1} we have seen that $(V^{i}-F^{i})^{-1} \geq 0$ if $\rn^i<1$, which implies the uniqueness of $v \geq 0$ in $(V^{i}-F^{i}) v=u$. If $u=0$ then trivially $v=0$, and we use Lemma \ref{lem:reduc} to get that $v>0$ when $u\neq 0$. Similar arguments as in the proof of Theorem \ref{th:r0>1} yield that $v$ has a non-positive component if $\rn^i>1$, but Lemma \ref{lem:reduc} again makes only $v=0$ and $v \ngeq 0$ possible. However $v=0$ is a solution of  $(V^{i}-F^{i}) v=u$ if and only if $u=0$, otherwise $v$ must have a negative component.
\end{proof}
The following theorem and proposition are immediate from Lemma \ref{lem:uveq}. We remark that parts of the results of the theorem are to be found in Theorem 5.9 \cite{friedler}, that is, if $V^{i}-F^{i}$ is irreducible then equation (\ref{eq:derx}) has a positive solution.
\begin{theorem}\label{th:irred}
Assume that there is a region $i$, $i \in \{1, \dots r\}$, which is DFAT in the endemic equilibrium $EE^0$ of system $(L_1)$--$(L_r)$, and $V^{i}-F^{i}$ is irreducible. If  $\sum_{\substack{j=1 \\j \neq i}}^r  C_x^{ij} \hat{x}^j \neq 0$, then for the function $f_{\hat{x}^i}$ defined in Theorem  \ref{th:iftendemic} it is satisfied that $\frac{\ud f_{\hat{x}^i}}{\ud \alpha} (0) > 0$ if $\rn^i<1$, and $\frac{\ud f_{\hat{x}^i}}{\ud \alpha} (0) \ngeq 0$ if $\rn^i>1$. 
\end{theorem}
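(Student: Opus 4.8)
The plan is to recognize that this theorem is essentially a translation of Lemma \ref{lem:uveq} into the language of the derivative equation (\ref{eq:derx}), so almost all of the work has already been done and the proof reduces to selecting the correct branch of that lemma. First I would invoke Lemma \ref{lem:firstder}, which guarantees that the derivative $v:=\frac{\ud f_{\hat{x}^i}}{\ud \alpha}(0)$ exists (since $V^i-F^i$ is nonsingular in both regimes $\rn^i<1$ and $\rn^i>1$, as recorded in the proofs of Theorems \ref{th:r0<1} and \ref{th:r0>1}, so the implicit function theorem of Theorem \ref{th:iftendemic} applies) and satisfies the linear system $(V^i-F^i)v=u$ with $u:=\sum_{\substack{j=1\\j\neq i}}^r C_x^{ij}\hat{x}^j$.

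Second, I would verify the standing hypotheses needed to quote Lemma \ref{lem:uveq}. The vector $u$ is nonnegative because each connectivity matrix $C_x^{ij}$ has nonnegative diagonal entries and each equilibrium component $\hat{x}^j$ satisfies $\hat{x}^j\geq 0$, exactly as observed in the discussion preceding Theorem \ref{th:r0<1}. The extra hypothesis $\sum_{\substack{j=1\\j\neq i}}^r C_x^{ij}\hat{x}^j\neq 0$ is precisely the condition $u\neq 0$, which rules out the degenerate branch $u=0$ of Lemma \ref{lem:uveq}; thus we are in the case $u\geq 0$, $u\neq 0$. Irreducibility of $V^i-F^i$ is assumed, so Lemma \ref{lem:uveq} is directly applicable.

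With these pieces in place the two assertions follow immediately. When $\rn^i<1$, Lemma \ref{lem:uveq} says the system $(V^i-F^i)v=u$ has a unique positive solution; since $v$ solves (\ref{eq:derx}) and the solution is unique (nonsingularity of $V^i-F^i$), we conclude $v>0$, that is $\frac{\ud f_{\hat{x}^i}}{\ud \alpha}(0)>0$. When $\rn^i>1$, the same lemma forces $v\ngeq 0$, that is $\frac{\ud f_{\hat{x}^i}}{\ud \alpha}(0)\ngeq 0$. I expect no genuine obstacle here: all the spectral content, namely the nonnegativity of $(V^i-F^i)^{-1}$ when $\rn^i<1$, the eigenvalue/sign argument when $\rn^i>1$, and the crucial exclusion of mixed zero-and-positive solutions via the reducibility dichotomy of Lemma \ref{lem:reduc}, has already been established inside Lemma \ref{lem:uveq}. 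The only point demanding a little care is the bookkeeping: checking that $u\neq 0$ indeed selects the non-degenerate branch, and that the ``unique positive solution'' produced by Lemma \ref{lem:uveq} is genuinely the derivative $v$ rather than some unrelated solution, which holds because $V^i-F^i$ is invertible and hence (\ref{eq:derx}) has $v$ as its only solution in each regime.
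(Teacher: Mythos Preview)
Your proposal is correct and matches the paper's approach exactly: the paper simply states that Theorem \ref{th:irred} is ``immediate from Lemma \ref{lem:uveq},'' and your write-up spells out precisely that reduction via equation (\ref{eq:derx}) with $u=\sum_{j\neq i} C_x^{ij}\hat{x}^j\geq 0$, $u\neq 0$. The only superfluous remark is the appeal to nonsingularity of $V^i-F^i$ for the existence of the derivative---that existence is already guaranteed by Theorem \ref{th:iftendemic}, and in the $\rn^i>1$ branch Lemma \ref{lem:uveq} applies to \emph{any} solution of $(V^i-F^i)v=u$, so uniqueness is not needed there.
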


\begin{proposition}\label{prop:rhszero}
Assume that there is a region $i$, $i \in \{1, \dots r\}$, which is DFAT in the endemic equilibrium $EE^0$ of system $(L_1)$--$(L_r)$, and $V^{i}-F^{i}$ is irreducible. If $\sum_{\substack{j=1 \\j \neq i}}^r  C_x^{ij} \hat{x}^j = 0$, then $\frac{\ud f_{\hat{x}^i}}{\ud \alpha} (0) = 0$ is the only solution if $\rn^i<1$, and in the case when $\rn^i>1$ the derivative $\frac{\ud f_{\hat{x}^i}}{\ud \alpha}(0)$ is either zero or has a negative component.
\end{proposition}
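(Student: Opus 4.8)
The plan is to read this proposition off as the homogeneous (that is, $u=0$) specialization of Lemma \ref{lem:uveq}, applied to the concrete relation (\ref{eq:derx}). First I would make the identification $v=\frac{\ud f_{\hat{x}^i}}{\ud \alpha}(0)$ and $u=\sum_{j\neq i} C_x^{ij}\hat{x}^j$, so that equation (\ref{eq:derx}) --- which holds by Lemma \ref{lem:firstder} --- is exactly the linear system $(V^i-F^i)v=u$. The standing hypothesis $\sum_{j\neq i} C_x^{ij}\hat{x}^j=0$ is then precisely the assertion $u=0$, and the derivative $\frac{\ud f_{\hat{x}^i}}{\ud \alpha}(0)$ is a particular solution of the resulting homogeneous system $(V^i-F^i)v=0$.

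With $u=0$ and $V^i-F^i$ irreducible, I would invoke the second half of Lemma \ref{lem:uveq} verbatim. For $\rn^i<1$ that lemma states that $v=0$ is the only solution of $(V^i-F^i)v=0$; since the derivative is such a solution, it must vanish, which is the first claim. For $\rn^i>1$ the lemma states that every solution is either zero or has a negative component; as the derivative is again a solution, the same dichotomy transfers to it, giving the second claim. In the supercritical case it is worth noting that both alternatives are genuinely possible, depending on whether $V^i-F^i$ is singular, and the statement claims no more than this dichotomy.

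Essentially all of the work sits inside Lemma \ref{lem:uveq}, which combines three ingredients established earlier: the nonnegativity of $(V^i-F^i)^{-1}$ when $\rn^i<1$, so that the homogeneous system admits only the trivial solution; the eigenvalue characterization borrowed from \cite{friedler} in the proof of Theorem \ref{th:r0>1}, which rules out a strictly positive solution when $\rn^i>1$; and the exclusion, for irreducible $V^i-F^i$, of solutions having both zero and positive components but no negative ones, furnished by Lemma \ref{lem:reduc}. Consequently I do not anticipate a substantive obstacle: the only care needed is the bookkeeping that the derivative is a specific solution of the homogeneous system, so that uniqueness pins it to zero in the subcritical regime while only the dichotomy survives in the supercritical regime. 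No new estimate or construction is required.
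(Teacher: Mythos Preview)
Your proposal is correct and matches the paper's approach exactly: the paper states that the proposition is ``immediate from Lemma~\ref{lem:uveq}'', and your argument---identifying $v=\frac{\ud f_{\hat{x}^i}}{\ud \alpha}(0)$, $u=\sum_{j\neq i}C_x^{ij}\hat{x}^j=0$ via equation~(\ref{eq:derx}), then reading off the $u=0$ case of Lemma~\ref{lem:uveq}---is precisely this.
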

We summarize our findings as follows. We consider every region $i$, $i\in \{1, \dots r\}$, which is DFAT in a boundary endemic equilibrium $EE^0$ of $(L_1)$--$(L_r)$. If the derivative in equation (\ref{eq:derx}) has some zero but no negative components then Lemmas \ref{lem:suffforpos} and \ref{lem:firstder} are insufficient to decide whether the fixed point $f(\alpha)$, for which $f(0)=EE^0$, will be biologically meaningful in the system of connected regions. In the case when $\sum_{\substack{j=1 \\j \neq i}}^r  C_x^{ij} \hat{x}^j \neq 0$ (with words, some infected classes of region $i$ have inflow of individuals from EAT regions), the statement of Theorems \ref{th:r0<1} and \ref{th:r0>1} can be sharpened if the extra assumption of $V^{i}-F^{i}$ being irreducible holds: as pointed out in Theorem \ref{th:irred}, the derivative in equation (\ref{eq:derx}) is positive if $\rn^i<1$, and has a negative component if $\rn^i>1$. Applying the results of Lemmas \ref{lem:suffforpos} and \ref{lem:firstder}, this means that if every DFAT region $i$ has inflow from an EAT region and $V^{i}-F^{i}$ is irreducible in all such regions $i$ then $f(\alpha)$, $f(0)=EE^0$, is a positive steady state of $(T_1)$--$(T_r)$ if $\rn^i<1$, and $f(\alpha)$ is not a biologically meaningful equilibrium if there is a region where $\hat{x}^i=0$ and the local reproduction number is greater than one. For conclusion we state a corollary which is similar to the one at the end of section \ref{sec:ee}.
\begin{corollary}\label{cor:summaryirred}
Consider a boundary endemic equilibrium $EE^0$ of system $(L_1)$--$(L_r)$. Let us assume that $\sum_{\substack{j=1 \\j \neq i}}^r C_x^{ij} \hat{x}^j \neq 0$ is satisfied whenever $i$, $i\in \{1, \dots r\}$, is a DFAT region in $EE^0$; we remark that this situation is realized if each DFAT region has at least one infected class with connection from an EAT region. In addition we also suppose that $V^{i}-F^{i}$ is irreducible for DFAT regions.  Then, in case $\rn^i<1$ holds in all regions $i$ which are DFAT we get that $EE^0$ is preserved for small volumes of traveling by a unique function which depends continuously on $\alpha$. If there exists a region $i$ which is DFAT and  where $\rn^i>1$ then $EE^0$ moves out of the feasible phase space when traveling is introduced.
\end{corollary}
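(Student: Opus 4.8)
The plan is to assemble the corollary directly from the preceding machinery, since all the analytic work has already been done; the statement is a bookkeeping result that packages Theorem \ref{th:irred} together with Lemmas \ref{lem:suffforpos} and \ref{lem:firstder}, in exact parallel with Corollary \ref{cor:summaryee}.

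First I would invoke Theorem \ref{th:iftendemic} to produce the unique continuous branch $f(\alpha)$ with $f(0)=EE^0$ solving $\mathcal{T}(\alpha, f(\alpha))=0$ on an interval $[0,\alpha_E)$; this relies on the invertibility of $\left(\frac{\partial\mathcal{T}}{\partial\mathcal{X}}\right)(0,EE^0)$ that underlies that theorem. The whole question then reduces to whether this branch stays in the nonnegative cone for small positive $\alpha$. By Lemma \ref{lem:suffforpos}, the susceptible and removed components take care of themselves (positivity of $f_{\hat{y}^i}$ for small $\alpha$ follows from $f_{\hat{y}^i}(0)=y_0^i>0$ and continuity, and the $M$-matrix argument handles the removed classes), while positivity of $f_{\hat{x}^j}$ is automatic for EAT regions; hence nonnegativity of $f(\alpha)$ is equivalent to nonnegativity of $f_{\hat{x}^i}(\alpha)$ across all DFAT regions $i$.

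Next I would reduce each DFAT region to a first-derivative test via Lemma \ref{lem:firstder}: $f_{\hat{x}^i}(\alpha)$ is positive for small $\alpha$ precisely when $\frac{\ud f_{\hat{x}^i}}{\ud \alpha}(0)>0$, and $f(\alpha)$ fails to be nonnegative on any right-neighbourhood of zero as soon as some $\frac{\ud f_{\hat{x}^i}}{\ud \alpha}(0)$ has a negative component. The sign of this derivative is governed by equation (\ref{eq:derx}), whose right-hand side $\sum_{\substack{j=1 \\ j \neq i}}^r C_x^{ij}\hat{x}^j$ is nonzero for every DFAT region by the first hypothesis (and this is exactly the complete-network situation when all the $c_{x,q}^{j,l}$ are positive). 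It is here that the irreducibility hypothesis pays off: Theorem \ref{th:irred} sharpens the weaker conclusions of Theorems \ref{th:r0<1}--\ref{th:r0>1} to the clean dichotomy $\frac{\ud f_{\hat{x}^i}}{\ud \alpha}(0)>0$ when $\rn^i<1$ and $\frac{\ud f_{\hat{x}^i}}{\ud \alpha}(0)\ngeq 0$ when $\rn^i>1$.

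Assembling these, if $\rn^i<1$ in every DFAT region then each such derivative is strictly positive, Lemma \ref{lem:firstder} gives $f_{\hat{x}^i}(\alpha)>0$ for small $\alpha$, and Lemma \ref{lem:suffforpos} then yields $f(\alpha)\geq 0$, so the equilibrium persists along the unique continuous branch. Conversely, if some DFAT region $i$ has $\rn^i>1$, then applying Theorem \ref{th:irred} to that particular $i$---which is legitimate because both the nonzero inflow sum and the irreducibility of $V^{i}-F^{i}$ are assumed for all DFAT regions---the derivative $\frac{\ud f_{\hat{x}^i}}{\ud \alpha}(0)$ has a negative component, whence Lemma \ref{lem:firstder} shows that no right-neighbourhood of $\alpha=0$ keeps $f$ in the feasible cone and $EE^0$ leaves the phase space. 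I do not anticipate a genuine obstacle; the only point demanding care is to confirm that the hypotheses of Theorem \ref{th:irred} are available in the branch of the argument where they are invoked, which the corollary has arranged by assuming them for all DFAT regions at once.
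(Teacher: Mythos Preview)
Your proposal is correct and follows essentially the same approach as the paper: the corollary is stated without a formal proof environment, and the preceding paragraph of discussion assembles precisely the ingredients you list---Theorem~\ref{th:iftendemic} for the branch $f(\alpha)$, Lemmas~\ref{lem:suffforpos} and~\ref{lem:firstder} to reduce nonnegativity to the sign of $\frac{\ud f_{\hat{x}^i}}{\ud\alpha}(0)$ in DFAT regions, and Theorem~\ref{th:irred} for the dichotomy under irreducibility. Your write-up is in fact more detailed than the paper's own summary paragraph.
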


\subsection{What if $V^{i}-F^{i}$ is reducible?}
An $n \times n$ square matrix $A$ is called reducible if the set $\{1, \dots n\}$ can be divided into two disjoint nonempty subsets $\{j_1, \dots j_s\}$ and $\{j_{s+1}, \dots j_n\}$ such that $(A)_{j_p,j_q}=0$ holds whenever $p \in \{1, \dots s\}$ and $q \in \{s+1, \dots n\}$. An equivalent definition is that, with simultaneous row and/or column permutations, the matrix can be placed into a form to have an $s \times (n-s)$  zero block. When an infectious agent is introduced into a fully susceptible population in some region $i$ then --- as pointed out in section \ref{sec:dfe} --- the matrix $F^{i}-V^{i}$ describes  disease propagation in the early stage of the epidemic since the change in the rest of the  population can be assumed negligible during the initial spread. If $F^{i}-V^{i}=$ $-(V^{i}-F^{i})$ is reducible then without loss of generality we can assume that it can be decomposed into 
\begin{equation}\aligned
F^{i}-V^{i}=\begin{pmatrix}
R_{r \times r} & S_{r \times s}\\
S_{s \times r} & R_{s \times s}
\end{pmatrix},
\nonumber\endaligned\end{equation}
where $r=n-s$, the dimensions of the sub-matrices are indicated in lower indexes and $S_{s \times r}$ is the zero matrix. This means that there are $s$ infected classes in region $i$ which have no inflow induced by the other $r=n-s$ infected classes of region $i$ in the initial stage of the epidemic (by the expression ``inflow induced by an infected class'' we mean either transition from the class described by matrix $V^{i}$, or the arrival of new infections generated by the infected class, described by $F^{i}$). \\

In the sequel we will assume that such dynamical separation of the infected classes is not realized in any of the regions, or with other words for each $i$ the matrices $F^{i}$ and $V^{i}$ are defined in the model such that $F^{i}-V^{i}$ is irreducible. The biological consequence of this assumption is that whenever a single infected compartment of a DFAT region imports infection via a link from the corresponding $x$-class of an EAT region then the disease will spread in {\it all} infected classes of the DFAT region, not only in the one which has connection from the EAT region. Furthermore we note that the irreducibility of $F^{i}-V^{i}$ also ensures by means of Lemma \ref{lem:uveq} that the fixed point equation $(F^i-V^i)x^i=0$ of system (\ref{basismodel}) has only componentwise positive solutions besides the disease free equilibrium, which is in conjunction with the assumption made for the equilibria in section \ref{sec:ee}. \\

The criterion on $F^{i}-V^{i}$ being irreducible is satisfied in a wide range of well-known epidemiological models, however we remark that our results obtained in sections \ref{sec:dfe} and \ref{sec:ee} also hold in the general case, i.e., when the matrix is reducible.

\section{When the first derivative doesn't help --- DFAT regions with no connection from EAT regions}\label{sec:nodirect}

We consider an endemic equilibrium $EE^0$ of system $(L_1)$--$(L_r)$, our aim is to investigate the solution $f(\alpha)$ of the fixed point equations of system $(T_1)$--$(T_r)$, for which $f(0)=EE^0$, when $\alpha$ is small but positive. The case of positive fixed points has been treated in Theorem \ref{th:posee}. If $EE^0$ is boundary endemic equilibrium, then we assume that the matrix $V^{i}-F^{i}$ is irreducible for every DFAT region $i$; if for each such $i$ it holds that $\sum_{\substack{j=1 \\j \neq i}}^r  C_x^{ij} \hat{x}^j \neq 0$ then Corollary \ref{cor:summaryirred} describes precisely under what conditions is $f(\alpha)$ a nonnegative steady state. It remains to handle the scenario when there exists a region $i$ which is DFAT but $\sum_{\substack{j=1 \\j \neq i}}^r  C_x^{ij} \hat{x}^j = 0$, that is, the region $i$ is disease free in the disconnected system and so are all the regions which have a direct connection to the infected classes of $i$ in $(T_1)$--$(T_r)$. We emphasize here that under {\it ``direct connection from a region $j$ to $i$''} we doesn't necessarily mean that {\it all} infected classes of $i$ have an inbound link from $j$; in the sequel we will use this term to describe the case when  $C_x^{ij}=\text{diag}(c_{x,1}^{ij}, \dots c_{x,n}^{ij}) \neq 0$, that is, there is an infected compartment of $j$  which is connected to $i$. See Figure \ref{fig:directcon} which further illustrates the definition.\\

Henceforth we proceed with the case when there is a region $i$ which is DFAT in $EE^0$ and has no direct connection from any EAT regions. For such $i$-s Proposition \ref{prop:rhszero}  yields that our approach of investigating the non-negativity of $f(\alpha)$ using Lemma \ref{lem:firstder} and the first derivative from equation (\ref{eq:derx}) fails.  However, we assume that $\frac{\ud f_{\hat{x}^l}}{\ud \alpha} (0) \geq 0$ holds for all DFAT regions where $\hat{x}^l=f_{\hat{x}^l}(0)=0$ and $\sum_{\substack{j=1 \\j \neq l}}^r  C_x^{lj} \hat{x}^j \neq 0$, since if the derivative has a negative component then, as pointed out in Corollary \ref{cor:summaryirred}, $f(\alpha)$ moves out of the feasible phase space when $\alpha$ increases and there is no further examination necessary. First we state a few results for later use.

\begin{proposition}\label{prop:derxn} For any positive integer $N$,  $N\leq r-1$, it holds that
\begin{equation}\aligned
(V^{i}-F^{i})\frac{\ud^N f_{\hat{x}^i}}{\ud \alpha^N}(0)&=N\sum_{\substack{j=1 \\j \neq i}}^r C_x^{ij}\frac{\ud^{N-1} f_{\hat{x}^j}}{\ud \alpha^{N-1}} (0)
\nonumber\endaligned\end{equation}
whenever region $i$, $i \in \{1, \dots r\}$, is DFAT in the boundary equilibrium $EE^0$, and $\frac{\ud^{l} f_{\hat{x}^i}}{\ud \alpha^l}(0)=0$ for every $l <N$.
\end{proposition}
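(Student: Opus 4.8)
The plan is to take the fixed-point equation for the infected compartments of the DFAT region $i$, namely $\mathcal{T}^{i,x}(\alpha,f(\alpha))=0$, which is the first line of (\ref{genmodel}) with $f(\alpha)$ substituted, differentiate it $N$ times in $\alpha$ by the general Leibniz rule, and evaluate at $\alpha=0$, discarding most terms by means of the hypothesis $\frac{\ud^l f_{\hat{x}^i}}{\ud\alpha^l}(0)=0$ for all $l<N$. This is a direct higher-order analogue of the computation in Lemma \ref{lem:firstder}, to which the statement reduces when $N=1$ (there the only hypothesis is $f_{\hat{x}^i}(0)=0$, i.e. DFAT). There are four terms to differentiate, and the whole proof is a matter of identifying which survive at $\alpha=0$.

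First I would treat the incidence term $\mathcal{F}^{i}(f_{\hat{x}^i},f_{\hat{y}^i},f_{\hat{z}^i})\,f_{\hat{x}^i}$: the Leibniz expansion is $\sum_{k=0}^N \binom{N}{k}(\mathcal{F}^{i})^{(k)}(f_{\hat{x}^i})^{(N-k)}$, and at $\alpha=0$ every summand with $k\geq 1$ carries a factor $(f_{\hat{x}^i})^{(N-k)}(0)$ with $N-k<N$, which vanishes by hypothesis; only the $k=0$ term $\mathcal{F}^{i}(0)\,\frac{\ud^N f_{\hat{x}^i}}{\ud\alpha^N}(0)$ remains. Exactly as in Lemma \ref{lem:firstder}, since region $i$ is DFAT the point $(0,\hat{y}^i,\hat{z}^i)$ equals the disease free equilibrium $(0,y_0^i,0)$, so $\mathcal{F}^{i}(0)=F^i$ and this term becomes $F^i\frac{\ud^N f_{\hat{x}^i}}{\ud\alpha^N}(0)$. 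The linear term $-V^{i}f_{\hat{x}^i}$ contributes simply $-V^{i}\frac{\ud^N f_{\hat{x}^i}}{\ud\alpha^N}(0)$.

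The two coupling terms each carry an explicit factor $\alpha$, whose derivatives of order $\geq 2$ vanish, so Leibniz leaves only two summands, $\frac{\ud^N}{\ud\alpha^N}[\alpha\, w(\alpha)] = \alpha\, w^{(N)} + N\, w^{(N-1)}$. For the outflow term $-\sum_{j\neq i}\alpha C_x^{ji} f_{\hat{x}^i}$ the factor $\alpha$ kills $\alpha\, w^{(N)}$ at $\alpha=0$ and the remaining $N(f_{\hat{x}^i})^{(N-1)}(0)$ vanishes by the same hypothesis, so this term drops out entirely. For the inflow term $\sum_{j\neq i}\alpha C_x^{ij} f_{\hat{x}^j}$, however, the surviving piece is $N\sum_{j\neq i} C_x^{ij}(f_{\hat{x}^j})^{(N-1)}(0)$, which need not vanish because the hypothesis constrains only region $i$, not the neighbouring regions $j$. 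Collecting the four contributions yields $(F^i-V^{i})\frac{\ud^N f_{\hat{x}^i}}{\ud\alpha^N}(0) + N\sum_{j\neq i} C_x^{ij}\frac{\ud^{N-1} f_{\hat{x}^j}}{\ud\alpha^{N-1}}(0) = 0$, which rearranges at once to the claimed identity.

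I expect the only real obstacle to be careful bookkeeping of the Leibniz expansions together with a clean justification that differentiating to order $N$ is legitimate: this is where the hypothesis $N\leq r-1$ enters, since $f$ is $r-1$ times continuously differentiable by Theorem \ref{th:iftendemic} and $\mathcal{F}^{i}$ is $C^{r-1}$ on the set where $f_{\hat{y}^i}(\alpha)>0$, exactly as remarked after (\ref{eq:deralpha}). Conceptually the content is slight: the asymmetry between the self-coupling outflow term, which dies, and the cross-coupling inflow term, which feeds in the lower-order derivative of the neighbours, is precisely what turns the identity into a usable recursion for probing regions with no direct inflow from EAT regions.
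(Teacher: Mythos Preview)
Your proof is correct and follows essentially the same approach as the paper's: both differentiate the fixed-point equation $\mathcal{T}^{i,x}(\alpha,f(\alpha))=0$ $N$ times via the Leibniz rule, evaluate at $\alpha=0$, and use the hypothesis $\frac{\ud^{l} f_{\hat{x}^i}}{\ud \alpha^{l}}(0)=0$ for $l<N$ together with the DFAT identification $\mathcal{F}^{i}(0,\hat{y}^i,\hat{z}^i)=F^i$ to discard all but the four surviving terms. Your presentation is in fact somewhat more explicit in tracking why each of the outflow and inflow coupling terms behaves as it does, but the argument is the same.
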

\begin{figure}[t]
\centering
\includegraphics[width=7cm]{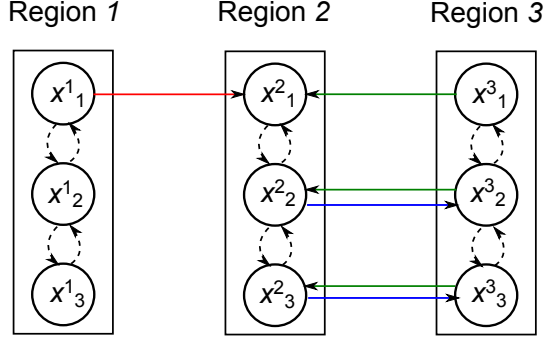}
\caption{We consider three regions with three infected classes ($r=3$, $n=3$). Every infected class of region 2 has an inbound link from region 3 (green arrows). This means that region 2 has direct connection from 3, but 3 also has direct connection from 2 since $c_{x,2}^{32}, c_{x,3}^{32}>0$, that is, there are links from the second and third infected classes of region 2 to the corresponding compartments of region 3 (blue arrows). Region 1 has no direct connection from either 2 or 3, and there is direct connection from region 1 to 2 (red arrow) but not to 3. On the other hand, 3 is reachable from 1 because there is a path from 1 to 3 via region 2. Region 1 is not reachable from any of the other two regions. \label{fig:directcon}}
\end{figure}
\begin{proof}
In case $N=1$, the equation in the proposition reads as (\ref{eq:derx}). Let us assume that $N \geq 2$ and $\frac{\ud f_{\hat{x}^i}}{\ud \alpha} (0)=0$. We return to equation (\ref{eq:deralpha}) to obtain the $N$th derivative of the equation of $x^i$ in (\ref{genmodel}) as
\begin{equation}\aligned\label{eq:deralphaN}
\frac{\ud^N}{\ud \alpha^N} \biggl( \mathcal{F}^{i}(f_{\hat{x}^i}(\alpha), f_{\hat{y}^i}(\alpha), f_{\hat{z}^i}(\alpha)) f_{\hat{x}^i}(\alpha)- V^{i} f_{\hat{x}^i}(\alpha)&\\
-\sum_{\substack{j=1 \\j \neq i}}^r \alpha C_x^{ji} f_{\hat{x}^i}(\alpha)+\sum_{\substack{j=1 \\j \neq i}}^r  \alpha C_x^{ij} f_{\hat{x}^j}(\alpha)\biggr)&=\\
\sum_{l=0}^{N} {N \choose l}\frac{\ud^{N-l}}{\ud \alpha^{N-l}} \biggl( \mathcal{F}^{i}(f_{\hat{x}^i}(\alpha), f_{\hat{y}^i}(\alpha), f_{\hat{z}^i}(\alpha))\biggr) \frac{\ud^l f_{\hat{x}^i}}{\ud \alpha^l} (\alpha)&\\
- V^{i} \frac{\ud^N f_{\hat{x}^i}}{\ud \alpha^N}(\alpha)-\sum_{l=0}^{N} {N \choose l}\sum_{\substack{j=1 \\j \neq i}}^r \frac{\ud^{N-l} (\alpha C_x^{ji})}{\ud \alpha^{N-l}} \cdot \frac{\ud^{l} f_{\hat{x}^i}}{\ud \alpha^{l}}(\alpha)&\\
+\sum_{l=0}^{N} {N \choose l} \sum_{\substack{j=1 \\j \neq i}}^r \frac{\ud^{N-l} (\alpha C_x^{ij})}{\ud \alpha^{N-l}}\cdot \frac{\ud ^{l} f_{\hat{x}^j}}{\ud \alpha^{l}}(\alpha)&=0.
\endaligned\end{equation}
As $f_{\hat{y}^i}(\alpha)>0$, it is satisfied by assumption that $\mathcal{F}^{i}$ is $r-1$ times continuously differentiable in the respective point. Clearly $\frac{\ud^{N-l} (\alpha C_x^{ij})}{\ud \alpha^{N-l}}=0$ whenever $N-l\geq 2$, moreover $\frac{\ud (\alpha C_x^{ij})}{\ud \alpha}=C_x^{ij}$, so if $\frac{\ud^{l} f_{\hat{x}^i}}{\ud \alpha^{l}} (0)= 0$ holds for all $l < N$ then (\ref{eq:deralphaN}) at $\alpha=0$ reads
\begin{equation}\aligned\label{eq:derxN}
(V^{i}-F^{i})\frac{\ud^N f_{\hat{x}^i}}{\ud \alpha^N}(0)&=N\sum_{\substack{j=1 \\j \neq i}}^r C_x^{ij}\frac{\ud^{N-1} f_{\hat{x}^j}}{\ud \alpha^{N-1}} (0)
\endaligned\end{equation}
since $(f_{\hat{x}^i}(0), f_{\hat{y}^i}(0), f_{\hat{z}^i}(0))=(0, y_0^i,0)$ and $F^{i}=\mathcal{F}(0, y_0^i,0)$. 
\end{proof}

Our interpretation of the term {\it ``direct connection from a region $j$ to the infected classes of $i$''} can be extended to the expression {\it ``path from a region $j$ to the infected classes of $i$''}, representing a chain of direct connections via other regions, starting at $j$ and ending in $i$. Figure \ref{fig:directcon} provides an example for three regions, where there is a path from region 1 to 3 via 2 (this is, $c_{x,1}^{21}, c_{x,2}^{32}>0$). We note, however, that the path doesn't necessarily consist of the same type of infected classes in the regions: in terms of the above example, infection imported to region 2 via the link from $x^1_1$ to $x^2_1$ spreads in other infected classes of region 2 as well by means of the irreducibility of $V^2-F^2$ (represented by dashed arrows in the figure), enabling the disease to reach region 3 via the links from $x^2_2$ to $x^3_2$ and from $x^2_3$ to $x^3_3$.  We also remark that the notation ``path from a region $j$ to the infected classes of $i$'' includes the special case when the path consists of $i$ and $j$ only, i.e., there is a direct connection from $j$ to $i$. We now define the shortest distance from EAT regions to a DFAT region.
\begin{definition}\label{def:mi}
Consider a region $i$ which is DFAT in the boundary endemic equilibrium $EE^0$. We define $M_i$ as the least nonnegative integer such that in system $(T_1)$--$(T_r)$ there is a path starting with an EAT region $j$, ending with region $i$ and containing $M_i$ regions in-between. If there is no such path then let $M_i= r-1$.
\end{definition}
If there is a direct connection from an EAT region $j$ to the infected classes of $i$ then this definition implies $M_i=0$. We also note that $M_i \leq r-2$ always holds whenever the path described above exists. In the sequel we omit the words ``infected classes'' from the expression ``direct connection (path) for $j$ to $i$'' for convenience. Clearly infection from endemic regions to disease free territories are never imported via links between non-infected compartments of different regions, so to decide whether the disease arrives to a region it is enough to know the graph connecting infected compartments.  
\begin{lemma}\label{lem:derivativezero}
Assume that $f_{\hat{x}^j}(\alpha) \geq 0$ is satisfied on an interval $[0,\alpha^*)$ whenever a region $j$, $j \in \{1, \dots r\}$, is DFAT in the boundary endemic equilibrium $EE^0$. Then for any DFAT region $i$, $i \in \{1, \dots r\}$, it holds that $\frac{\ud^{l} f_{\hat{x}^i}}{\ud \alpha^l}(0)=0$ for $l \leq M_i$.
\end{lemma}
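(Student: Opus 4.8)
The plan is to prove the statement by strong induction on the derivative order $l$, reducing the vanishing of the $l$-th derivative of $f_{\hat{x}^i}$ to the $(l-1)$-th derivatives of its network neighbours via Proposition \ref{prop:derxn}, and closing each step with the irreducibility result Lemma \ref{lem:uveq} together with the nonnegativity hypothesis of the statement. Throughout I use that $V^{i}-F^{i}$ is irreducible for every DFAT region, which is the standing assumption of this section.

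Before the induction I would record a combinatorial monotonicity property of the distance $M_i$: if there is a direct connection from $j$ to $i$, that is $C_x^{ij}\neq 0$, and some path from an EAT region to $j$ exists, then concatenating that path with the edge $j\to i$ yields a walk from an EAT region to $i$; extracting a path from it gives $M_i\leq M_j+1$. The consequence I need is this: if $i$ is DFAT with $M_i\geq l\geq 1$ and $C_x^{ij}\neq 0$, then $j$ cannot be EAT (an EAT neighbour would force $M_i=0$), so $j$ is DFAT, and either no EAT-to-$j$ path exists, in which case $M_j=r-1\geq l-1$ automatically, or such a path exists and then $M_j\geq M_i-1\geq l-1$. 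In both cases $M_j\geq l-1$.

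The base case $l=0$ is immediate: $f_{\hat{x}^i}(0)=\hat{x}^i=0$ since $i$ is DFAT. For the inductive step, fix a DFAT region $i$ with $M_i\geq l$ and assume the claim at all orders $<l$. Since $M_i\geq l>l'$ for every $l'<l$, the induction hypothesis applied to $i$ itself gives $\frac{\ud^{l'} f_{\hat{x}^i}}{\ud \alpha^{l'}}(0)=0$ for all $l'<l$, which is exactly the hypothesis of Proposition \ref{prop:derxn}; as $1\leq l\leq M_i\leq r-1$, that proposition applies with $N=l$ and yields
\begin{equation}
(V^{i}-F^{i})\,\frac{\ud^l f_{\hat{x}^i}}{\ud \alpha^l}(0) = l\sum_{\substack{j=1\\ j\neq i}}^r C_x^{ij}\,\frac{\ud^{l-1} f_{\hat{x}^j}}{\ud \alpha^{l-1}}(0).
\nonumber
\end{equation}
Every summand with $C_x^{ij}=0$ vanishes, and for $C_x^{ij}\neq 0$ the combinatorial observation above shows $j$ is DFAT with $M_j\geq l-1$, so the induction hypothesis at order $l-1$ gives $\frac{\ud^{l-1} f_{\hat{x}^j}}{\ud \alpha^{l-1}}(0)=0$. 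Hence the right-hand side is zero and $v:=\frac{\ud^l f_{\hat{x}^i}}{\ud \alpha^l}(0)$ solves $(V^{i}-F^{i})v=0$.

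It remains to deduce $v=0$. If $\rn^i<1$ this is immediate, since $(V^{i}-F^{i})^{-1}\geq 0$ exists as in the proof of Theorem \ref{th:r0<1}. For $\rn^i>1$ I would first extract the sign of $v$ from the standing hypothesis $f_{\hat{x}^i}(\alpha)\geq 0$ on $[0,\alpha^*)$: because all derivatives of $f_{\hat{x}^i}$ up to order $l-1$ vanish at $0$, the Taylor expansion reads $f_{\hat{x}^i}(\alpha)=\frac{\alpha^l}{l!}\,v+o(\alpha^l)$, so dividing by $\alpha^l>0$ and letting $\alpha\to 0^+$ componentwise forces $v\geq 0$. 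With $v\geq 0$ and $(V^{i}-F^{i})v=0$, the $u=0$ case of Lemma \ref{lem:uveq} leaves only $v=0$, closing the induction. The main obstacle I anticipate is the bookkeeping around the distance $M$—proving the inequality $M_i\leq M_j+1$ cleanly and matching each order of differentiation to the correct value of $M$—together with the passage from the global nonnegativity of $f_{\hat{x}^i}$ to componentwise nonnegativity of its lowest possibly nonvanishing derivative.
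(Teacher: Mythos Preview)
Your proof is correct and follows essentially the same route as the paper: both arguments proceed by induction on the derivative order, use Proposition \ref{prop:derxn} to express $(V^i-F^i)\frac{\ud^l f_{\hat{x}^i}}{\ud\alpha^l}(0)$ in terms of the $(l-1)$-th derivatives of neighbouring regions, invoke the distance inequality $M_j\geq M_i-1$ for $C_x^{ij}\neq 0$ to make the right-hand side vanish, and then use Lemma \ref{lem:uveq} together with the nonnegativity hypothesis to rule out a negative component of the derivative. Your presentation is somewhat tidier than the paper's --- you state the distance monotonicity up front and make the Taylor-expansion step for the $\rn^i>1$ case explicit, whereas the paper walks through the cases $M_{i_1}\geq 1$, $M_{i_2}\geq 2$ concretely before stating the general pattern --- but the mathematical content is the same.
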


\begin{proof}
The inequality $M_{i_0} \geq 0$ is satisfied in every region $i_0$ with $\hat{x}^{i_0}=0$. The case when $M_{i_0}=0$ is trivial, so we consider a region $i_1$ for which $M_{i_1} \geq 1$, and using that $\hat{x}^{i_1}=0$ we derive
\begin{equation}\aligned
(V^{i_1}-F^{i_1})\frac{\ud f_{\hat{x}^{i_1}}}{\ud \alpha}(0)&=\sum_{\substack{j=1 \\j \neq i_1}}^r C_x^{{i_1},j} f_{\hat{x}^j} (0),
\nonumber\endaligned\end{equation}
which is similar to equation (\ref{eq:derx}). For every $j$ such that $C_x^{{i_1},j} \neq 0$ it follows from  $M_{i_1} \neq 0$ that $f_{\hat{x}^j}(0)= 0$, thus the right hand side is zero. Lemma \ref{lem:uveq} yields that $\frac{\ud f_{\hat{x}^{i_1}}}{\ud \alpha}(0) $ is either zero (in case $\rn^{i_1}<1$ this is the only possibility) or has a negative component (this can be realized only if $\rn^{i_1}>1$). Nevertheless, the derivative having a negative component together with $\hat{x}^{i_1}= 0$ contradicts the assumption that $f_{\hat{x}^{i_1}}(\alpha) \geq 0$ for small $\alpha$-s, this observation makes  $\frac{\ud f_{\hat{x}^{i_1}}}{\ud \alpha}(0)=0$ the only possible case.\\

Next consider a region $i_2$ where $\hat{x}^{i_2}= 0$ and $M_{i_2} \geq 2$. We have $\frac{\ud f_{\hat{x}^{i_2}}}{\ud \alpha}(0)=0 $ since $M_{i_2} \geq 2 \geq 1$, so Proposition \ref{prop:derxn} yields the equation
\begin{equation}\aligned
(V^{i_2}-F^{i_2})\frac{\ud^2 f_{\hat{x}^{i_2}}}{\ud \alpha^2}(0)&=2 \sum_{\substack{j=1 \\j \neq i_2}}^r C_x^{{i_2},j} \frac{\ud f_{\hat{x}^j}}{\ud \alpha} (0).
\nonumber\endaligned\end{equation}
We note that each region $j$ for which $C_x^{{i_2},j} \neq 0$ is DFAT since $M_{i_2} \geq 1$. Thus, for $M_j$ it follows that $M_j \geq 1$, henceforth $\frac{\ud f_{\hat{x}^j}}{\ud \alpha} (0)= 0$ holds by induction, and the right hand side of the last equation is zero. Using Lemma \ref{lem:uveq} there are again two possibilities for $\frac{\ud^2 f_{\hat{x}^{i_2}}}{\ud^2 \alpha}(0)$, namely that it is either zero or has a negative component; but $f_{\hat{x}^{i_2}}(0)=0$, $\frac{\ud f_{\hat{x}^{i_2}}}{\ud \alpha}(0)=0$ and $\frac{\ud^2 f_{\hat{x}^{i_2}}}{\ud^2 \alpha}(0) \ngeq 0$ would imply the existence of an $\alpha^{**}$ such that $f_{\hat{x}^{i_2}}(\alpha)\ngeq 0$ for $\alpha <\alpha^{**}$ which is impossible. We conclude that $\frac{\ud^2 f_{\hat{x}^{i_2}}}{\ud \alpha^2}(0)=0$ holds for all regions where $M_{i_2} \geq 2$. \\
The continuation of these procedure yields that $\frac{\ud^l f_{\hat{x}^{i_l}}}{\ud \alpha^l}(0)=0$ for any region $i_l$ where $M_{i_l} \geq l$ holds. This proves the lemma.
\end{proof}
We say that {\it region $i$ is reachable from region $j$} if there is a path from (the infected classes of) $j$ to (the infected classes of) $i$. Directly connected regions are clearly reachable. Now we are in the position to prove one of the main results of this section.
\begin{theorem}\label{th:pluszeroro>1}
Assume that in the boundary endemic equilibrium $EE^0$ there is a region $i$ which is DFAT and for which $\rn^i>1$ holds, furthermore $i$ is reachable from an EAT region. Then there is an $\alpha^* >0$ such that $f(\alpha)$ has a negative component for $\alpha \in (0,\alpha^*)$, meaning that $f(0)=EE^0$ moves out of the feasible phase space when traveling is introduced.
\end{theorem}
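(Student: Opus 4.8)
The plan is to argue by contradiction: I assume that the conclusion fails, so that $f(\alpha)$ is componentwise nonnegative for arbitrarily small $\alpha>0$. Since every block $f_{\hat{x}^w}$ is $C^{r-1}$ and vanishes at $\alpha=0$ for each DFAT region $w$, a negative leading Taylor coefficient in any component would force that component to be strictly negative on a whole right-neighbourhood of $0$, which is incompatible with the assumption; hence the negation in fact yields $f_{\hat{x}^w}(\alpha)\geq 0$ for all DFAT regions $w$ on a common interval $[0,\alpha^*)$ (the EAT blocks stay positive by continuity). This is exactly the hypothesis of Lemma \ref{lem:derivativezero}, so $\frac{\ud^{l} f_{\hat{x}^w}}{\ud \alpha^{l}}(0)=0$ for all $l\leq M_w$ and every DFAT region $w$. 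The whole proof then reduces to showing that, under this standing nonnegativity, the first possibly nonzero derivative $\frac{\ud^{M_i+1} f_{\hat{x}^i}}{\ud \alpha^{M_i+1}}(0)$ of the distinguished region $i$ has a strictly negative component; Taylor's formula together with the vanishing of the lower derivatives then makes the corresponding component of $f_{\hat{x}^i}(\alpha)$ negative on $(0,\alpha^*)$, the desired contradiction.

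To reach region $i$ I would propagate positivity along a shortest path. Since $i$ is reachable, $M_i\leq r-2$ is the true shortest distance (Definition \ref{def:mi}), realised by a path $j_0\to j_1\to\cdots\to j_{M_i}\to i$ with $j_0$ EAT and $j_1,\dots,j_{M_i}$ DFAT, and I would first record the graph bookkeeping that a direct predecessor $l$ of any region $w$ satisfies $M_l\geq M_w-1$, with equality along a shortest path. I then prove by induction on $m=M_w$ the claim: for every reachable DFAT region $w$ lying strictly before $i$, the leading derivative $D_w:=\frac{\ud^{m+1} f_{\hat{x}^w}}{\ud \alpha^{m+1}}(0)$ is strictly positive, $D_w>0$. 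For the base case $m=0$ equation (\ref{eq:derx}) reads $(V^{w}-F^{w})D_w=\sum_{\substack{j=1\\ j\neq w}}^r C_x^{wj}\hat{x}^j$, whose right-hand side is a nonzero nonnegative vector because $w$ has a direct link from the EAT region $j_0$; the standing nonnegativity rules out $\rn^w>1$ (which by Lemma \ref{lem:uveq} would give $D_w\ngeq0$ and hence a negative component of $f$), so $\rn^w<1$ and Lemma \ref{lem:uveq} yields $D_w>0$. For the inductive step I apply Proposition \ref{prop:derxn} with $N=m+1\leq r-1$; because $M_w=m\geq1$ forbids any direct EAT predecessor, all contributing terms come from DFAT predecessors $l$, and by the vanishing from Lemma \ref{lem:derivativezero} only those with $M_l=m-1$ survive, giving $(V^{w}-F^{w})D_w=(m+1)\sum_{\substack{l:\,C_x^{wl}\neq 0\\ M_l=m-1}} C_x^{wl}D_l$. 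The shortest-path predecessor contributes a strictly positive $D_l$ by the inductive hypothesis, so the right-hand side is nonnegative and nonzero; as before nonnegativity of $f$ forces $\rn^w<1$, and irreducibility of $V^w-F^w$ together with Lemma \ref{lem:uveq} gives $D_w>0$.

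Finally I would run the same computation one step further, at $w=i$: Proposition \ref{prop:derxn} with $N=M_i+1$ gives $(V^{i}-F^{i})\frac{\ud^{M_i+1} f_{\hat{x}^i}}{\ud \alpha^{M_i+1}}(0)=(M_i+1)\sum_{\substack{l:\,C_x^{il}\neq 0\\ M_l=M_i-1}} C_x^{il}D_l$, and the shortest-path predecessor $j_{M_i}$ (with $M_{j_{M_i}}=M_i-1$ and $C_x^{i j_{M_i}}\neq0$) contributes $C_x^{i j_{M_i}}D_{j_{M_i}}$, a nonzero nonnegative vector because $D_{j_{M_i}}>0$. Thus the right-hand side is $\geq 0$ and $\neq 0$, but now $\rn^i>1$, so by Lemma \ref{lem:uveq} (using irreducibility of $V^i-F^i$) the solution $\frac{\ud^{M_i+1} f_{\hat{x}^i}}{\ud \alpha^{M_i+1}}(0)$ has a negative component, completing the contradiction. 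I expect the main obstacle to be the middle step: proving that the right-hand side driving region $i$ is genuinely nonzero rather than merely nonnegative. This is precisely where strict positivity of the intermediate leading derivatives is needed, and it hinges on the irreducibility of $V^w-F^w$ (so that Lemma \ref{lem:uveq} produces strictly positive, not just nonnegative, solutions) and on the graph bookkeeping that isolates the distance-$(m-1)$ predecessors as the only ones feeding the order-$m$ derivative.
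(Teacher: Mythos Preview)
Your proposal is correct and follows essentially the same approach as the paper: a proof by contradiction that propagates strict positivity of successive derivatives along a shortest path from an EAT region, using Proposition~\ref{prop:derxn}, Lemma~\ref{lem:derivativezero}, and Lemma~\ref{lem:uveq}, until the terminal DFAT region with $\rn^i>1$ forces a negative leading derivative. The only organizational difference is that the paper selects the \emph{closest} DFAT region with reproduction number exceeding one (so minimality of $L$ guarantees a priori that the intermediate path regions satisfy $\rn<1$), whereas you keep the given $i$ and deduce $\rn^w<1$ for each intermediate $w$ from the standing nonnegativity hypothesis; both devices achieve the same end.
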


\begin{proof}
The proof is by contradiction. We assume that $EE^0$ is such that there are regions $i_0$ and $i_+$ where $\hat{x}^{i_0}=0$, $\hat{x}^{i_+}>0$, $\rn^{i_0}>1$ and $i_0$ is reachable from $i_+$, moreover there exists an $\alpha^{**}>0$ such that $f(\alpha) \geq 0$ for $0 \leq \alpha \leq \alpha^{**}$, this is, the equilibrium $EE^0=f(0)$ of the disconnected system remains biologically meaningful in the system with traveling. This also means that for all $j$ with $\hat{x}^j=0$ it necessarily holds that $\frac{\ud f_{\hat{x}^{j}}}{\ud \alpha}(0)\geq 0$. \\

If regions $i_0$ and $i_+$, as described above, exist then there is a minimal distance between such regions, this is, there exists a least nonnegative integer $L\leq r-2$ such that there is a path (connecting infected compartments of regions) from an EAT region via $L$ regions to a region which is DFAT in $(L_1)$--$(L_r)$. In the case when $L=0$ Theorem \ref{th:irred} immediately yields contradiction, so we can assume that $L \geq 1$. We label the regions which are part of the minimal-length path by $i$, $i_1^{*}$, $\dots$ $i_{L}^{*}$, $i_{L+1}^{*}$, where $\hat{x}^i=\hat{x}^{i_1^{*}}= \dots \hat{x}^{i_{L}^{*}}=0$, $\hat{x}^{i_{L+1}^{*}}> 0$, moreover note that $\rn^i>1$ and $\rn^{i_j^*}<1$ hold for $j = 1, \dots L$. See the path depicted in Figure \ref{fig:Lpath} in the Appendix.\\

The fact that $\hat{x}^{i_{L}^{*}}=f_{\hat{x}^{i_{L}^{*}}}(0)=0$ gives
\begin{equation}\aligned
(V^{i_{L}^{*}}-F^{i_{L}^{*}})\frac{\ud f_{\hat{x}^{i_{L}^{*}}}}{\ud \alpha}(0)&=\sum_{\substack{j=1 \\j \neq i_{L}^{*}}}^r C_x^{{i_{L}^{*}},j} f_{\hat{x}^j} (0)
\nonumber\endaligned\end{equation}
by Proposition \ref{prop:derxn}. The equation has a non-zero right hand side since $\hat{x}^{i_{L+1}^{*}}=f_{\hat{x}^{i_{L+1}^{*}}} (0)>0$, so Lemma \ref{lem:uveq} and $\rn^{i_{L}^{*}}<1$ imply $\frac{\ud f_{\hat{x}^{i_{L}^{*}}}}{\ud \alpha}(0)>0$. A similar equation
\begin{equation}\aligned
(V^{i_{L-1}^{*}}-F^{i_{L-1}^{*}})\frac{\ud f_{\hat{x}^{i_{L-1}^{*}}}}{\ud \alpha}(0)&=\sum_{\substack{j=1 \\j \neq i_{L-1}^{*}}}^r C_x^{{i_{L-1}^{*}},j} f_{\hat{x}^j} (0)
\nonumber\endaligned\end{equation}
follows from $\hat{x}^{i_{L-1}^{*}}=0$. We note that $M_{i_{L-1}^{*}}= 1$, where $M$ was defined in Definition \ref{def:mi}, hence $f_{\hat{x}^j} (0)=0$ holds for every $j$ such that $C_x^{{i_{L-1}^{*}},j} \neq 0$.  The zero right hand side, Lemma \ref{lem:uveq} and $\rn^{i_{L-1}^{*}}<1$ yield $\frac{\ud f_{\hat{x}^{i_{L-1}^{*}}}}{\ud \alpha}(0)=0$, so we can apply Proposition \ref{prop:derxn} to derive
\begin{equation}\aligned
(V^{i_{L-1}^{*}}-F^{i_{L-1}^{*}})\frac{\ud^2 f_{\hat{x}^{i_{L-1}^{*}}}}{\ud \alpha^2}(0)&=2 \sum_{\substack{j=1 \\j \neq {i_{L-1}^{*}}}}^r C_x^{{i_{L-1}^{*}},j} \frac{\ud f_{\hat{x}^j}}{\ud \alpha} (0).
\nonumber\endaligned\end{equation}
If there is a $j$ such that $C_x^{{i_{L-1}^{*}},j} \neq 0$ and $\frac{\ud f_{\hat{x}^j}}{\ud \alpha} (0) \ngeq 0$ then $f_{\hat{x}^j} (0)=0$ would mean that for small $\alpha$-s $f_{\hat{x}^j} (\alpha)$ has a negative component and $f(\alpha)$, $f(0)=EE^0$, is not in the nonnegative cone, which violates our assumption that $f(\alpha)\geq 0$ for $\alpha$ sufficiently small. Thus each such derivative is necessarily nonnegative, moreover we have showed that $\frac{\ud f_{\hat{x}^{i_{L}^{*}}}}{\ud \alpha}(0)>0$ is satisfied, which makes the right hand side of the last equation positive; this, with the use Lemma \ref{lem:uveq}, implies  $\frac{\ud^2 f_{\hat{x}^{i_{L-1}^{*}}}}{\ud^2 \alpha}(0)>0$ since $\rn^{i_{L-1}^{*}}<1$.\\ 

Next we consider region $i_{L-2}^{*}$, where $M_{i_{L-2}^{*}}=2$. For any region $j$ for which $C_x^{{i_{L-2}^{*}},j} \neq 0$ it holds that $M_j \geq 1$, thus $f_{\hat{x}^j} (0)=0$ and $\frac{\ud f_{\hat{x}^j}}{\ud \alpha} (0) = 0$ hold by Lemma \ref{lem:derivativezero} and the assumption that $f(\alpha) \geq 0$ for small $\alpha$-s. Thus, the right hand side of equation
\begin{equation}\aligned
(V^{i_{L-2}^{*}}-F^{i_{L-2}^{*}})\frac{\ud f_{\hat{x}^{i_{L-2}^{*}}}}{\ud \alpha}(0)&=\sum_{\substack{j=1 \\j \neq i_{L-2}^{*}}}^r C_x^{{i_{L-2}^{*}},j} f_{\hat{x}^j} (0)
\nonumber\endaligned\end{equation}
is zero, from $\rn^{i_{L-2}^{*}}<1$ and  Lemma \ref{lem:uveq} it follows that $\frac{\ud f_{\hat{x}^{i_{L-2}^{*}}}}{\ud \alpha}(0)=0$ and thus Proposition \ref{prop:derxn} yields
\begin{equation}\aligned
(V^{i_{L-2}^{*}}-F^{i_{L-2}^{*}})\frac{\ud^2 f_{\hat{x}^{i_{L-2}^{*}}}}{\ud \alpha^2}(0)&=2 \sum_{\substack{j=1 \\j \neq i_{L-2}^{*}}}^r C_x^{{i_{L-2}^{*}},j} \frac{\ud f_{\hat{x}^j}}{\ud \alpha} (0).
\nonumber\endaligned\end{equation}
We get again that $\frac{\ud^2 f_{\hat{x}^{i_{L-2}^{*}}}}{\ud \alpha^2}(0)=0$ since, as we have seen above, all derivatives in the right hand side are zero and $\rn^{i_{L-2}^{*}}<1$ also holds, so Lemma \ref{lem:uveq} makes the second derivative of $f_{\hat{x}^{i_{L-2}^{*}}}$ zero.
Finally, using that $\frac{\ud^l f_{\hat{x}^{i_{L-2}^{*}}}}{\ud \alpha^l}(0)=0$ for $l=0,1,2$, we derive
 \begin{equation}\aligned
 (V^{i_{L-2}^{*}}-F^{i_{L-2}^{*}})\frac{\ud^3 f_{\hat{x}^{i_{L-2}^{*}}}}{\ud \alpha^3}(0)&=3 \sum_{\substack{j=1 \\j \neq i_{L-2}^{*}}}^r C_x^{{i_{L-2}^{*}},j} \frac{\ud^2 f_{\hat{x}^j}}{\ud \alpha^2} (0),
 \nonumber\endaligned\end{equation}
where $C_x^{{i_{L-2}^{*}},i_{L-1}^{*}} \neq 0$ and  $\frac{\ud^2 f_{\hat{x}^{i_{L-1}^{*}}}}{\ud \alpha^2}(0)>0$. If there is a $j$, $C_x^{{i_{L-2}^{*}},j} \neq 0$, for which $\frac{\ud^2 f_{\hat{x}^j}}{\ud \alpha^2} (0)$ has a negative component then so does $f_{\hat{x}^j}(\alpha)$ and $f(\alpha)$ for small $\alpha$-s since $\frac{\ud f_{\hat{x}^j}}{\ud \alpha} (0) = 0$ and $f_{\hat{x}^j}(0) = 0$, which is a contradiction. Otherwise the right hand side of the last equation is positive (it holds that $\frac{\ud^2 f_{\hat{x}^{i_{L-1}^{*}}}}{\ud \alpha^2}(0)>0$), thus the positivity of  $\frac{\ud^3 f_{\hat{x}^{i_{L-2}^{*}}}}{\ud \alpha^3}(0)$ follows from $\rn^{i_{L-2}^{*}}<1$ and Lemma \ref{lem:uveq}.\\

Following these arguments one can prove that  $\frac{\ud^{l+1} f_{\hat{x}^{i_{L-l}^{*}}}}{\ud \alpha^{l+1}}(0)>0$ for $l=0, 1, \dots L-1$ (we remark that for $l=L-1$ this reads $\frac{\ud^{L} f_{\hat{x}^{i_{1}^{*}}}}{\ud \alpha^{L}}(0)>0$), and that for any fixed $l$ and $k \leq l$ it holds that $\frac{\ud^{k} f_{\hat{x}^{i_{L-l}^{*}}}}{\ud \alpha^{k}}(0)=0$.
We note that $M_i=L$, which according to Lemma \ref{lem:derivativezero} also means that $\frac{\ud^l f_{\hat{x}^i}}{\ud \alpha^l} (0)=0$ for $l \leq M_i=L$ since $f_{\hat{x}^i}(\alpha) \geq 0$ holds for small $\alpha$-s by assumption. Henceforth we can apply Proposition \ref{prop:derxn} and derive
  \begin{equation}\aligned
   (V^{i}-F^{i})\frac{\ud^{L+1} f_{\hat{x}^{i}}}{\ud \alpha^{L+1}}(0)&=L \sum_{\substack{j=1 \\j \neq i}}^r C_x^{i,j} \frac{\ud^L f_{\hat{x}^j}}{\ud \alpha^L} (0).
   \nonumber\endaligned\end{equation}
$M_i=L$ implies $M_j \geq L-1$ for any $j$ for which $C_x^{i,j} \neq 0$, hence $\frac{\ud^l f_{\hat{x}^j}}{\ud \alpha^l} (0)=0$ is satisfied for $l=0, 1, \dots L-1$.  The assumption $f(\alpha) \geq 0$ for small $\alpha$-s yields  $ f_{\hat{x}^j}(\alpha) \geq 0$ for any region $j$ with $C_x^{i,j} \neq 0$, so $\frac{\ud^L f_{\hat{x}^j}}{\ud \alpha^L} (0) \ngeq 0$ is impossible; this together with $\frac{\ud^{L} f_{\hat{x}^{i_{1}^{*}}}}{\ud \alpha^{L}}(0)>0$ results in the positivity of the right hand side of the above equation. As $\rn^{i}>1$ holds, it follows from Lemma \ref{lem:uveq} that $\frac{\ud^{L+1} f_{\hat{x}^{i}}}{\ud \alpha^{L+1}}(0)$ has a negative component, but we showed that $\frac{\ud^{l} f_{\hat{x}^{i}}}{\ud \alpha^{l}}(0)=0$ when $0\leq l \leq L$, so for small $\alpha$-s $f_{\hat{x}^i} (\alpha) \ngeq 0$ follows, a contradiction. The proof is complete.
\end{proof}

Theorem \ref{th:pluszeroro>1} ensures that, for a boundary endemic equilibrium $EE^0$ of $(L_1)$--$(L_r)$, the point $f(\alpha)$ defined by Theorem \ref{th:iftendemic} with $f(0)=EE^0$ will not be a biologically meaningful fixed point of system $(T_1)$--$(T_r)$ if there is a DFAT region $i$ in $EE^0$ which has local reproduction number greater than one and is reachable from another region which is EAT in $EE^0$. The question, whether the condition $\rn^i>1$ is crucial, comes naturally. We need the following result which is similar to Lemma \ref{lem:derivativezero}.

\begin{lemma}\label{lem:derivzeroro<1}
Assume that in the boundary endemic equilibrium $EE^0$ there is no DFAT region $j$ for which $\rn^j>1$ and $M_j < r-1$. Then for a region $i$ which is DFAT it holds that $\frac{\ud^{l} f_{\hat{x}^{i}}}{\ud \alpha^{l}}(0)=0$ for $l \leq M_i$.
\end{lemma}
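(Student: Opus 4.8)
The plan is to split the DFAT regions according to whether they are reachable from an EAT region, to dispose of the unreachable ones by an invariance argument, and to treat the reachable ones by a strong induction on the order of differentiation that transcribes the proof of Lemma~\ref{lem:derivativezero}, with the present hypothesis taking over the role that nonnegativity of $f$ played there.

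First I would collect the unreachable regions into $\mathcal{U}=\{\,i\ \text{DFAT}:M_i=r-1\,\}$ and observe that $\mathcal{U}$ is closed under incoming infected links: if $i\in\mathcal{U}$ and $C_x^{ij}\neq0$ then $j\in\mathcal{U}$. Indeed such a $j$ cannot be EAT, for a direct link from an endemic region would give $M_i=0$; and if $j$ were reachable, appending the link $j\to i$ would exhibit a path from an EAT region to $i$, forcing $M_i\leq r-2$ by the remark after Definition~\ref{def:mi} and contradicting $M_i=r-1$. Consequently the set $\{\,x^i=0\ \text{for every}\ i\in\mathcal{U}\,\}$ is invariant for $(T_1)$--$(T_r)$: once these infected components vanish, the only inflow terms $\sum_j\alpha C_x^{ij}x^j$ in the corresponding $x^i$-equations carry indices $j\in\mathcal{U}$ and hence vanish too. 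Since $EE^0$ lies in this set and, at $\alpha=0$, the equations for $\{x^i:i\in\mathcal{U}\}$ decouple from all other variables with Jacobian block $\text{diag}(F^i-V^i)_{i\in\mathcal{U}}$ (invertible because invertibility of $(\partial\mathcal{T}/\partial\mathcal{X})(0,EE^0)$ forces each $\rn^i\neq1$), the full Jacobian is block triangular and its reduced part is invertible; the implicit function theorem then yields an equilibrium branch of $(T_1)$--$(T_r)$ lying in the invariant set (where the $\mathcal{U}$-infected equations hold automatically) and issuing from $EE^0$. By the uniqueness in Theorem~\ref{th:iftendemic} this branch is $f$ itself, so $f_{\hat{x}^i}(\alpha)\equiv0$ for $i\in\mathcal{U}$ and every derivative of $f_{\hat{x}^i}$ at $0$ vanishes, settling the claim for these regions.

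Next I would prove by strong induction on $l$ that $\frac{\ud^l f_{\hat{x}^i}}{\ud \alpha^l}(0)=0$ for every DFAT region $i$ with $M_i\geq l$. The case $l=0$ is $f_{\hat{x}^i}(0)=\hat{x}^i=0$. For the step, take $i$ DFAT with $M_i\geq l\geq1$; if $i\in\mathcal{U}$ the conclusion is already available, so assume $M_i<r-1$, whence the hypothesis gives $\rn^i<1$. The inductive hypothesis makes the lower derivatives of $f_{\hat{x}^i}$ vanish, so Proposition~\ref{prop:derxn} applies and gives
\begin{equation}\aligned
(V^{i}-F^{i})\frac{\ud^l f_{\hat{x}^i}}{\ud \alpha^l}(0)&=l\sum_{\substack{j=1\\ j\neq i}}^r C_x^{ij}\frac{\ud^{l-1} f_{\hat{x}^j}}{\ud \alpha^{l-1}}(0).
\nonumber\endaligned\end{equation}
Each $j$ with $C_x^{ij}\neq0$ is DFAT (as $M_i\geq1$ rules out a direct link from an EAT region) and obeys $M_j\geq M_i-1\geq l-1$, since otherwise extending a shorter path to $j$ by the link $j\to i$ would violate the minimality defining $M_i$; hence the inductive hypothesis (or the invariance step, when $j\in\mathcal{U}$) gives $\frac{\ud^{l-1} f_{\hat{x}^j}}{\ud \alpha^{l-1}}(0)=0$. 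The right-hand side thus vanishes, and Lemma~\ref{lem:uveq} with $u=0$ and $\rn^i<1$ forces $\frac{\ud^l f_{\hat{x}^i}}{\ud \alpha^l}(0)=0$, closing the induction and proving the lemma.

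I expect the induction to be routine---it repeats Lemma~\ref{lem:derivativezero} with the condition $\rn^i<1$ (supplied here by the hypothesis for every \emph{reachable} DFAT region) replacing the appeal to $f\geq0$ when the ``negative component'' alternative of Lemma~\ref{lem:uveq} is discarded. The real obstacle is the invariance step for the unreachable regions: the hypothesis deliberately permits $\rn^i>1$ when $M_i=r-1$, and for such regions the sign dichotomy of Lemma~\ref{lem:uveq} stays inconclusive at every order, so no derivative-based argument can work and one must instead exploit that the infection is never transported into them. Verifying the three ingredients there---closedness of $\mathcal{U}$, the decoupling of its Jacobian block at $\alpha=0$, and the identification of the reduced branch with $f$ via uniqueness---is where the care is needed.
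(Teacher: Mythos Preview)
Your proposal is correct and follows essentially the same approach as the paper: both separate the unreachable DFAT regions ($M_i=r-1$) and treat them by an invariance argument, then handle the reachable ones by induction on the order of differentiation using Proposition~\ref{prop:derxn} and Lemma~\ref{lem:uveq} together with the hypothesis ensuring $\rn^i<1$. The only notable difference is that your treatment of the unreachable regions is considerably more detailed---the paper dispatches this case in one sentence (``$i$ doesn't import any infection by means of traveling and hence we have $f_{\hat{x}^i}(\alpha)=0$ for all $\alpha>0$'') whereas you spell out the closedness of $\mathcal{U}$, the block-triangular Jacobian, and the identification via uniqueness; this is a matter of rigor rather than a genuinely different route.
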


\begin{proof}
If $i$ is disease free for $\alpha=0$ and the region is not reachable from any region $j$ with $\hat{x}^j>0$ (that is, $M_i =r-1$), then $i$ doesn't import any infection by means of traveling and hence we have  $f_{\hat{x}^i}(\alpha)=0$ for all $\alpha>0$. This also means that $\frac{\ud^{l} f_{\hat{x}^{i}}}{\ud \alpha^{l}}(0)=0$ holds for all $0 \leq l \leq r-1$. The case when $M_i=0$ is trivial, for $1 \leq M_i < r-1$ we use the method of induction. \\

We claim that for any $1 \leq l \leq r-2$ it holds that $\frac{\ud^{l} f_{\hat{x}^{i_l}}}{\ud \alpha^{l}}(0)=0$ whenever a region $i_l$ is such that $\hat{x}^{i_l}=0$, $\rn^{i_l}<1$ and $M_{i_l} \geq l$. If so, the statement of the lemma follows for region $i$ with the choice of $i:=i_l$ for $l=1, 2, \dots M_i$. For a region $i_1$ where $\hat{x}^{i_1}=0$, $M_{i_1} \geq 1$ and $\rn^{i_1}<1$, we get $\frac{\ud f_{\hat{x}^{i_1}}}{\ud \alpha}(0)=0$ from
  \begin{equation}\aligned
   (V^{i_1}-F^{i_1})\frac{\ud f_{\hat{x}^{i_1}}}{\ud \alpha}(0)&= \sum_{\substack{j=1 \\j \neq i_1}}^r C_x^{i_1,j} f_{\hat{x}^j}(0)
   \nonumber\endaligned\end{equation}
and Lemma \ref{lem:uveq} since the right hand side is zero because of $M_{i_1} \geq 1$. Let us assume that there exists an $L < r-2$ such that the statement holds for all $l \leq L$. We consider a region $i_{L+1}$ where  $\hat{x}^{i_{L+1}}=0$, $\rn^{i_{L+1}}<1$ and $M_{i_{L+1}} \geq L+1$, here clearly $M_{i_{L+1}} \geq 1, 2, \dots L$ so $\frac{\ud f_{\hat{x}^{i_{L+1}}}}{\ud \alpha}(0)=$ $\frac{\ud^{2} f_{\hat{x}^{i_{L+1}}}}{\ud \alpha^{2}}(0)=\dots =$ $\frac{\ud^{L} f_{\hat{x}^{i_{L+1}}}}{\ud \alpha^{L}}(0)=0$ holds and thus Proposition \ref{prop:derxn} yields
 \begin{equation}\aligned
   (V^{i_{L+1}}-F^{i_{L+1}})\frac{\ud^{L+1} f_{\hat{x}^{i_{L+1}}}}{\ud \alpha^{L+1}}(0)&=(L+1) \sum_{\substack{j=1 \\j \neq i_{L+1}}}^r C_x^{i_{L+1},j} \frac{\ud^L f_{\hat{x}^j}}{\ud \alpha^L} (0).
   \nonumber\endaligned\end{equation}
For any $j$ with $C_x^{i_{L+1},j} \neq 0$ it holds that the region is DFAT and $M_j\geq M_{i_{L+1}} -1 \geq L$, thus $\frac{\ud^L f_{\hat{x}^j}}{\ud \alpha^L} (0)=0$ makes the right hand side zero, and using Lemma \ref{lem:uveq} we get that $\frac{\ud^{L+1} f_{\hat{x}^{i_{L+1}}}}{\ud \alpha^{L+1}}(0)=0$ since $\rn^{i_{L+1}}<1$.
\end{proof}

The next theorem is the key to answer the question stated earlier, that is, an endemic equilibrium $EE^0$ of $(L_1)$--$(L_r)$ will persist in the system of connected regions via the uniquely defined function $f(\alpha)$, $f(0)=EE^0$, for small volumes of traveling if $\rn^i<1$ holds in all DFAT regions of $EE^0$ which are reachable from an EAT region. In what follows, we prove that $f_{\hat{x}^{i}}$ has a positive derivative whenever region $i$ is DFAT with local reproduction number less than one, and reachable from a region $j$ which is EAT. Then, with the help of Lemma \ref{lem:derivzeroro<1}, the statement yields that $f_{\hat{x}^{i}}(\alpha)$ is positive for small $\alpha$-s, and thus so is $f(\alpha)$ by Lemma \ref{lem:suffforpos}.

\begin{theorem}\label{th:pluszeroro<1}
Assume that in the boundary endemic equilibrium $EE^0$ there is no DFAT region $j$ for which $\rn^j>1$ and $M_j < r-1$. Then for a DFAT region $i$ where $\rn^i<1$, it holds that $\frac{\ud^{M_i+1} f_{\hat{x}^{i}}}{\ud \alpha^{M_i+1}}(0)>0$ if $M_i < r-1$.
\end{theorem}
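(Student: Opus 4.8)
The plan is to prove the statement by induction on the integer $L:=M_i$, which by hypothesis ranges over $0\le L\le r-2$. The engine throughout is the pairing of Proposition \ref{prop:derxn}, which expresses the $(L+1)$-th derivative of $f_{\hat{x}^i}$ through the $L$-th derivatives of its neighbours, with Lemma \ref{lem:uveq}, which (since $V^{i}-F^{i}$ is irreducible and $\rn^i<1$) turns a nonnegative nonzero right-hand side into a componentwise positive solution. Before the induction I would record the purely graph-theoretic fact that drives everything: if $C_x^{ij}\neq0$, i.e. there is a direct connection from $j$ to $i$, then $M_j\ge M_i-1$, since otherwise appending the link $j\to i$ to a shortest EAT-to-$j$ path would yield an EAT-to-$i$ path shorter than $M_i$ permits; conversely, along a shortest path realizing $M_i=L$ the immediate predecessor $i_1^{*}$ of $i$ is DFAT (an EAT in-between region would shorten the path), satisfies $C_x^{i,i_1^{*}}\neq0$, and has $M_{i_1^{*}}=L-1$ exactly.

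For the base case $L=0$ there is a direct connection from an EAT region, so $\sum_{j\neq i}C_x^{ij}\hat{x}^j$ is nonnegative and nonzero (a diagonal nonnegative $C_x^{ij}$ with a positive entry applied to the componentwise positive $\hat{x}^j$). Equation (\ref{eq:derx}) together with Lemma \ref{lem:uveq} and $\rn^i<1$ then gives $\frac{\ud f_{\hat{x}^i}}{\ud\alpha}(0)>0$; this is precisely Theorem \ref{th:irred}.

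For the inductive step I assume the conclusion for every DFAT region with shortest distance $L-1$ and subcritical local reproduction number, and take a DFAT region $i$ with $\rn^i<1$ and $M_i=L\ge1$. By Lemma \ref{lem:derivzeroro<1} every derivative of $f_{\hat{x}^i}$ up to order $M_i=L$ vanishes at $\alpha=0$, so Proposition \ref{prop:derxn} with $N=L+1\le r-1$ applies and yields
\begin{equation}\aligned
(V^{i}-F^{i})\frac{\ud^{L+1} f_{\hat{x}^i}}{\ud\alpha^{L+1}}(0)&=(L+1)\sum_{\substack{j=1 \\ j\neq i}}^r C_x^{ij}\frac{\ud^{L} f_{\hat{x}^j}}{\ud\alpha^{L}}(0).
\nonumber\endaligned\end{equation}
I would then inspect the right-hand side neighbour by neighbour via $M_j\ge L-1$. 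If $M_j\ge L$ then $\frac{\ud^{L} f_{\hat{x}^j}}{\ud\alpha^{L}}(0)=0$ by Lemma \ref{lem:derivzeroro<1}, so such terms drop out. If $M_j=L-1$ then the standing hypothesis of the theorem forbids a reachable DFAT region with $\rn^j>1$, so $\rn^j<1$, and the inductive hypothesis gives $\frac{\ud^{L} f_{\hat{x}^j}}{\ud\alpha^{L}}(0)>0$. Hence each surviving summand is a nonnegative diagonal matrix times a componentwise positive vector, so the right-hand side is nonnegative; it is nonzero because the shortest-path predecessor $i_1^{*}$ contributes $C_x^{i,i_1^{*}}\frac{\ud^{L} f_{\hat{x}^{i_1^{*}}}}{\ud\alpha^{L}}(0)$ with at least one strictly positive component. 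A final application of Lemma \ref{lem:uveq} with $\rn^i<1$ converts this nonnegative nonzero right-hand side into $\frac{\ud^{L+1} f_{\hat{x}^i}}{\ud\alpha^{L+1}}(0)>0$, closing the induction.

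The main obstacle I anticipate is combinatorial bookkeeping rather than analysis: one must verify that the distance $M$ behaves correctly under the direct-connection relation (the inequality $M_j\ge M_i-1$ and the sharp value $M_{i_1^{*}}=L-1$), and that the inductive hypothesis together with Lemma \ref{lem:derivzeroro<1} cleanly splits the neighbours into those at distance $L-1$ with strictly positive $L$-th derivative and those at distance $\ge L$ with vanishing $L$-th derivative, so that no neighbour can inject a negative component into the sum. Securing this last point is exactly where the global hypothesis, that no reachable DFAT region is supercritical, is indispensable, since it is what excludes a distance-$(L-1)$ neighbour whose $L$-th derivative might fail to be positive.
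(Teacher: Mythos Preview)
Your proof is correct and follows essentially the same route as the paper: induction on the shortest distance $M_i$, with Lemma \ref{lem:derivzeroro<1} killing the lower-order derivatives so that Proposition \ref{prop:derxn} applies, and Lemma \ref{lem:uveq} converting the nonnegative-nonzero right-hand side into a strictly positive derivative. Your presentation is in fact slightly tidier than the paper's: you state the graph-theoretic inequality $M_j\ge M_i-1$ for neighbours up front and use it to give a uniform case split (distance exactly $L-1$ versus distance $\ge L$), whereas the paper separates out the $M_j=r-1$ case parenthetically; both arrive at the same place.
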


\begin{proof}

The proof is by induction. For any $i_0$ such that $\hat{x}^{i_0}=0$, $\rn^{i_0}<1$ and $M_{i_0}=0$, Theorem \ref{th:irred} yields $\frac{\ud f_{\hat{x}^{i_0}}}{\ud \alpha}(0)>0$. Whenever $M_{i_1}=1$ is satisfied in a region $i_1$ where $\hat{x}^{i_1}=0$ and $\rn^{i_1}<1$, Lemma \ref{lem:derivzeroro<1} implies $\frac{\ud f_{\hat{x}^{i_1}}}{\ud \alpha}(0)=0$, so using Proposition \ref{prop:derxn} we derive
  \begin{equation}\aligned
   (V^{i_1}-F^{i_1})\frac{\ud^2 f_{\hat{x}^{i_1}}}{\ud \alpha^2}(0)&= 2\sum_{\substack{j=1 \\j \neq i_1}}^r C_x^{i_1,j} \frac{\ud f_{\hat{x}^{j}}}{\ud \alpha}(0).
\nonumber\endaligned\end{equation}
For every $j$ with $C_x^{i_1,j} \neq 0$ it holds that $M_j \geq 0$ (we remark that $M$ is well-defined for such regions because $M_{i_1} \neq 0$ implies that all such $j$-s are DFAT regions); if either $M_j=r-1$ (this always holds if $\rn^j>1$) or $1 \leq M_j < r-1$ then Lemma \ref{lem:derivzeroro<1} gives $\frac{\ud f_{\hat{x}^{j}}}{\ud \alpha}(0)=0$, and whenever $M_j=0$ then necessarily $\rn^j<1$ so $\frac{\ud f_{\hat{x}^{j}}}{\ud \alpha}(0)>0$ holds by induction. Nevertheless, the positivity of the right hand side of the last equation is guaranteed because we know from $M_{i_1}=1$ that there must exist a $j$ with $M_j=0$ and $\rn^j<1$, hence the inequality $\frac{\ud^{2} f_{\hat{x}^{i_1}}}{\ud \alpha^{2}}(0)>0$ follows using Lemma \ref{lem:uveq}.\\

We assume that the statement of the theorem holds for an $L$, $0<L<r-2$, that is, $\frac{\ud^{L+1} f_{\hat{x}^{i_L}}}{\ud \alpha^{L+1}}(0)>0$ if $M_i=L$, $\hat{x}^{i_L}=0$ and $\rn^{i_L}<1$. We take a region $i_{L+1}$, $M_{i_{L+1}}=L+1$, $\hat{x}^{i_{L+1}}=0$ and $\rn^{i_{L+1}}<1$, and obtain the equation
 \begin{equation}\aligned
   (V^{i_{L+1}}-F^{i_{L+1}})\frac{\ud^{L+2} f_{\hat{x}^{i_{L+1}}}}{\ud \alpha^{L+2}}(0)&=(L+1) \sum_{\substack{j=1 \\j \neq i_{L+1}}}^r C_x^{i_{L+1},j} \frac{\ud^{L+1} f_{\hat{x}^j}}{\ud \alpha^{L+1}} (0)
   \nonumber\endaligned\end{equation}
using Lemma \ref{lem:derivzeroro<1} and Proposition \ref{prop:derxn}. $M_{i_{L+1}}=L+1$ makes $M_j \geq L$ for each $j$ where $C_x^{i_{L+1},j}\neq 0$, and by examining the derivatives on the right hand side of this equation we get from Lemma \ref{lem:derivzeroro<1} that $\frac{\ud^{L+1} f_{\hat{x}^j}}{\ud \alpha^{L+1}} (0)=0$ for each $j$, $ C_x^{i_{L+1},j}\neq 0$, whenever $M_j \geq L+1$.  The case when $M_j=L$ is only possible if $\rn^j<1$, and for all such $j$-s the inequality $\frac{\ud^{L+1} f_{\hat{x}^j}}{\ud \alpha^{L+1}} (0)>0$ holds by induction. Hence, the right hand side of the last equation is positive because all the derivatives in it are nonnegative and $M_{i_{L+1}}=L+1$ implies there is a $j$ with $M_j=L$. We apply Lemma \ref{lem:uveq} to get that $\frac{\ud^{L+2} f_{\hat{x}^{i_{L+1}}}}{\ud \alpha^{L+2}}(0)>0$, which completes the proof.
\end{proof}

Let us now summarize what we have learned about steady states of system $(T_1)$--$(T_r)$ for small volumes of traveling (represented by the parameter $\alpha$) between the regions. With some conditions on the model equations described in Theorems \ref{th:iftdfe} and \ref{th:iftendemic}, for every equilibrium of the disconnected system there exists a unique continuous function of $\alpha$ on an interval to the right of zero, which satisfies the fixed point equations of $(T_1)$--$(T_r)$. As discussed in Theorems \ref{th:iftdfe} and \ref{th:posee}, $f_0$ corresponding to the unique disease free equilibrium of $(L_1)$--$(L_r)$ defines a disease free fixed point for $\alpha \in [0,\alpha_0)$, moreover if $f(0)$ is positive then $f(\alpha)>0$ holds for $\alpha$ sufficiently close to zero. With other words the connected system $(T_1)$--$(T_r)$ admits a single infection-free equilibrium and also several positive fixed points for small $\alpha$-s, regardless of  the connections between the regions.\\ 

On the other hand, the structure of the connection network plays an important role when considering boundary endemic equilibria, i.e., when some regions are disease free for $\alpha=0$. If there are regions $i$ and $j$ such that $i$ is reachable from $j$ then, by increasing $\alpha$ the fixed point $f(\alpha)$ moves out of the nonnegative cone whenever $f(0)=EE^0$ is such that $\hat{x}^i=0$, $\rn^i>1$, and $\hat{x}^j>0$, this is,  $j$ is an EAT region and $i$ is a DFAT region with local reproduction number greater than one. However, a boundary equilibrium of the disconnected system will persist through $f$ for small volumes of traveling in $(T_1)$--$(T_r)$ if the local reproduction number is less than one in all DFAT regions which are reachable from EAT regions. These last conclusions are stated below in the form of a corollary as well.
\begin{corollary}\label{cor:summarypluszero}
Consider a boundary endemic equilibrium $EE^0$ of system $(L_1)$--$(L_r)$. Assume that there is a DFAT region $i$ in $EE^0$ with $\rn^i>1$, and $i$ is reachable from a region which is EAT. Then $EE^0$ moves out of the feasible phase space when traveling is introduced. On the other hand, if there is no such region $i$ in the system, then $EE^0$ is preserved for small volumes of traveling, and given by a unique function which depends continuously on $\alpha$.
\end{corollary}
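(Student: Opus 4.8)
The plan is to recognize this corollary as a synthesis of the two preceding theorems, once the reachability hypothesis is rephrased in terms of the distance $M_i$ from Definition~\ref{def:mi}. For a DFAT region $i$ one has $M_i \leq r-2$ exactly when $i$ is reachable from some EAT region, and $M_i = r-1$ otherwise. Consequently the clause ``there is a DFAT region $i$ with $\rn^i>1$ that is reachable from an EAT region'' is literally ``there is a DFAT region $i$ with $\rn^i>1$ and $M_i < r-1$'', and its negation is precisely the standing hypothesis of both Theorem~\ref{th:pluszeroro<1} and Lemma~\ref{lem:derivzeroro<1}. With this dictionary the first assertion is immediate: under its hypothesis Theorem~\ref{th:pluszeroro>1} directly produces an $\alpha^*>0$ with $f(\alpha)\ngeq 0$ for $\alpha \in (0,\alpha^*)$, so $EE^0 = f(0)$ leaves the feasible phase space, and nothing more is needed.

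For the second assertion I would invoke Lemma~\ref{lem:suffforpos}, which reduces the claim $f(\alpha)\geq 0$ to checking $f_{\hat{x}^i}(\alpha)\geq 0$ for every DFAT region $i$. I would then split on reachability. If $M_i = r-1$, region $i$ imports no infection and $f_{\hat{x}^i}(\alpha)\equiv 0$, as observed inside the proof of Lemma~\ref{lem:derivzeroro<1}. If instead $M_i < r-1$, the standing hypothesis forces $\rn^i<1$, since otherwise $i$ would be a DFAT region with $\rn^i>1$ and $M_i<r-1$, the very configuration excluded. Now Lemma~\ref{lem:derivzeroro<1} gives $\frac{\ud^{l} f_{\hat{x}^i}}{\ud \alpha^{l}}(0)=0$ for $l \leq M_i$, while Theorem~\ref{th:pluszeroro<1} gives $\frac{\ud^{M_i+1} f_{\hat{x}^i}}{\ud \alpha^{M_i+1}}(0)>0$. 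Because $f$ is $C^{r-1}$ and $M_i+1 \leq r-1$, a Taylor expansion of order $M_i+1$ about $\alpha=0$ has vanishing terms up to order $M_i$ and a componentwise positive leading coefficient, so $f_{\hat{x}^i}(\alpha)>0$ for all sufficiently small $\alpha>0$.

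Taking the minimum of the finitely many thresholds over all DFAT regions, both cases combine through Lemma~\ref{lem:suffforpos} to give $f(\alpha)\geq 0$ on some $[0,\alpha_E)$; since Theorem~\ref{th:iftendemic} already supplies $f$ as the unique continuous branch with $f(0)=EE^0$, this is exactly the claimed preservation. The only point demanding care is the Taylor-sign step: I must be sure that the vanishing of all lower-order derivatives together with the strict componentwise positivity of the leading one genuinely forces $f_{\hat{x}^i}(\alpha)>0$ on a whole interval, with the remainder estimate uniform across the regions, and that the dichotomy $M_i=r-1$ versus $M_i<r-1$ really exhausts the DFAT regions so that no reachable region is missed.
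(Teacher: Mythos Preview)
Your proposal is correct and follows essentially the same approach the paper sketches in the paragraph preceding the corollary: the paper itself gives no formal proof, merely noting that Theorem~\ref{th:pluszeroro<1} combined with Lemma~\ref{lem:derivzeroro<1} yields $f_{\hat x^i}(\alpha)>0$ for reachable DFAT regions with $\rn^i<1$, and then Lemma~\ref{lem:suffforpos} delivers $f(\alpha)\geq 0$. Your write-up is in fact more complete than the paper's, since you explicitly treat the case $M_i=r-1$ (DFAT regions unreachable from any EAT region, possibly with $\rn^i>1$), which the paper only handles in passing inside the proof of Lemma~\ref{lem:derivzeroro<1}.
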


\section{Application to an HIV model on three patches}\label{sec:HIV}

\begin{sloppypar}
Human immunodeficiency virus infection/acquired immunodeficiency syndrome (HIV/AIDS) is one of the greatest public health concerns of the last decades worldwide. UNAIDS, the Joint United Nations Programme on HIV/AIDS reports an estimated 35.3 (32.2--38.8) million people living with HIV in 2012 \cite{unaids}. Though the data of 2.3 (1.9–-2.7) million infections acquired in 2012 show a decline in the number of new cases compared to 2001, enormous effort is devoted to halt and begin to reverse the epidemic. Developing vaccine which provides partial or complete protection against HIV infection remains a striking challenge of modern times. IAVI --- The International AIDS Vaccine Initiative \cite{iavi} believes that the earlier results on combining the two major approaches of stimulating antibody production and HIV infection clearance in the human body provides grounds for optimism and confidence in designing HIV vaccines.\\

There are several compartmental models (see, for instance, \cite{blower95, elbasha, massad, mclean93}) which deal with the mathematical modeling of HIV infection dynamics. The following model for the transmission of HIV with differential infectivity was given by Sharomi{\it~et~al.}~\cite{sharomi}\end{sloppypar}
\begin{equation}\label{HIVmod}\tag{$\textit{H}$}\aligned
\frac{\ud}{\ud t}S&=(1-p)\Lambda-\mu S-\lambda S+\gamma S_V,\\
\frac{\ud}{\ud t}S_V&=p\Lambda-\mu S_V-q\lambda S_V-\gamma S_V,\\
\frac{\ud}{\ud t}Y_1&=\rho_1\lambda S-(\mu+\sigma_1)Y_1,\\
\frac{\ud}{\ud t}Y_2&=\rho_2\lambda S-(\mu+\sigma_2)Y_2,\\
\frac{\ud}{\ud t}W_1&=\pi_1q\lambda S_V-(\mu+\theta_1\sigma_1)W_1,\\
\frac{\ud}{\ud t}W_2&=\pi_2q\lambda S_V-(\mu+\theta_2\sigma_2)W_2,\\
\frac{\ud}{\ud t}A&=\sigma_1 Y_1+\sigma_2 Y_2+\theta_1\sigma_1 W_1+\theta_2\sigma_2 W_2-(\delta+\mu)A,
\nonumber\endaligned\end{equation}
where the population is divided into the disjoint classes of unvaccinated ($S$) and vaccinated ($S_V$) susceptibles, unvaccinated infected individuals with high ($Y_1$) and low ($Y_2$) viral load, vaccinated infected individuals with high ($W_1$) and low ($W_2$) viral load, and individuals in AIDS stage of infection ($A$). Note that instead of the notation  $X$ and $V$ of the unvaccinated and vaccinated susceptible classes applied in \cite{sharomi} we use $S$ and $S_V$ to avoid confusion with the matrix $V^{i}$ and vector $\mathcal{X}$ used in section \ref{sec:dfe}. The total population of individuals not in the AIDS stage is denoted by $N$, $N=S+S_V+Y_1+Y_2+W_1+W_2$. Disease transmission is modeled by standard incidence, with transmission coefficients $\beta_1$ and $\beta_2$ in the infected classes with high and low viral load, the force of infection $\lambda$ arises as $\lambda=\sum_{j=1}^2 \left(\beta_j \frac{Y_j}{N}+\beta_j s_j \frac{W_j}{N}\right)$. Relative infectiousness of members of the $W_1$ and $W_2$ compartments is represented by $s_1$ and $s_2$. Parameter $\Lambda$ is the constant recruitment rate into the population, while $\mu$ stands for natural mortality. Susceptible individuals are immunized by vaccination with probability $p$, and $\gamma$ is the rate of waning immunity.  In the classes of infected individuals with high and low viral load the progression of the disease is modeled by $\sigma_1$ and $\sigma_2$, modification parameters $\theta_1$ and $\theta_2$ are used to account for the reduction of the progression rates in $W_1$ and $W_2$. The  disease-induced mortality rate $\delta$ is introduced into the equation of $A$, the individuals in the AIDS stage. All model parameters are assumed positive.\\

It holds that the system (\ref{HIVmod}) has a unique disease free equilibrium $E_{df}^H=$ $(S_0,$ $(S_V)_0,$ $\lambda_0,$ $A_0)$ with $S_0=\frac{(\gamma+(1-p)\mu)\Lambda}{\mu(\mu+\gamma)}>0$, $(S_V)_0=\frac{p \Lambda}{\mu+\gamma}>0$ and $\lambda_0=0$, $A_0=0$, which is globally asymptotically stable in the disease free subspace, moreover by Lemma 3 \cite{sharomi} $E_{df}^H$ is a locally asymptotically stable (unstable) steady state of (\ref{HIVmod}) if $\rn_H<1$ ($\rn_H>1$), where the reproduction number $\rn_H$ is defined by
\begin{equation}\aligned
\rn_H&=\frac{1}{N_0} \left(\frac{B_1 X_0}{(\mu+\sigma_1)(\mu+\sigma_2)}+\frac{B_2 V_0}{(\mu+\theta_1\sigma_1)(\mu+\theta_2\sigma_2)}\right)
\nonumber\endaligned\end{equation}
with $N_0=S_0+(S_V)_0$ and $B_1=\beta_2 \rho_2 (\mu+\sigma_1)+\beta_1 \rho_1 (\mu+\sigma_2)$, $B_2=q(\pi_2 s_2\beta_2 (\mu+\theta_1\sigma_1)+\pi_1 s_1\beta_1 (\mu+\theta_2\sigma_2))$. It easily follows from the model equations that in an equilibrium an infected compartment is at a positive steady state if and only if all components of the fixed point are positive. According to Theorem 4 \cite{sharomi} system (\ref{HIVmod}) has a unique endemic equilibrium if $\rn_H>1$, nevertheless positive fixed points can exist for $\rn_H<1$ as well; under certain conditions on the parameters the model exhibits backward bifurcation at $\rn_H=1$, that is, a critical value $\rn_{c} <1$ can be defined such that there are two distinct positive equilibria for values of $\rn_H$ in $(\rn_c, 1)$ (see \cite{sharomi} for details).\\

We consider $r$ patches and investigate the dynamics of HIV infection by incorporating the possibility of traveling into model (\ref{HIVmod}). In each region the same model compartments as in the one-patch model can be defined, upper index `$i$' is used to label the classes of region $i$, $i \in \{1, \dots r\}$. In terms of our notations in system (\ref{genmodel}), $n=4$, $m=2$, $k=1$ and we let $x^i=(Y_1^i, Y_2^i, W_1^i, W_2^i)^T$, $y^i=(S^i, S_V^i)^T$, $z^i=A^i$. The equalities $D^i=-(\delta^i+\mu^i)z^i$, $Z^i=(\sigma^i_1,\sigma^i_2,\theta^i_1\sigma^i_1,\theta^i_2\sigma^i_2)$ and
\begin{equation}\aligned\label{eq:HIVvari}
g^i(x^i, y^i, z^i)&=\begin{pmatrix}
(1-p^i) \Lambda^i\\
p^i \Lambda^i
\end{pmatrix}+\begin{pmatrix}
-\mu^i & \gamma^i\\
0 &  -\gamma^i-\mu^i
\end{pmatrix} y^i,\\
V^i&=\begin{pmatrix}
\mu^i+\sigma_1^i& 0&0&0\\
0&\mu^i+\sigma_2^i&0&0\\
0&0&\mu^i+\theta_1^i \sigma_1^i&0\\
0&0&0&\mu^i+\theta_2^i \sigma_2^i
\end{pmatrix},\\
\mathcal{B}^i&=\frac{1}{N^i}\begin{pmatrix}
\beta^i_1&\beta^i_2& s_1^i \beta^i_1& s^i_2 \beta^i_2\\
q^i \beta^i_1& q^i \beta^i_2&q^i s_1^i \beta^i_1&q^i s^i_2 \beta^i_2\\
\end{pmatrix}, \\
\eta^{i}_{1,\cdot}&=(\rho^i_1,\rho^i_2,0,0)^T, \eta^{i}_{2,\cdot}=(0,0,\pi^i_1,\pi^i_2)^T
\endaligned\end{equation}
put the multiregional HIV model  $(H_1)$--$(H_r)$ into the form of system $(L_1)$--$(L_r)$, moreover  $F^i$ arises as
\begin{equation}\aligned
F^i&=\begin{pmatrix}
\frac{\beta^i_1\rho^i_1S_0^{i}}{N^i_0}& \frac{\beta^i_2\rho^i_1S_0^{i}}{N^i_0}&\frac{s^i_1\beta^i_1\rho^i_1S_0^{i}}{N^i_0}&\frac{s^i_2\beta^i_2\rho^i_1S_0^{i}}{N^i_0}\\
\frac{\beta^i_1\rho^i_2S_0^{i}}{N^i_0}& \frac{\beta^i_2\rho^i_2S_0^{i}}{N^i_0}&\frac{s^i_1\beta^i_1\rho^i_2S_0^{i}}{N^i_0}&\frac{s^i_2\beta^i_2\rho^i_2S_0^{i}}{N^i_0}\\
\frac{\beta^i_1\pi^i_1q^i(S_V^{i})_0}{N^i_0}& \frac{\beta^i_2\pi^i_1q^i(S_V^{i})_0}{N^i_0}&\frac{s^i_1\beta^i_1\pi^i_1q^i(S_V^{i})_0}{N^i_0}&\frac{s^i_2\beta^i_2\pi^i_1q^i(S_V^{i})_0}{N^i_0}\\
\frac{\beta^i_1\pi^i_2 q^i(S_V^{i})_0}{N^i_0}& \frac{\beta^i_2\pi^i_2 q^i(S_V^{i})_0}{N^i_0}&\frac{s^i_1\beta^i_1\pi^i_2 q^i(S_V^{i})_0}{N^i_0}&\frac{s^i_2\beta^i_2\pi^i_2 q^i(S_V^{i})_0}{N^i_0}
\end{pmatrix}.
\nonumber\endaligned\end{equation}
By introducing parameter $c_w^{ij}$ to represent the connectivity potential from class $w^j$ to $w^i$, $w \in \{S, S_V, Y_1, Y_2, W_1, W_2, A\}$ and $i, j \in \{1, \dots r\}$, $i \neq j$, $\alpha$ as general mobility parameter, system $(H_1)$--$(H_r)$ can be extended to $(T_1)$--$(T_r)$ in the same way as described in section \ref{sec:model} to get an epidemic model with HIV dynamics in $r$ regions connected by traveling.

\subsection{Disease free equilibrium for arbitrary volumes of travel}
We recall that system (\ref{HIVmod}) has a single disease free fixed point $(0,y_0,0)$ with $y_0=\left(S_0,(S_V)_0\right)^T$, which is locally asymptotically stable if $\rn_H<1$ and unstable if $\rn_H>1$. This also means that the system of the regions connected with traveling $(T_1)$--$(T_r)$ admits a single disease free steady state when the general mobility parameter $\alpha$ equals zero. We now show that in case of the HIV model the connected system has a disease free equilibrium for every $\alpha>0$ as well.
\begin{theorem}\label{th:dfehiv}
The connected system of $r$ regions with HIV dynamics admits a unique disease free equilibrium for any $\alpha \geq 0$. It also holds that the classes of individuals in the AIDS stage are at zero steady state.
\end{theorem}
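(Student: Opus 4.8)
The plan is to exploit the special \emph{affine} structure of the recruitment term $g^i$ in the HIV model, which upgrades the purely local implicit-function-theorem result of Theorem \ref{th:iftdfe} into a statement valid for all $\alpha\ge 0$. A disease free equilibrium of $(H_1)$--$(H_r)$ has $x^i=(Y_1^i,Y_2^i,W_1^i,W_2^i)^T=0$ for every $i$, so I would first dispose of the AIDS classes. Setting $x^i=0$ in the steady-state version of the $z$-equations of $(T_1)$--$(T_r)$ kills the inflow $Z^i x^i$, so the vector $(A^1,\dots,A^r)^T$ satisfies $M_z\,(A^1,\dots,A^r)^T=0$, where $M_z$ is precisely the matrix introduced in Lemma \ref{lem:suffforpos} (each block being the scalar $\delta^i+\mu^i$ since $k=1$). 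Lemma \ref{lem:suffforpos} already establishes that $M_z$ is a nonsingular M-matrix with nonnegative inverse, whence $A^i=0$ for all $i$; this yields the second assertion of the theorem.

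It then remains to solve the susceptible subsystem with $x^i=0$ and $z^i=0$. The decisive observation is that for model $(H)$ the map $g^i$ is affine in $y^i=(S^i,S_V^i)^T$: from (\ref{eq:HIVvari}) one has $g^i(0,y^i,0)=b^i+G^i y^i$ with $b^i=((1-p^i)\Lambda^i,\,p^i\Lambda^i)^T$ and $G^i=\begin{pmatrix}-\mu^i&\gamma^i\\0&-\gamma^i-\mu^i\end{pmatrix}$. Consequently the steady-state susceptible equations form a single linear system $\mathcal{G}\,Y=-b$ for $Y=(y^1,\dots,y^r)^T$, where $\mathcal{G}$ is block-structured with diagonal blocks $G^i-\sum_{j\neq i}\alpha C_y^{ji}$ and off-diagonal $(i,j)$-blocks $\alpha C_y^{ij}$. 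This exact linearity is what allows me to bypass the local analysis of Theorem \ref{th:iftdfe} and treat arbitrary $\alpha$.

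The heart of the argument is to show that $-\mathcal{G}$ is a nonsingular M-matrix, uniformly in $\alpha$. Its off-diagonal entries are nonpositive (the intra-block $-\gamma^i$ and the inter-block $-\alpha c_{y,\cdot}^{ij}$), so it has the Z sign pattern. The crucial computation is a column-sum cancellation: in the column indexed by $S^i$ the outflow terms $\sum_{j\neq i}\alpha c_{y,1}^{ji}$ on the diagonal cancel exactly against the inflow entries $-\alpha c_{y,1}^{li}$ occurring in that same column (these are the same mobility rates out of region $i$, merely relabelled), leaving column sum $\mu^i>0$; in the $S_V^i$ column the $-\gamma^i$ in the $S^i$-row cancels the $+\gamma^i$ on the diagonal and again the travel terms cancel, leaving $\mu^i>0$. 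Hence $-\mathcal{G}$ is strictly column diagonally dominant for every $\alpha\ge 0$, and Theorem 5.1 of \cite{friedler} (the same equivalence invoked in Lemma \ref{lem:suffforpos}) gives that $-\mathcal{G}$ is invertible with $(-\mathcal{G})^{-1}\ge 0$. Therefore $Y=(-\mathcal{G})^{-1}b$ is the unique solution and is nonnegative; since $b\ge 0$, $b\neq 0$ and the nonsingular $(-\mathcal{G})^{-1}$ has no zero row, one in fact gets $Y>0$, so $N^i>0$ and the fixed point is biologically meaningful. Combined with $A^i=0$ this produces a unique disease free equilibrium for every $\alpha\ge 0$.

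The step I expect to be the main obstacle is the column-sum cancellation: verifying that \emph{all} $\alpha$-dependent mobility contributions drop out of each column, leaving the strictly positive constant $\mu^i$, is exactly what makes the diagonal dominance, and hence the invertibility of $-\mathcal{G}$, hold uniformly in $\alpha$ rather than only for small $\alpha$. A secondary, minor point is upgrading $Y\ge 0$ to $Y>0$: if some $p^i=1$ annihilates a component of $b^i$, one must propagate positivity through the $\gamma^i$-coupling (equivalently, through irreducibility of the relevant block), but generically $0<p^i<1$ gives $b>0$ and strict positivity is immediate.
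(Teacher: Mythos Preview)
Your approach is correct and rests on the same core idea as the paper's proof: the affine structure of $g^i$ reduces the disease free steady-state equations to a linear system whose coefficient matrix is a Z-matrix with strict column diagonal dominance (the travel terms cancel, leaving $\mu^i>0$), hence a nonsingular M-matrix with nonnegative inverse for every $\alpha\ge 0$.

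The one difference is the decomposition. You treat the full $2r\times 2r$ susceptible system at once, whereas the paper exploits the block-triangular structure: since the $S_V$-equations do not involve any $S^j$, one first solves the $r\times r$ system $M_{S_V}(\hat S_V^1,\dots,\hat S_V^r)^T=(p^1\Lambda^1,\dots,p^r\Lambda^r)^T$ to get $\hat S_V^i>0$, and then the $S$-system becomes $M_S(\hat S^1,\dots,\hat S^r)^T$ with right-hand side $(1-p^i)\Lambda^i+\gamma^i\hat S_V^i$, which is strictly positive because $\gamma^i\hat S_V^i>0$ regardless of whether $p^i=1$. This buys the paper the strict positivity of $\hat S^i$ for free, so it never needs the ``propagate positivity through the $\gamma^i$-coupling'' step you flag. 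In your combined treatment, note that the inference ``$(-\mathcal{G})^{-1}\ge 0$ has no zero row and $b\ge 0$, $b\neq 0$, hence $Y>0$'' is not valid as stated (the identity matrix and $b=(1,0)^T$ is a counterexample); you do need the extra argument you outline, or simply observe the same triangular structure inside $-\mathcal{G}$.
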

\begin{proof}
When the infected classes are at zero steady state in the HIV model we obtain the fixed point equations 
\begin{equation}\aligned\label{eq:msvmsma}
\begin{pmatrix}
p^1 \Lambda^1\\  \vdots\\p^r \Lambda^r
\end{pmatrix}&=M_{S_V} \begin{pmatrix}
\hat{S_V}^1(\alpha)\\
\vdots \\\hat{S_V}^r(\alpha)
\end{pmatrix},\\
\text{diag}(\gamma^i)\cdot\begin{pmatrix}
\hat{S_V}^1(\alpha)\\
\vdots \\\hat{S_V}^r(\alpha)
\end{pmatrix}+\begin{pmatrix}
(1-p^1) \Lambda^1\\  \vdots\\(1-p^r) \Lambda^r
\end{pmatrix}&=M_{S}\begin{pmatrix}
\hat{S}^1(\alpha)\\
\vdots \\\hat{S}^r(\alpha)
\end{pmatrix},\\
0&=M_{A}\begin{pmatrix}
\hat{A}^1(\alpha)\\
\vdots \\\hat{A}^r(\alpha)
\end{pmatrix}\\
\endaligned\end{equation}
with
\begin{equation}\aligned
M_{S_V}&=\begin{pmatrix}
\sum_{\substack{j=1 \\j \neq 1}}^r \alpha c_{S_V}^{j1}+\mu^1+\gamma^1 & \dots & -\alpha c_{S_V}^{1r}\\
\vdots  & \ddots  &\vdots\\
-\alpha c_{S_V}^{r1}& \dots &  \sum_{\substack{j=1 \\j \neq r}}^r \alpha c_{S_V}^{jr} +\mu^r+\gamma^r
\end{pmatrix},\\
M_{S}&=\begin{pmatrix}
\sum_{\substack{j=1 \\j \neq 1}}^r \alpha c_{S}^{j1} +\mu^1& \dots & -\alpha c_{S}^{1r}\\
\vdots & \ddots  &\vdots\\
-\alpha c_{S}^{r1}& \dots &  \sum_{\substack{j=1 \\j \neq r}}^r \alpha c_{S}^{jr} +\mu^r
\end{pmatrix},\\
M_{A}&=\begin{pmatrix}
\sum_{\substack{j=1 \\j \neq 1}}^r \alpha c_{A}^{j1}+\mu^1+\delta^1 &  \dots & -\alpha c_{A}^{1r}\\
\vdots &  \ddots  &\vdots\\
-\alpha c_{A}^{r1}& \dots &  \sum_{\substack{j=1 \\j \neq r}}^r \alpha c_{A}^{jr} +\mu^r+\delta^r
\end{pmatrix}.
\nonumber\endaligned\end{equation}
Similarly as discussed in section \ref{sec:ee} for the matrix $M_z$, Theorem 5.1 \cite{friedler} implies that the inverses of $M_{S_V}$, $M_{S}$ and $M_{A}$ exist and are nonnegative. It immediately follows that $\hat{A}^i(\alpha)=0$, $\hat{S_V}^i(\alpha)>0$ and $\hat{S}^i(\alpha)>0$, $i \in \{1, \dots r\}$, hold for the unique solution of (\ref{eq:msvmsma}).
\end{proof}

\subsection{Endemic equilibria}
In section \ref{sec:ee} we required $\hat{x}^i >0$ (that is,  $\hat{x}^i \geq 0$ with both zero and positive components not possible) for endemic steady states, we recall that this is fulfilled in the HIV model since the model parameters are assumed positive. At positive fixed points $g^{i}$ and $\mathcal{B}^{i}$ defined in (\ref{eq:HIVvari}) are infinitely many times continuously differentiable, hence it is possible to derive equations (\ref{eq:derx}) and (\ref{eq:derxN}). The analysis in section \ref{sec:nodirect} has been carried out with the extra condition that the matrix $V^i-F^i$ is irreducible, which is indeed the case by the HIV model.\\

Theorem \ref{th:iftendemic} contains condition on the non-singularity of the Jacobian of the system evaluated at an endemic fixed point and $\alpha=0$. The matrix  $\left(\frac{\partial \mathcal{T}}{\partial \mathcal{X}}\right) (0,\cdot)$ has block diagonal form with the block $\left(\frac{\partial \mathcal{T}^i}{\partial \mathcal{X}^i}\right) (0,\cdot)$ corresponding to region $i$, where we denote $\mathcal{X}^i=(x^i,y^i,z^i)^T$ and  $\mathcal{T}^i=(\mathcal{T}^{i,x},\mathcal{T}^{i,y},\mathcal{T}^{i,z})^T$. This gives $\det \left(\frac{\partial \mathcal{T}}{\partial \mathcal{X}}\right) (0,\cdot)=$ $\prod_{i=1}^{r} \det \left(\frac{\partial \mathcal{T}^i}{\partial \mathcal{X}^i}\right) (0,\cdot) $, so we conclude that the Jacobian of the system of $r$ regions is non-singular at a fixed point if and only if $\det \left(\frac{\partial \mathcal{T}^i}{\partial \mathcal{X}^i}\right) (0,\cdot)\neq 0$ holds in each region $i$. It is not hard to see that the matrix $\left(\frac{\partial \mathcal{T}^i}{\partial \mathcal{X}^i}\right) (0,\cdot)$ gives the Jacobian of $(H_i)$ without traveling, that is, it suffices to consider the steady state--components in each region separately. The Jacobian evaluated at a stable equilibrium has only eigenvalues with negative real part, which guarantees the non-singularity of the matrix; although in the case when the fixed point is unstable we only know that the determinant has an eigenvalue with positive real part, which doesn't exclude the existence of an eigenvalue on the imaginary axis. \\

It is conjectured  from an example of  \cite{sharomi} that if $\rn_H>1$ in the one-patch HIV model then the positive fixed point is locally asymptotically stable and the disease free equilibrium is unstable, furthermore in case the model exhibits backward bifurcation, one of the endemic steady states is locally asymptotically stable whilst the other one is unstable for $\rn_c^i<\rn_H^i<1$. As noted above, the matrix $\left(\frac{\partial \mathcal{T}^i}{\partial \mathcal{X}^i}\right) (0,\cdot)$ is always invertible at stable equilibria, and we use the same set of parameter values as the example in \cite{sharomi} to illustrate a case when the determinant of $\left(\frac{\partial \mathcal{T}^i}{\partial \mathcal{X}^i}\right) (0,\cdot)$ is non-zero at unstable fixed points. The continuous dependence of the determinant on parameters implies that the situation when the Jacobian is singular is realized only in isolated points of the parameter space. In fact, for $\rho^i_1=0.3$, $\rho^i_2=0.7$, $\sigma^i_1=0.45$, $\sigma^i_2=17$, $\beta^i_1=0.85$, $\beta^i_2=0.1$, $s^i_1=1$, $s^i_2=1$, $\pi^i_1=0.9$, $\pi^i_2=0.1$, $\theta^i_1=0.5$, $\theta^i_1=0.5$, $q^i=0.5$, $\mu^i=0.05$, $\gamma^i=0.05$, $\Lambda^i=1$ and $p^i=0.999$, the condition for backward bifurcation holds and $\rn_c^i<\rn_H^i<1$ \cite{sharomi}, moreover the positive equilibria $(\hat{X}^i, \hat{V}^i, \hat{\lambda}^i,\hat{A}^i)_{1,2}$ with $(\hat{\lambda}^i)_1= 0.0195$ and $(\hat{\lambda}^i)_2= 0.1492$ are unstable and stable, respectively, with the Jacobian evaluated at $(\hat{S}^i, \hat{S_V}^i, \hat{\lambda}^i,\hat{A}^i)_{1}$ non-singular. Letting  $\beta^i_1=1$ makes $\rn_H^i=1.12 >1$ and the disease free steady state $E_{df}^{H,i}=(S^i_0,(S_V^i)_0,0,0)$ is unstable with no eigenvalues of the Jacobian having zero real part.\\

In the sequel we assume that the model parameters are set such that $\left(\frac{\partial \mathcal{T}}{\partial \mathcal{X}}\right) (0,\cdot)\neq 0$ at the fixed points and thus the conditions of Theorem \ref{th:iftendemic} hold. Then, as discussed above, all the assumptions made throughout sections \ref{sec:model}, \ref{sec:dfe}, \ref{sec:ee}, \ref{sec:irred} and \ref{sec:nodirect} are satisfied and we conclude that the results obtained in these sections for the general model are applicable for the multiregional HIV model with traveling. We use this model to demonstrate our findings in the case when $r=3$. Let us assume that the necessary conditions for backward bifurcation are satisfied in all three regions. Then each region $i$ can have one (the case when $\rn_H^i<\rn_c^i$), three (the case when $\rn_c^i<\rn_H^i<1$) or two (the case when $\rn_H^i>1$) equilibria, including the disease free steady state. Thus, without traveling the united system of three regions with HIV dynamics has a disease free equilibrium, and $1^\phi \cdot 3^\psi \cdot 2^\omega-1$ endemic steady states where for the integers $\phi, \psi$ and $\omega$ it holds that $0 \leq \phi, \psi, \omega \leq 3$ and $\phi+\psi+\omega=3$; it is easy to check that the possibilities for the number of equilibria are 1, 2, 3, 4, 6, 8, 9, 12, 18 and 27.\\

Theorem \ref{th:dfehiv} guarantees the existence and uniqueness of the disease free fixed point when traveling is incorporated into the system.  Theorem \ref{th:posee} and Corollary \ref{cor:summarypluszero} give a full picture about the (non-)persistence of endemic steady states: a boundary endemic equilibrium of the disconnected system, where there is a DFAT region $i$ with $\rn_H^i>1$ which is reachable from an EAT region, will not be preserved in the connected system for any small volumes of traveling, however all other endemic fixed points of the disconnected system will exist if the mobility parameter $\alpha$ is small enough. It is obvious that the movement network connecting the regions plays an important role in deriving the exact number of steady states of the system with traveling; in what follows we give a complete description of the possible cases.

\subsection{Irreducible connection network}
\begin{figure}[t]
\centering
\subfigure[Reducible network.] { \includegraphics[width=3cm]{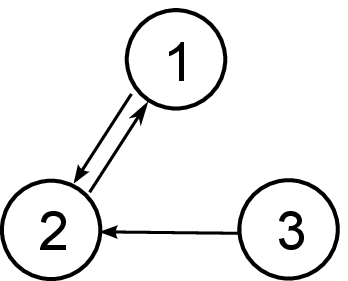}}\hspace{1cm}
\subfigure[Irreducible network.]{\includegraphics[width=3cm]{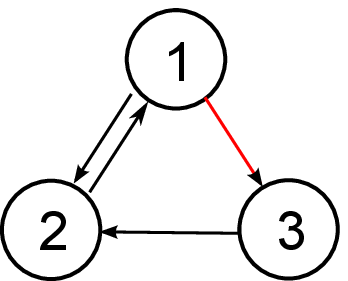}}\hspace{1cm}
\subfigure[Complete network.]{\includegraphics[width=3cm]{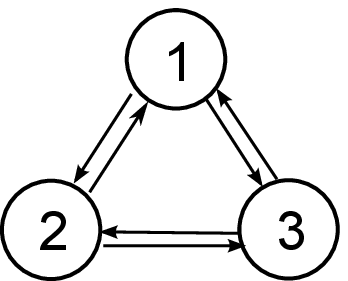}}
\caption{Example of reducible, irreducible and complete travel networks for three regions. Though both 1 and 2 are reachable from region 3, by (a) there is no connection to 3 from the other two regions. Adding a link from 1 to 3  on (b) makes the network irreducible, though not complete. In the example depicted in (c) the regions are directly connected to each other, which means that the network is complete and also clearly irreducible.\label{fig:irredrednetwork}}
\end{figure}

First we consider the case when each region is reachable from any other region, that is, the graph consisting of nodes as regions and directed edges as direct connections from (the infected classes of) one region to (the infected classes of) another region, is irreducible. Such network is realized if we think of the nodes as distant territories and the edges as one-way air travel routes. Note that the irreducibility of the network doesn't mean that each region is directly accessible from any other one; as experienced by the global airline network of the world, some territories are linked to each other via the correspondence in a third region. Figure \ref{fig:irredrednetwork} is presented to give examples of irreducible an reducible connection networks.

\begin{theorem}
If the network connecting three regions with HIV dynamics is irreducible then the number of fixed points of the disconnected system, which persists in $(T_1)$--$(T_3)$ for small volumes of traveling, can be 1, 2, 3, 4, 9, 10 or 27, depending on the local reproduction numbers in the regions. As pointed out in Theorem \ref{th:dfehiv} the unique disease free equilibrium always exists in $(T_1)$--$(T_3)$.
\end{theorem}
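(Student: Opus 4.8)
The plan is to reduce the count to a combinatorial enumeration over the three regions, organised by their local reproduction numbers. First I would classify each region $i$ according to the position of $\rn_H^i$ relative to the backward-bifurcation threshold $\rn_c^i$ and to unity: writing $\phi$ for the number of regions with $\rn_H^i<\rn_c^i$ (which carry only the disease free state), $\psi$ for those with $\rn_c^i<\rn_H^i<1$ (three local equilibria, by backward bifurcation), and $\omega$ for those with $\rn_H^i>1$ (two local equilibria), so that $\phi+\psi+\omega=3$. Each equilibrium of the disconnected system is a tuple selecting one local state per region, giving $3^\psi\cdot 2^\omega$ fixed points in total, exactly one of which (all components disease free) is $E_{df}^0$.

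Next I would translate the persistence criteria of the previous sections into this bookkeeping. The disease free equilibrium always survives by Theorem \ref{th:dfehiv}, and every positive equilibrium survives by Theorem \ref{th:posee}. For a boundary endemic equilibrium the decisive tool is Corollary \ref{cor:summarypluszero}: it is preserved precisely when no DFAT region with local reproduction number exceeding one is reachable from an EAT region. Here I would exploit irreducibility of the network: since the graph is strongly connected and a boundary endemic equilibrium always contains at least one EAT region, every DFAT region is automatically reachable from an EAT region. Hence the reachability clause is vacuous, and the survival criterion collapses to the clean statement that a non-disease-free equilibrium persists if and only if every region with $\rn_H^i>1$ (i.e.\ every one of the $\omega$ regions) is chosen to be EAT.

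With this dichotomy the count becomes elementary. When $\omega=0$ there is no region that can violate the criterion, so all $3^\psi$ equilibria persist, yielding the values $3^\psi\in\{1,3,9,27\}$ as $\psi$ ranges over $\{0,1,2,3\}$. When $\omega\geq1$ the surviving endemic equilibria are exactly those in which all $\omega$ high-reproduction regions are endemic, giving $3^\psi$ choices over the backward regions, together with the disease free equilibrium (which has all high-reproduction regions disease free and so is tallied separately); this totals $3^\psi+1$. Running through the admissible triples $(\phi,\psi,\omega)$ with $\omega\geq1$ produces $3^\psi+1\in\{2,4,10\}$. Taking the union over all ten triples summing to three gives precisely $\{1,2,3,4,9,10,27\}$, with $E_{df}^0$ present in every case by Theorem \ref{th:dfehiv}.

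The step I expect to be the crux, and the one to state most carefully, is the reduction of the reachability hypothesis of Corollary \ref{cor:summarypluszero} to the mere existence of a bad DFAT region, which is exactly where irreducibility of the connection graph enters; the remainder is the case analysis above. I would also take care that the disease free equilibrium is counted once and not duplicated when $\omega\geq1$, and that in the $\omega=0$ case the boundary endemic equilibria with some backward region disease free are correctly included, since their DFAT regions all satisfy $\rn_H^i<1$ and hence pose no obstruction.
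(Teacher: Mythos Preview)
Your proposal is correct and follows essentially the same approach as the paper: both reduce the reachability hypothesis of Corollary~\ref{cor:summarypluszero} to the simple criterion that every region with $\rn_H^i>1$ must be EAT (using irreducibility of the network), and then count. The paper organises the enumeration as four cases on the number $\omega$ of regions with $\rn_H^i>1$, whereas you extract the uniform formula $3^\psi$ for $\omega=0$ and $3^\psi+1$ for $\omega\geq1$; this is a tidier packaging of the same case analysis, not a different argument.
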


\begin{proof}
We distinguish four cases on the number of regions with local reproduction number greater than one.\\

Case 1: No regions with $\rn_H^i >1$.\\
This case is easy to treat: if in all three regions it holds that the local reproduction number is less than one, then Theorem \ref{th:pluszeroro<1} implies that all fixed points of the disconnected system of three regions are preserved for some small positive $\alpha$-s. If $\rn_H^i <1$ for $i=1,2,3$, the system $(L_1)$--$(L_3)$ may have 1 (if $\rn_H^i <\rn_c^i$ for $i=1,2,3$), 3 (if $\rn^{i_1} <\rn_c^{i_1}$, $\rn^{i_2} <\rn_c^{i_2}$ and $\rn_c^{i_3} <\rn^{i_3}<1$, $\{i_1,i_2,i_3\}=\{1,2,3\}$) , 9 (if $\rn^{i_1} <\rn_c^{i_1}$ and $\rn_c^{i_2} <\rn^{i_2}<1$, $\rn_c^{i_3} <\rn^{i_3}<1$, $\{i_1,i_2,i_3\}=\{1,2,3\}$) or 27 (if $\rn_c^{i} <\rn^{i}<1$ for $i=1,2,3$) equilibria.\\

Case 2: Exactly one region with $\rn_H^i >1$.\\
Let this region be labeled by $i_1$, system $(H_{i_1})$ has a disease free and a positive fixed point. By Theorem \ref{th:pluszeroro>1} and the assumption that $i_1$ is reachable from both other regions, we get that no endemic equilibrium of  $(L_1)$--$(L_3)$, where $i_1$ is DFAT, persists with traveling. It follows that besides the disease free equilibrium (when none of the regions is endemic), only fixed points with $\hat{x}^{i_1}=f_{\hat{x}^{i_1}}(0)>0$ will exist for small volumes of traveling, which makes the total number of equilibria 2 (1 disease free + 1 endemic if $\rn^{i_2} <\rn_c^{i_2}$, $\rn^{i_3} <\rn_c^{i_3}$), 4 (1 disease free + 3 endemic if either $\rn^{i_2} <\rn_c^{i_2}$ and $\rn_c^{i_3}<\rn^{i_3}<1$, or $\rn^{i_3} <\rn_c^{i_3}$ and $\rn_c^{i_2}<\rn^{i_2}<1$) or 10 (1 disease free + 9 endemic if $\rn_c^{i_2}<\rn^{i_2}<1$, $\rn_c^{i_3}<\rn^{i_3}<1$). \\

Case 3: Exactly two regions with $\rn_H^i >1$.\\
We let the reader convince him- or herself that if $\rn^{i_1} >1$ and $\rn^{i_2} >1$ ($i_1, i_2 \in \{1,2,3\}$) hold then a total number of 2 or 4 fixed points of the disconnected regions may persist in system $(T_1)$--$(T_3)$ for small $\alpha$-s. The proof can be led in a similar way as by Case 2, considering the two possibilities $\rn^{i_3} <\rn_c^{i_3}$ and $\rn_c^{i_3}<\rn^{i_3}<1$ for the local reproduction number of the third region. One again gets that the equilibrium where all the regions are disease free will exists, moreover it is worth recalling that no region with $\rn_H^i>1$ can be DFAT while another region is EAT. \\

Case 4: All three regions with $\rn_H^i >1$.\\
We apply Theorems \ref{th:pluszeroro>1} and \ref{th:pluszeroro<1} to get that if any of the regions is DFAT then so should be the other two for an equilibrium to persist $(T_1)$--$(T_3)$ and $\alpha$ small. This implies that only 2 fixed points of $(H_1)$--$(H_3)$, the disease free and the endemic with all three regions at positive steady state, will be preserved once traveling is incorporated.

\end{proof}

To summarize our findings, we note that the introduction of traveling via an irreducible network into $(H_1)$--$(H_3)$ never gives rise to situations when precisely 6, 8, 12 and 18 fixed points of the disconnected system continues to exist with traveling. Nevertheless evidence has been showed that new dynamical behavior (namely, the case when 10 equilibria coexist) can occur when connecting the regions by means of small volume--traveling. We conjecture that lifting the irreducibility restriction on the network results in even more new scenarios. This is proved in the next subsection.

\subsection{General connection network}
It is clear that, with the help of Theorems \ref{th:pluszeroro>1} and \ref{th:pluszeroro<1}, the number of fixed points in the disconnected system which persist with traveling can be easily determined for any given (not necessarily irreducible) connecting network.  The next theorem discusses all the possibilities on the number of equilibria. Examples are also provided to illustrate the cases.

\begin{theorem}
Depending on the local reproduction numbers and the connections between the regions, the system of three regions for HIV dynamics with traveling $(T_1)$--$(T_3)$ preserves  1--7, 9, 10, 12, 18 or 27 fixed points of the disconnected system for small volumes of traveling. As pointed out in Theorem \ref{th:dfehiv} the unique disease free equilibrium always exists in $(T_1)$--$(T_3)$.
\end{theorem}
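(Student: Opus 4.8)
The plan is to reduce the enumeration to a finite combinatorial problem governed by the persistence criterion of Corollary \ref{cor:summarypluszero}, and then to march through the possibilities. By Theorem \ref{th:posee} every componentwise positive equilibrium of $(H_1)$--$(H_3)$ survives, by Theorem \ref{th:dfehiv} the disease free equilibrium always survives, and by Corollary \ref{cor:summarypluszero} a boundary endemic equilibrium $EE^0$ survives for small $\alpha$ precisely when it has no DFAT region $i$ with $\rn_H^i>1$ that is reachable from an EAT region through the graph of infected-class connections. Hence the number of surviving fixed points equals the number $3^{\psi}2^{\omega}$ of equilibria of the disconnected system, where $\psi$ and $\omega$ count the regions with $\rn_c^i<\rn_H^i<1$ and with $\rn_H^i>1$ respectively and $\phi=3-\psi-\omega$ counts those with $\rn_H^i<\rn_c^i$, minus the number of equilibria that the reachability criterion kills.

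First I would classify the three regions into type~A (one local state, always DFAT, $\rn_H^i<\rn_c^i$), type~B (three local states, $\rn_c^i<\rn_H^i<1$) and type~C (two local states, $\rn_H^i>1$), so that a configuration is specified by the triple $(\phi,\psi,\omega)$ together with the directed reachability relation induced by the connecting network. Only a type~C region can be a dangerous DFAT region, so in each of the ten admissible triples the number of killed equilibria depends solely on which type~C regions are reachable from which EAT regions. I would then run through the ten triples: the four with $\omega=0$ lose nothing and give $1,3,9,27$; the triples $(2,0,1)$, $(1,1,1)$, $(1,0,2)$ are immediate and contribute $2$, then $\{4,6\}$, then $\{2,3,4\}$; and the genuinely variable triples $(0,2,1)$, $(0,1,2)$, $(0,0,3)$ contribute $\{10,12,18\}$, $\{4,5,6,7,9,12\}$ and $\{2,3,4,5\}$ respectively. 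The union over all ten triples is exactly $\{1,\dots,7,9,10,12,18,27\}$, the five genuinely new values $\{5,6,7,12,18\}$ (relative to the irreducible case) coming from the reducible patterns in $(0,2,1)$ and $(0,1,2)$.

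The crux, and the step I expect to be the main obstacle, is the bookkeeping for $(0,1,2)$ and $(0,0,3)$, where the surviving count truly depends on the reachability pattern and where one must rule out the forbidden values $8$ and $11$. The decisive point is that the travel network connects the regions, so no region is isolated with respect to the infected-class graph, and this connectivity excludes exactly the degenerate reachability relations that would otherwise produce $8$. Concretely, for $(0,1,2)$ a count of $8$ forces one type~C region to receive infection from the rest while the other neither receives nor can transmit infection, which requires that region to carry no incoming or outgoing infected link at all — impossible for a connected network — so once such patterns are discarded the admissible values collapse to $\{4,5,6,7,9,12\}$; likewise for $(0,0,3)$ the value $8$ needs the empty reachability relation, again excluded, leaving $\{2,3,4,5\}$. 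The value $11$ is then unattainable because it could only arise from a base count of $12$, $18$ or $27$: the base-$12$ triple $(0,1,2)$ never admits a kill count of exactly one (its kill counts are $0,3,5,6,7,8$), the base-$18$ triple $(0,2,1)$ only admits kill counts $0,6,8$, and the base-$27$ triple has no type~C region and hence no kills.

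Finally I would establish achievability by exhibiting, for each value in $\{1,\dots,7,9,10,12,18,27\}$, an explicit choice of local reproduction numbers (fixing the types $(\phi,\psi,\omega)$) together with a connected directed network realizing the required reachability pattern, so that the surviving count computed above equals that value. Since each listed number was produced in the case analysis by a concrete pattern, these examples are read off directly from the enumeration, confirming that every value in the list, and no other, occurs.
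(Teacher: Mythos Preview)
Your approach is essentially the paper's, only organized differently: you partition by the type triple $(\phi,\psi,\omega)$ and take the union, whereas the paper first rules out the ranges $13$--$17$ and $19$--$26$, then eliminates $8$ and $11$ via a case split on the reproduction numbers, and finally exhibits explicit networks (its Figure~4) for the remaining values. Both arguments rest on the same two ingredients---Corollary~\ref{cor:summarypluszero} and the tacit assumption that the infected-class network is connected---and both identify the exclusion of $8$ as the crux. Your per-triple bookkeeping is a cleaner way to see why nothing between $12$ and $18$ or between $18$ and $27$ can occur; the paper's Step~1 argues this more directly but less systematically.

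Two places where your write-up is thinner than it should be. First, the assertion that the kill counts for $(0,1,2)$ are exactly $\{0,3,5,6,7,8\}$ is stated but not derived; in fact kill count $4$ (surviving $8$) is algebraically possible and is removed only by the connectivity constraint, so the list should be presented as the output of that constraint, not as input to it. The paper's Step~2 handles this by first observing that if neither type~C region is reachable from the type~B region the surviving count is a multiple of~$3$, and then, assuming reachability, showing that the two specific equilibria needed to reach $8$ cannot coexist on a connected graph---a slightly different slicing from yours but the same content. Second, for $(0,0,3)$ you rule out $8$ (empty reachability) but claim the contribution is $\{2,3,4,5\}$ without arguing that $6$ and $7$ are also excluded by connectivity; they are (surviving $6$ or $7$ forces an isolated vertex), and while this does not affect the final union it leaves your per-triple claim unjustified. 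Filling in these two computations would make your proof complete and, arguably, more transparent than the paper's.
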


\begin{proof}
The existence of the unique disease free steady state is guaranteed by Theorem \ref{th:dfehiv}. The proof will be done in the following steps:
\begin{enumerate}
\item[Step 1] We show that there is no travel network which results in the persistence of 13-17 or 19-26 equilibria.
\item[Step 2] We prove that the system of three regions with traveling cannot have 8 or 11 fixed points.
\item[Step 3] We demonstrate through examples that all other numbers of equilibria up to 27 can be realized.
\end{enumerate}

Step 1: \\
We note that if either $\rn_H^i<\rn_c^i$ holds in any of the regions, or there are two or more regions where $\rn_H^i>1$, then the number of fixed points doesn't exceed 12. Thus, to have at least 13 equilibria there must be two regions $i_1, i_2$ with $\rn_c^{i_1}<\rn_H^{i_1}<1$ and $\rn_c^{i_2}<\rn_H^{i_2}<1$. If the third region also has three fixed points, that is,  $\rn_c^{i_3}<\rn_H^{i_3}<1$ then there is no region with local reproduction number greater than one, and thus Theorem \ref{th:pluszeroro<1} yields the existence of 27 steady states. Otherwise $\rn_H^{i_3}$ is greater than one and region $i_3$ has two equilibria, one disease free and one endemic. In this case by Theorem \ref{th:iftendemic} there are 9 fixed points where $\hat{x}_{i_3}>0$, all of which preserved for small volumes of traveling. The possible number of equilibria with $\hat{x}_{i_3}=0$, which exist with traveling, are one (if $i_3$ is reachable from both regions), 3 (if $i_3$ is reachable from only one of them) and 9 (if $i_3$ is unreachable). We conclude that there are only two values greater than 12 for the possible number of fixed points in the travel system, which are 18 and 27. \\

Step 2: \\
We distinguish 5 cases to consider:
\begin{enumerate}[(i)]
\item $\rn_c^{i}<\rn_H^{i}<1$ for $i=1,2,3$;
\item $\rn_c^{i_1}<\rn_H^{i_1}<1$, $\rn_c^{i_2}<\rn_H^{i_2}<1$, $\rn_H^{i_3}>1$, $\{i_1, i_2, i_3\}=\{1,2,3\}$; 
\item $\rn_c^{i_1}<\rn_H^{i_1}<1$, $\rn_H^{i_2}>1$, $\rn_H^{i_3}>1$, $\{i_1, i_2, i_3\}=\{1,2,3\}$; 
\item $\rn_H^{i}>1$ for $i=1,2,3$;
\item there is an $i_1$ such that $\rn_H^{i_1}<\rn_c^{i_1}$, $i_1 \in \{1,2,3\}$.
\end{enumerate}
In case (i) each region has three equilibria, hence the connected system obtains 27 fixed points for small $\alpha$-s. We have seen in Step 1 that there are 10, 12 or 18 equilibria in a network with the regions such that case (ii) holds. \\
Let us assume that case (iii) is realized, and henceforth the system has maximum 12 fixed points. If neither $i_2$ nor $i_3$ is reachable from $i_1$ then the persistence of an equilibrium for small $\alpha$-s is independent of the steady state--value $\hat{x}_{i_1}$, thus the number of possible fixed points is a multiple of three, which doesn't hold for any of 8 and 11. On the other hand if there is a connection from $i_1$ to any of $i_2$ and $i_3$ then some equilibria may vanish once traveling is incorporated. More precisely, let $i_2$ be reachable from $i_1$. By Theorem  \ref{th:pluszeroro>1}, steady states where region $i_2$ is DFAT and $i_1$ is EAT don't exist in the connected system, which means that the connection from $i_1$ to $i_2$ destroys $2 \cdot 1 \cdot 2=4$ fixed points out of the maximum 12 (note that in region $i_1$ there are two positive equilibria, and the steady state--value of $i_3$ doesn't change the non-persistence of fixed points of the type $\hat{x}_{i_2}=0$, $\hat{x}_{i_1}>0$). This immediately makes 11 equilibria impossible. By means of the above arguments, either $\hat{x}_{i_1}=\hat{x}_{i_2}=0$ or  $\hat{x}_{i_2}>0$ must be satisfied for each fixed point which persists, and their number can be maximum 8. In particular the equilibria, $E_1$ where $\hat{x}_{i_1}=\hat{x}_{i_2}=0$, $\hat{x}_{i_3}>0$ and $E_2$ where $\hat{x}_{i_2}>0$, $\hat{x}_{i_1}>0$, $\hat{x}_{i_3}=0$, are such fixed points. $E_1$ persists with traveling only if the network is such that $i_2$ is unreachable from $i_3$, so in this case there must be a path from $i_1$ to $i_3$ due to the connectedness of the network (recall that we assumed that $i_2$ is reachable from $i_1$, so any link from $i_3$ would make $i_2$ reachable from $i_3$). However this structure makes the persistence of $E_2$ for small $\alpha$-s impossible, and we get that there cannot be 8 steady states in the case when $\rn_H^{i_2}, \rn_H^{i_3}>1$ and $\rn_c^{i_1}<\rn_H^{i_1}<1$.\\ 
The maximum number of equilibria by cases (iv) and (v) are 8 and 9, respectively, which observation finishes the investigation of the persistence of precisely 11 steady states in the system with traveling. By case (iv) some of the 8 fixed points obtained in the disconnected system clearly won't persist in the connected system --- if, for instance, there is a link from $i_1$ to $i_2$ then the equilibrium where $\hat{x}_{i_1}>0$ and $\hat{x}_{i_2}=0$ won't be preserved for positive $\alpha$-s. If case (v) is realized and there is a region with local reproduction number greater than one then the system cannot have more than 6 steady states. Otherwise $\rn_H^{i}<1$ holds for all $i\in \{1,2,3\}$ in case (v) and Theorem \ref{th:pluszeroro<1} yields that all fixed points of the disconnected system continues to exist once traveling is incorporated. It is not hard to check that the number of equilibria is never 8.\\

\begin{figure}[t]
\centering
\subfigure[4 equilibria]{\includegraphics[width=3cm]{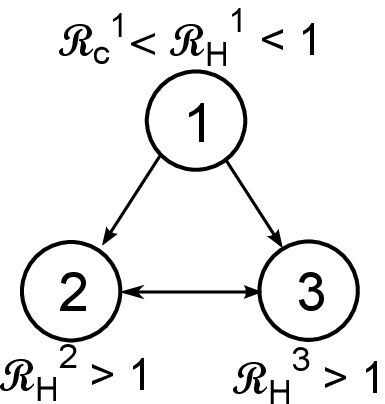}}\hspace{0.4cm}
\subfigure[5 equilibria]{\includegraphics[width=3cm]{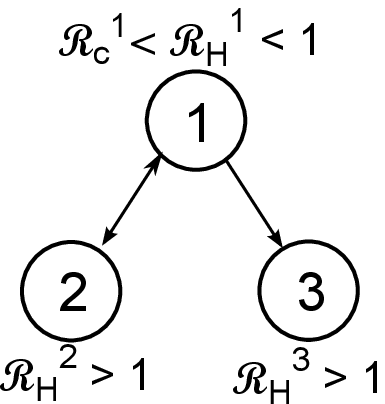}}\hspace{0.4cm}
\subfigure[6 equilibria]{\includegraphics[width=3cm]{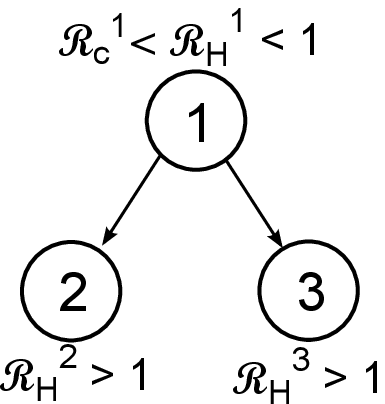}}\hspace{0.4cm}
\subfigure[7 equilibria]{\includegraphics[width=3cm]{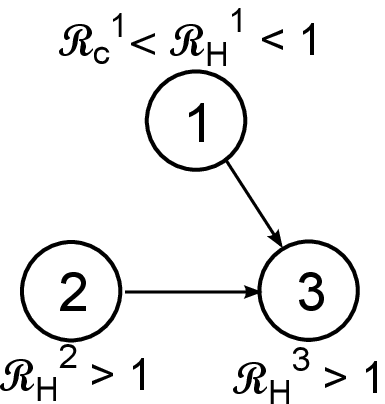}}
\caption{Examples of the travel network for three regions with $\rn_c^{1}<\rn_H^{1}<1$ and $\rn_H^{2}, \rn_H^{3}>1$. \label{fig:4567}}
\end{figure}
Step 3:\\
Any network where $\rn_H^{i}<\rn_c^{i}$ is satisfied for all $i$ exhibits only one, the disease free equilibrium. It is straightforward to see that the complete network of three regions has 2 fixed points when $\rn_H^{i}>1$ for $i \in \{1,2,3\}$, and if there is one, two or three region(s) where $\rn_c^{i}<\rn_H^{i}<1$ whilst $\rn_H^{j}<\rn_c^{j}$ holds in the remaining region(s) then, independently of the connections, the connected system preserves 3, 9 or 27 equilibria, respectively, of the disconnected system from small volume--traveling. \\
Any network where $\rn_c^{i_1}<\rn_H^{i_1}<1$, $\rn_c^{i_2}<\rn_H^{i_2}<1$, $\rn_H^{i_3}>1$ and $i_3$ is reachable from both other regions works as a suitable example for the case of 10 fixed points, since this way the disease free equilibrium coexists with 9 steady states where $\hat{x}_{i_3}>0$. A way to obtain 12 and 18  fixed points has been described in Step 1, and we use the case when $\rn_c^{1}<\rn_H^{1}<1$ and $\rn_H^{2}, \rn_H^{3}>1$ to construct examples for 4, 5, 6 and 7 steady states. Figure \ref{fig:4567} depicts one possibility for the network of each case, though it is clear that there might be several ways to get the same number of equilibria. \\
If both regions 2 and 3 are reachable from 1, then fixed points where $\hat{x}_{i_1}>0$ are preserved with traveling only if $\hat{x}_{i_2}>0$ and $\hat{x}_{i_3}>0$ also hold. On the top of these 2 positive equilibria, there surely exists the disease free steady state plus 1, 2 or 3 non-zero fixed point(s) with $\hat{x}_{i_1}=0$, depending on whether region 2 is reachable from 3 and vice versa, as illustrated in Figure \ref{fig:4567} (a), (b) and (c). If region 3 is reachable from both regions 1 and 2 then $\hat{x}_{i_3}=0$ is only possible in the disease free equilibrium; although all 6 fixed points where region 3 is at the endemic steady state persist for small volumes of traveling if there is no connection from 1 to 2 (Figure \ref{fig:4567} (d) shows such a situation).
\end{proof}

The dynamics of the HIV model in connected regions is worth investigating in more depth, although this is beyond the scope of this study. However the numerical simulations presented in the next section reveal some interesting behavior of the model. 

\section{Rich dynamical behavior}\label{sec:richdyn}

This section is devoted to illustrate the rich dynamical behavior in the model. The epidemiological consequence of the existence of multiple positive equilibria in one-patch models is that the epidemic can have various outcomes, because solutions with different initial values might converge to different steady states. Stable fixed points are of particular interest as they usually attract solutions starting in the neighborhood of other (unstable) steady states. For instance, in case of backward bifurcation the presence of a stable positive equilibrium for $\rn<1$ makes it possible that the disease sustains itself even if the number of secondary cases generated by a single infected individual in a fully susceptible population is less than one. However, considering multiple patches with connections from one to another deeply influences local disease dynamics, since the travel of infected agents induces outbreaks in disease free regions. The inflow of infected individuals might change the limiting behavior when pushing a certain solution into the attracting region of a different steady state, and it also may modify the value of fixed points.\\

Henceforth, knowing the stability of equilibria in the connected system of regions is of key importance. For small volumes of traveling not only the number of fixed points  but also their stability can be determined: whenever a steady state of the disconnected regions continues to exists in the system with traveling by means of the implicit function theorem, its stability is not changed on a small interval of the mobility parameter $\alpha$. This means that equilibria of $(T_1)$--$(T_r)$, which have all $r$ components stable in the disconnected system, are stable; although every steady state which contains an unstable fixed point as a component is unstable when $\alpha=0$ and thus, also for small positive $\alpha$. In this paper the conditions for the persistence of steady states with the introduction of small--volume traveling has been described:  by a continuous function of $\alpha$, all fixed points of $(L_1)$--$(L_r)$ will exist in the connected system but those for which there is a DFAT region with local reproduction number greater than one, and to which the connecting network establishes a connection from an EAT territory. However, infection-free steady states are typically unstable for $\rn>1$, thus the above argument yields that incorporating traveling with low volumes preserves all stable fixed points of the disconnected system, since the equilibria which disappear when $\alpha$ exceeds zero are unstable. \\
The dependence of the dynamics on movement is illustrated for the HIV model. To focus our attention to how $\alpha$ influences the fixed points, their stability and the long time behavior of solutions, we let all model parameters but the local reproduction numbers in the three regions to be equal. In the figures which we present in the Appendix, the evolution of four solutions with different initial conditions were investigated as $\alpha$ increases from zero through small volumes to larger values.\\

If all three regions exhibit backward bifurcation, and the local reproduction numbers are set such that besides the disease free fixed point, there are two positive equilibria $(\hat{X}^i, \hat{V}^i, \hat{\lambda}^i,\hat{A}^i)_{1,2}$, $\hat{\lambda}^1 < \hat{\lambda}^2$, then, as described in section \ref{sec:HIV}, 27 steady states exist for small $\alpha$-s. Assuming that the conjectures of section \ref{sec:HIV} about the stability of the disease free equilibrium and the steady state with $\hat{\lambda}_2$, and the instability of the positive fixed point with $\hat{\lambda}_1$ hold in each region, we get that system $(T_1)$--$(T_r)$ with HIV dynamics exhibits 8 stable and 19 unstable steady states on an interval for $\alpha$ to the right of zero. This is confirmed by Figures \ref{fig:HIVrl1irred} and \ref{fig:HIVrl1red}, where two cases of irreducible and reducible travel networks were considered (see the Appendix for more detailed description of the networks), and $\rn_H^{1}=\rn_H^{2}=\rn_H^{3}$ holds. Introducing low volume traveling (e.g., letting $\alpha=10^{-5}$ in our examples) effects neither the stability of steady states nor the limiting behavior of solutions, however the difference in the type of the connecting network manifests for larger movement rates, as the conditions for disease eradication clearly change along with the equilibrium values (see Figures \ref{fig:HIVrl1irred} and \ref{fig:HIVrl1red} (c) and (d) where $\alpha$ were chosen as $10^{-3}$ and $10^{-1}$, respectively). \\

When there are regions with local reproduction numbers larger than one in the network, certain fixed points of the disconnected system disappear with the introduction of traveling; this phenomenon is reasonably expected to have an impact on the final outcome of the epidemic. For all three networks used for the simulations in Figures \ref{fig:HIVrg1nofb}, \ref{fig:HIVrg1feedback} and \ref{fig:HIVrg1full}, presented in the Appendix, the number of infected individuals takes off in regions with $\rn^{i}_H>1$ for small $\alpha$, regardless of the initial conditions (see figures (b) where $\alpha=10^{-5}$ and, in particular, the cases when $\lambda_2(0)=\lambda_3(0)=0$). The results for larger travel volumes (in the simulations the two settings of $\alpha=10^{-3}$ and $10^{-1}$ were considered) further support the conjecture that solutions converge to positive steady states in regions with reproduction number greater than one. However, the case when regions 2 and 3, $\rn^{2}_H, \rn^{3}_H>1$, are not reachable from each other and a single endemic equilibrium seems to attract all solutions (illustrated in Figure \ref{fig:HIVrg1nofb}) is in contrary to the situation experienced in Figure \ref{fig:HIVrg1feedback}, since we see that establishing a path from region 2 to 3 via region 1 results in the emergence of another positive (possibly locally stable) steady state in region 3. Nevertheless, comparing Figures \ref{fig:HIVrg1nofb} and \ref{fig:HIVrg1feedback} with reducible networks to Figure \ref{fig:HIVrg1full}, where a complete connecting network was considered, highlights the role of the irreducibility of the network on the dynamics. Whereas in case of reducible networks, the final epidemic outcome in a region with $\rn_H$ strongly depends on initial conditions and connections to other regions, making each region reachable from another sustains the epidemic in region 1 (where $\rn^{1}_{c}<\rn_{H}^1<1$) by giving rise to a single positive steady state of the system. This has an implication on the long term behavior of solutions in regions with $\rn_H>1$ as well, since direct connections seem to stabilize only one endemic equilibrium in region 2 and region 3, and exclude the existence of other steady states.   \\

In summary, the theoretical analysis performed throughout the paper is in accordance with the numerical simulations for small values of the general mobility parameter $\alpha$, but more importantly, it provides full information about the fixed points of $(L_1)$--$(L_r)$: it determines their (non-)persistence, along with their stability, in the system with traveling incorporated. On the other hand, little is known about the solutions of the model when the travel volume is larger, as the structure of the connecting network and initial values deeply influence the dynamics.

\section{Conclusion}

In this paper a general class of differential epidemic models with multiple susceptible, infected and  removed compartments was considered. We provided examples of multigroup, multistrain and stage progression models to illustrate the broad range of applicability of our framework to describe the spread of infectious diseases in a population of individuals. The model setup allows us to investigate disease dynamics models with multiple endemic steady states. Such models have been considered in various works in the literature including studies which deal with the phenomenon of backward bifurcation. We extended our framework to an arbitrary number of regions and incorporated the possibility of mobility of individuals (e.g., traveling) between the regions into the model. Motivated by well known multiregional models, where the exact number of steady states have not been explored, our aim in this work was to reveal the implication of mobility between the regions on the structure of equilibria in the system.  \\

We introduced a parameter $\alpha$ to express the general mobility intensity, whilst differences in the connectedness of the regions were modeled by constants, each describing the relative connectivity of one territory to another. Considering the model equations of the connected system as a function of the model variables and $\alpha$, the implicit function theorem enabled us to represent steady states as continuous functions of the mobility parameter. We showed that the unique disease free equilibrium of the disconnected system along with all componentwise positive fixed points continues to exist in the system with traveling for small $\alpha$, with their stability unchanged. On the other hand, boundary equilibria of the system with no traveling (this is, steady states with some regions without infection and others endemic for $\alpha=0$)  may disappear when $\alpha$ becomes positive, as they might move out of the nonnegative cone along the continuous function established by means of the implicit function theorem, and thus, become no longer biologically meaningful.\\

Throughout the analysis performed in the paper we gave necessary and sufficient condition for the persistence of such equilibria in the system with traveling for various types of the connecting network. It turned out that the local reproduction numbers and the structure of the graph describing connections between the infected compartments of the regions play an important role. If each infected compartment is connected to every other infected class of the same type of other regions, implying that the connecting network includes every possible link, then a boundary equilibrium of the disconnected system won't persist with traveling if and only if there is a component of the fixed point corresponding to a disease free region with local reproduction number greater than one. Assuming an extra condition on the infected subsystem in each region we showed that the same statement holds in the case when the connection network of infected classes is not complete but is still irreducible, meaning that each region is reachable from any other one via a series of links between any of the infected classes -- see Figure \ref{fig:directcon} which illustrates such a situation. The result also extends to the most general case of arbitrary connection network of the infected classes: it was proved that steady states of the disconnected system which have a disease free region with $\rn>1$  disappear from the system if the possibility of mobility establishes a connection to this region (maybe via several other regions) from a territory where the disease is endemic; nevertheless all other equilibria of the system without traveling continue to exist for small values of the mobility parameter $\alpha$. The epidemiological implication of this behavior is that, even for small volumes of traveling, all regions with local reproduction number greater than one will be invaded by the disease unless they are unreachable from endemic territories. Direct or indirect connections from regions with positive disease state make the inflow of infecteds possible and then the imported cases spread the disease in the originally disease free region due to $\rn>1$. \\

In the most common situation of forward transcritical bifurcation of the disease free equilibrium at $\rn=1$, when the disease cannot be sustained for values of $\rn$ less than one, our results yield that only connections from regions with $\rn>1$ have impact on the equilibria of the disconnected system. If a region with local reproduction number greater than one is susceptible in the absence of traveling then isolating it from endemic territories keeps the region free of infection, so a successful intervention strategy can be to deny all connections from regions with $\rn>1$. However, the dynamics becomes more complicated when small--volume traveling is incorporated into a system of multiple regions with some exhibiting the phenomenon of backward bifurcation: in a case when endemic equilibria exist for $\rn<1$ as well, protecting a region with $\rn>1$ from the disease by denying the entrance of individuals from areas where the reproduction number is greater than one is no longer sufficient (though still necessary) to prevent the outbreak. Such a situation was illustrated by an HIV transmission model for three regions where, under certain conditions, the dynamics undergoes backward bifurcation in each region. We calculated the possible number of steady states of the disconnected system which persist with the introduction of traveling with small volumes into the system, and illustrated by several examples on the network structure and model parameter setting that mobility of individuals between the regions gives rise to various scenarios for the limiting behavior of solutions, and thus makes the outcome of the epidemic difficult to predict.

\section*{Acknowledgment}

DHK acknowledges support by the European Union and the State of Hungary, co-financed by the European Social Fund in the framework of T\'{A}MOP 4.2.4. A/2-11-1-2012-0001 ``National Excellence Program''. GR was supported by the European Union and the European Social Fund through project FuturICT.hu (grant T\'{A}MOP--4.2.2.C-11/1/KONV-2012-0013), European Research Council StG Nr. 259559,
and  Hungarian Scientific Research Fund OTKA K109782.

\newpage
\section*{Appendix}
The following figure illustrates the path of $L+2$ regions considered in the proof of Theorem \ref{th:pluszeroro>1}.
\begin{figure}[h!]
\centering
\includegraphics[width=7cm]{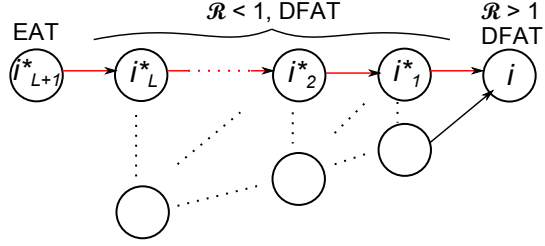} 
\caption{A path of regions $i_{L+1}^*$, $i_L^*$, $\dots$ $i_2^*$, $i_1^*$ and $i$, having the property that regions $i$ and $i_1^*$, $i_2^*$, $\dots$ $i_L^*$ are DFAT, $\rn^i>1$ and $\rn^j<1$ for $j \in \{i_1^*, i_2^*, \dots i_L^*\}$, furthermore region $i_{L+1}^*$ is EAT.}\label{fig:Lpath}
\end{figure}

We present the results of the simulations considered in section \ref{sec:richdyn} in the following figures, which depict solutions of system $(T_1)$--$(T_3)$ with HIV dynamics for four different sets of initial values. \\
For Figures \ref{fig:HIVrl1irred} and \ref{fig:HIVrl1red} initial values were chosen as $S^i(0)=10$, $S_v^i(0)=5$, $Y_2^i(0)=0$, $W_2^i(0)=0$ for $i=1, 2, 3$, and $Y_1^1(0)=1$, $W_1^1(0)=1$, $Y_1^2(0)=0.1$, $W_1^2(0)=0.5$, $Y_1^3(0)=0.1$, $W_1^3(0)=1$ (blue curve), $Y_1^1(0)=0.1$, $W_1^1(0)=1$, $Y_1^2(0)=1$, $W_1^2(0)=1$, $Y_1^3(0)=0.1$, $W_1^3(0)=0.1$ (red curve), $Y_1^1(0)=0.1$, $W_1^1(0)=0.1$, $Y_1^2(0)=1$, $W_1^2(0)=0$, $Y_1^3(0)=1$, $W_1^3(0)=0$ (black curve), $Y_1^1(0)=0.1$, $W_1^1(0)=0.1$, $Y_1^2(0)=1$, $W_1^2(0)=0$, $Y_1^3(0)=0.4$, $W_1^3(0)=0.3$ (green curve).\\

For Figures \ref{fig:HIVrg1nofb}, \ref{fig:HIVrg1feedback} and \ref{fig:HIVrg1full} initial values were chosen as $S^i(0)=10$, $S_v^i(0)=5$, $Y_2^i(0)=0$, $W_2^i(0)=0$ for $i=1, 2, 3$, and $Y_1^1(0)=0.1$, $W_1^1(0)=0.5$, $Y_1^2(0)=0$, $W_1^2(0)=0$, $Y_1^3(0)=0$, $W_1^3(0)=0$ (blue curve), $Y_1^1(0)=1$, $W_1^1(0)=1$, $Y_1^2(0)=0$, $W_1^2(0)=0$, $Y_1^3(0)=0.2$, $W_1^3(0)=0$ (red curve), $Y_1^1(0)=0.4$, $W_1^1(0)=0.3$, $Y_1^2(0)=0$, $W_1^2(0)=0$, $Y_1^3(0)=0$, $W_1^3(0)=0$ (black curve), $Y_1^1(0)=1$, $W_1^1(0)=0$, $Y_1^2(0)=5$, $W_1^2(0)=5$, $Y_1^3(0)=0$, $W_1^3(0)=0$ (green curve).

\begin{figure}[p]
\centering
\subfigure[$\alpha=0$]{\includegraphics[width=4.8cm]{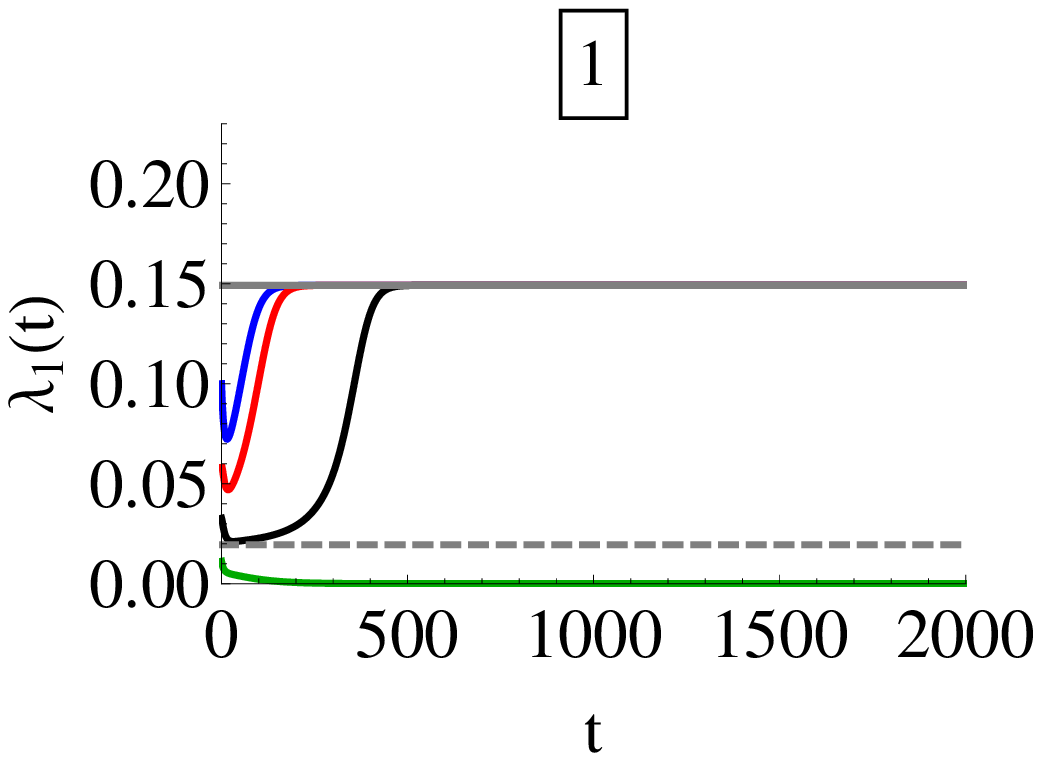} \includegraphics[width=4.8cm]{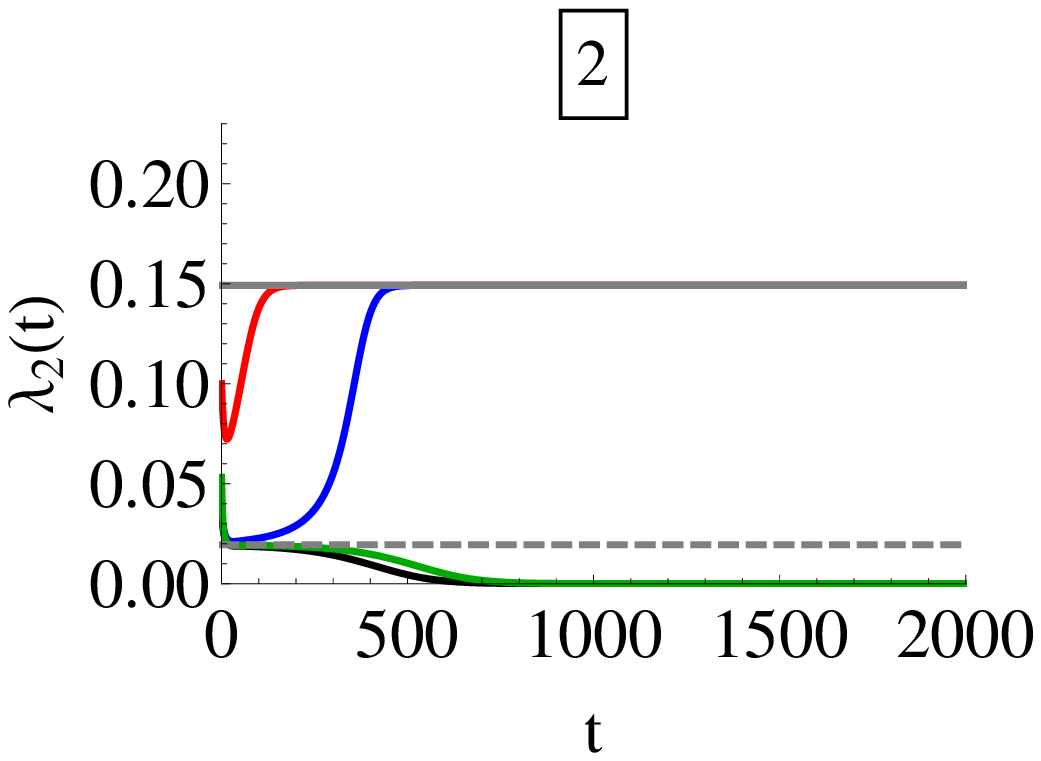} \includegraphics[width=4.8cm]{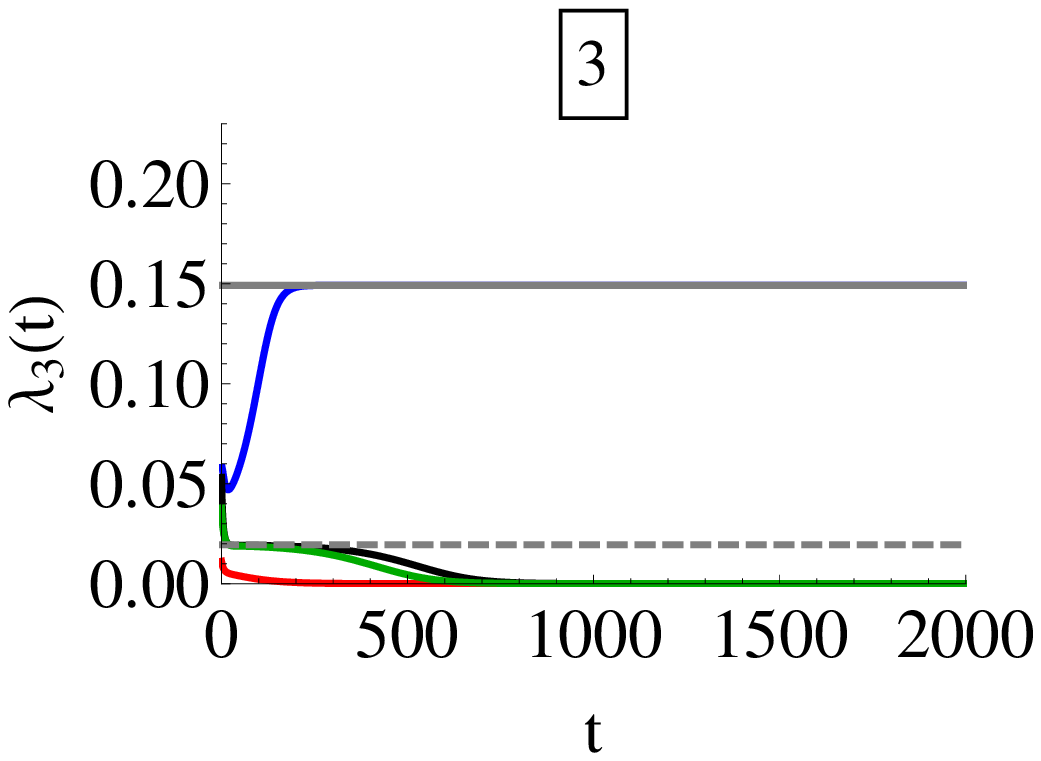}}\\
\subfigure[$\alpha=10^{-5}$]{\includegraphics[width=4.8cm]{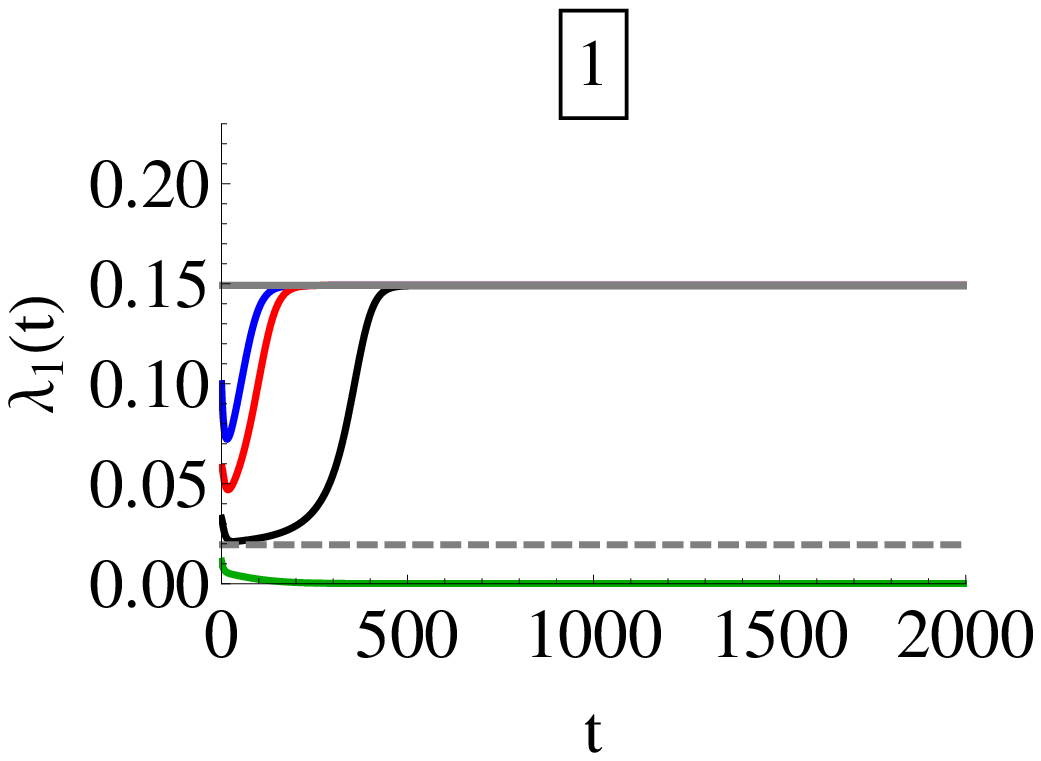} \includegraphics[width=4.8cm]{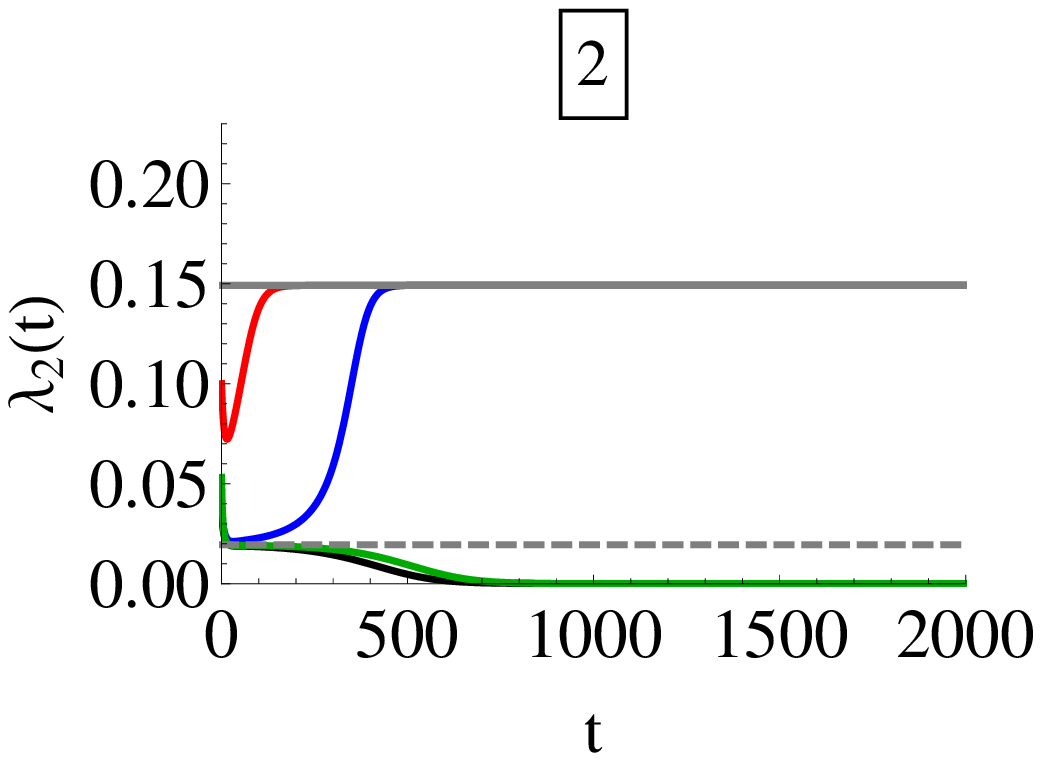} \includegraphics[width=4.8cm]{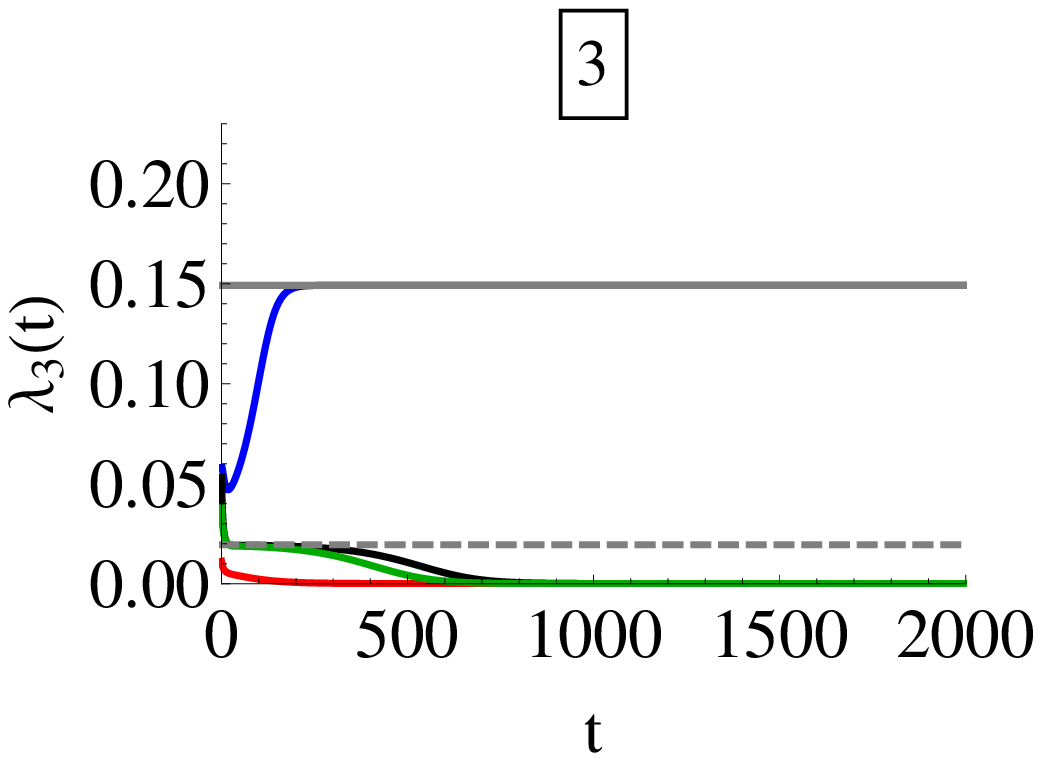}}\\
\subfigure[$\alpha=10^{-3}$]{\includegraphics[width=4.8cm]{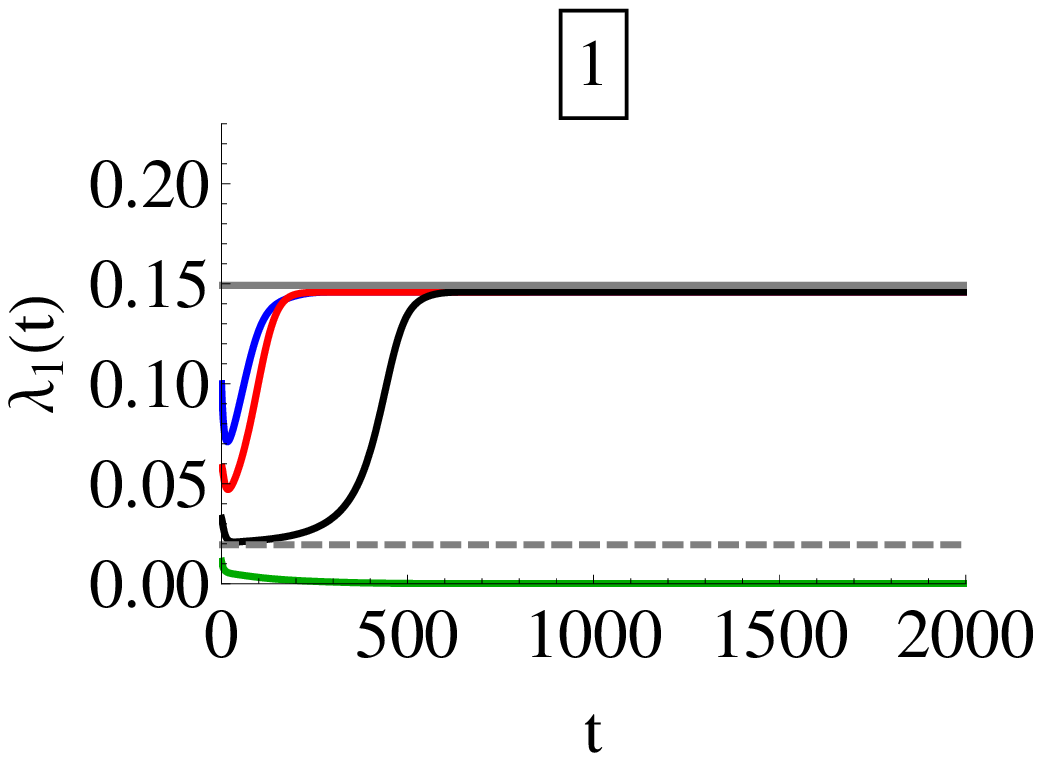} \includegraphics[width=4.8cm]{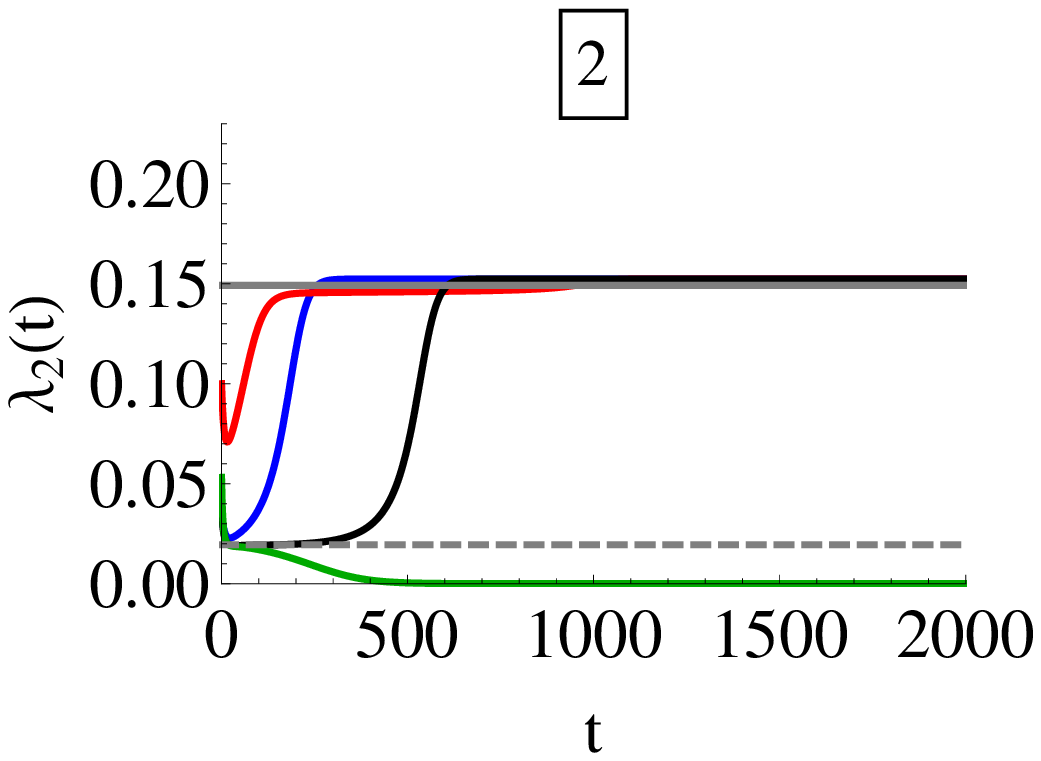} \includegraphics[width=4.8cm]{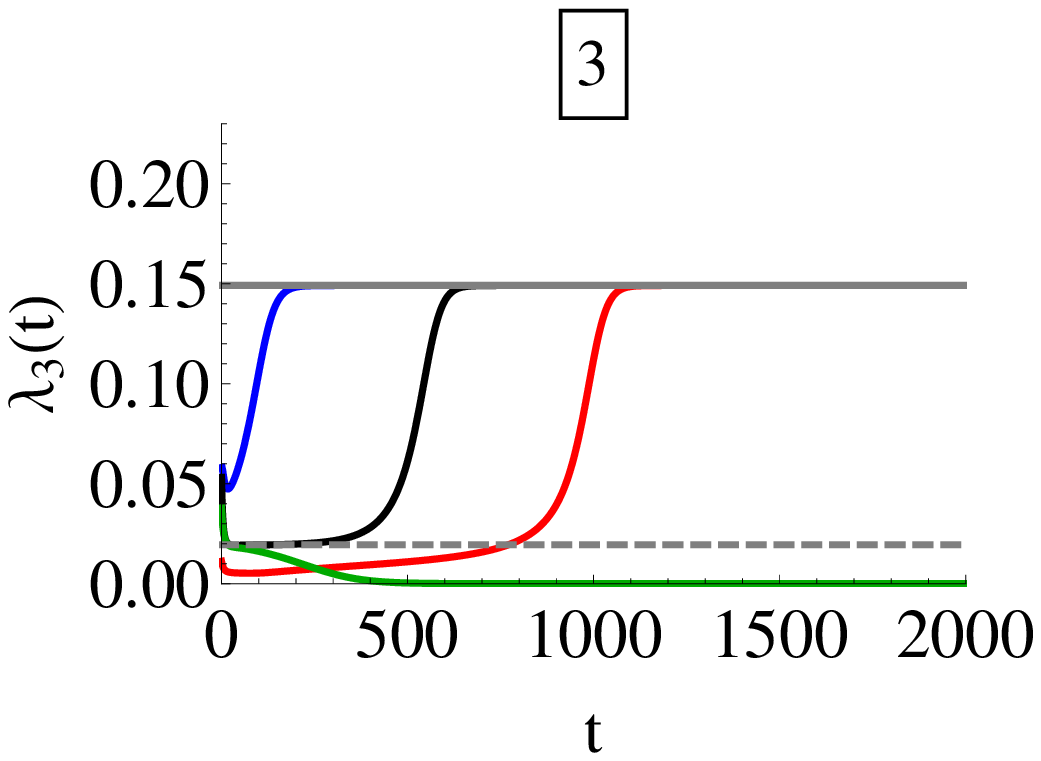}}\\
\subfigure[$\alpha=10^{-1}$]{\includegraphics[width=4.8cm]{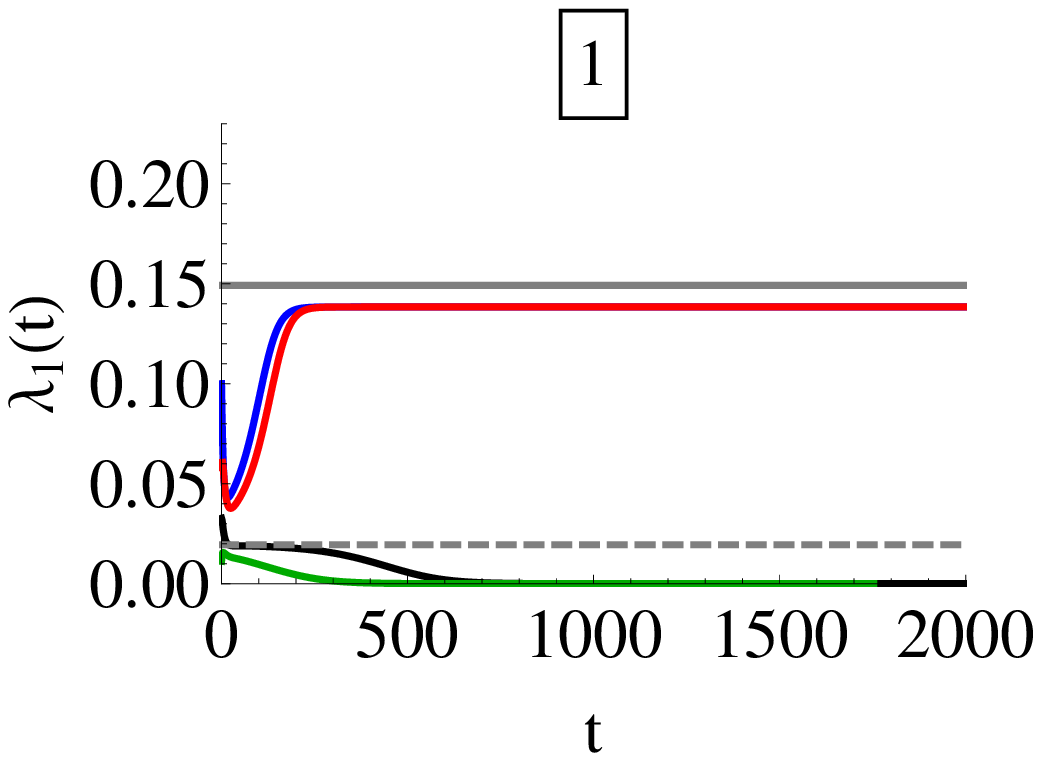} \includegraphics[width=4.8cm]{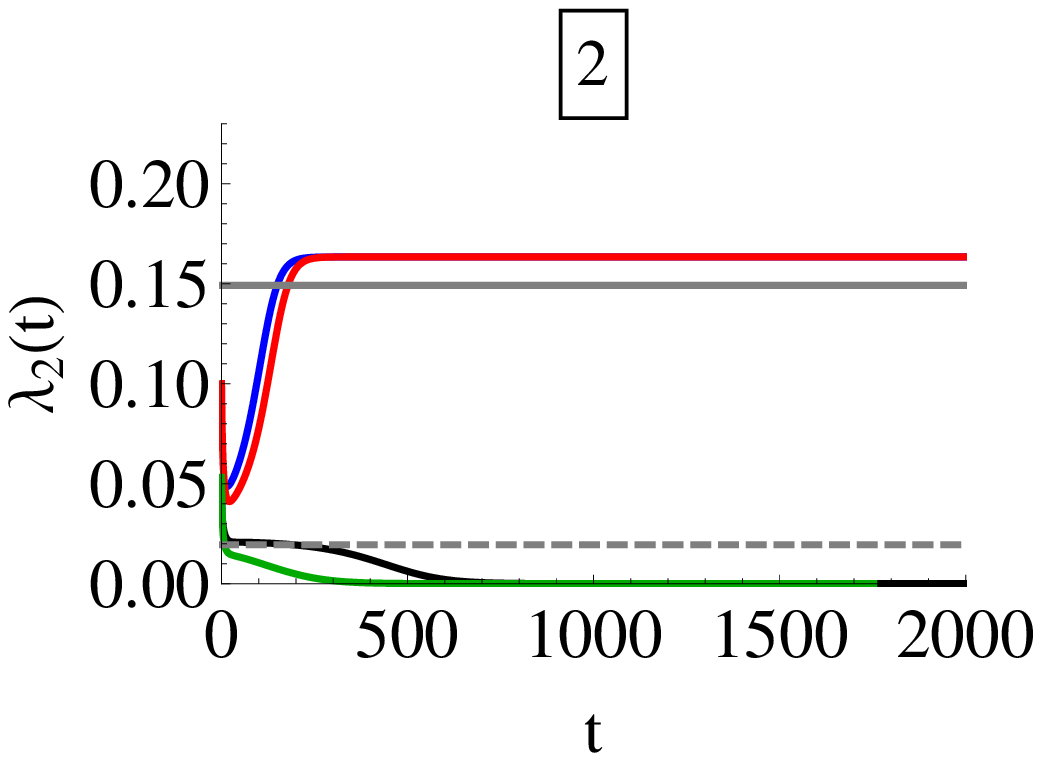} \includegraphics[width=4.8cm]{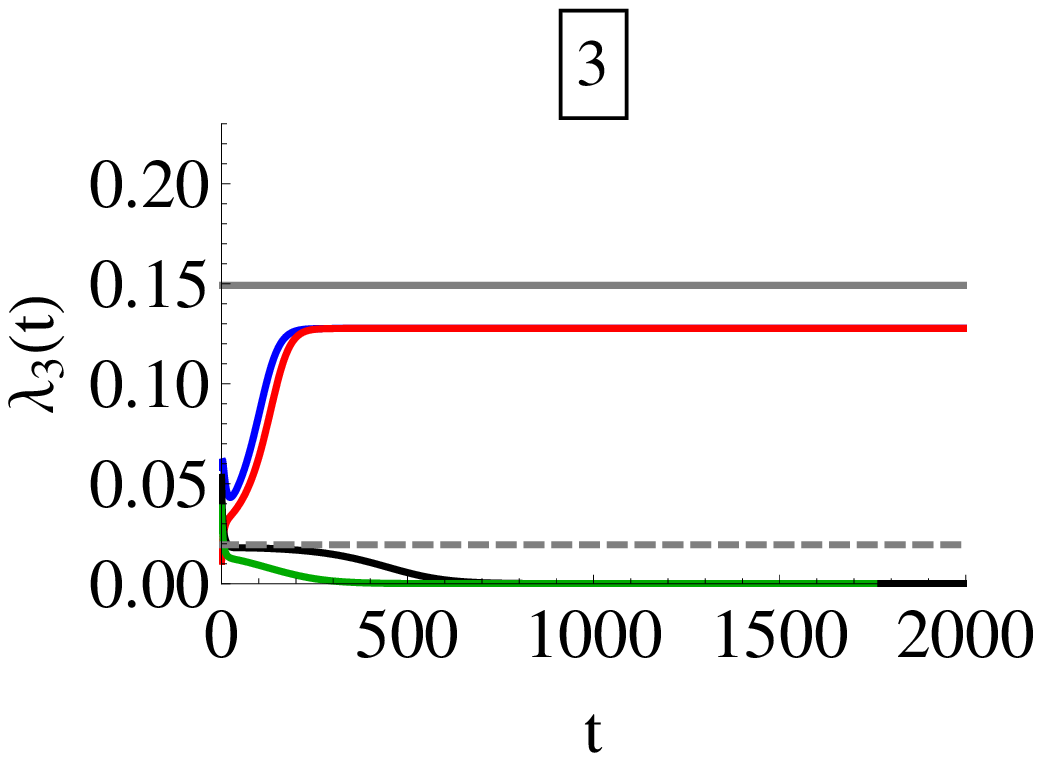}}
\caption{Solutions of system $(T_1)$--$(T_3)$ with HIV dynamics for different travel volumes. The three regions are considered to be symmetric in every parameter value, we applied the parameter set given in section \ref{sec:HIV} so that $\rn_c^{i}<\rn_H^{i}<1$ is satisfied for $i=1, 2, 3$. We use the irreducible connection network depicted in Figure \ref{fig:irredrednetwork} (b), where the connectivity potential parameters $c^{12}$, $c^{21}$, $c^{23}$, $c^{31}$ are equal to one in all model classes. Solid and dashed gray lines correspond to steady state solutions in the regions in the absence of traveling. }\label{fig:HIVrl1irred}
\end{figure}

\begin{figure}[p]
\centering
\subfigure[$\alpha=0$]{\includegraphics[width=4.8cm]{rl1_irred_a0_1.eps} \includegraphics[width=4.8cm]{rl1_irred_a0_2.eps} \includegraphics[width=4.8cm]{rl1_irred_a0_3.eps}}\\
\subfigure[$\alpha=10^{-5}$]{\includegraphics[width=4.8cm]{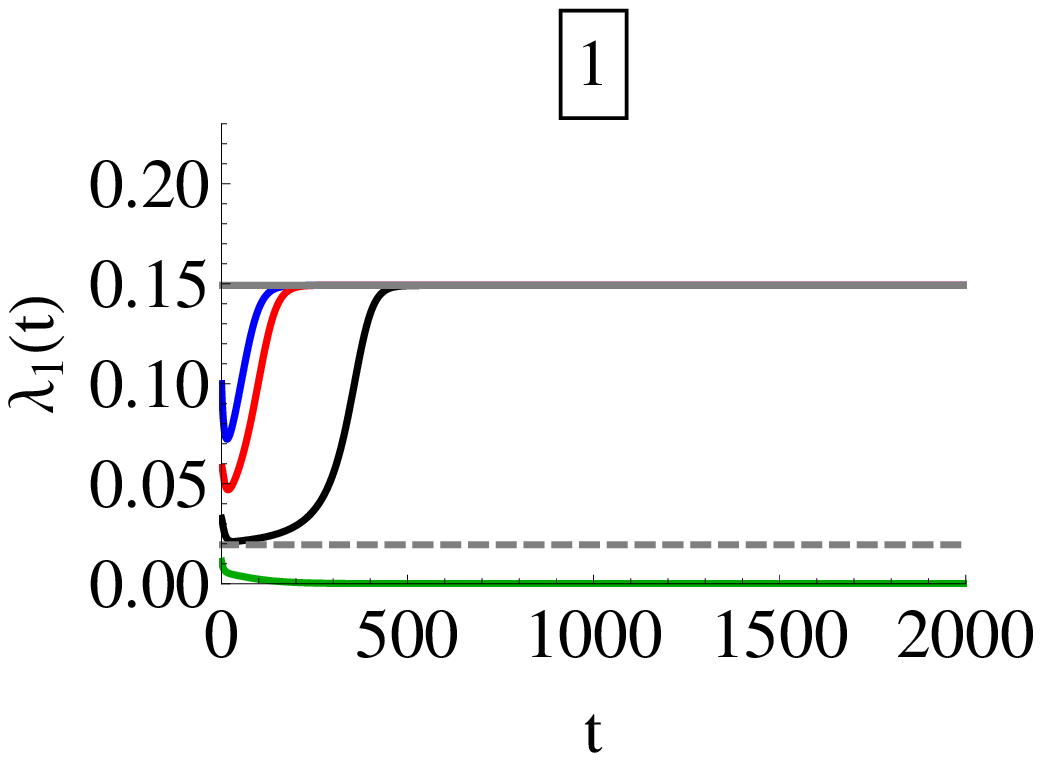} \includegraphics[width=4.8cm]{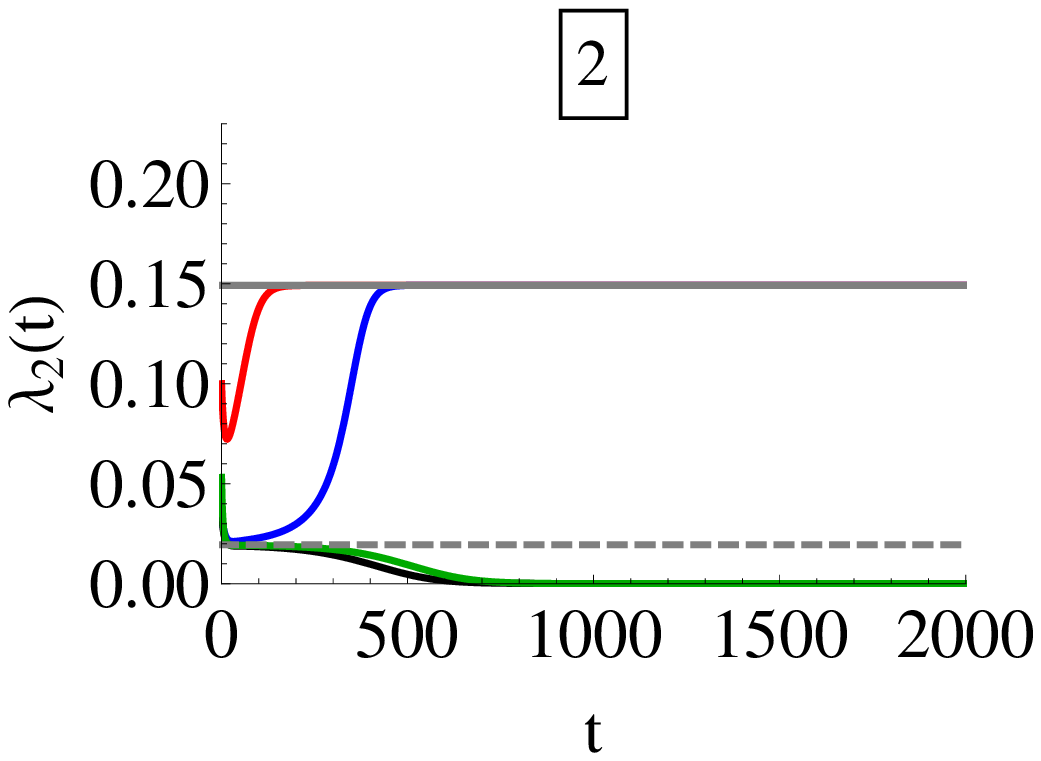} \includegraphics[width=4.8cm]{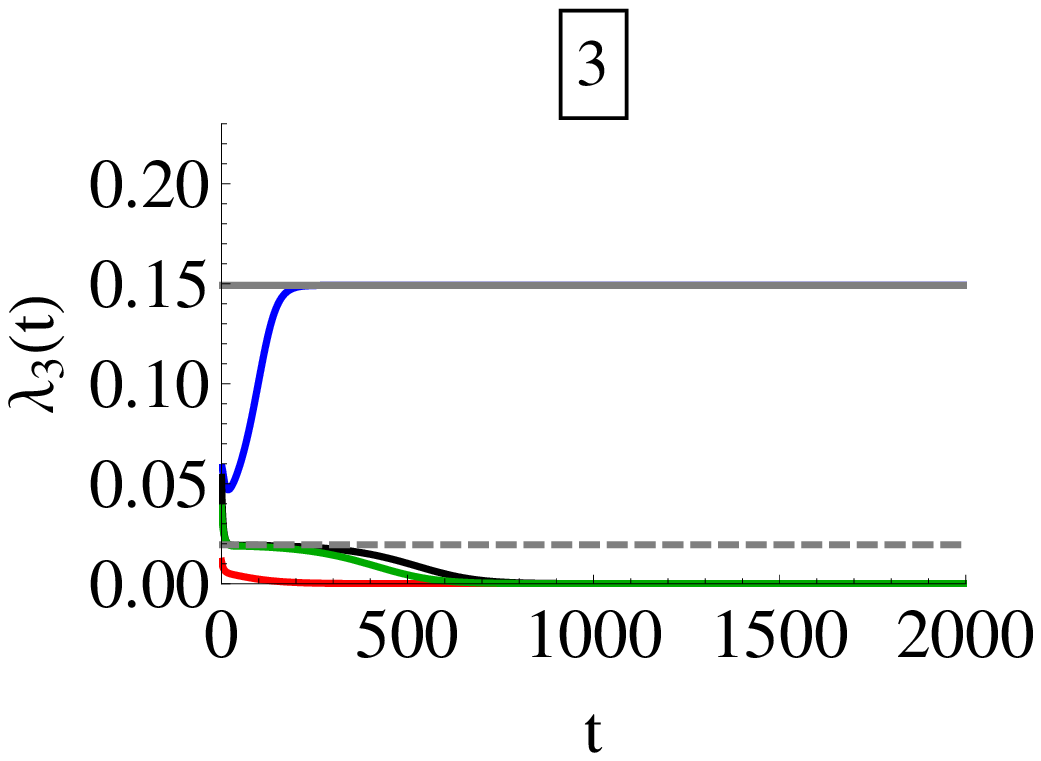}}\\
\subfigure[$\alpha=10^{-3}$]{\includegraphics[width=4.8cm]{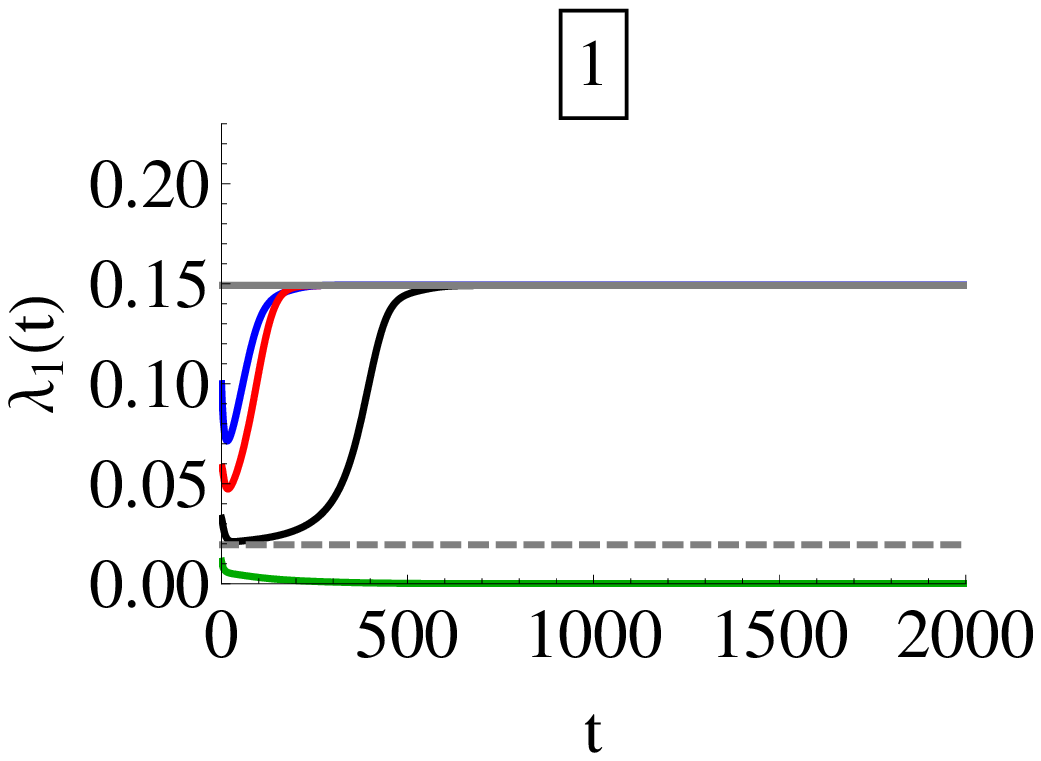} \includegraphics[width=4.8cm]{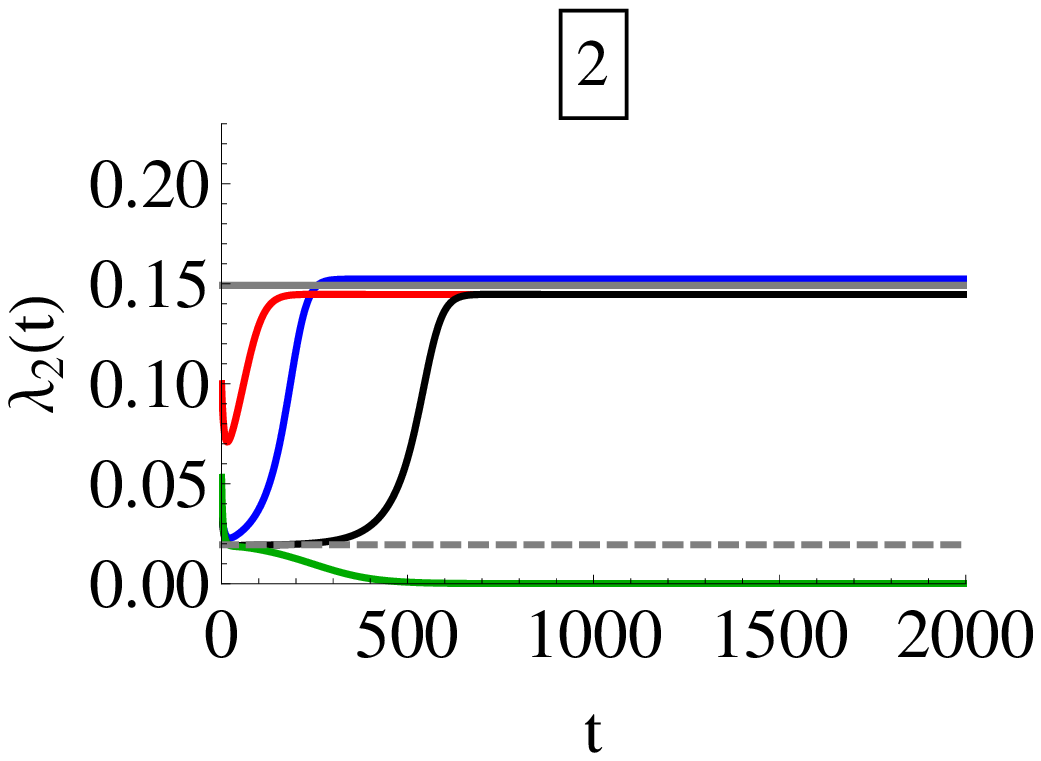} \includegraphics[width=4.8cm]{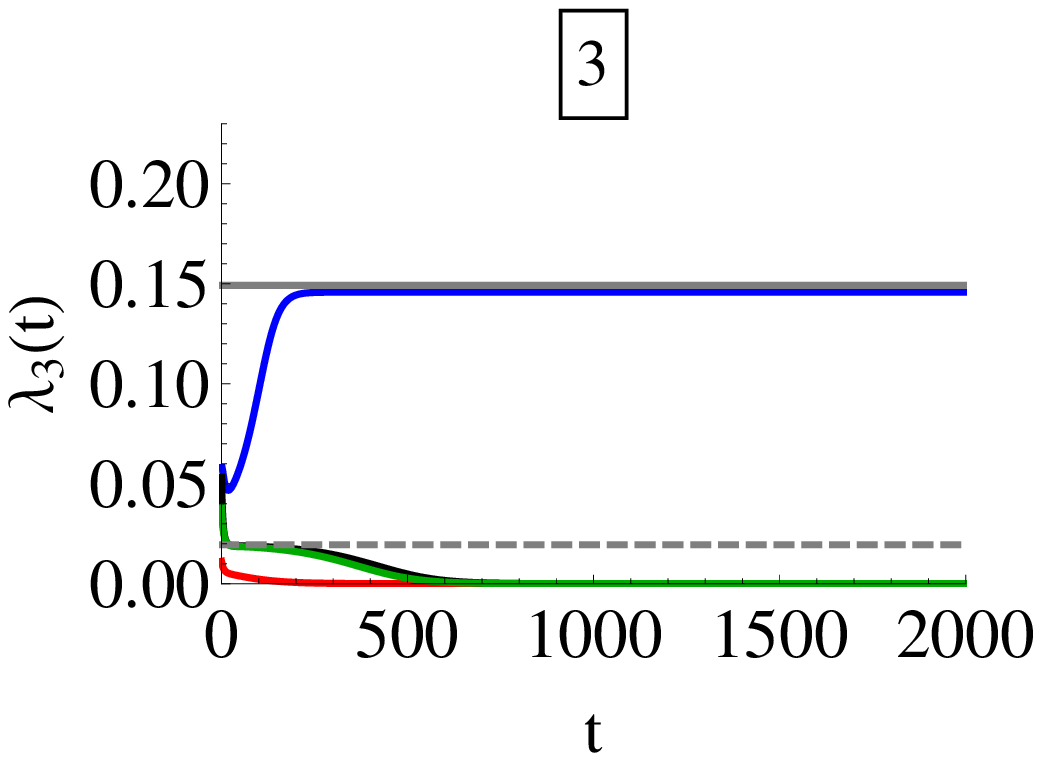}}\\
\subfigure[$\alpha=10^{-1}$]{\includegraphics[width=4.8cm]{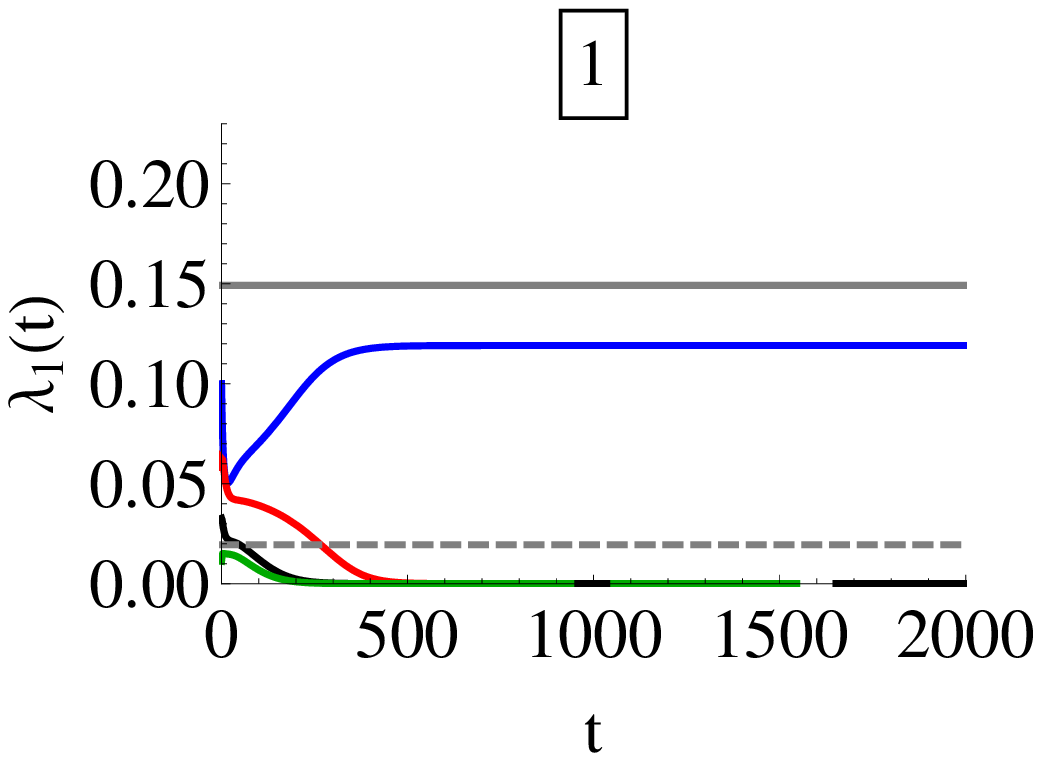} \includegraphics[width=4.8cm]{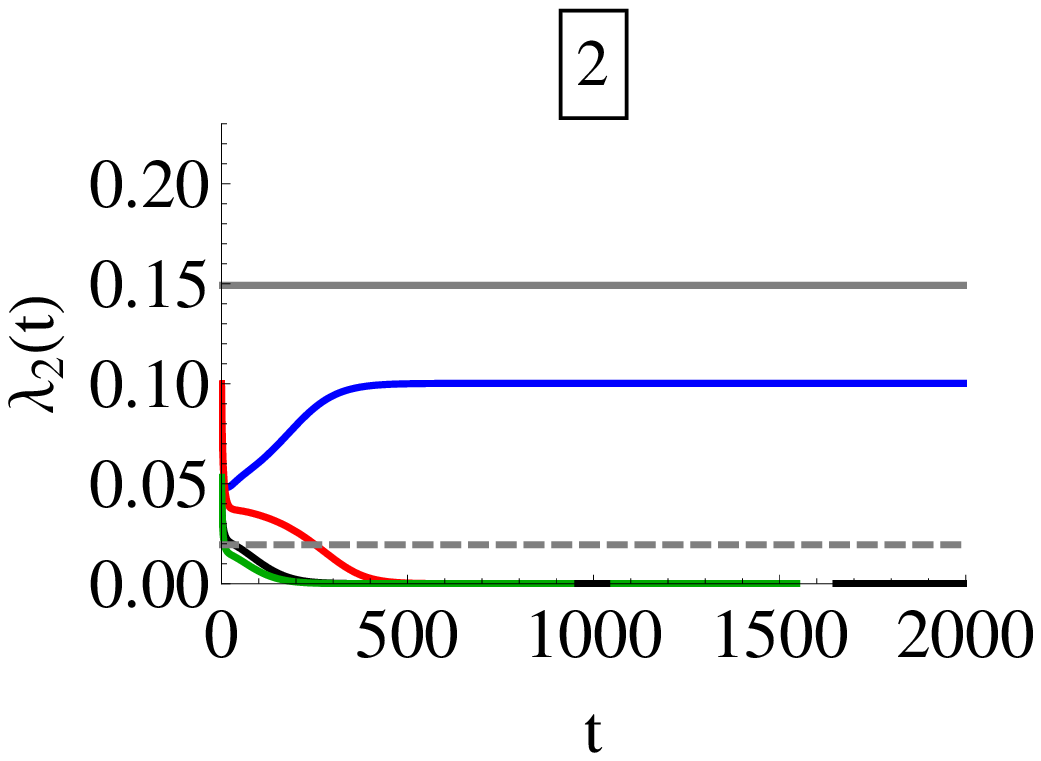} \includegraphics[width=4.8cm]{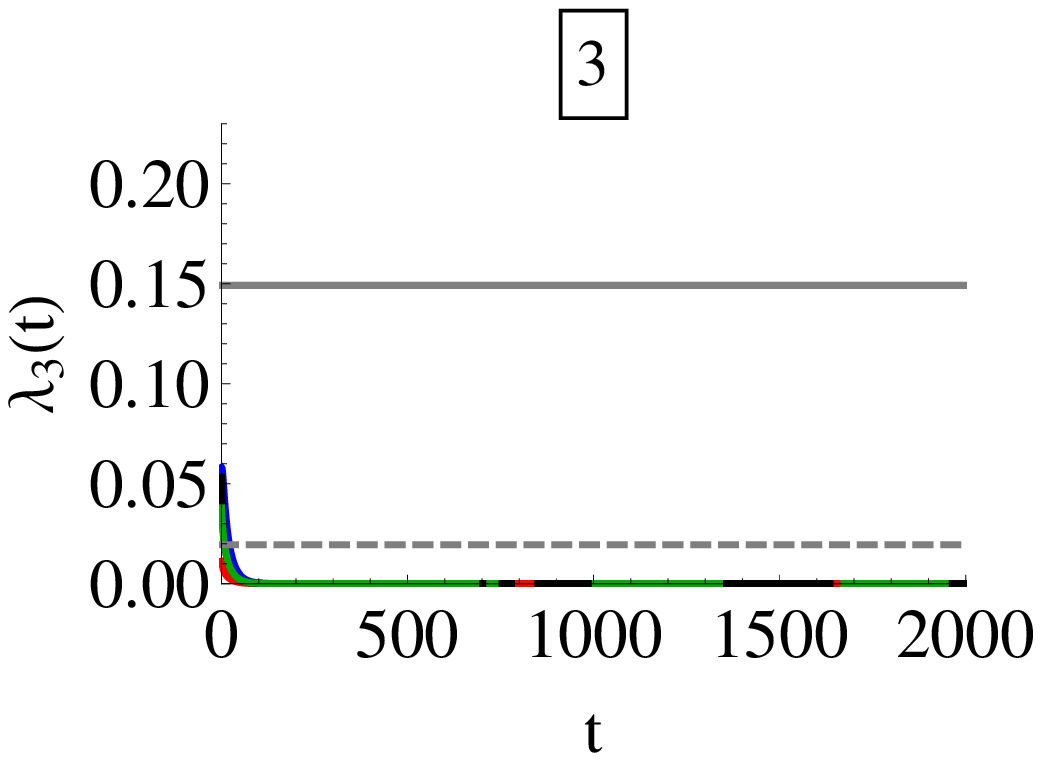}}
\caption{Solutions of system $(T_1)$--$(T_3)$ with HIV dynamics for different travel volumes. The three regions are considered to be symmetric in every parameter value, we applied the parameter set given in section \ref{sec:HIV} so that $\rn_c^{i}<\rn_H^{i}<1$ is satisfied for $i=1, 2, 3$. We use the reducible connection network depicted in Figure \ref{fig:irredrednetwork} (a), where the connectivity potential parameters $c^{12}$, $c^{21}$, $c^{23}$ are equal to one in all model classes.  Solid and dashed gray lines correspond to steady state solutions in the regions in the absence of traveling.}\label{fig:HIVrl1red}
\end{figure}

\begin{figure}[p]
\centering
\subfigure[$\alpha=0$]{\includegraphics[width=4.8cm]{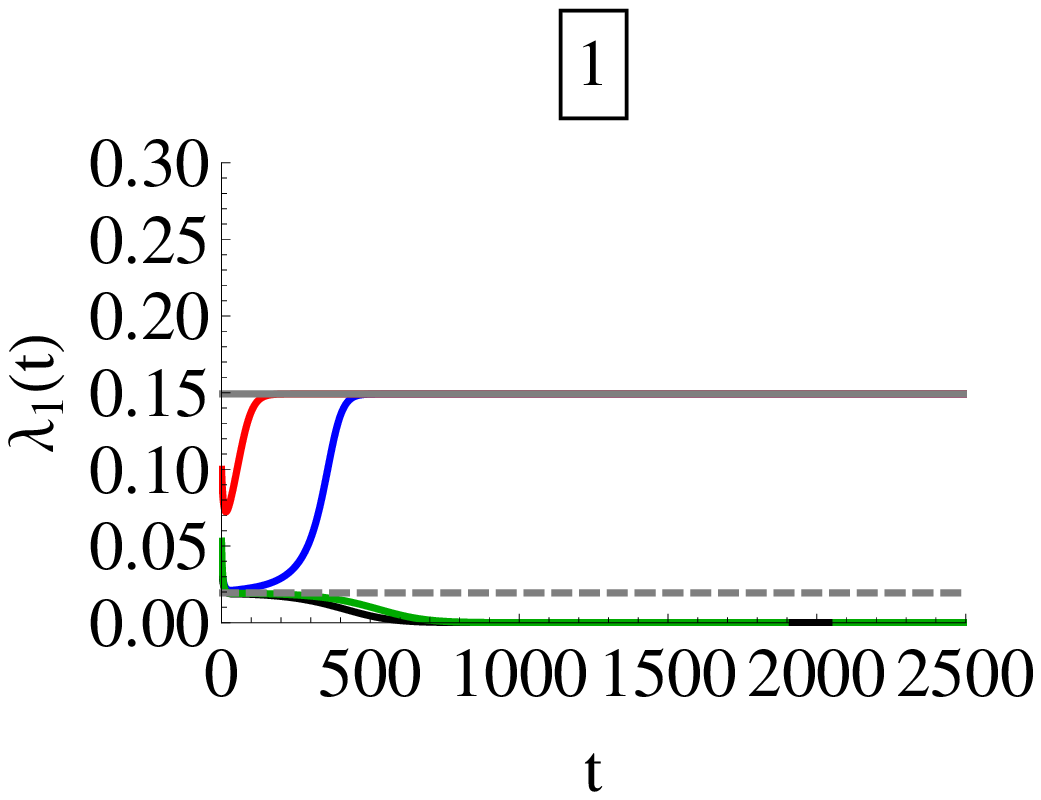} \includegraphics[width=4.8cm]{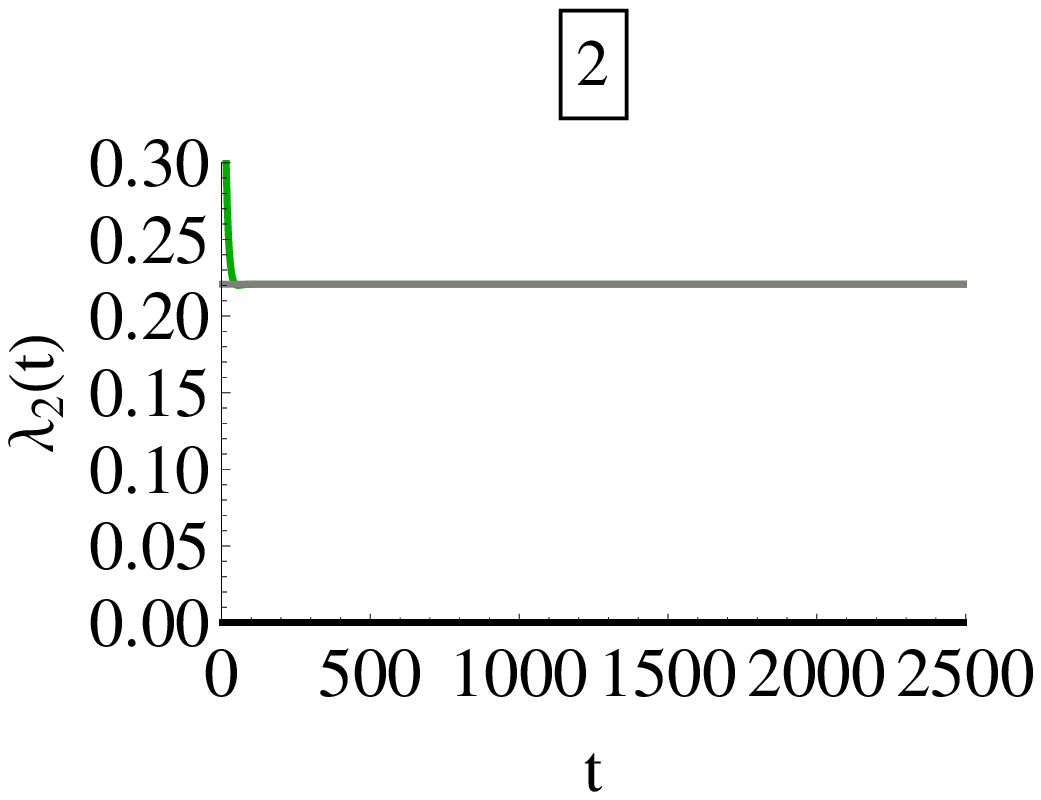} \includegraphics[width=4.8cm]{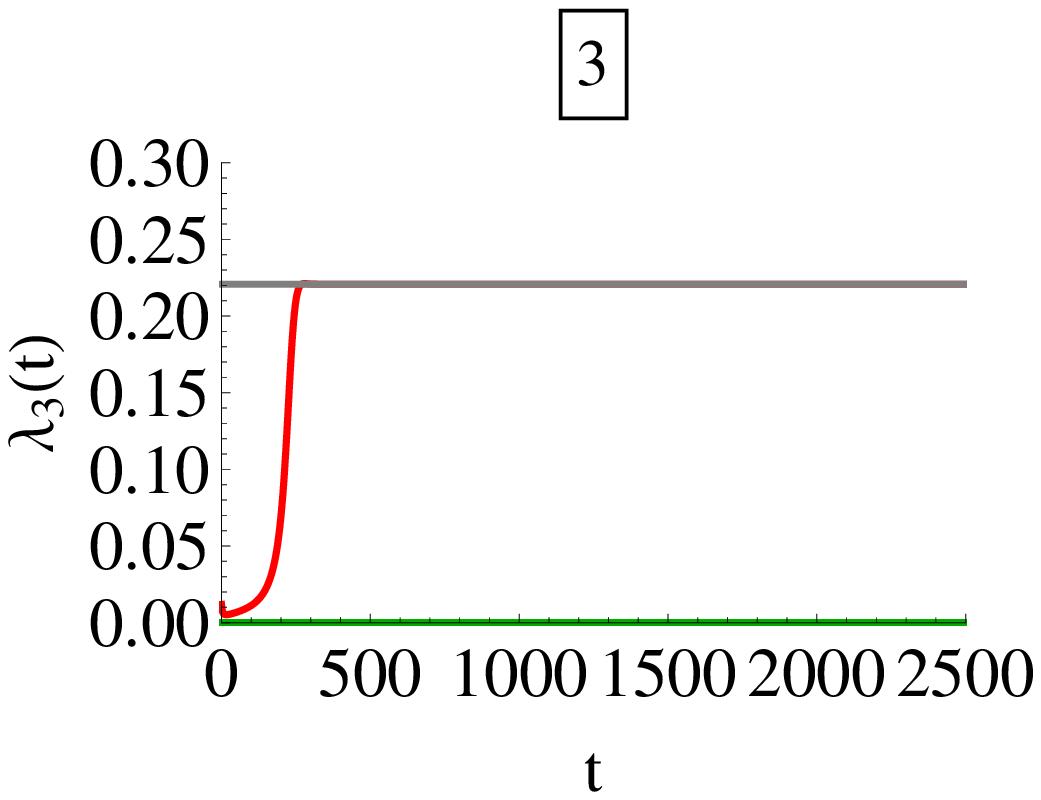}}\\
\subfigure[$\alpha=10^{-5}$]{\includegraphics[width=4.8cm]{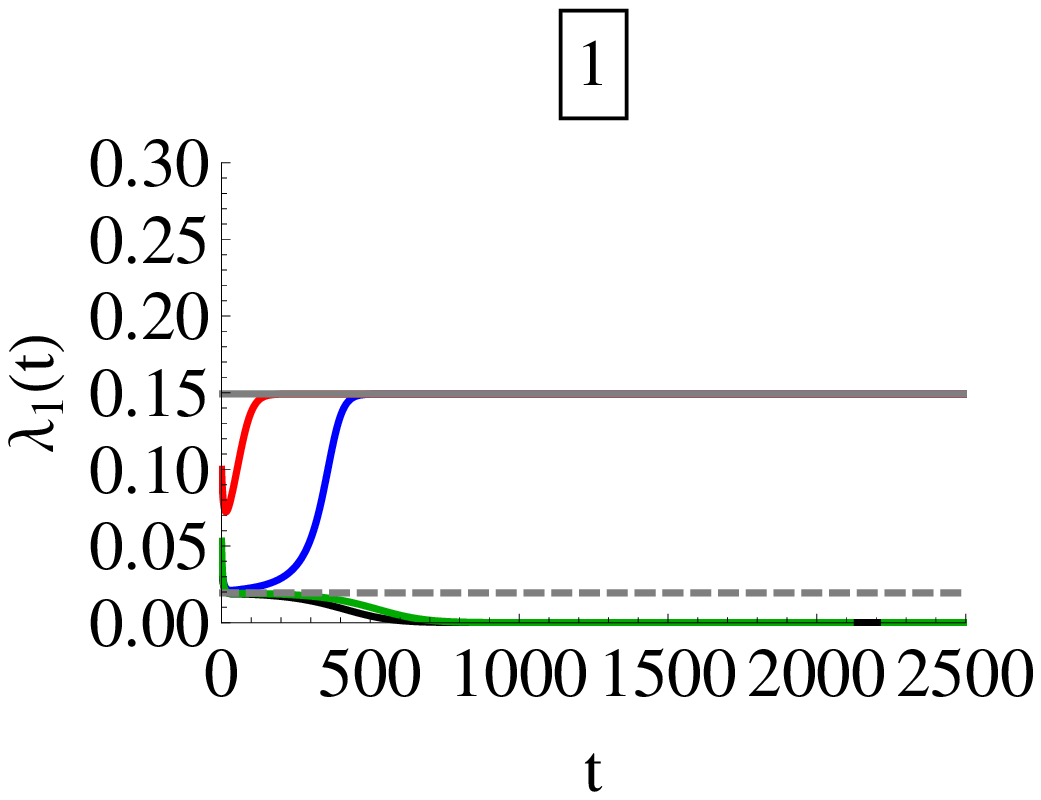} \includegraphics[width=4.8cm]{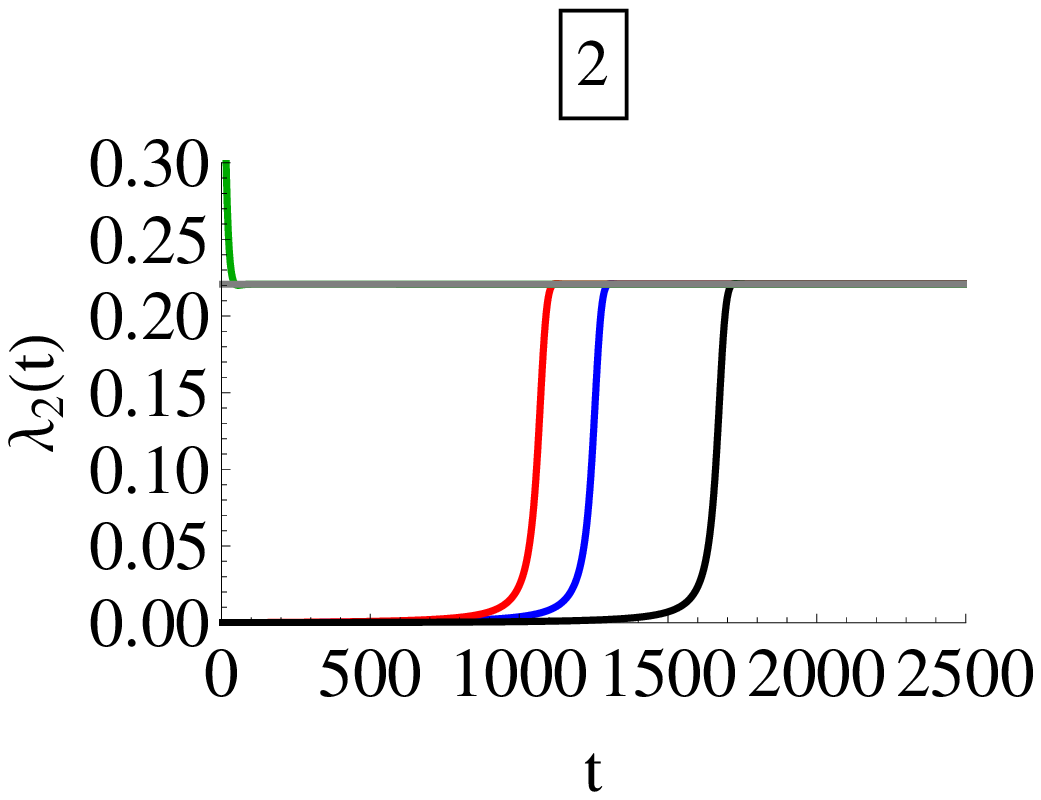} \includegraphics[width=4.8cm]{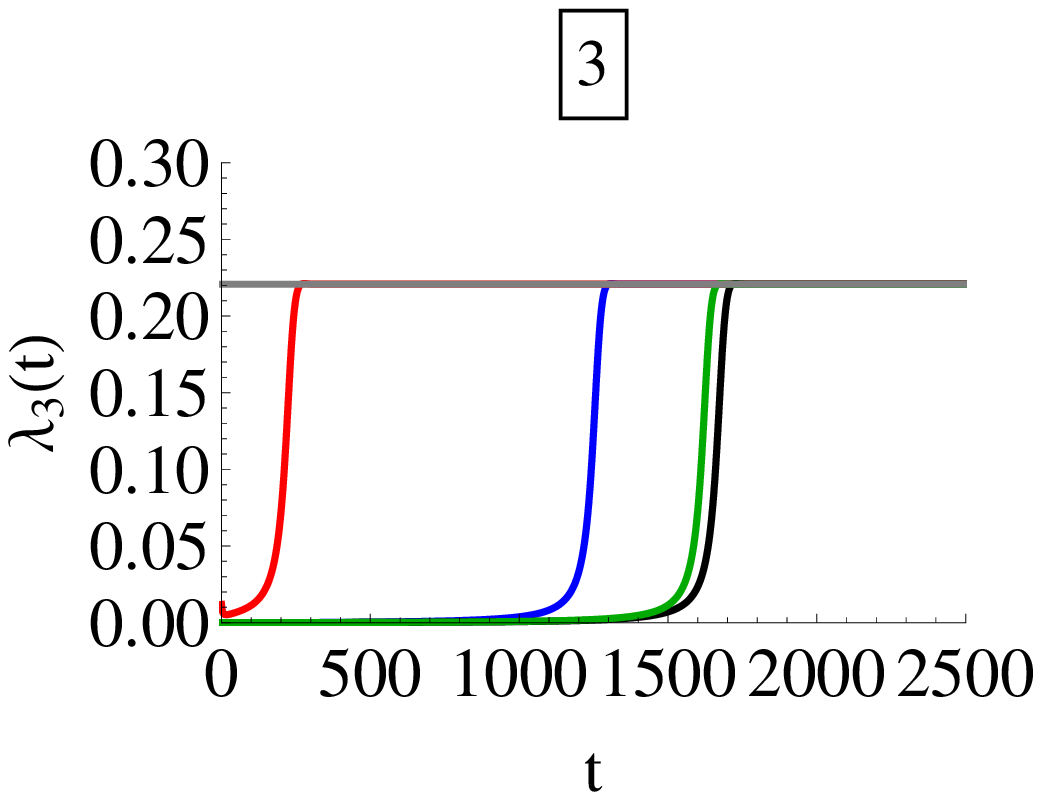}}\\
\subfigure[$\alpha=10^{-3}$]{\includegraphics[width=4.8cm]{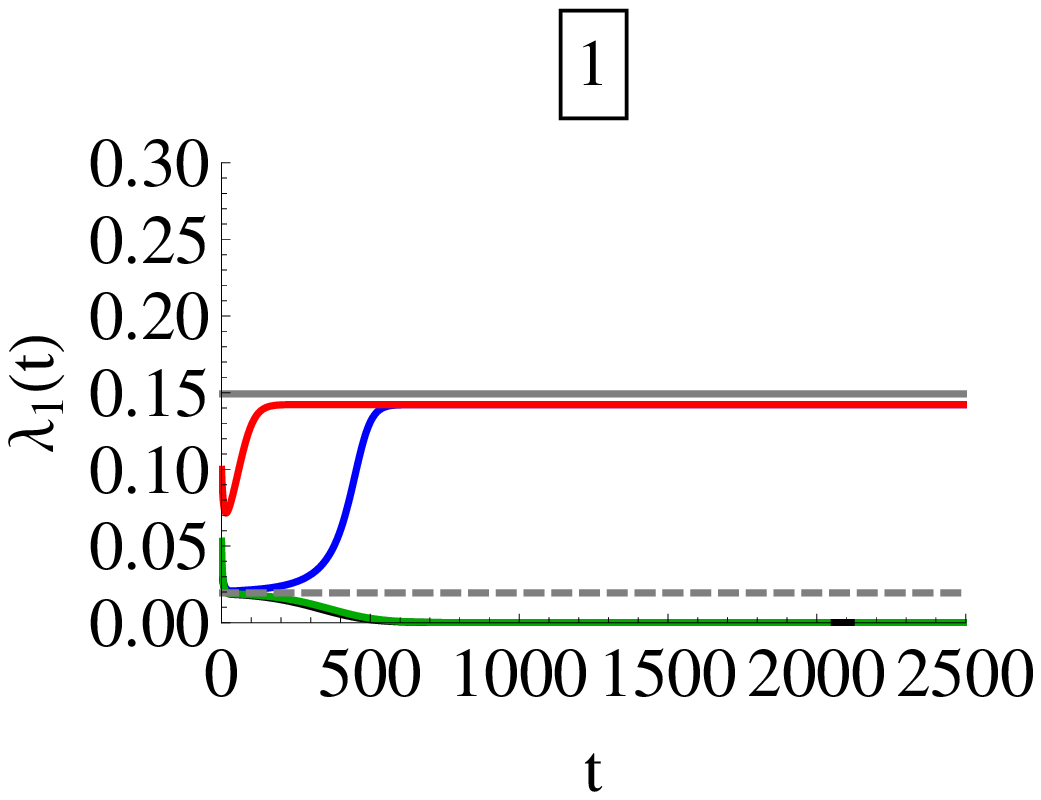} \includegraphics[width=4.8cm]{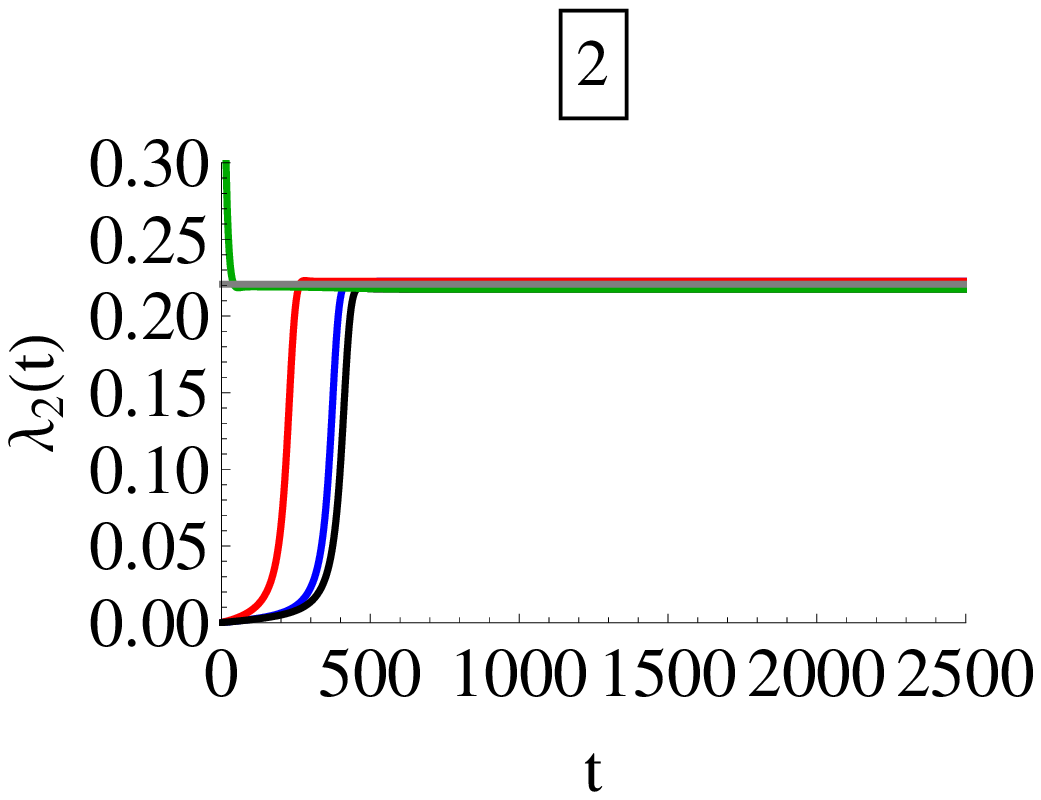} \includegraphics[width=4.8cm]{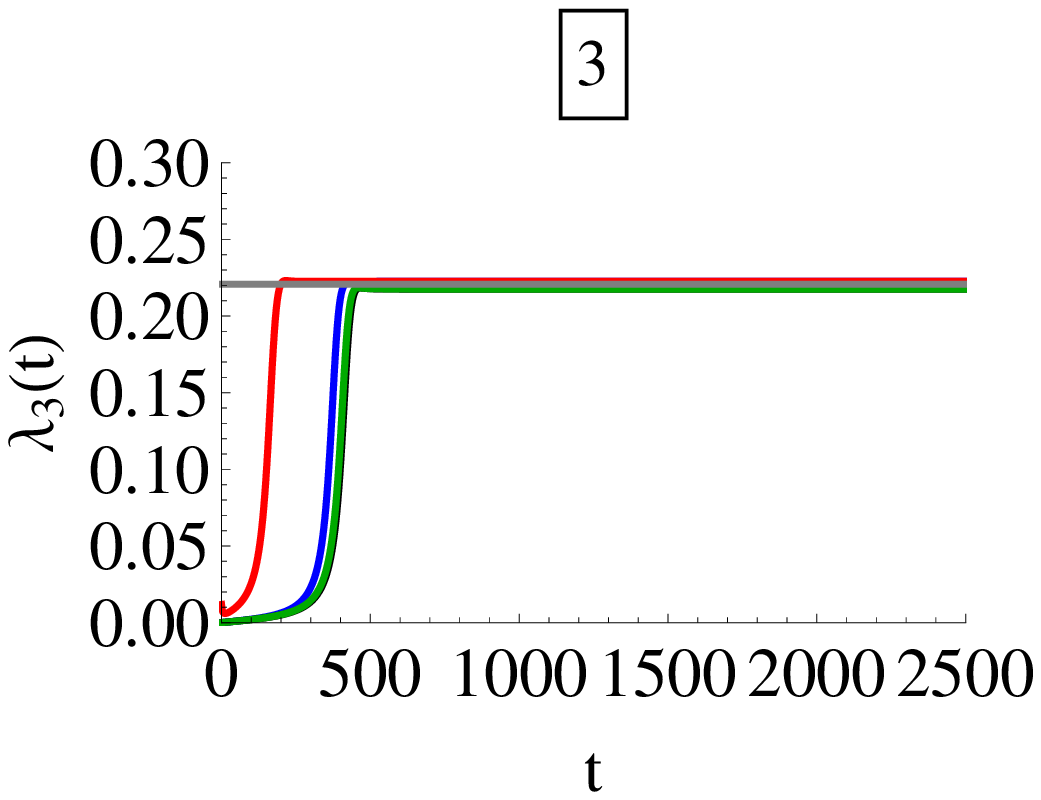}}\\
\subfigure[$\alpha=10^{-1}$]{\includegraphics[width=4.8cm]{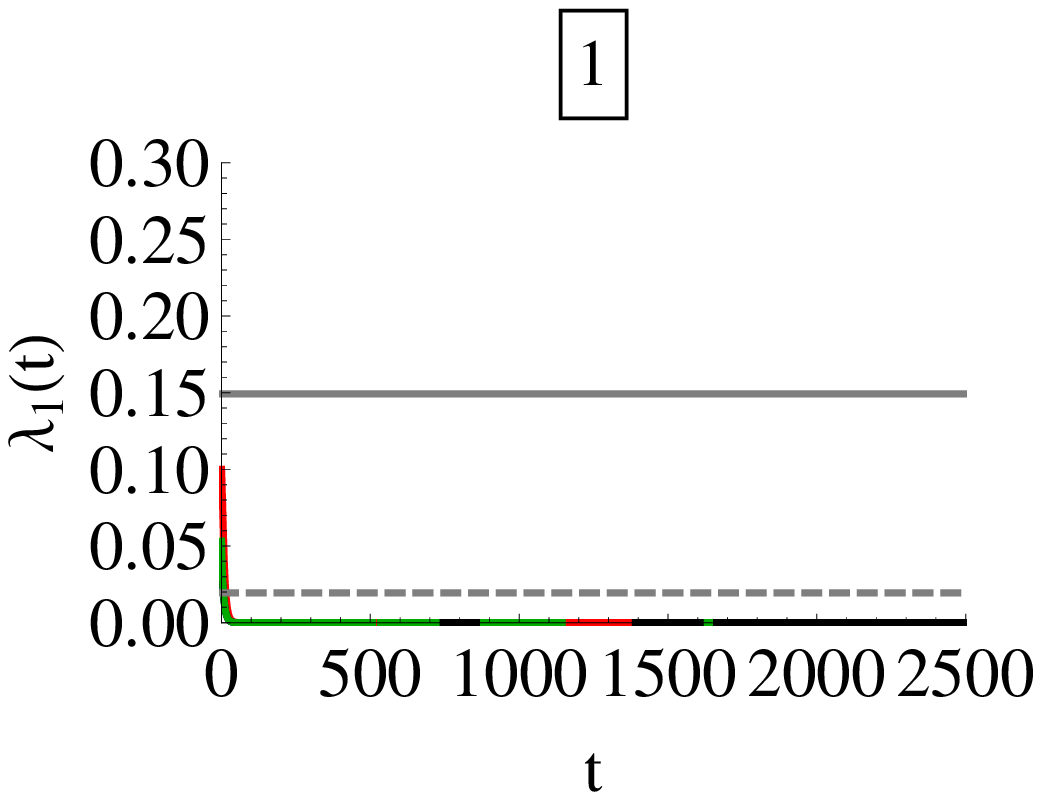} \includegraphics[width=4.8cm]{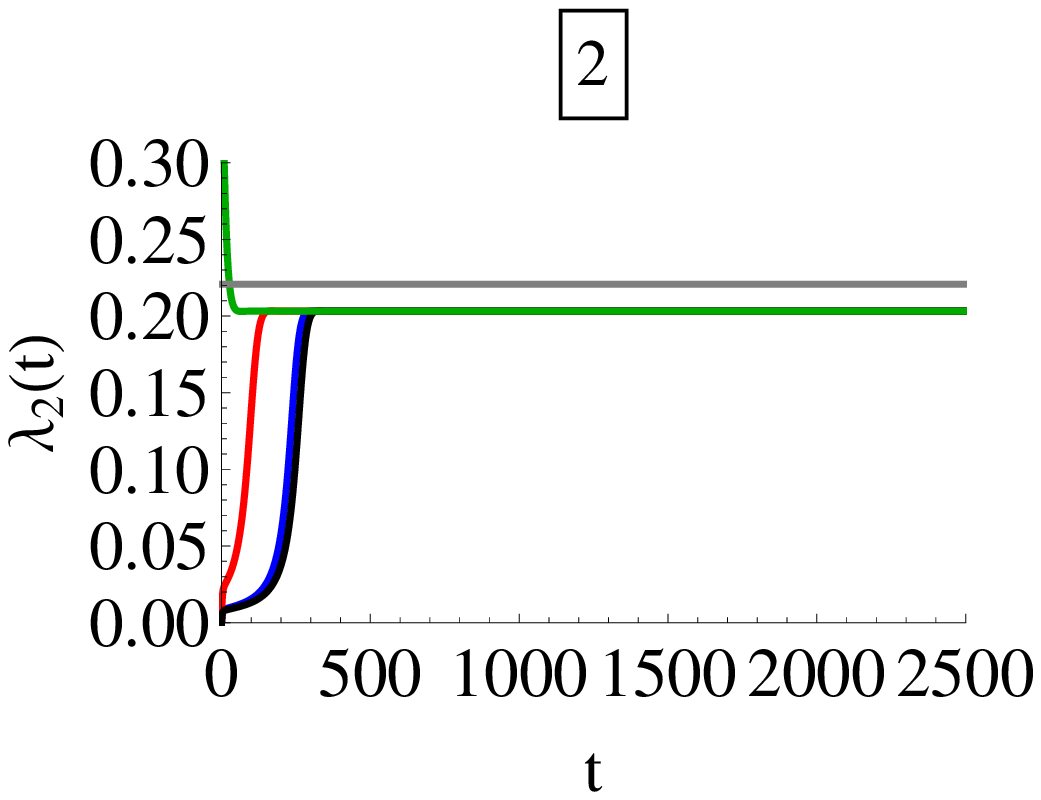} \includegraphics[width=4.8cm]{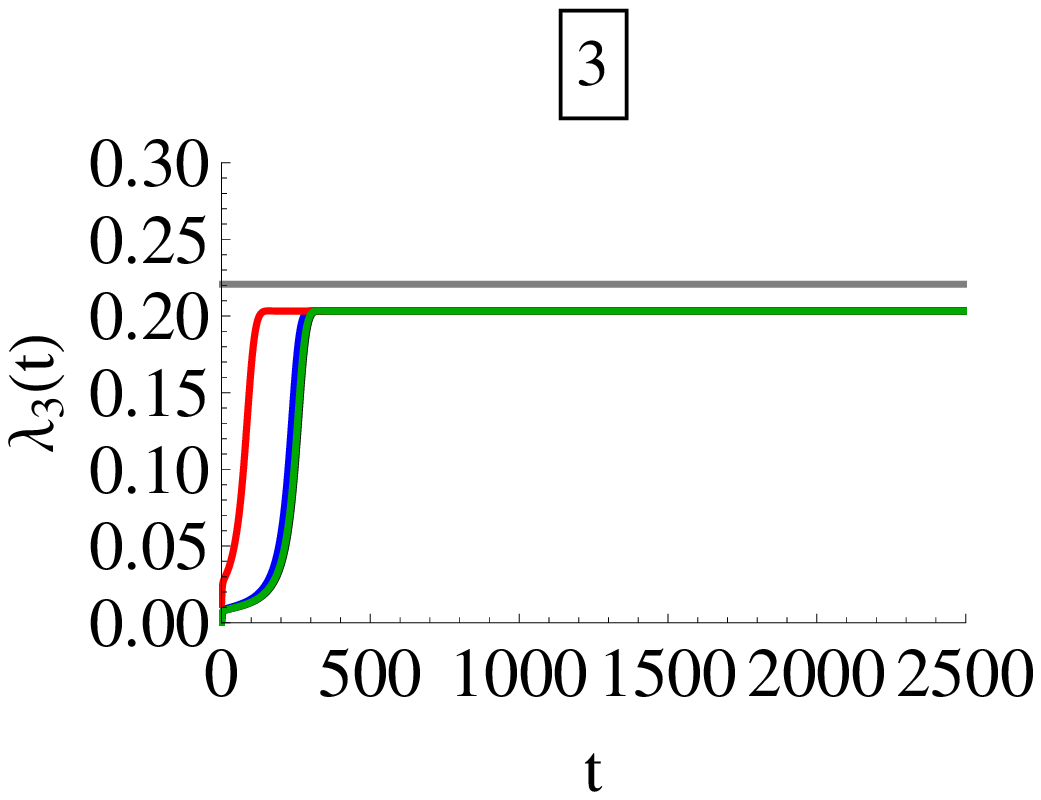}}
\caption{Solutions of system $(T_1)$--$(T_3)$ with HIV dynamics for different travel volumes. The three regions are considered to be symmetric in every parameter value but the local reproduction numbers, we applied the parameter set given in section \ref{sec:HIV} with $\beta^{i}_1=0.85$ and $\beta^{i}_2=\beta^{i}_3=1$ so that $\rn_c^{1}<\rn_H^{1}<1$ and  $\rn_H^{2},\rn_H^{3}>1$ are satisfied. We use the connection network depicted in Figure \ref{fig:4567} (c), where the connectivity potential parameters $c^{21}=c^{31}$ are equal to one in all model classes. Solid and dashed gray lines correspond to steady state solutions in the regions in the absence of traveling. 
}
\label{fig:HIVrg1nofb}
\end{figure}

\begin{figure}[p]
\centering
\subfigure[$\alpha=0$]{\includegraphics[width=4.8cm]{rg1_fig3b_a0_1.eps} \includegraphics[width=4.8cm]{rg1_fig3b_a0_2.eps} \includegraphics[width=4.8cm]{rg1_fig3b_a0_3.eps}}\\
\subfigure[$\alpha=10^{-5}$]{\includegraphics[width=4.8cm]{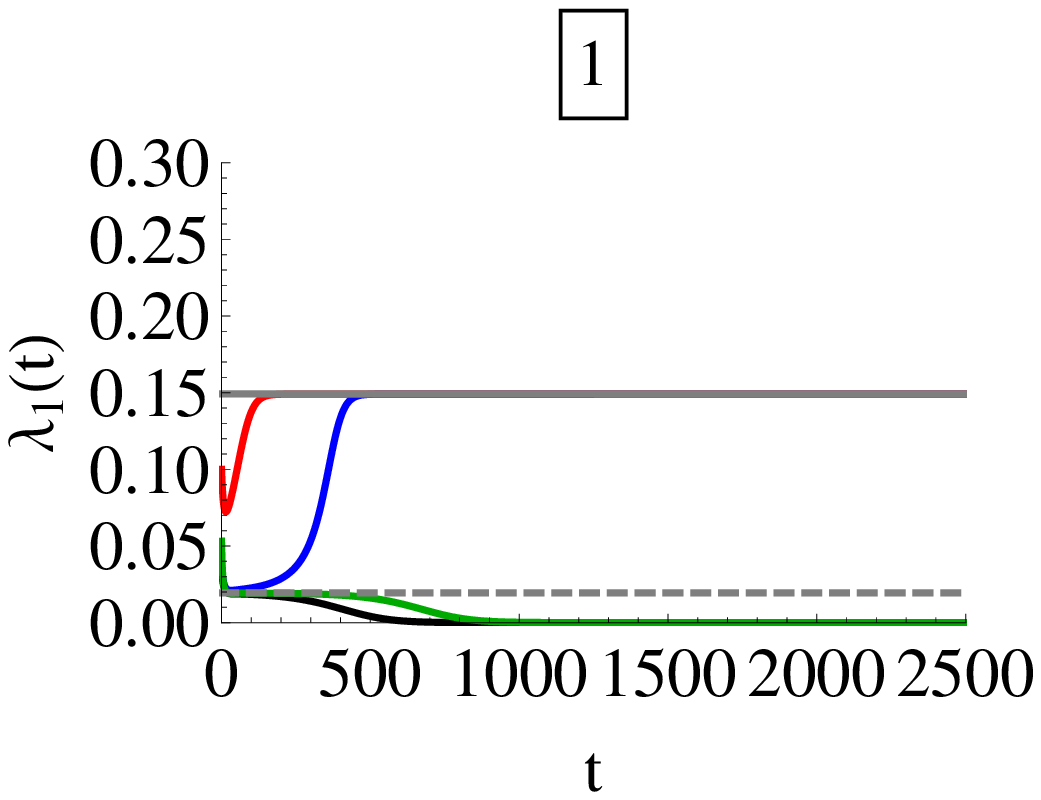} \includegraphics[width=4.8cm]{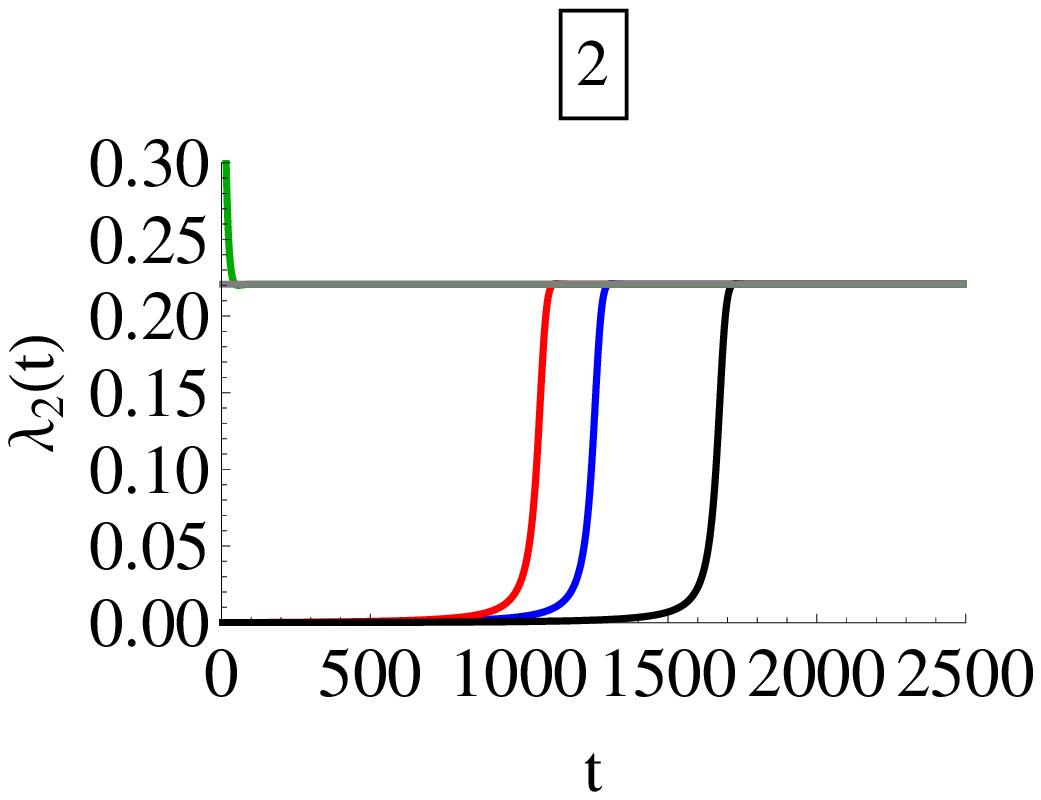} \includegraphics[width=4.8cm]{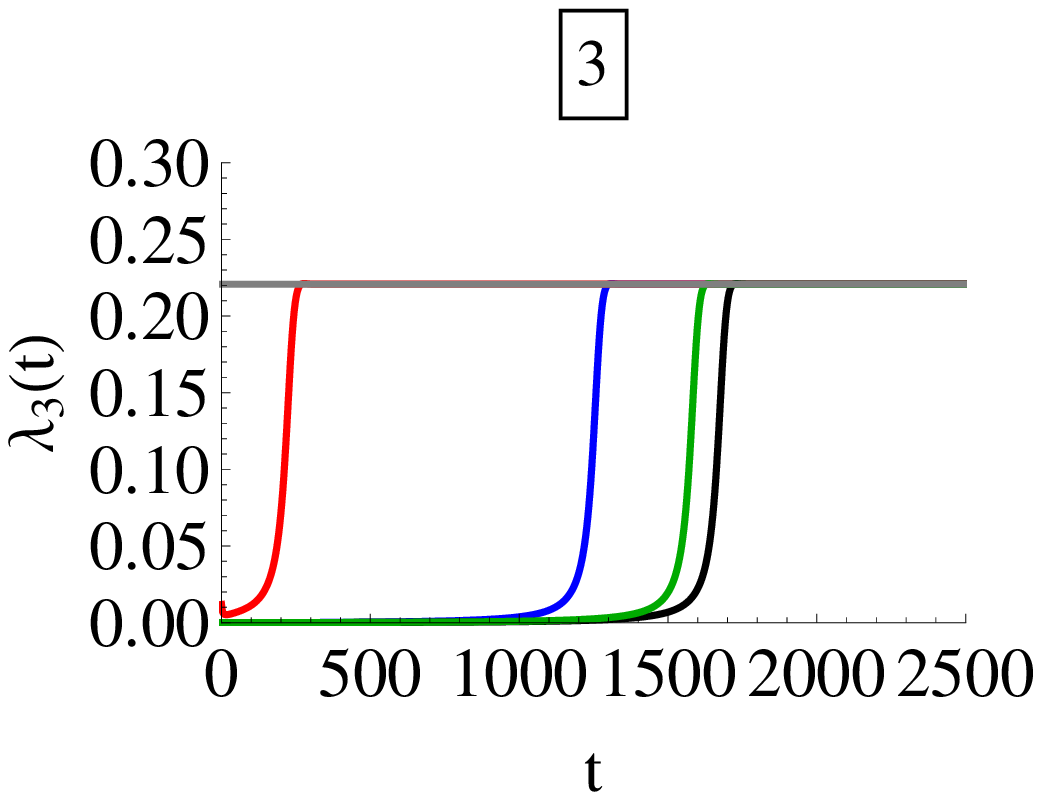}}\\
\subfigure[$\alpha=10^{-3}$]{\includegraphics[width=4.8cm]{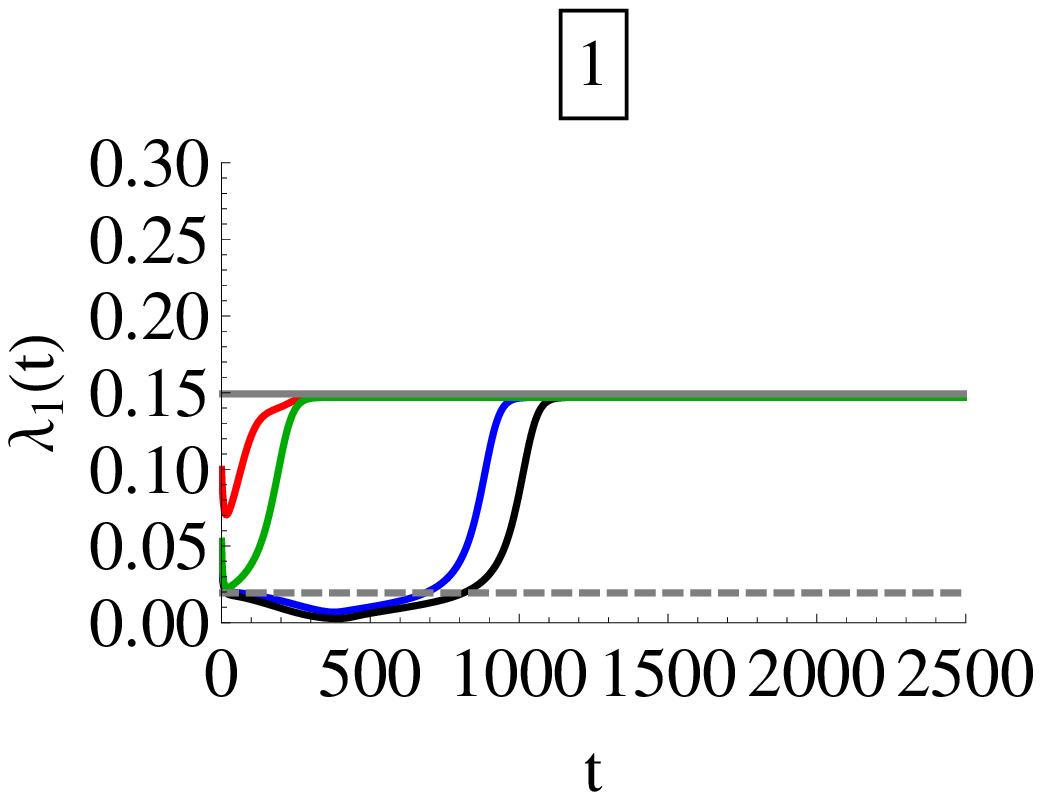} \includegraphics[width=4.8cm]{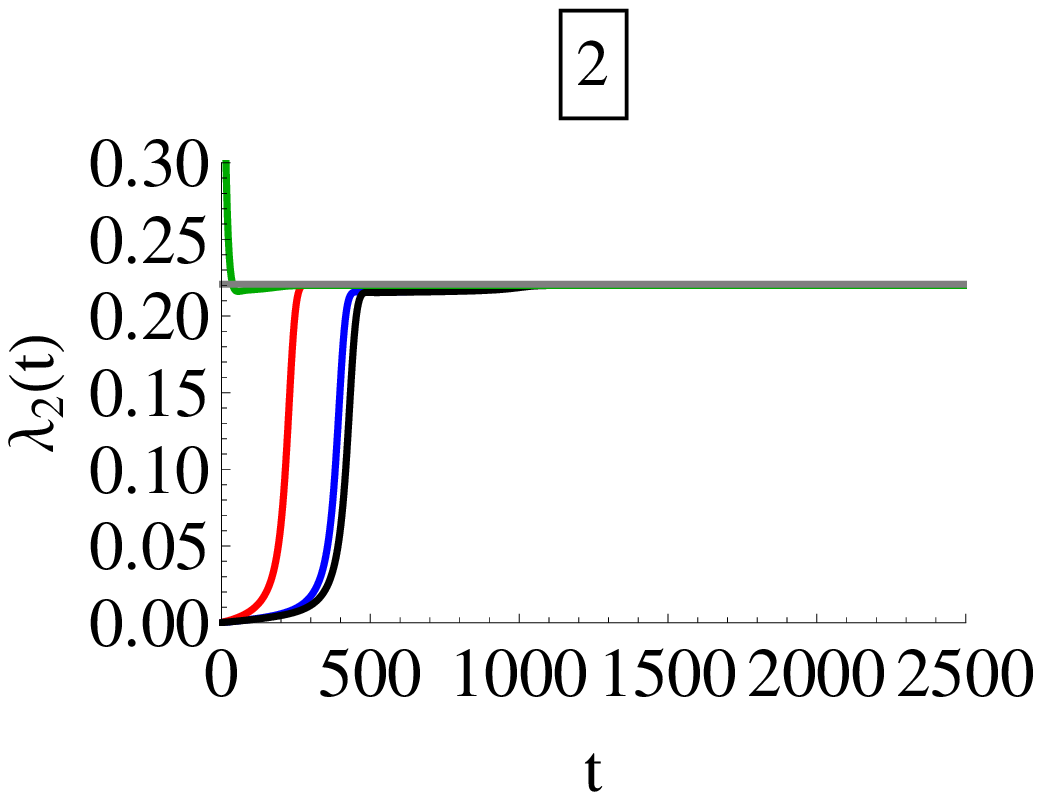} \includegraphics[width=4.8cm]{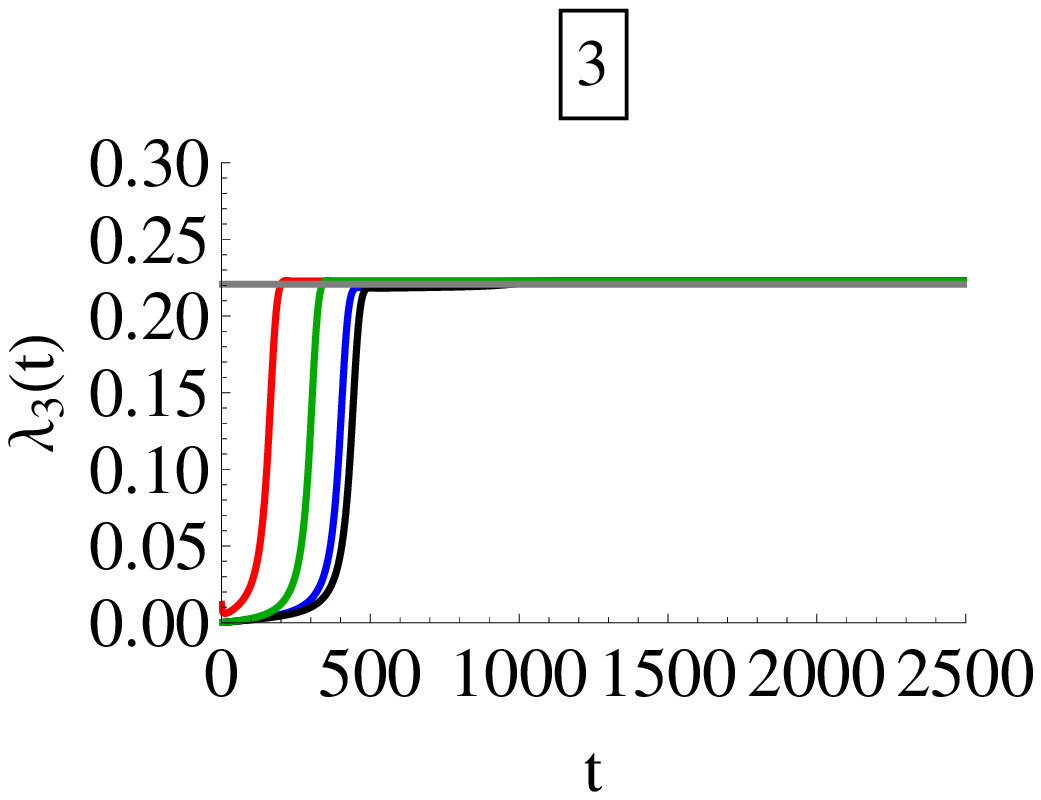}}\\
\subfigure[$\alpha=10^{-1}$]{\includegraphics[width=4.8cm]{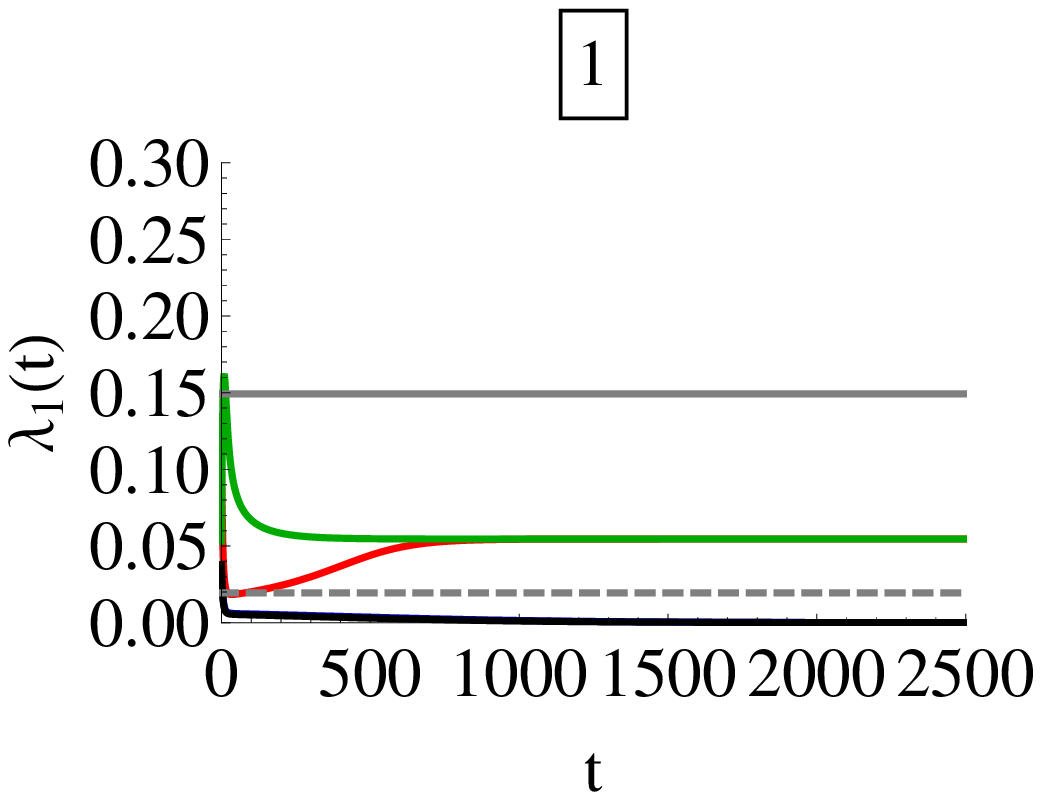} \includegraphics[width=4.8cm]{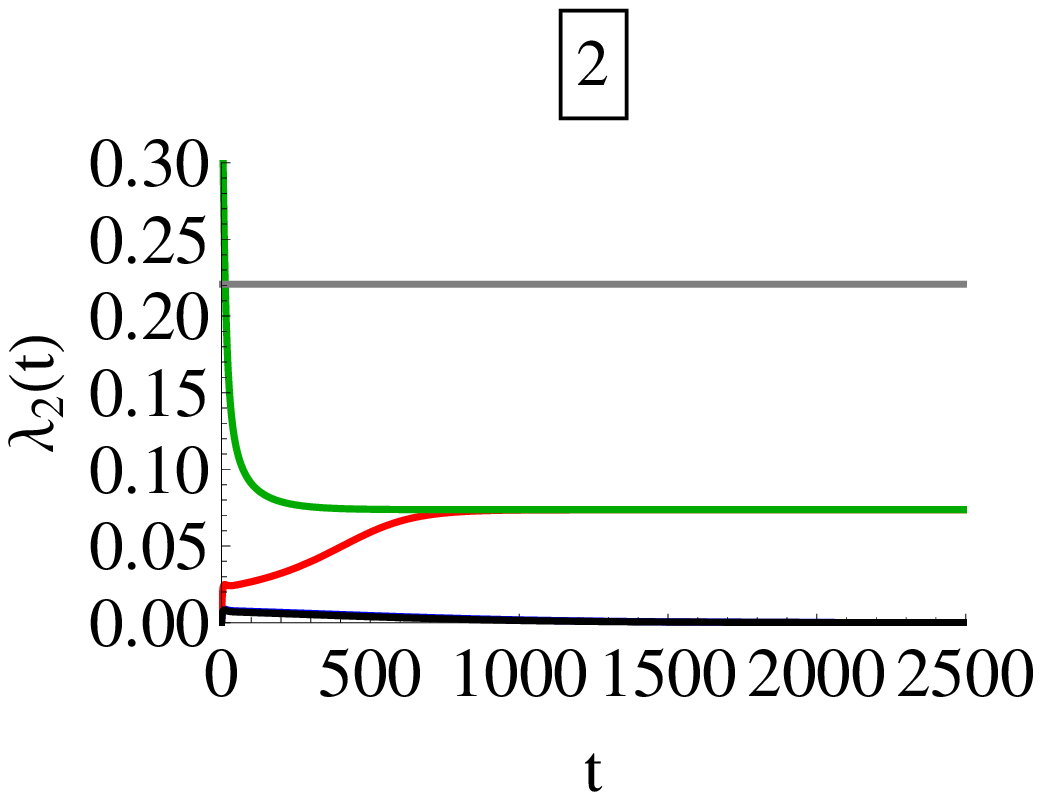} \includegraphics[width=4.8cm]{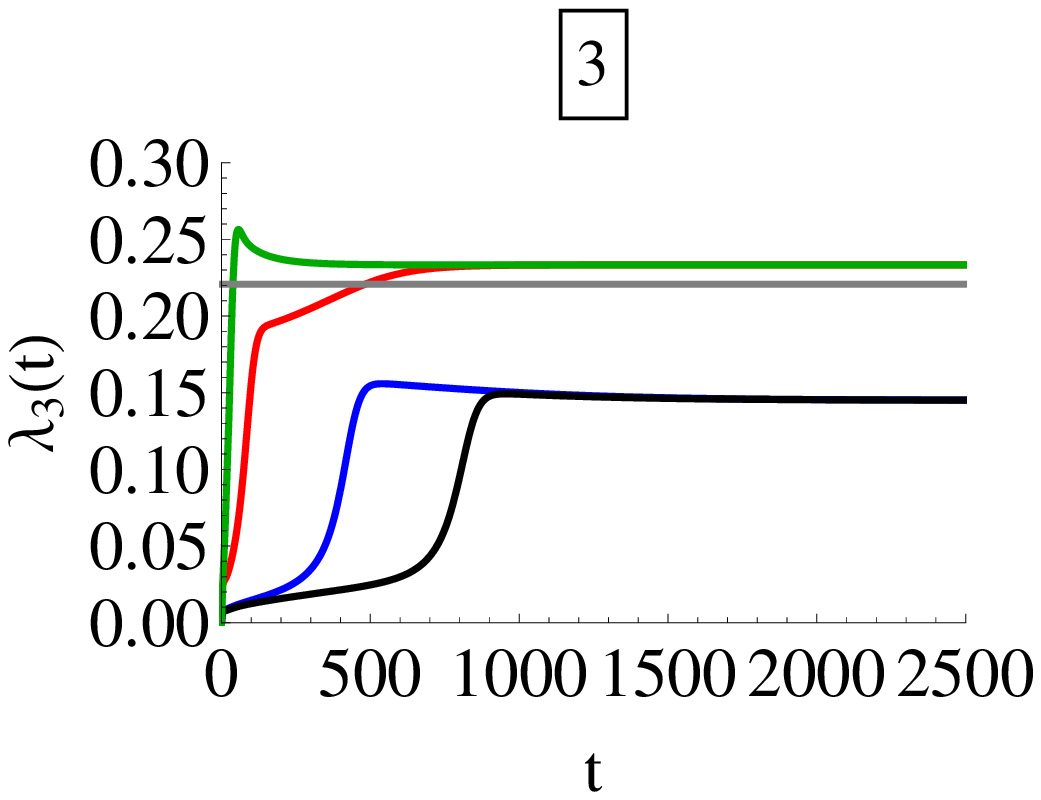}}
\caption{Solutions of system $(T_1)$--$(T_3)$ with HIV dynamics for different travel volumes. The three regions are considered to be symmetric in every parameter value but the local reproduction numbers, we applied the parameter set given in section \ref{sec:HIV} with $\beta^{i}_1=0.85$ and $\beta^{i}_2=\beta^{i}_3=1$ so that $\rn_c^{1}<\rn_H^{1}<1$ and  $\rn_H^{2},\rn_H^{3}>1$ are satisfied. We use the connection network depicted in Figure \ref{fig:4567} (b), where the connectivity potential parameters $c^{12}=c^{21}=c^{31}$ are equal to one in all model classes and zero otherwise. Solid and dashed gray lines correspond to steady state solutions in the regions in the absence of traveling. 
}\label{fig:HIVrg1feedback}
\end{figure}

\begin{figure}[p   ]
\centering
\subfigure[$\alpha=0$]{\includegraphics[width=4.8cm]{rg1_fig3b_a0_1.eps} \includegraphics[width=4.8cm]{rg1_fig3b_a0_2.eps} \includegraphics[width=4.8cm]{rg1_fig3b_a0_3.eps}}\\
\subfigure[$\alpha=10^{-5}$]{\includegraphics[width=4.8cm]{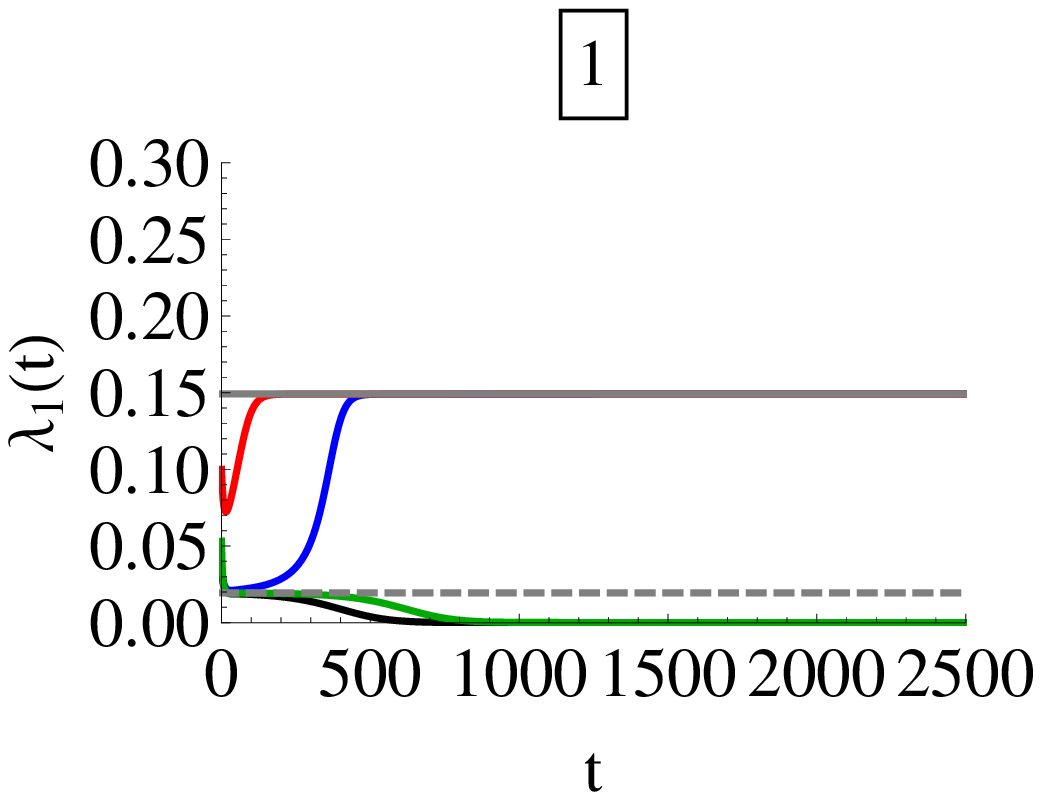} \includegraphics[width=4.8cm]{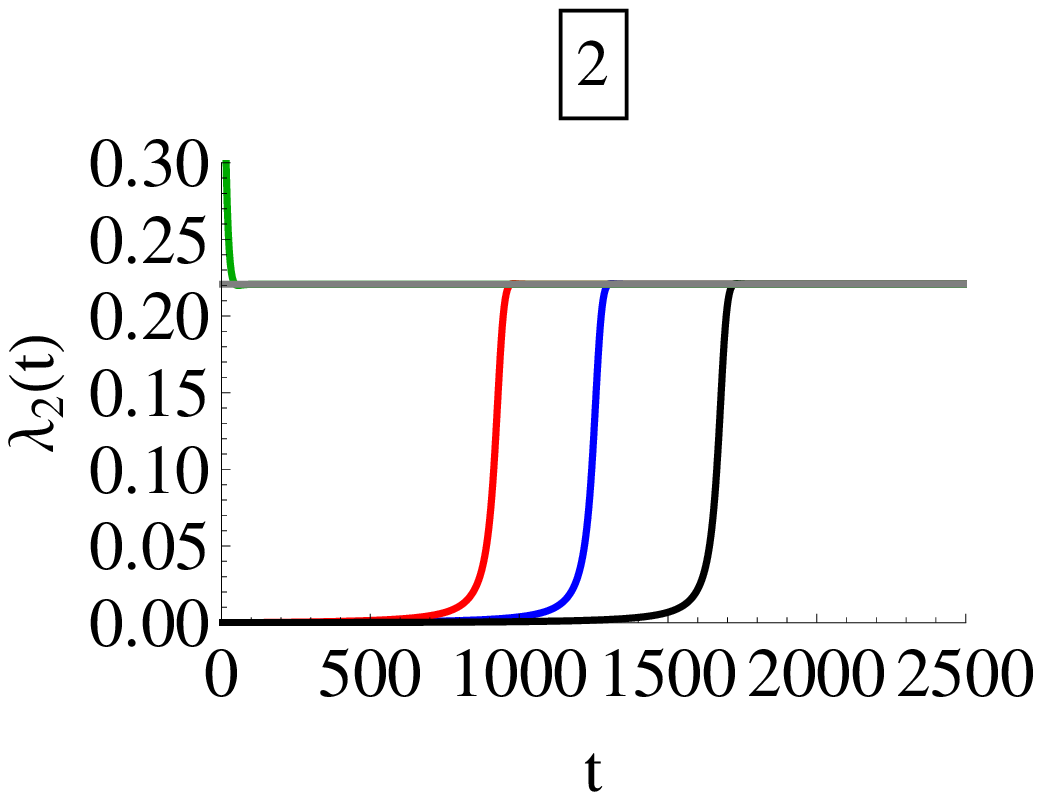} \includegraphics[width=4.8cm]{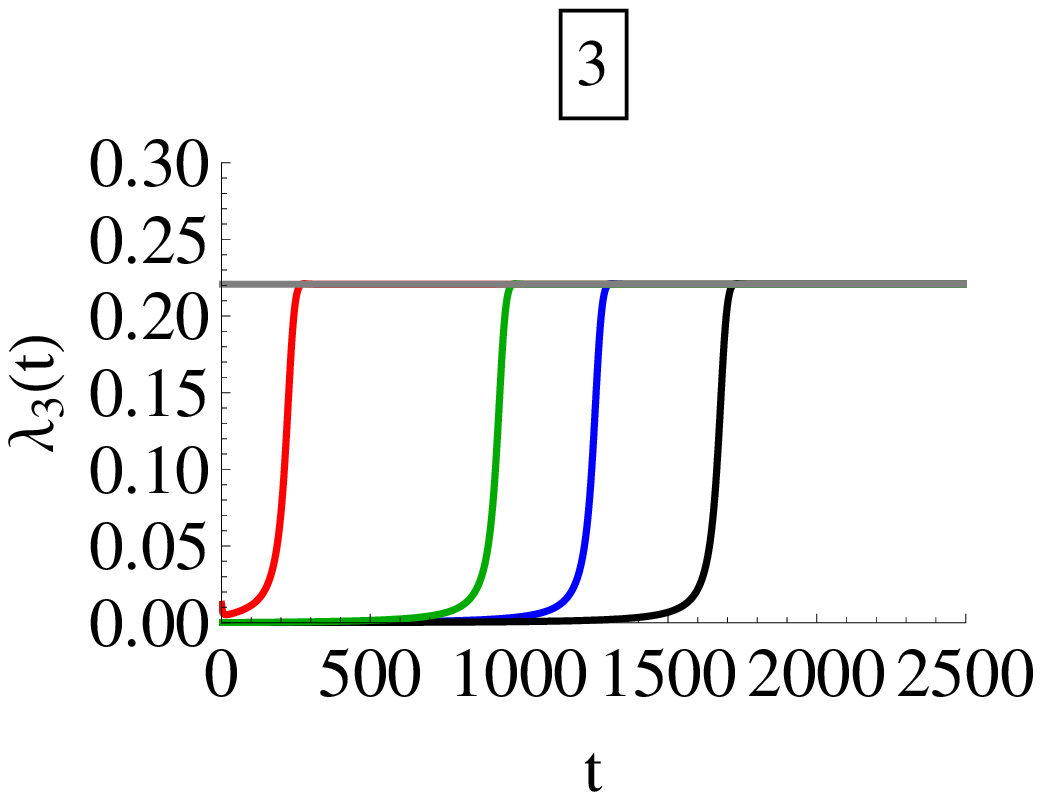}}\\
\subfigure[$\alpha=10^{-3}$]{\includegraphics[width=4.8cm]{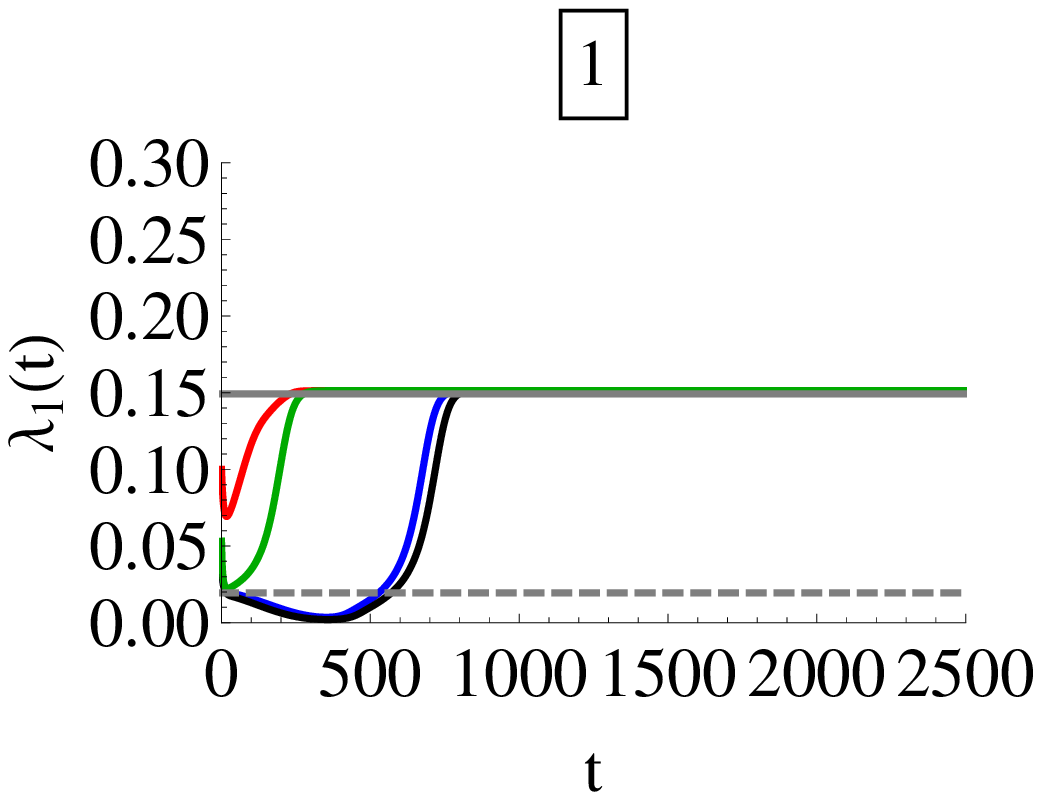} \includegraphics[width=4.8cm]{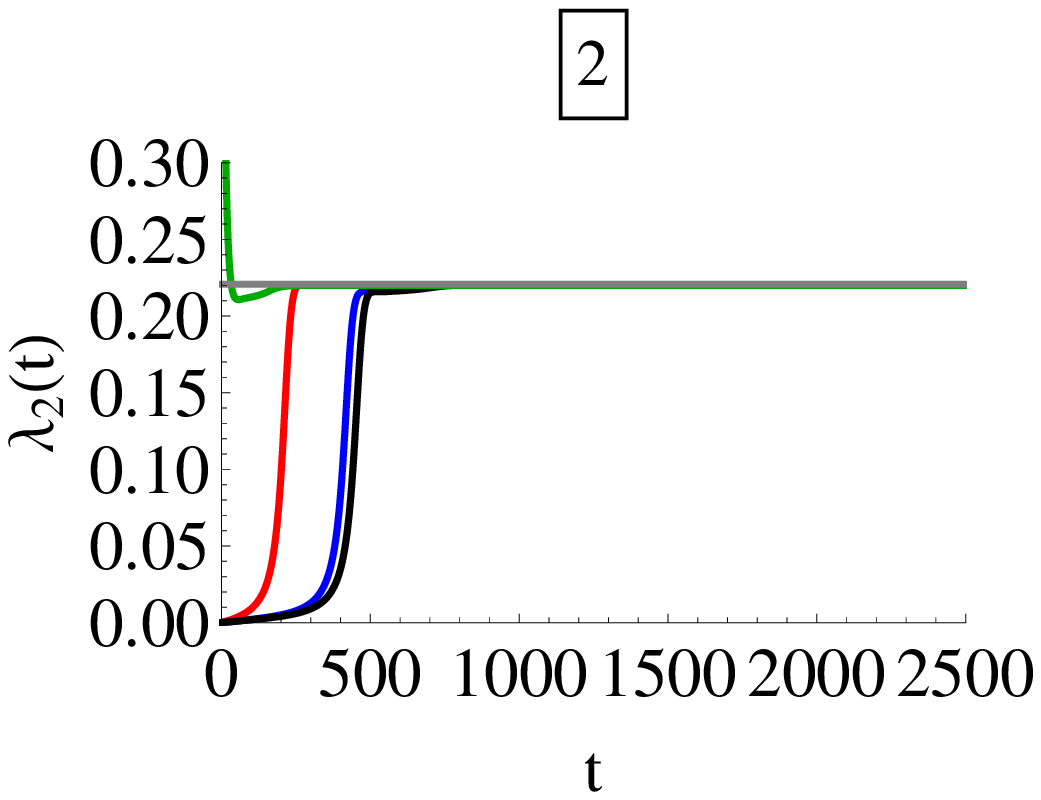} \includegraphics[width=4.8cm]{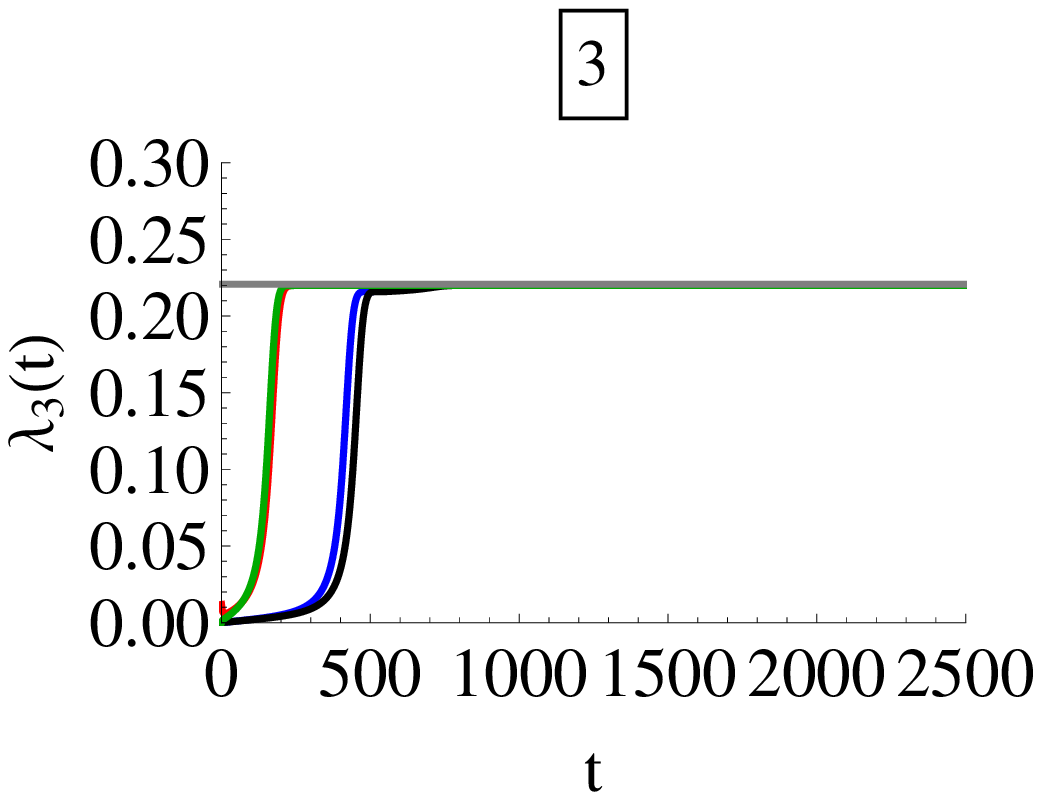}}\\
\subfigure[$\alpha=10^{-1}$]{\includegraphics[width=4.8cm]{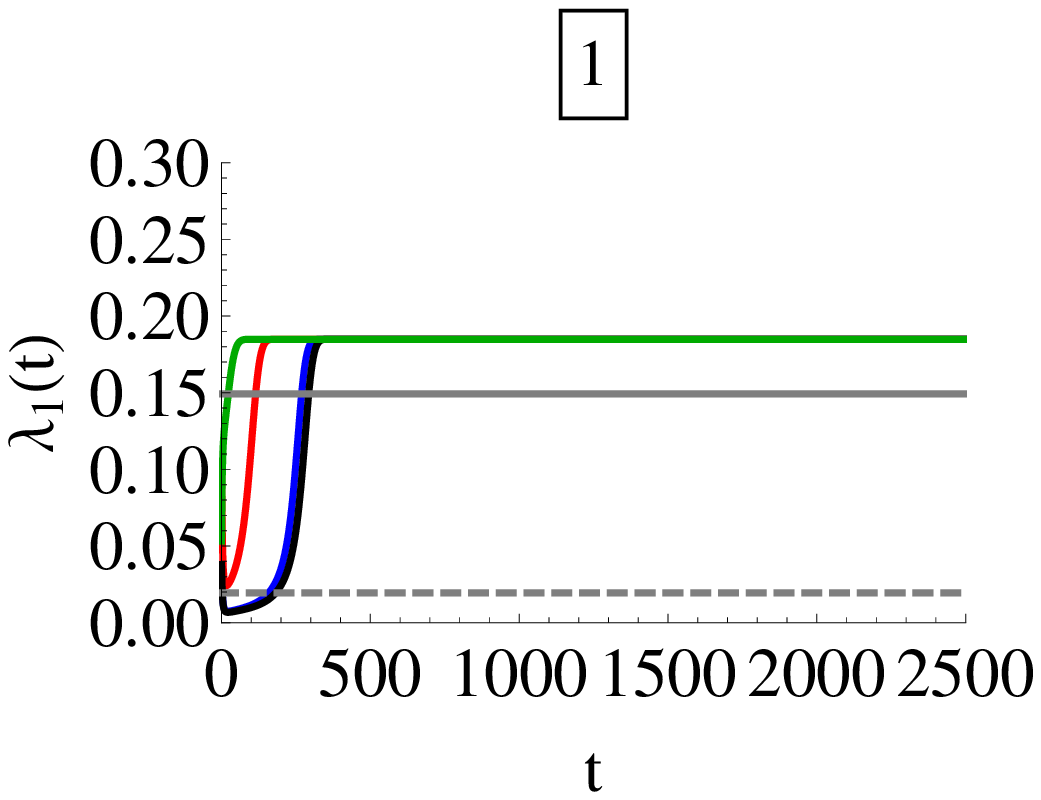} \includegraphics[width=4.8cm]{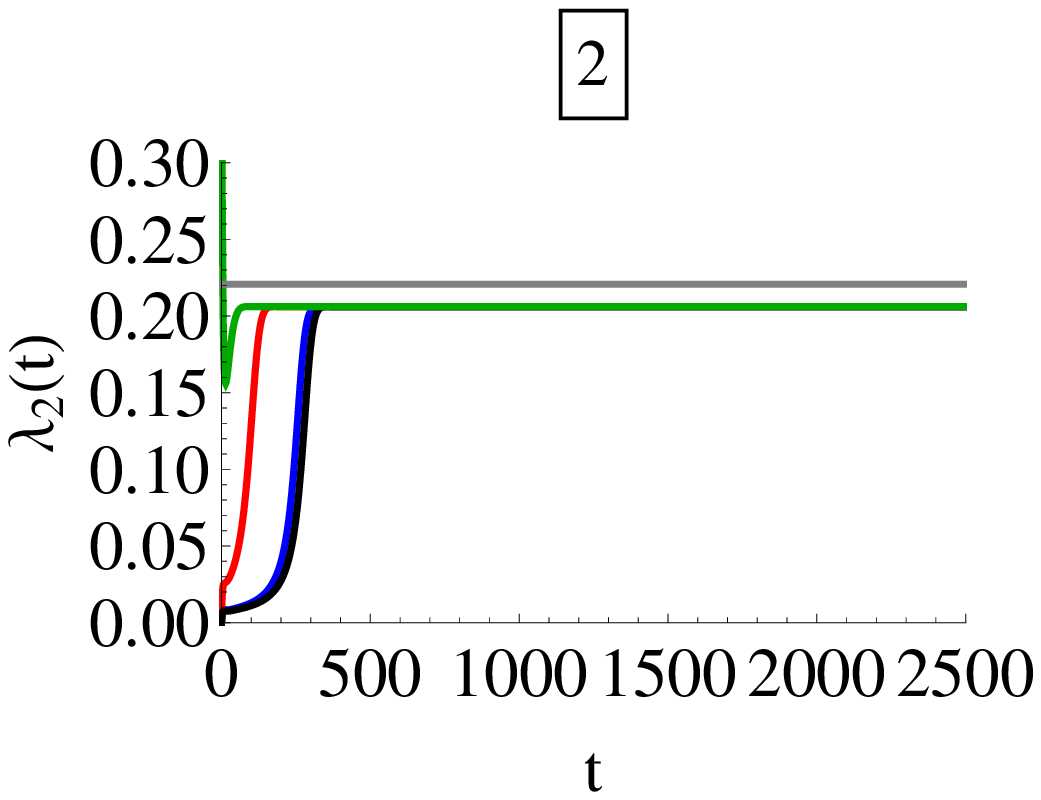} \includegraphics[width=4.8cm]{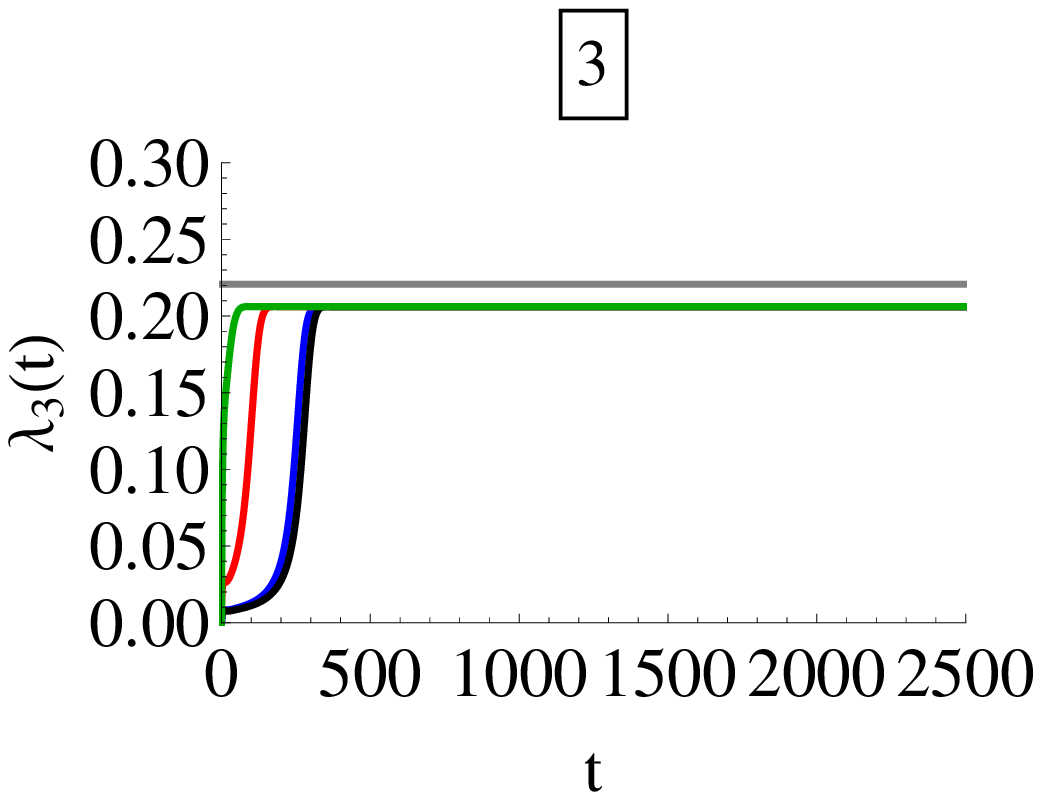}}
\caption{Solutions of system $(T_1)$--$(T_3)$ with HIV dynamics for different travel volumes. The three regions are considered to be symmetric in every parameter value but the local reproduction numbers, we applied the parameter set given in section \ref{sec:HIV} with $\beta^{i}_1=0.85$ and $\beta^{i}_2=\beta^{i}_3=1$ so that $\rn_c^{1}<\rn_H^{1}<1$ and  $\rn_H^{2},\rn_H^{3}>1$ are satisfied. We use the complete connection network depicted in Figure \ref{fig:irredrednetwork} (c), where the connectivity potential parameters $c^{ij}$, $i, j \in \{1,2,3\}$, $i \neq j$, are equal to one in all model classes. Solid and dashed gray lines correspond to steady state solutions in the regions in the absence of traveling.
}\label{fig:HIVrg1full}
\end{figure}
\end{document}